\patchcmd{\ps@pprintTitle}{\footnotesize\itshape
        \hfill\today}{\relax}{}{}
\newtheorem{theorem}{Theorem}[section]
\newtheorem*{theorem*}{Theorem}
\newtheorem*{outline}{Outline of the paper}
\newtheorem{lemma}[theorem]{Lemma}
\newtheorem{corollary}[theorem]{Corollary}
\newtheorem*{corollary*}{Corollary}
\newtheorem*{main corollary*}{Main Corollary}
\newtheorem{prop}[theorem]{Proposition}
\newtheorem*{con7*}{Conjecture 7*}
\newtheorem{Remark}{Remark}
\theoremstyle{definition}
\newtheorem{definition}[theorem]{Definition}
\newtheorem*{Notations}{Notations}
\newcommand\dela[1]{}
\newcommand{\tens}{
	\otimes^q}
\newcommand{\ptens}{
	\otimes^p}
 \newcommand{\im}[1]{\text{Im}{(#1)}}
   \newcommand{\settheoremtag}[1]{% \settheoremtag{<tag>}
  \let\oldthetheorem\thetheorem% Store \thetheorem
  \renewcommand{\thetheorem}{#1}% Redefine it to a fixed value
  \g@addto@macro\endtheorem{% At \end{theorem}, ...
    \global\let\thetheorem\oldthetheorem}% ...restore \thetheorem
  }
\def\ps@pprintTitle{%
	\let\@oddhead\@empty
	\let\@evenhead\@empty
	\def\@oddfoot{\reset@font\hfil\thepage\hfil}
	\let\@evenfoot\@oddfoot
	
}
\journal{}
\begin{document}

\begin{frontmatter}

\title{Powerfully embedded subgroups of extensions of powerful pro-$p$ groups}

 \author[IISER TVM]{Sathasivam Kalithasan}
\ead{sathasivam19@iisertvm.ac.in}
 \author[IISER TVM]{Tony N. Mavely}
\ead{tonynixonmavely17@iisertvm.ac.in}
\author[IISER TVM]{Viji Z. Thomas\corref{cor1}}
\address[IISER TVM]{School of Mathematics,  Indian Institute of Science Education and Research Thiruvananthapuram,\\695551
Kerala, India.}
\ead{vthomas@iisertvm.ac.in}
\cortext[cor1]{Corresponding author. \emph{Phone number}: +91 8921458330}

\begin{abstract}

One of the aims of this paper is to obtain structural results showing that powerful subgroups are abundant in pro-$p$ groups admitting certain powerful quotients. In particular, we obtain an analogue of Baer’s theorem for powerful pro-$p$ groups, namely that the powerfulness of $H/Z_{n-1}(H)$ implies that the $n$th terms of both the lower $p$-series and the lower central series of $H$ are powerfully embedded in $H$. As a consequence, we obtain that if $H$ is a finitely generated pro-$p$ group and $H/Z_n(H)$ is a $p$-adic analytic pro-$p$ group for some positive integer $n$, then $H$ is a $p$-adic analytic pro-$p$ group. We also study crossed squares of powerful $p$-groups, establishing that if $\mu : M \to G$ is a crossed module with $M$ a finite powerful $p$-group and $G$ a finite $p$-group, and if $\mu(M)$ is powerfully embedded in $G$, then both $M \otimes G$ and $M \otimes^{p} G$ are powerful.

\end{abstract}

\begin{keyword}
Pro-$p$ groups \sep $p$-adic analytic groups \sep powerful $p$ groups \sep Lazard series \sep Frattini series \sep Crossed modules. 

\MSC[2020]  	20E18 \sep 18G45.
\end{keyword}

\end{frontmatter}

 \section{Introduction}
 One of the main aims of this paper is to prove

\settheoremtag{A}
\begin{theorem} \label{mainresult:H/Z,H/D powerful gamma_n,P_n,Gamma_n powerful}
Let $p$ be an odd prime and $H$ be a pro-$p$ group. Then the following statements hold:
\begin{enumerate}
\item[(i)]  If $H/Z_{n-1}(H)$ is powerful, then the closure of the $n$th term of the lower central series of $H$ is powerfully embedded in $H$.
\item[(ii)]  If $H/Z_{n-1}(H)$ is powerful, then the $n$th term of the lower $p$-series (Lazard series) of $H$ is powerfully embedded in $H$.
\item [(iii)] If $H/\mathcal{D}_{n-1}(H)$ (cf. Definition \ref{D_n(G)}) is powerful, then the closure of the $n$th term of the derived series of $H$ is powerfully embedded in $H$.
\end{enumerate}
\end{theorem}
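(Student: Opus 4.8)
The plan is to reduce to finite $p$-groups and then make statement~(i) the crux, deriving~(ii) from it and handling~(iii) by the same method. For the reduction, recall that a closed normal subgroup $N$ of a pro-$p$ group $H$ is powerfully embedded in $H$ iff $NU/U$ is powerfully embedded in $H/U$ for every open normal $U\trianglelefteq H$; since $(H/U)/Z_{n-1}(H/U)$ is a quotient of $H/Z_{n-1}(H)$ and quotients of powerful groups are powerful, and $\overline{\gamma_n(H)}\,U/U=\gamma_n(H/U)$ (and similarly for the lower $p$-series and the derived series), it suffices to prove each item for finite $p$-groups. The one non-elementary input I would isolate as a lemma is a Hall--Petrescu collection estimate: for every $s\ge 1$, $[\gamma_s(H)^p,H]\le \gamma_{s+1}(H)^p\,\gamma_{ps+1}(H)$. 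The point is that, $p$ being odd, $\binom pi$ is divisible by $p$ for $2\le i\le p-1$, so every collected term except one is a $p$-th power of an element of $\gamma_{\ge s+1}(H)$, while the lone remaining term lies in $\gamma_{ps+1}(H)$; iterating over copies of $H$ yields $[H^p,\underbrace{H,\dots,H}_{k}]\le\gamma_{k+1}(H)^p\,\gamma_{p+k}(H)$.

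For~(i), unwinding the hypothesis gives $\gamma_2(H)\le H^pZ_{n-1}(H)$. Since $\gamma_{n+1}(H)=[\gamma_2(H),\underbrace{H,\dots,H}_{n-1}]$, expanding the outer commutators and using $[Z_{n-1}(H),\underbrace{H,\dots,H}_{n-1}]=1$ kills the $Z_{n-1}(H)$-contribution and leaves $\gamma_{n+1}(H)\le[H^p,\underbrace{H,\dots,H}_{n-1}]\le\gamma_n(H)^p\,\gamma_{p+n-1}(H)\le\gamma_n(H)^p\,\gamma_{n+2}(H)$ by the estimate above (here $p\ge 3$ is used). Because $H/Z_{m-1}(H)$ is a quotient of $H/Z_{n-1}(H)$ for every $m\ge n$, the hypothesis propagates and the same computation yields $\gamma_{m+1}(H)\le\gamma_m(H)^p\,\gamma_{m+2}(H)$ for all $m\ge n$. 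Feeding this back into itself and using $\gamma_{m+1}(H)^p\le\gamma_m(H)^p$ gives $\gamma_{n+1}(H)\le\gamma_n(H)^p\,\gamma_{n+1+k}(H)$ for every $k\ge 1$; as $H$ is a finite $p$-group, hence nilpotent, the tail vanishes for $k$ large, so $[\gamma_n(H),H]=\gamma_{n+1}(H)\le\gamma_n(H)^p$, i.e. $\gamma_n(H)$ is powerfully embedded in $H$.

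For~(ii), I would invoke the Lazard-series identity $P_m(H)=\prod_{i+j\ge m}\gamma_i(H)^{p^{j}}=\prod_{i=1}^{m}\gamma_i(H)^{p^{m-i}}$ (valid for odd $p$), and check that every defining factor of $P_{n+1}(H)=\prod_{i=1}^{n+1}\gamma_i(H)^{p^{n+1-i}}$ lies in $P_n(H)^p$. For $i\le n$ one has $\gamma_i(H)^{p^{n+1-i}}\le\bigl(\gamma_i(H)^{p^{n-i}}\bigr)^p\le P_n(H)^p$ because $\gamma_i(H)^{p^{n-i}}$ is one of the defining factors of $P_n(H)$; and for $i=n+1$, part~(i) gives $\gamma_{n+1}(H)\le\gamma_n(H)^p\le P_n(H)^p$ since $\gamma_n(H)\le P_n(H)$. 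Hence $[P_n(H),H]\le P_{n+1}(H)\le P_n(H)^p$, so $P_n(H)$ is powerfully embedded. Part~(iii) is then proved exactly as~(i): the hypothesis unwinds to $\gamma_2(H)\le H^p\mathcal D_{n-1}(H)$, the defining property of the series $\mathcal D_\bullet$ (Definition~\ref{D_n(G)}) makes the $\mathcal D_{n-1}(H)$-part disappear once the commutator/derived operations building $H^{(n)}$ out of $\gamma_2(H)$ are applied, and the identical collection-and-telescoping argument gives $[H^{(n)},H]\le(H^{(n)})^p$.

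The step I expect to be the real obstacle is closing the telescoping, i.e. passing from $\gamma_{n+1}(H)\le\gamma_n(H)^p\,\gamma_{n+2}(H)$ (all that the collection estimate directly delivers) to $\gamma_{n+1}(H)\le\gamma_n(H)^p$; this is precisely where one needs both that the hypothesis is inherited by every quotient $H/Z_{m-1}(H)$ with $m\ge n$, so that the estimate holds at all levels, and the prior reduction to finite $p$-groups, so that the iterated tails $\gamma_{n+1+k}(H)$ are eventually trivial. A secondary technical nuisance, to be dispatched once and for all inside the collection lemma, is the bookkeeping of the binomial coefficients $\binom pi$ and the depth $ps+1$ of the exceptional collected term: this is exactly what forces the error terms deep enough in the series for the telescoping to terminate, and is where oddness of $p$ is used.
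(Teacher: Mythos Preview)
Your treatment of (i) matches the paper's proof (Theorem~\ref{gamma_npowerful}) essentially step for step: reduce to finite $p$-groups, kill the $Z_{n-1}(H)$ contribution, apply a Hall--Petrescu/Zassenhaus collection estimate to $[H^p,\,_{n-1}H]$, and telescope using nilpotency. The paper packages the collection step as its Theorem~\ref{PR:T:2} rather than a bespoke lemma, but the content is identical.

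For (ii) your argument is correct and genuinely more economical than the paper's. The paper (Theorem~\ref{P_npowerful}) computes $[P_n(H),H]=\prod_i[\gamma_i(H)^{p^{n-i}},H]$ directly, bounding each factor via the estimate of Theorem~\ref{PR:T:1} and tracking a proliferation of error terms before finally invoking the telescoping inequality from (i). Your route through $P_{n+1}(H)$, using only the elementary containment $A^{p^{k+1}}\le(A^{p^k})^p$ for the factors with $i\le n$ and part~(i) for the single factor $\gamma_{n+1}(H)$, bypasses that machinery entirely and makes (ii) a two-line corollary of (i). This is a real simplification.

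For (iii), however, the claim that it goes ``exactly as~(i)'' is where your proposal goes wrong as a plan. The paper needs three preparatory lemmas (Lemmas~\ref{Lemma Derived series 1}, \ref{Lemma Derived series 1A}, \ref{Lemma Derived series 2}) and a lengthy argument (Theorem~\ref{Gamma_n is powerful}) here, and the extra work is not cosmetic. After killing $\mathcal D_{n-1}(H)$ one is left with $[\Gamma_n,H]\le\bigl[[H^p,H],\Gamma_2,\dots,\Gamma_{n-1}\bigr]$, and unlike in (i) the $p$-th power must be pushed through $n-2$ further commutator brackets, each against a \emph{different} subgroup $\Gamma_i$; every such push emits an error term of the shape $\bigl[[H,\,_kH],\Gamma_2,\dots,\Gamma_{n-1}\bigr]$, and the telescoping then has to run inside this mixed filtration rather than in the lower central series alone. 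Your strategy is the right one, but you should expect the execution to be an order of magnitude heavier than your sketch allows for; treating it as a reprise of (i) would leave you stuck at exactly the point where the real difficulty begins.
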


\settheoremtag{A$'$}
\begin{theorem} \label{mainresultforeven:H/Z,H/D powerful gamma_n,P_n,Gamma_n powerful}
Let $p=2$ and $H$ be a pro-$p$ group. Then the following statements hold:
\begin{enumerate}
\item[(i)]  If $H/Z_{n-1}(H)$ is powerful, then the closure of the $n$th term of the lower central series of $H$ is powerfully embedded in $H$. 
\item[(ii)]  If $H/Z_{n-1}(H)$ is powerful, then the $n$th term of the lower $p$-series (Lazard series) of $H$ is powerful.
\end{enumerate}
\end{theorem}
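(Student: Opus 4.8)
The plan is to prove both parts by induction on $n$, following the scheme of the proof of Theorem~\ref{mainresult:H/Z,H/D powerful gamma_n,P_n,Gamma_n powerful} but replacing each odd-prime ``interchange'' identity --- which exchanges $p$-th powers past commutators at the cost of one extra term of the lower central series --- by its weaker $p=2$ analogue phrased with fourth powers. It is exactly this loss that forces the conclusion of~(ii) to be merely ``powerful'' rather than ``powerfully embedded'' and that precludes a derived-series statement. For $n=1$ everything is trivial: $H=H/Z_{0}(H)$ is powerful by hypothesis, $\gamma_{1}(H)=H$ is powerfully embedded in itself, and $P_{1}(H)=H$ is powerful.

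For the inductive step of~(i), put $Z=Z(H)$ and pass to $\overline{H}=H/Z$. Since the upper central series satisfies $Z_{n-2}(\overline{H})=Z_{n-1}(H)/Z$, the hypothesis gives that $\overline{H}/Z_{n-2}(\overline{H})\cong H/Z_{n-1}(H)$ is powerful, so by the inductive hypothesis $\gamma_{n-1}(\overline{H})=\gamma_{n-1}(H)Z/Z$ is powerfully embedded in $\overline{H}$. Unwinding the definition yields $\gamma_{n}(H)\le\overline{\gamma_{n-1}(H)^{4}\,Z}$, and, feeding $\gamma_{n-1}(\overline{H})$ into the $p=2$ power lemmas for powerfully embedded subgroups --- that $\overline{N^{4}}$ is powerfully embedded whenever $N$ is, together with the corresponding $p=2$ bound for $[\overline{N^{4}},G]$ --- one also obtains $\gamma_{n+1}(H)\le\overline{\gamma_{n}(H)^{4}\,Z}$. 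Passing to $\widehat{H}=H/\overline{\gamma_{n}(H)^{4}}$, the subgroup $\gamma_{n+1}(\widehat{H})$ is then central in $\widehat{H}$ and of exponent dividing $4$ (since $\gamma_{n}(\widehat{H})$ has exponent dividing $4$ and the commutator map $\gamma_{n}(\widehat{H})\times\widehat{H}\to\gamma_{n+1}(\widehat{H})$ is additive in each variable with central image), and the point is to show that it is trivial, i.e.\ that $\gamma_{n+1}(H)\le\overline{\gamma_{n}(H)^{4}}$. Here I would combine that bilinearity with the observation that, because $[\gamma_{n}(H),Z_{n-1}(H)]=1$, the conjugation action of $H$ on $\gamma_{n}(H)$ factors through the \emph{powerful} group $H/C_{H}(\gamma_{n}(H))$, and run the Hall--Petrescu collection formula to absorb the residual central contribution.

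For~(ii), observe first that the hypothesis passes to every quotient $H/Z_{m}(H)$ with $m\ge n-1$, so by part~(i) the term $\gamma_{i}(H)$ is powerfully embedded in $H$ for all $i\ge n$. Writing $P_{n}(H)=\overline{\prod_{i2^{j}\ge n}\gamma_{i}(H)^{2^{j}}}$ for the lower $2$-series, its image in the powerful group $H/Z_{n-1}(H)$ equals $\overline{(H/Z_{n-1}(H))^{2^{k}}}$ with $k=\lceil\log_{2}n\rceil$, so $P_{n}(H)\le\overline{H^{2^{k}}Z_{n-1}(H)}$, while $P_{n}(H)\ge\gamma_{n}(H)$. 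Analysing the generators $x^{2^{j}}$ of $P_{n}(H)$ (with $x\in\gamma_{i}(H)$ and $i2^{j}\ge n$) by means of the $p=2$ collection formula and the power lemmas, one pushes $[P_{n}(H),P_{n}(H)]$ into $\overline{P_{n}(H)^{4}}$, so that $P_{n}(H)$ is powerful. This cannot be upgraded to powerful embeddedness in $H$, in contrast with the odd case: the $p=2$ interchange estimates only leave a factor of $2$, not $4$, to spare, so $[P_{n}(H),H]$ can be pushed into $\overline{P_{n}(H)^{2}}$ but not into $\overline{P_{n}(H)^{4}}$.

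I expect the genuine obstacle throughout to be the elimination of the central exponent-$4$ quotient $\gamma_{n+1}(\widehat{H})$ in~(i). At odd $p$ the identity $[x^{p},y]\equiv[x,y]^{p}$ modulo one further term of the lower central series lets one trade powers for commutators essentially for free, so every relevant estimate lands inside $\overline{N^{p}}$; at $p=2$ one must instead carry fourth powers and error terms of the shape $[x,y]^{2}[[x,y],x]$, so a priori everything lands only in $\overline{N^{2}}$, and recovering the sharp target $\overline{\gamma_{n}(H)^{4}}$ --- attainable for $\gamma_{n}(H)$ but \emph{not} for $P_{n}(H)$ --- is where the real content of the $p=2$ case lies.
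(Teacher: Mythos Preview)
Your approach diverges substantially from the paper's, and in both parts there are gaps that are not merely cosmetic.

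\textbf{Part (i).} The paper does \emph{not} induct on $n$ by passing to $H/Z(H)$. Instead it reduces to finite $p$-groups via Proposition~\ref{Dixon_Profinite_to_finite} and argues directly from the hypothesis $\gamma_2(H)\le H^4 Z_{n-1}(H)$: one writes $[\gamma_n(H),H]=[\gamma_2(H),\,_{n-1}H]\le[H^4,\,_{n-1}H]$ and expands the right-hand side with the commutator–power identity of Theorem~\ref{PR:T:2}. For $p=2$ this produces extra factors of the shape $\gamma_{n+k}(H)^2$ that do not appear at odd primes, but each of these is handled by the same identity applied to $\gamma_{n+k}=[\gamma_2,\,_{n+k-2}H]\le[H^4,\,_{n+k-2}H]$; one obtains inclusions $\gamma_{n+k}\le\gamma_n^4\cdot\gamma_{n+k}^2\cdot\gamma_{n+k+1}$, and since in a finite $2$-group $A\le A^2$ forces $A=1$, this telescopes to $[\gamma_n,H]\le\gamma_n^4$. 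No ``residual central contribution'' ever arises.

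Your inductive scheme, by contrast, produces exactly that residue: after quotienting by $\overline{\gamma_n(H)^4}$ you are left needing $\gamma_{n+1}(\widehat H)=1$ knowing only that it is central of exponent dividing $4$. You propose to use that $H/C_H(\gamma_n(H))$ is powerful and to ``run Hall--Petrescu'', but you do not carry this out, and you yourself flag it as the genuine obstacle. It \emph{can} be made to work --- writing $\gamma_n\le\gamma_{n-1}^4 Z$ and expanding $[y^4,h]$ as $[y,h]^4[[y,h],y]^6$ modulo $\gamma_{n+2}$ shows every generator of $\gamma_{n+1}(\widehat H)$ is a square in the abelian group $\gamma_{n+1}(\widehat H)$, forcing $\gamma_{n+1}(\widehat H)=\gamma_{n+1}(\widehat H)^2=1$ --- but this computation is the heart of the matter and is absent from your write-up.

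\textbf{Part (ii).} Here there is a definitional error: you write $P_n(H)=\overline{\prod_{i2^j\ge n}\gamma_i(H)^{2^j}}$ and deduce that its image in the powerful quotient is $(H/Z_{n-1})^{2^{\lceil\log_2 n\rceil}}$. That formula describes the \emph{dimension} (Jennings--Zassenhaus) series, not the lower $p$-series. For the Lazard series one has $P_n(H)=\overline{\prod_{i=1}^n\gamma_i(H)^{2^{n-i}}}$, and in a powerful group $P_n(G)=G^{2^{n-1}}$. The paper's argument for~(ii) (Theorems~\ref{P_npowerful} and~\ref{P_npowerfulevenprime}) uses precisely this product decomposition: one expands $[P_n(H),H]$ term by term via Theorem~\ref{PR:T:1}, bounding each $[\gamma_i(H)^{2^{n-i}},H]$ by products of $\gamma_j(H)^{2^{n-j}}$-type factors that land in $P_n(H)^2$, with a residual $\gamma_{n+1}(H)$ handled by~\eqref{central series step 3}. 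At $p=2$ the bookkeeping only yields $[P_n,H]\le P_n^2$ rather than $P_n^4$ --- hence ``powerful'' but not ``powerfully embedded'' --- but to conclude that $P_n$ is powerful one then bounds $[P_n,P_n]$, where the extra factor of $2$ is recovered. Your sketch (``analysing the generators\ldots one pushes $[P_n,P_n]$ into $\overline{P_n^4}$'') is on the right track in spirit, but rests on the wrong description of $P_n$ and omits the actual estimates.
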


As an easy consequence of the above Theorems, we obtain the following elegant result.

\begin{main corollary*} \label{mainresult: p-adic analytic corollary}
Let $p$ be a prime, and $H$ be a finitely generated pro-$p$ group. If $H/Z_n(H)$ is $p$-adic analytic for some $n\in \mathbb{N}$, then so is $H$.
\end{main corollary*}

%In \cite[Theorem 3.4.5]{La1965}, Serre proved that for a Hausdorff topological group $H$, and $N$ a closed normal subgroup, if both $N$ and $H/N$ are $p$-adic analytic, then $H$ is $p$-adic analytic.

In \cite[Theorem 3.4.5]{La1965}, Serre proved that the class of Hausdorff topological groups that are $p$-adic analytic is closed under extensions: if $H$ is a Hausdorff topological group with a closed normal subgroup $N$ such that both $N$ and $H/N$ are $p$-adic analytic, then $H$ is $p$-adic analytic. In the above corollary, we do not require that the kernel is $p$-adic analytic. This corollary is particularly useful in light of the characterizations of ${p\text{-adic analytic groups}}$. Specifically, to verify any of the equivalent conditions listed below for a group $H$, it suffices to verify the condition for the quotient $H/Z_n(H)$ for some $n \in \mathbb{N}$.

\settheoremtag{1.1}
\begin{theorem}\cite[Interlude A, 1]{DDMS99} \label{padic analytic characterization}
    Let $G$ be a pro-$p$ group. The following conditions are equivalent:
\begin{enumerate}
    \item [(i)] $G$ is a $p$-adic analytic group.
    \item [(ii)] $G$ has finite rank.
    \item [(iii)] $G$ is finitely generated and virtually powerful.
    \item [(iv)] $G$ is finitely generated and virtually uniform.
    \item [(v)] $G$ has polynomial subgroup growth.
    \item [(vi)] $G$ is the product of finitely many procyclic subgroups.
    \item [(vii)] $G$ is isomorphic to a closed subgroup of $GL_d(\mathbb{Z}_p)$ for some $d$.
\end{enumerate}
\end{theorem}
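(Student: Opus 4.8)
The plan is to make the \emph{uniform} pro-$p$ groups the pivot of the whole argument and to prove the seven conditions equivalent by a cycle of implications that passes through finite rank and through uniformity, rather than verifying each pair directly. First I would develop the elementary structure theory of powerful and uniform pro-$p$ groups. A finitely generated powerful pro-$p$ group $G$ has the property that every closed subgroup is generated by at most $d(G)$ elements, so $G$ has finite rank; moreover a suitable term $P_i(G)$ of the lower $p$-series is torsion-free and hence uniform. This gives at once the equivalence (iii) $\Leftrightarrow$ (iv), together with the implications (iii), (iv) $\Rightarrow$ (ii), once one checks that finite rank is inherited by finite-index overgroups.

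Next I would prove (ii) $\Rightarrow$ (iii): a pro-$p$ group of finite rank contains an open normal uniform subgroup. The key point is that in a group of finite rank the generator numbers $d(P_i(G))$ are uniformly bounded, and for $i$ sufficiently large $P_i(G)$ is powerful and torsion-free, hence uniform. Combined with the previous paragraph this settles the mutual equivalence of (ii), (iii) and (iv) among the purely pro-$p$ conditions.

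The heart of the argument is (iv) $\Rightarrow$ (i). Given a uniform group $G$ of dimension $d$ with topological generators $g_1,\dots,g_d$, the coordinate map $(\lambda_1,\dots,\lambda_d) \mapsto g_1^{\lambda_1}\cdots g_d^{\lambda_d}$ is a homeomorphism $\mathbb{Z}_p^d \to G$, and I would show that in these coordinates multiplication and inversion are given by $p$-adically convergent power series. This rests on the convergence over $\mathbb{Q}_p$ of the Hausdorff (Campbell--Baker--Hausdorff) series, whose denominators must then be controlled; the uniformity hypothesis (powerfulness together with torsion-freeness, with the $p=2$ correction) is exactly what guarantees the necessary $p$-adic estimates. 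This endows $G$ with a $p$-adic analytic manifold structure compatible with the group laws, after which I would argue that analyticity is inherited by any group containing an open uniform subgroup, giving (iv) $\Rightarrow$ (i) in full. \textbf{I expect this construction of the analytic structure, and the verification that the group laws are analytic, to be the main obstacle}, since it is the one step that genuinely leaves the combinatorics of finite $p$-groups and requires the analytic machinery.

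To close the cycle I would prove (i) $\Rightarrow$ (vii) $\Rightarrow$ (ii). For the first implication, the $\mathbb{Z}_p$-Lie algebra of an open uniform subgroup together with the $p$-adic exponential provides a faithful analytic embedding of that subgroup into $GL_d(\mathbb{Z}_p)$, and inducing the representation up to $G$ extends it to a closed embedding of the whole group. For (vii) $\Rightarrow$ (ii) I would use that the principal congruence subgroups $\ker\bigl(GL_d(\mathbb{Z}_p) \to GL_d(\mathbb{Z}/p^k)\bigr)$ are uniform for $k \ge 1$ (for $k \ge 2$ when $p=2$), so that $GL_d(\mathbb{Z}_p)$, and hence every closed subgroup of it, is virtually uniform and therefore of finite rank. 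Finally I would fold in (v) and (vi) by two-sided counting and generation arguments: finite rank $r$ forces the number of open subgroups of index $p^n$ to grow polynomially in $p^n$, since each such subgroup needs at most $r$ generators, while conversely polynomial subgroup growth bounds the generator numbers of the $P_i(G)$ and hence the rank, giving (ii) $\Leftrightarrow$ (v); and a uniform group of dimension $d$ is visibly a product of $d$ procyclic groups, while a product of finitely many procyclic subgroups is boundedly generated and so, for pro-$p$ groups, of finite rank, giving (ii) $\Leftrightarrow$ (vi).
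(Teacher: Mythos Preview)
The paper does not prove this theorem at all: it is quoted verbatim from \cite[Interlude A, 1]{DDMS99} as a known characterization, and is used only as a black box in the proof of Corollary~\ref{padic analytic from padic analytic}. So there is nothing in the paper to compare your proposal against.

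That said, your outline is a faithful summary of the standard argument in Dixon--du~Sautoy--Mann--Segal, and the organization you describe (uniform groups as pivot, the Campbell--Baker--Hausdorff construction for (iv) $\Rightarrow$ (i), congruence subgroups for (vii) $\Rightarrow$ (ii), counting arguments for (v) and (vi)) matches the development there. If you were asked to supply a proof, this is the right plan; for the purposes of the present paper, however, a citation suffices and no proof is expected.
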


In the next theorem, we relax the hypothesis that $H$ is a pro-$p$ group.

%The primary objective of this paper is to establish the following theorems:
\settheoremtag{B}
\begin{theorem} \label{thmc}
    Let $p$ be an odd prime, suppose we have a group extension $1\to N\to H \to G\to 1$, where $G$ is a finite powerful $p$-group. The following statements hold:
    
    \begin{enumerate}
        \item [(i)] If $N$ is contained in the $n$th center of $H$ for some positive integer $n$, then the $(n+1)$-th term of the lower central series of $H$ is finite powerful $p$-group.
       \item[(ii)] If $N\leqslant \mathcal{D}_n(H)$ (cf. Definition \ref{D_n(G)}) for some positive integer $n$, then the $(n+1)$-th term of derived series of $H$ is finite powerful $p$-group. 
    \end{enumerate}
\end{theorem}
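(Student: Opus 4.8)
The plan is to deduce Theorem~\ref{thmc} from Theorem~\ref{mainresult:H/Z,H/D powerful gamma_n,P_n,Gamma_n powerful} (the powerful analogue of Baer's theorem) together with the classical finiteness theorems of Baer and P.\ Hall.

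Consider (i). Since $N\leqslant Z_n(H)$, the quotient $H/Z_n(H)$ is a homomorphic image of $H/N\cong G$; as homomorphic images of powerful $p$-groups are powerful, $H/Z_n(H)$ is a finite powerful $p$-group. Being finite and nilpotent, this quotient forces $H$ itself to be nilpotent, and Baer's theorem (together with its standard refinement that $\pi(\gamma_{n+1}(H))\subseteq\pi(|H:Z_n(H)|)$) gives that $M:=\gamma_{n+1}(H)$ is a \emph{finite $p$-group}. Next I record the structure of $M$. By the three-subgroups lemma $[\gamma_{j+1}(H),Z_j(H)]=1$ for every $j$, so with $j=n$ the subgroup $Z_n(H)$ centralizes $M$; consequently $M\cap Z_n(H)$ is central in $M$, the conjugation action of $H$ on $M$ factors through the finite powerful $p$-group $\bar G:=H/Z_n(H)$, and $M/(M\cap Z_n(H))\cong\gamma_{n+1}(\bar G)$ is a term of the lower central series of the powerful $p$-group $\bar G$, hence powerfully embedded in $\bar G$ and in particular powerful.

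It remains to upgrade this to powerfulness of $M$ itself, which is exactly the content of Theorem~\ref{mainresult:H/Z,H/D powerful gamma_n,P_n,Gamma_n powerful} in the finite-$p$-group case. Concretely I would choose a normal subgroup $C_0\trianglelefteq H$ with $H/C_0$ a finite $p$-group and $C_0\cap M=1$. Then $M\cong MC_0/C_0=\gamma_{n+1}(H/C_0)$, while $(H/C_0)/Z_n(H/C_0)$ is a quotient of $H/Z_n(H)$ and so is again a finite powerful $p$-group; hence Theorem~\ref{mainresult:H/Z,H/D powerful gamma_n,P_n,Gamma_n powerful}(i), applied to the finite (so pro-$p$) group $P:=H/C_0$ with the index raised by one, shows that $\gamma_{n+1}(P)\cong M$ is powerfully embedded in $P$, in particular powerful. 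Part (ii) runs in exactly the same way with $Z_n$ replaced by $\mathcal D_n$ (cf.\ Definition~\ref{D_n(G)}) throughout: $N\leqslant\mathcal D_n(H)$ makes $H/\mathcal D_n(H)$ a finite powerful $p$-group, the soluble analogue of Baer's theorem (P.\ Hall) makes $H^{(n+1)}$ a finite $p$-group, and Theorem~\ref{mainresult:H/Z,H/D powerful gamma_n,P_n,Gamma_n powerful}(iii) supplies powerfulness.

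The delicate point, and the step I expect to be the main obstacle, is the passage to the finite $p$-group quotient $P$: one must embed the finite $p$-group $M$ into a finite $p$-quotient of $H$, and this is where the relaxation of the pro-$p$ hypothesis really bites. Because $H$ is nilpotent and $M$ is a $p$-group, the torsion subgroup of $H$ splits as $T_p(H)\times T_{p'}(H)$ with $T_{p'}(H)\leqslant Z_n(H)$ (every $p'$-element maps trivially into the $p$-group $H/Z_n(H)$) and $M\cap T_{p'}(H)=1$; factoring out $T_{p'}(H)$ reduces to the case that $H$ has no $p'$-torsion, and for such $H$ a residual-finiteness argument produces $C_0$. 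When $H$ is not finitely generated this can fail — $M$ may meet the $p$-residual of $H$ nontrivially — and in that situation, rather than passing to a finite quotient, one adapts the proof of Theorem~\ref{mainresult:H/Z,H/D powerful gamma_n,P_n,Gamma_n powerful} directly inside $H$: that proof is local, manipulating $p$-th powers and iterated commutators, and the finiteness of $M$ together with $[M,Z_n(H)]=1$ confines all of those manipulations to the finite $p$-group $M$ and the finite powerful $p$-group $\bar G$, so the argument is expected to carry over with only minor changes.
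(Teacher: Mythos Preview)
Your route is genuinely different from the paper's and has a real gap at the point you yourself flag. The paper does not deduce Theorem~\ref{thmc} from Theorem~\ref{mainresult:H/Z,H/D powerful gamma_n,P_n,Gamma_n powerful} at all: it first develops the crossed-module machinery of Section~\ref{section: crossed square I} to prove that the nonabelian tensor powers $G^{\otimes n+1}$ and $G_{\otimes n+1}$ are finite powerful $p$-groups (Corollaries~\ref{nfold tensor is powerful} and~\ref{n-iterated tensor product is powerful}), and then quotes the surjections $G^{\otimes n+1}\twoheadrightarrow\gamma_{n+1}(H)$ and $G_{\otimes n+1}\twoheadrightarrow\Gamma_{n+1}(H)$ of \cite[Lemmas~3.1, 3.7]{Donadze21}, which are valid for \emph{arbitrary} $H$. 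Powerfulness and finiteness then follow in one line, with no residual-finiteness hypothesis on $H$ needed; this is precisely why the tensor-product work in Sections~\ref{section: crossed square I}--\ref{section: crossed square II} is done.

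Your reduction to a finite $p$-quotient can fail outright. Take $n=1$ and let $H$ be the central extension of $C_p\times C_p$ by the Pr\"ufer group $\mathbb{Z}(p^\infty)$ in which $[x,y]$ generates the unique subgroup of order $p$ in the centre; then $H/Z(H)\cong C_p^{\,2}$ is powerful and $M=[H,H]\cong C_p$, but every finite quotient of $H$ kills the divisible centre and hence $M$, so no $C_0$ with $C_0\cap M=1$ and $H/C_0$ a finite $p$-group exists. Your fallback of adapting the proof of Theorem~\ref{gamma_npowerful} inside $H$ is not a proof as written: the key congruences (Theorems~\ref{PR:T:1} and~\ref{PR:T:2}) are quoted only for pro-$p$ groups, and the assertion that the computation is ``confined to $M$ and $\bar G$'' is incorrect --- that argument passes through $H^p$ and $\gamma_n(H)$, which are in general infinite and not $p$-groups here. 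For part~(ii) there is an additional gap: the finiteness of $\Gamma_{n+1}(H)$ when $N\leqslant\mathcal D_n(H)$ is not a classical theorem of P.~Hall; it is established in \cite{Donadze21} precisely via the tensor-product surjection the paper exploits.
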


 Powerful $p$-groups naturally arise in the study of analytic pro-$p$ groups. Since the foundational work of Lubotzky and Mann \cite{LM87, LM872}, these groups have played a significant role in the theory of finite $p$-groups and pro-$p$ groups. A pro-$p$ group $H$ is powerful, if either $p$ is odd and $[H,H] \leqslant \overline{H^p}$ or $p=2$ and $[H,H] \leqslant \overline{H^4}$. A closed normal subgroup $N$ of a pro-$p$ group $H$ is powerfully embedded in $H$ if either $p$ is odd and $[N, H] \leqslant \overline{N^p}$ or $p=2$ and $[N, H] \leqslant \overline{N^4}$.

In \cite{Baer52}, Baer proved that if $H/Z_n(H)$ is finite, then $\gamma_{n+1}(H)$ is finite. With a similar hypothesis, Ellis in \cite{Ellis2001} gave bounds on the order and exponent of $\gamma_{n+1}(H)$.  Recently the authors of \cite{Donadze2020, Donadze21} provided generalizations of Baer's theorem to other classes of groups. Theorems \ref{mainresult:H/Z,H/D powerful gamma_n,P_n,Gamma_n powerful} and \ref{thmc} can be considered as generalizations of Baer's theorem. 

In this paper, we use a novel approach of studying powerful $p$-groups using crossed module squares and associated cubes.  A crossed module is a group homomorphism $\mu: M \to G$ together with an action of $G$ on $M$, which satisfies the following conditions
%In \cite{Baer52}, Baer demonstrated that if $G/Z_n(G)$ is finite, then $\gamma_{n+1}(G)$ is also finite. Building on a similar premise, Ellis in \cite{Ellis2001} provided bounds for both the order and the exponent of $\gamma_{n+1}(G)$. More recently, the authors of \cite{Donadze2020, Donadze21} have extended Baer's theorem to various other group classes. Theorems \ref{thmc} and \ref{thmd} in this paper can be viewed as further generalizations of Baer’s theorem. Additionally, in \cite{Williams21}, the author established that the agemo subgroups of central extensions of powerful groups remain powerful. Here, we adopt a novel approach, investigating powerful $p$-groups via crossed module squares and their associated cubes. A crossed module consists of a group homomorphism $\mu: M \to G$, along with a $G$-action on $M$, which satisfies the following conditions:
\begin{align}
    &\mu(^g m)=g\mu(m)g^{-1} , g\in G, m\in M\nonumber\\
        &^{\mu(m)}m'=mm'm^{-1}, m,m'\in M \nonumber.
\end{align}
%Recalling that a subgroup $N$ of a finite  $p$-group $G$ is powerfully embedded in $G$ if, when $p$ is odd, $[N,G] \leqslant N^p$; or when $[N,G] \leqslant N^4$, we proceed to state our next theorem.
Having defined this, we state our next result
\settheoremtag{C}
\begin{theorem} \label{thme}
    Let $p$ be an odd prime, $M$ and $G$ be finite $p$-groups, and ${\mu:M\to G}$ be a crossed module. If $M$ is powerful and $\mu(M)$ is powerfully embedded in $G$, then $M\otimes G$ and $M\ptens G$ are powerful.
\end{theorem}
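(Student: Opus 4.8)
The plan is to realise the nonabelian tensor product inside the crossed square it determines, reduce the powerfulness of $M\otimes G$ to a single commutator identity, and then absorb the resulting corrections into $p$-th powers using that $M$ is powerful. Write $T=M\otimes G$ (the case $T=M\ptens G$ is treated at the end). First I would record that the nonabelian tensor product of two finite $p$-groups is again a finite $p$-group, and that $T$ is generated by the elementary tensors $m\otimes g$. Since $[T,T]$ is the normal closure in $T$ of the commutators of a generating set, and since by the standard lifting lemma for powerful $p$-groups it is enough to have $[T,T]\le T^{p}\gamma_{3}(T)$, the problem reduces to showing that $[\,m_{1}\otimes g_{1},\,m_{2}\otimes g_{2}\,]\in T^{p}\gamma_{3}(T)$ for all elementary tensors.

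The structural input is the crossed square
\[
\begin{CD}
M\otimes G @>\lambda'>> M\\
@V \lambda VV @VV \mu V\\
G @= G,
\end{CD}
\]
with $\lambda(m\otimes g)=[\mu(m),g]$ and $\lambda'(m\otimes g)=m\,({}^{g}m)^{-1}$, and with $\mu\lambda'=\lambda$. Because $\lambda$ is a crossed module, conjugation in $T$ is implemented by the $G$-action pulled back along $\lambda$; in particular $\ker\lambda\le Z(T)$, and for elementary tensors
\[
[\,m_{1}\otimes g_{1},\,m_{2}\otimes g_{2}\,]\;=\;{}^{c}(m_{2}\otimes g_{2})\cdot(m_{2}\otimes g_{2})^{-1},\qquad c:=[\mu(m_{1}),g_{1}].
\]
Now $c\in[\mu(M),G]$, and since $\mu(M)$ is powerfully embedded in $G$ and $p$ is odd, $[\mu(M),G]\le\mu(M)^{p}=\mu(M^{p})$; hence $c=\mu(w)$ for some $w\in M^{p}$. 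Using the crossed module axiom ${}^{\mu(w)}m=wmw^{-1}$ one checks that the $G$-action of $\mu(w)$ on $T$ agrees with the $M$-action of $w$ on $T$, so
\[
[\,m_{1}\otimes g_{1},\,m_{2}\otimes g_{2}\,]\;=\;{}^{w}(m_{2}\otimes g_{2})\cdot(m_{2}\otimes g_{2})^{-1}\qquad\text{with } w\in M^{p}.
\]

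It thus remains to prove that for every $w\in M^{p}$ and every elementary tensor $m\otimes g$ one has ${}^{w}(m\otimes g)(m\otimes g)^{-1}\in T^{p}\gamma_{3}(T)$. Here I would expand via the defining relations $mm'\otimes g=({}^{m}m'\otimes{}^{m}g)(m\otimes g)$ and $m\otimes gg'=(m\otimes g)({}^{g}m\otimes{}^{g}g')$, observing that $w\mapsto{}^{w}(m\otimes g)(m\otimes g)^{-1}$ is a crossed homomorphism $M\to T$, so that its value at $w=v^{p}$ is a product of $p$ conjugates of its value at $v$; the remaining work is a Hall--Petrescu/collection computation showing that, because $[M,M]\le M^{p}$ and all the ``twists'' that occur lie again in $\mu(M^{p})$, these terms collect into $T^{p}$ modulo $\gamma_{3}(T)$. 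Equivalently, one may organise the argument around the central extension $1\to\ker\lambda\to T\xrightarrow{\lambda}[\mu(M),G]\to 1$: its quotient $[\mu(M),G]$ is powerful, being powerfully embedded in $G$, so $[T,T]\le T^{p}\ker\lambda$, and the task becomes to push the residual central part into $T^{p}$. I expect this last step to be the main obstacle: the central-extension reduction alone cannot suffice, since a central extension of a powerful $p$-group need not be powerful (extraspecial groups), so one genuinely has to exploit the powerfulness of $M$ — which controls $\ker\lambda$, governed by $M^{\mathrm{ab}}$ and low-degree homology — in tandem with the powerful-embeddedness of $\mu(M)$, all while never assuming that $G$ itself is powerful.

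Finally, for $M\ptens G$ I would first check whether it is a quotient of $M\otimes G$, the $p$-tensor product being presented by the same generators and relations together with extra relations on $p$-th powers; if so, the statement is immediate from the $M\otimes G$ case, because $M\ptens G$ is again a finite $p$-group and quotients of powerful $p$-groups are powerful. In any case the computation above transfers verbatim to $M\ptens G$, the additional $p$-power relations only simplifying the collection step of the previous paragraph.
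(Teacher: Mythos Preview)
Your approach to $M\otimes G$ is essentially the paper's, organised slightly differently. The paper first shows that commutators of elementary tensors lie in the subgroup $M^{p}\otimes G$ (meaning the image of the natural map $M^{p}\otimes G\to M\otimes G$), and then proves $M^{p}\otimes G\le(M\otimes G)^{p}$ via an explicit collection formula for $m^{p}\otimes g$ modulo $M^{p^{2}}\otimes G$, iterated down. Your reduction via $c=\mu(w)$ with $w\in M^{p}$ lands in exactly the same place: the Brown--Loday identity $m\otimes\beta(l)={}^{m}l\cdot l^{-1}$ gives ${}^{w}(m_{2}\otimes g_{2})(m_{2}\otimes g_{2})^{-1}=w\otimes\beta(m_{2}\otimes g_{2})\in M^{p}\otimes G$ directly, so your ``main obstacle'' is precisely the collection step $M^{p}\otimes G\le(M\otimes G)^{p}$, which the paper carries out in detail.

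There is, however, a genuine gap in your treatment of $M\ptens G$. The $p$-tensor product is \emph{not} presented by the tensor generators subject to extra $p$-th-power relations: in the Conduch\'e--Rodr\'{\i}guez presentation it carries additional generators $\{k\}$ indexed by the pullback $M\times_{G}G$, and the natural map $\sigma\colon M\otimes G\to M\ptens G$ is not surjective in general. Hence $M\ptens G$ is not a quotient of $M\otimes G$, and the computation does not transfer verbatim: one must separately control the commutators $[\{k\},\{k'\}]$ and $[\{k\},m\otimes g]$. The paper does this explicitly; the first equals $\pi_{1}(k)^{p}\otimes\pi_{2}(k')^{p}$ by one of the defining relations, and the second is pushed into $\eta(M^{p}\otimes G)$ using that $(m_{1},g_{1})\in M\times_{G}G$ forces $g_{1}=\mu(m_{1})$, whence $g_{1}^{p}\in\mu(M^{p})$. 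Powerfulness of $M\ptens G$ then follows from that of $M\otimes G$ via the chain $\gamma_{2}(M\ptens G)\le\eta(M^{p}\otimes G)\le\sigma\bigl((M\otimes G)^{p}\bigr)\le(M\ptens G)^{p}$, the last inclusion holding because $(M\otimes G)^{p}$ consists of genuine $p$-th powers once $M\otimes G$ is known to be powerful. So the $M\otimes G$ result \emph{is} used, but through $\sigma$ rather than through a nonexistent quotient map.
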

As a consequence, we obtain the following result for $n$-fold tensor product $G^{\otimes^p_{n+1}}$ (cf. \eqref{n fold qtensorsquare1}, \eqref{q n fold tensorsquare}  ) and the iterated tensor product $G_{\otimes^p_{n+1}}$ {(cf. \eqref{iteratedqtensordefn})}. 

\begin{corollary*}  \label{MR: Tensors are powerful}
	Let $p$ be an odd prime. Suppose $G$ is a finite powerful $p$-group, then $G^{\otimes^p_{n+1}}$ and $G_{\otimes^p_{n+1}}$ are finite powerful $p$-groups for all $n\geq 1$. 
\end{corollary*}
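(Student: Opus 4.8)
The plan is to argue by induction on $n$, with Theorem~\ref{thme} supplying the powerfulness at each stage. For the base case $n=1$, both $G^{\otimes^p_2}$ and $G_{\otimes^p_2}$ coincide with $G\otimes^p G$, the object attached to the crossed module $\mathrm{id}\colon G\to G$ (with $G$ acting on itself by conjugation). Since $G$ is powerful we have $[G,G]\leqslant G^p$, i.e. $G$ is powerfully embedded in itself, so Theorem~\ref{thme} applied with $M=G$, $\mu=\mathrm{id}$ gives that $G\otimes^p G$ is powerful; it is a finite $p$-group because the nonabelian $p$-tensor product of finite $p$-groups is a finite $p$-group.

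For the inductive step I would read off from the recursive definitions \eqref{n fold qtensorsquare1}, \eqref{q n fold tensorsquare} and \eqref{iteratedqtensordefn} that each of $G^{\otimes^p_{n+1}}$ and $G_{\otimes^p_{n+1}}$ has the form $M\otimes^p G$, where $M$ is $G^{\otimes^p_{n}}$, respectively $G_{\otimes^p_{n}}$, equipped with its crossed module map $\mu\colon M\to G$. By the induction hypothesis $M$ is a finite powerful $p$-group, so the one remaining hypothesis of Theorem~\ref{thme} to verify is that $\mu(M)$ is powerfully embedded in $G$. Here the point is that the crossed module structure map of a tensor product $N\otimes^p G$ sends $n\otimes g$ to the commutator $[\mu_N(n),g]$, so its image is $[\mu_N(N),G]$; starting from $\mathrm{id}\colon G\to G$ (whose image is $G$) this shows that $\mu(M)$ is a term $\gamma_k(G)$ of the lower central series of $G$. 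Since $G$ is powerful, each $\gamma_k(G)$ is powerfully embedded in $G$ (a standard property of powerful $p$-groups), so Theorem~\ref{thme} applies and yields that $M\otimes^p G$ is powerful; finiteness is again automatic. For the induction to continue one records that the crossed module structure map of $M\otimes^p G$ itself has image $[\gamma_k(G),G]=\gamma_{k+1}(G)$, which is again powerfully embedded.

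The powerfulness input is entirely packaged in Theorem~\ref{thme}, so the real work is the crossed-module bookkeeping: one must check that the map $M\to G$ produced by the recursion is genuinely a crossed module whose image is powerfully embedded (not merely normal) in $G$, and, for the $n$-fold tensor product, that the crossed $n$-cube / colimit description really does present $G^{\otimes^p_{n+1}}$ as a $\otimes^p$ of its $n$-fold predecessor against $G$ with this structure. In practice this may force one to carry the whole crossed $n$-cube of powerful $p$-groups through the induction rather than just its top corner. Once that is set up, the argument is exactly the short induction sketched above.
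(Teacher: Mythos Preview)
Your inductive strategy via Theorem~\ref{thme} is exactly what the paper does (Corollaries~\ref{Gqtensorpowerful} and~\ref{n-iterated qtensor product is powerful}), but two points in your execution need correcting.

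First, for the $n$-fold product $G^{\otimes^p_{n+1}}$: you compute the image of $\mu_n\colon G^{\otimes^p_n}\to G$ as $\gamma_n(G)$, but this overlooks the extra generators $\{k\}$ of the $p$-tensor product. By \eqref{E:beta} one has $\beta(\{k\})=\pi_2 k^p$, so $\mu_n$ also hits $\mu_{n-1}(x)^p$ for every $x\in G^{\otimes^p_{n-1}}$. Thus $\mu_n(G^{\otimes^p_n})=\mu_{n-1}(G^{\otimes^p_{n-1}})^p\,[\mu_{n-1}(G^{\otimes^p_{n-1}}),G]$, and inductively this is $P_n(G)$, the $n$th term of the lower $p$-series, not $\gamma_n(G)$. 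Your argument survives because $P_n(G)$ is also powerfully embedded in a powerful $G$ (Theorem~\ref{MannLubotzky_powerfully_embedded}\textit{(ii)}), but the identification should be fixed.

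Second, for the iterated product $G_{\otimes^p_{n+1}}$: you write that it ``has the form $M\otimes^p G$'' with $M=G_{\otimes^p_n}$ and a crossed module $M\to G$. That is not the definition: by \eqref{iteratedqtensordefn}, $G_{\otimes^p_{n+1}}=G_{\otimes^p_n}\otimes^p G_{\otimes^p_n}$, a tensor of $M$ with itself, not with $G$. The repair is immediate and is what the paper does: apply Theorem~\ref{thme} to the crossed module $\mathrm{id}\colon M\to M$ with $M=G_{\otimes^p_n}$. By induction $M$ is powerful, and $\mathrm{id}(M)=M$ is trivially powerfully embedded in itself, so $M\otimes^p M$ is powerful. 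No crossed $n$-cube bookkeeping is required; the recursive definitions already hand you the crossed module at each stage, so your final paragraph's worries are unfounded.
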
 
%Corollary \ref{MR: Tensors are powerful} is obtained in Corollaries \ref{Gqtensorpowerful} and \ref{n-iterated qtensor product is powerful}.

\begin{outline}

    Section \ref{section: preliminary results} collects  some preliminary results. In Section \ref{section: powerful lower p, lower central, derived}, we prove our main results on powerfully embedded subgroups and $p$-adic analyticity.  Sections \ref{section: crossed square I} and \ref{section: crossed square II} develop the crossed module framework and establish results on tensor products of powerful $p$-groups. In Section~\ref{section: crossed square morphisms}, we study morphisms between crossed squares and obtain an exact sequence given in Proposition~\ref{nfold_qtensor_exact_sequence} involving $n$-fold tensor products. In Section \ref{section: finiteness of lower p series}, we study the finiteness of the lower $p$-series for a pro-$p$ group. We prove the powerfulness and finiteness of the Frattini series for a pro-$p$ group in Section \ref{frattini series}.
    
    As a consequence of Theorem \ref{thme}, we obtain Theorem \ref{thmc}.
We obtain Theorem \ref{mainresult:H/Z,H/D powerful gamma_n,P_n,Gamma_n powerful} by combining  Theorems \ref{gamma_npowerful}, \ref{P_npowerful} and \ref{Gamma_n is powerful}; Theorem \ref{mainresultforeven:H/Z,H/D powerful gamma_n,P_n,Gamma_n powerful} by combining Theorems \ref{gamma_npowerful} and \ref{P_npowerfulevenprime}; Theorem \ref{thmc} is proved in Theorem \ref{gamma and derived is finite powerful}; Theorem \ref{thme} is the combination of Theorems \ref{T:Tensor of poweful crossed module} and \ref{qtensorpowerful}.

\end{outline}

\begin{Notations} We will denote the commutator subgroup of $H$ as $\gamma_2(H)$; $\gamma_n(H)$ will denote the $n$th term of the lower central series of $H$; $\Gamma_n(H)$ will denote the $n$th term of the derived series of $H$; $Z(H)$ will denote the center of the group; $Z_n(H)$ will denote the $n$th center of $H$; $\Phi_2(H)$ will denote the Frattini subgroup of $H$; $\Phi_n(H)$ denotes the $n$th term of the Frattini series (cf. Section \ref{frattini series}); $P_n(H)$ will denote the $n$th term of the lower $p$-series of a pro-$p$ group $H$, and  $\lambda_n(H)$ will denote the $n$th term of the lower $p$-series of an arbitrary group $H$; $\im{f}$ denotes the image of $f$.
\end{Notations}
\section{Preliminary results} \label{section: preliminary results}

In this section, we collect some preliminary results that will be needed in later sections.
\begin{lemma}\cite[Exercise 1.3, Corollary 1.3.12]{McK2000}\label{PR:R:1}
     Let $L, M$ and $N$ are normal subgroups of a group $G$, then 
     \begin{enumerate}
         \item[(i)] $[LM,N]=[L,N][M,N].$ 
         \item[(ii)]If $k\geq j\geq 1$ then $[\gamma_j(G),Z_k(G)]\leq Z_{k-j}(G)$. 
         \item[(iii)] $[N,M]=[M,N]$.
     \end{enumerate}
 \end{lemma}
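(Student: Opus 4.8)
The plan is to treat the three items separately: (i) and (iii) are purely formal identities about commutator subgroups, while (ii) is the only one that calls for an inductive argument, which I would run via the three subgroups (Hall--Witt) lemma. For (iii) I would simply observe that $[n,m]=[m,n]^{-1}$ for all $m\in M$ and $n\in N$, so the set of generators $\{[n,m]\}$ of $[N,M]$ is precisely the set of inverses of the generators $\{[m,n]\}$ of $[M,N]$; since the subgroup generated by a subset of a group equals the subgroup generated by the set of inverses of that subset, $[N,M]=[M,N]$, and no normality hypothesis is needed.

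For (i) the inclusion $[L,N][M,N]\leq[LM,N]$ is immediate from $L,M\leq LM$. For the reverse inclusion I would first use that, since $L$, $M$, $N$ are normal in $G$, the subgroups $[L,N]$ and $[M,N]$ are normal in $G$, so their product $[L,N][M,N]$ is a (normal) subgroup of $G$. Every element of the product set $LM$ has the form $lm$ with $l\in L$, $m\in M$, so $[LM,N]$ is generated by the commutators $[lm,n]$; applying the identity $[lm,n]=[l,n]^{m}[m,n]$ and noting that $[l,n]^{m}\in[L,N]$ by normality of $[L,N]$, while $[m,n]\in[M,N]$, each such generator lies in $[L,N][M,N]$. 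Hence $[LM,N]\leq[L,N][M,N]$, and equality follows.

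For (ii) I would induct on $j$. The base case $j=1$ is exactly the defining property of the upper central series, $[G,Z_k(G)]\leq Z_{k-1}(G)$ for every $k\geq 1$. For the inductive step, assume the statement for $j$ and all $k$, fix $k\geq j+1$, and put $N:=Z_{k-j-1}(G)$, which is normal in $G$. Since $\gamma_{j+1}(G)=[G,\gamma_j(G)]$, the three subgroups lemma (in its form for a fixed normal subgroup) reduces the desired inclusion $[[G,\gamma_j(G)],Z_k(G)]\leq N$ to the two inclusions $[[Z_k(G),\gamma_j(G)],G]\leq N$ and $[[G,Z_k(G)],\gamma_j(G)]\leq N$. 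The first follows from the inductive hypothesis $[\gamma_j(G),Z_k(G)]\leq Z_{k-j}(G)$ together with the base case $[Z_{k-j}(G),G]\leq Z_{k-j-1}(G)=N$; the second follows from the base case $[G,Z_k(G)]\leq Z_{k-1}(G)$ together with the inductive hypothesis applied with $k-1$ in place of $k$, giving $[\gamma_j(G),Z_{k-1}(G)]\leq Z_{k-1-j}(G)=N$ (using (iii) to reorder the commutators). This closes the induction. The only point that requires any care is this last item: one has to arrange the induction so that the three subgroups lemma is applied against the single fixed normal subgroup $Z_{k-j-1}(G)$, and one must remember to invoke the inductive hypothesis at the shifted index $k-1$ for one of the two reductions; everything else, and all of (i) and (iii), is routine.
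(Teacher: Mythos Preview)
Your proof is correct in all three parts; the arguments for (i) and (iii) are the standard ones, and your induction on $j$ for (ii), using the three subgroups lemma against the fixed normal subgroup $Z_{k-j-1}(G)$ with the two side inclusions handled by the base case and the inductive hypothesis at the shifted index $k-1$, is exactly right. Note that the paper itself does not supply a proof of this lemma at all: it is simply quoted from \cite[Exercise 1.3, Corollary 1.3.12]{McK2000}, so there is no in-paper argument to compare against; your write-up is the standard textbook proof one would expect to find in the cited reference.
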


\begin{lemma}\label{PR:R:2}
Let $G$ be a topological group and $M,N \leqslant G$ \begin{enumerate}
    \item[(i)] The function $\phi:G\times G\to G$ defined as $\phi(g,h)= [g,h]$ and the function $\psi:G\to G$ defined as $\psi(g)= g^p$ are continuous and hence $\big({\overline{M}}\big)^p \leqslant \overline{M^p}$ and $[\overline{M},\overline{N}] \leqslant \overline{[M,N]}$.
    \item[(ii)]  $\overline{M}\;\overline{N} \leqslant \overline{MN}$ and the equality holds if $M$ is compact and $N$ is closed. 
    %\textcolor{red}{Reference?}
\end{enumerate} 
 \end{lemma}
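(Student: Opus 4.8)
The plan is to derive both parts from two elementary facts about a topological group $G$: the multiplication map $m\colon G\times G\to G$ and the inversion map are continuous by definition, and any continuous map $f$ satisfies $f(\overline A)\subseteq\overline{f(A)}$ for every subset $A$ of its domain.

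I would start with the first inclusion in (ii), since it underpins the rest. Using that $\overline{M\times N}=\overline M\times\overline N$ in the product topology, $\overline M\,\overline N=m(\overline M\times\overline N)=m(\overline{M\times N})\subseteq\overline{m(M\times N)}=\overline{MN}$. Applying this with $M=N$ equal to a subgroup $H$, together with continuity of inversion, gives $\overline H\,\overline H\subseteq\overline{HH}=\overline H$ and $(\overline H)^{-1}=\overline{H^{-1}}=\overline H$; thus the closure of a subgroup of $G$ is again a subgroup, a fact I will use below.

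For (i), note first that $\phi(g,h)=g^{-1}h^{-1}gh$ and $\psi(g)=g^{p}$ are finite composites of $m$ and inversion, hence continuous. Since $\overline{M^{p}}$ is a closed subgroup, to prove $(\overline M)^{p}\leqslant\overline{M^{p}}$ it is enough to show that every generator $x^{p}$ with $x\in\overline M$ lies in it, and indeed $x^{p}=\psi(x)\in\psi(\overline M)\subseteq\overline{\psi(M)}\subseteq\overline{M^{p}}$. Similarly $\overline{[M,N]}$ is a closed subgroup, and for $x\in\overline M$, $y\in\overline N$ one has $[x,y]=\phi(x,y)\in\phi(\overline M\times\overline N)=\phi(\overline{M\times N})\subseteq\overline{\phi(M\times N)}\subseteq\overline{[M,N]}$; as these elements generate $[\overline M,\overline N]$, we get $[\overline M,\overline N]\leqslant\overline{[M,N]}$.

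It remains to prove the equality in (ii) when $M$ is compact and $N$ is closed. By the inclusion already established it suffices to show $MN$ is closed, for then $\overline{MN}=MN\subseteq\overline M\,\overline N\subseteq\overline{MN}$ forces equality throughout. Given $x\notin MN$, for each $a\in M$ we have $a^{-1}x\notin N$, so by continuity of $m$ there exist open sets $U_{a}\ni a$ and $V_{a}\ni x$ with $U_{a}^{-1}V_{a}\cap N=\emptyset$; the $U_{a}$ cover the compact set $M$, so finitely many $U_{a_{1}},\dots,U_{a_{k}}$ suffice, and $V:=\bigcap_{i}V_{a_{i}}$ is then an open neighbourhood of $x$ disjoint from $MN$. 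This compactness argument is the only step that is not pure bookkeeping; alternatively, the closedness of $MN$ could simply be cited from a standard reference on topological groups.
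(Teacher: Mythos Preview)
The paper states this lemma as a preliminary result without proof, so there is nothing to compare against. Your argument is correct and follows the standard route: continuity of the group operations together with the general fact $f(\overline{A})\subseteq\overline{f(A)}$ handles (i) and the first inclusion in (ii), and your compactness argument for the closedness of $MN$ is the classical one. The chain $\overline{MN}=MN\subseteq\overline{M}\,\overline{N}\subseteq\overline{MN}$ then gives equality without needing any Hausdorff assumption.
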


 \begin{lemma}\label{Commutator comparision}
Let $A$, $B$, $C$ and $D$ be normal subgroup of a group $G$. If $A\leqslant B$ and $C\leqslant D$, then $[A,C]\leqslant [B,D].$ 
\end{lemma}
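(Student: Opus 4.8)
The plan is to argue directly from the definition of the commutator subgroup, so this will be short. Recall that $[A,C]$ is by definition the subgroup of $G$ generated by the set
$S = \{[a,c] : a\in A,\ c\in C\}$, where $[a,c] = a^{-1}c^{-1}ac$.

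First I would observe that since $A\leqslant B$ and $C\leqslant D$, every $a\in A$ also lies in $B$ and every $c\in C$ also lies in $D$; hence for each generator $[a,c]\in S$ we have $a\in B$ and $c\in D$, so $[a,c]\in[B,D]$. Thus $S\subseteq[B,D]$. Next, since $[B,D]$ is a subgroup of $G$ containing the generating set $S$ of $[A,C]$, it contains the subgroup generated by $S$, namely $[A,C]$. Therefore $[A,C]\leqslant[B,D]$, as claimed. The normality hypotheses on $A,B,C,D$ play no role in this inclusion; they are imposed only because the lemma is invoked in settings where one simultaneously wants $[B,D]$ (and $[A,C]$) to be normal in $G$.

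There is no genuine obstacle here: the statement is an immediate consequence of the monotonicity of ``the subgroup generated by a set'' in that set, combined with the obvious monotonicity of the bracket on pairs of elements. The only point that would merit a moment's care — were the lemma phrased for topological groups rather than abstract ones — is the passage to closures, but since everything in this lemma lives at the level of abstract groups, no such subtlety intervenes.
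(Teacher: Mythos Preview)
Your proof is correct; the paper itself states this lemma without proof, treating it as elementary, and your argument via generators is exactly the natural one-line justification one would supply. Your remark that the normality hypotheses are superfluous for the bare inclusion is also accurate.
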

\begin{theorem} \cite[Theorems 1.1, 1.3, Corollary 1.2, Proposition 1.7]{LM87} \label{MannLubotzky_powerfully_embedded}
    Let $G$ be a finite $p$-group and let $M,N \unlhd G$. The following statements hold
    \begin{enumerate}
        \item[(i)] Let $M$ and $N$ be powerfully embedded in $G$. Then $[N,G]$, $N^p$, $MN$, and $[M,N]$ are powerfully embedded in $G$.
        \item[(ii)] Let $G$ be powerful. The subgroups $\gamma_i(G)$, $Z_i(G)$, $G^{p^i}$ and $P_i(G)$ are powerfully embedded in $G$.
        \item[(iii)] Let $G$ be powerful. Then each element of $G^{p^i}$ can be written as $a^{p^i}$ for some $a \in G$.
    \end{enumerate} 
\end{theorem}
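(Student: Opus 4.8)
The plan is to deduce everything from commutator calculus together with induction on $|G|$, the one real tool being the Hall--Petrescu collection formula
\[
(xy)^{p}=x^{p}y^{p}\,c_{2}^{\binom{p}{2}}c_{3}^{\binom{p}{3}}\cdots c_{p-1}^{\binom{p}{p-1}}c_{p},\qquad c_{i}\in\gamma_{i}(\langle x,y\rangle),
\]
together with the fact that $p\mid\binom{p}{i}$ for $1\le i\le p-1$; this divisibility is where oddness of $p$ enters, and for $p=2$ the same scheme runs with $4$th powers and the class-$\le 2$ version of the formula. I abbreviate ``powerfully embedded'' to p.e., and recall that for $p$ odd $N\unlhd G$ is p.e.\ iff $[N,G]\le N^{p}$.

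The backbone is a chain lemma: if $N$ is p.e.\ in $G$ then $[N^{p^{i}},G]\le N^{p^{i+1}}$ and $N^{p^{i+1}}=(N^{p^{i}})^{p}$ for all $i\ge 0$, proved by induction on $i$. The inductive step feeds the conjugation action on $(N^{p^{i}})^{p}$ into the collection formula: $[N^{p^{i+1}},G]$ is generated by $p$th powers of elements of $[N^{p^{i}},G]$ and by higher commutators of the shape $[N^{p^{i}},G,N^{p^{i}},\dots]$, and the inductive hypothesis $[N^{p^{i}},G]\le N^{p^{i+1}}$ lets one absorb these into $N^{p^{i+2}}$. The case $i=1$ already gives $[N^{p},G]\le N^{p^{2}}=(N^{p})^{p}$, so $N^{p}$ is p.e. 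For $[N,G]$: the hypothesis gives $[[N,G],G]\le[N^{p},G]$, and a collection argument bounds $[N^{p},G]$ by $[N,G]^{p}$ times correction terms lying in $[N,[N,G]]\le[N,N^{p}]$, which in turn sit inside $[N,G]^{p}$ by applying the chain lemma to $N^{p}$; hence $[[N,G],G]\le[N,G]^{p}$ and $[N,G]$ is p.e.

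For the remaining two subgroups in (i): by Lemma~\ref{PR:R:1}(i), $[MN,G]=[M,G][N,G]\le M^{p}N^{p}$, so it suffices to show $M^{p}N^{p}\le(MN)^{p}$, and this is again the collection formula --- $a^{p}b^{p}=(ab)^{p}d$ with $d$ a product of $\binom{p}{i}$th powers of elements of $\gamma_{i}(\langle a,b\rangle)\le[M,N]\le N^{p}$, so $d\in(MN)^{p}$. For $[M,N]$ one uses the Three Subgroups Lemma, $[[M,N],G]\le[[M,G],N][[N,G],M]\le[M^{p},N][N^{p},M]$, and the chain lemma (applied to $M$ and to $N$) puts each factor inside $[M,N]^{p}$ up to absorbed error terms. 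This proves (i). Part (ii) is then immediate: a powerful $G$ is p.e.\ in itself, so (i) gives $G^{p}$, hence inductively $G^{p^{i}}$, hence $\gamma_{i}(G)$ and $P_{i}(G)$ (each assembled from $p$th powers and commutators) are p.e., while $Z_{i}(G)$ is handled by downward induction using $[Z_{i}(G),G]\le Z_{i-1}(G)$. For (iii), the chain lemma reduces to the case $i=1$, and given $x\in G^{p}$ one collapses a product $a_{1}^{p}\cdots a_{k}^{p}$ to a single $p$th power by iterating $a^{p}b^{p}=(ab)^{p}d$, the accumulated $d$ staying inside $G^{p}$ because $[G,G]\le G^{p}$ is p.e.

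The conceptual steps are short; the genuine work --- and the main obstacle --- is the bookkeeping in the collection formula: tracking exactly where the higher commutators $c_{i}$ and the binomial exponents land, and checking at every stage that the error terms are swallowed by the next power $N^{p^{k}}$. This is precisely where the p.e.\ hypothesis is used repeatedly, and where the argument genuinely needs $p$ odd (respectively the $4$th-power convention when $p=2$).
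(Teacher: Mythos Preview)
The paper does not prove this statement at all: it is quoted as a preliminary result with a citation to Lubotzky--Mann \cite{LM87} and no argument is supplied. There is therefore nothing in the paper to compare your proposal against.

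For what it is worth, your outline is essentially the original Lubotzky--Mann strategy (Hall--Petrescu collection plus induction, with the implicit reduction that to verify $[N,G]\le N^{p}$ one may pass to the quotient by $[N,G]^{p}$). One place where the sketch is genuinely thin is the treatment of $Z_{i}(G)$: the containment $[Z_{i}(G),G]\le Z_{i-1}(G)$ by itself does not give $[Z_{i}(G),G]\le Z_{i}(G)^{p}$, and ``downward induction'' from $Z_{c}(G)=G$ does not close the loop without an extra ingredient --- in \cite{LM87} (and in \cite{DDMS99}) this case is handled by a separate argument, not by the bare inclusion you cite. The remaining parts of the sketch are reasonable at the level of detail given, though the phrases ``up to absorbed error terms'' in the $[M,N]$ and part~(iii) steps hide exactly the modding-out reduction that does the real work.
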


\begin{theorem}[{\cite[Theorem 3.6\textit{(iii)}]{DDMS99}}] \label{MannLubotzky_powerfully_embeddedforpropgroups}
    Let $G$ be a finitely generated powerful pro-$p$ group. For each $i \in \mathbb{N}$, $G^{p^i}=\{x^{p^i} \mid x \in G\}$.
\end{theorem}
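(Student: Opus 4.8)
The plan is to deduce this from the corresponding statement for finite powerful $p$-groups, Theorem~\ref{MannLubotzky_powerfully_embedded}(iii), by a standard inverse-limit (compactness) argument. Write $G^{p^i}=\overline{\langle x^{p^i}\mid x\in G\rangle}$. The inclusion $\{x^{p^i}\mid x\in G\}\subseteq G^{p^i}$ is trivial, since $x^{p^i}$ lies in the abstract subgroup generated by the $p^i$-th powers and hence in its closure; so the content is that every $y\in G^{p^i}$ is an honest $p^i$-th power in $G$ (this additionally forces the set of $p^i$-th powers to be a closed subgroup equal to $G^{p^i}$).

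First I would note that for every open normal subgroup $N\trianglelefteq G$ the quotient $G/N$ is a finite powerful $p$-group, since homomorphic images of powerful pro-$p$ groups are powerful. Since $\langle x^{p^i}\mid x\in G\rangle$ is dense in $G^{p^i}$ and $G/N$ is finite, the continuous surjection $\pi_N\colon G\to G/N$ satisfies $\pi_N(G^{p^i})=\langle\,g^{p^i}\mid g\in G/N\,\rangle=(G/N)^{p^i}$, which by Theorem~\ref{MannLubotzky_powerfully_embedded}(iii) equals $\{\,g^{p^i}\mid g\in G/N\,\}$. Hence, fixing $y\in G^{p^i}$, the set
\[
S_N=\{\,x\in G\mid x^{p^i}\in yN\,\}
\]
is nonempty for every open normal $N$.

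Next I would observe that each $S_N$ is closed, being the preimage of the closed coset $yN$ under the continuous power map $x\mapsto x^{p^i}$ (continuity of the power map is Lemma~\ref{PR:R:2}(i)). The family $\{S_N\}$ has the finite intersection property: for open normal $N_1,\dots,N_k$ the subgroup $N=N_1\cap\cdots\cap N_k$ is open normal with $\emptyset\neq S_N\subseteq S_{N_1}\cap\cdots\cap S_{N_k}$. Since $G$ is compact, $\bigcap_N S_N\neq\emptyset$; any $x$ in this intersection satisfies $x^{p^i}\in\bigcap_N yN=\{y\}$, because the open normal subgroups form a neighbourhood basis of $1$ in the Hausdorff group $G$. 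Therefore $y=x^{p^i}$, which proves $G^{p^i}\subseteq\{x^{p^i}\mid x\in G\}$ and hence the theorem.

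The only substantive ingredient is the finite powerful case, Theorem~\ref{MannLubotzky_powerfully_embedded}(iii), which we may quote; the remainder is routine profinite bookkeeping. The one point deserving care is verifying that the $p^i$-th power map passes compatibly to the finite quotients, so that the sets $S_N$ are genuinely nonempty, i.e.\ that $\pi_N(G^{p^i})=(G/N)^{p^i}$. Note that finite generation of $G$ is not actually used in this argument.
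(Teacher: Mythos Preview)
The paper does not include a proof of this statement; it is quoted without proof from \cite[Theorem 3.6\textit{(iii)}]{DDMS99} as a preliminary result. Your compactness argument is correct: reducing to the finite case (Theorem~\ref{MannLubotzky_powerfully_embedded}\textit{(iii)}) via the finite intersection property on the closed sets $S_N$ is a clean and valid route, and the one point needing care---that $\pi_N(G^{p^i})=(G/N)^{p^i}$---you handle correctly using density of the abstract power subgroup in $G^{p^i}$ and discreteness of $G/N$.

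Your remark that finite generation is not actually used is also correct and worth noting. The argument in \cite{DDMS99} proceeds differently, working with a fixed finite generating set $\{a_1,\dots,a_d\}$ and the structure of the successive quotients $P_i(G)/P_{i+1}(G)=G^{p^{i-1}}/G^{p^i}$ to produce an explicit $p^i$-th root; that approach gives more constructive information but genuinely relies on finite generation. Your inverse-limit argument trades this explicitness for generality.
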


 \begin{theorem} [{\cite[Theorem 1.4]{NS2007}}]\label{NikSeg2007}
     Let $G$ be a finitely generated profinite group and $H$ a closed normal subgroup of $G$. Then the subgroup $[H,G]$, generated (algebraically) by all commutators $[h,g]$ $(h \in H, g \in G)$, is closed in $G$. In particular, each term of the lower central series $\gamma_n(G)$ is a closed subgroup of $G$.
 \end{theorem}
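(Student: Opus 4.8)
The plan is to deduce closedness of $[H,G]$ from a uniform bound on commutator width, exploiting compactness. Write $C_f$ for the set of all products $[h_1,g_1]\cdots[h_f,g_f]$ of at most $f$ commutators with $h_i\in H$ and $g_i\in G$. By Lemma~\ref{PR:R:2}(i) the commutator map is continuous, so $C_f$ is the image of the compact space $(H\times G)^{f}$ under the continuous map $((h_i,g_i)_i)\mapsto\prod_i[h_i,g_i]$; hence $C_f$ is compact and therefore closed in $G$. The algebraic commutator subgroup $[H,G]$ is the ascending union $\bigcup_f C_f$, and it is dense in its closure $\overline{[H,G]}$, the latter being a genuine closed normal subgroup. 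Thus it suffices to produce a single integer $f$ with $C_f=\overline{[H,G]}$: then $[H,G]=C_f$ is closed. The whole problem is reduced to showing that $\overline{[H,G]}$ has \emph{bounded commutator width}, i.e. that each of its elements is a product of boundedly many commutators $[h,g]$.

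Next I would reduce this width statement to a purely finite-group assertion. Fix a set of $d$ topological generators of $G$ and write $G=\varprojlim_{U} G/U$ over the open normal subgroups $U$. For each such $U$ the image of $[H,G]$ in the finite group $G/U$ is exactly $[HU/U,\,G/U]$, where $HU/U$ is normal in the $d$-generated finite group $G/U$. If one can find a function $f=f(d)$, independent of $U$, such that in every finite $d$-generated group $\bar G$ with normal subgroup $\bar H$ each element of $[\bar H,\bar G]$ is a product of at most $f(d)$ commutators, then the width bound passes to the inverse limit: an element of $\overline{[H,G]}$ projects into each $[HU/U,G/U]$ as a product of $\le f(d)$ commutators, and a compactness argument (in each finite quotient the set of valid length-$f(d)$ expressions is nonempty and finite, and their inverse limit is nonempty) yields a single expression of length $\le f(d)$ for it in $G$. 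This gives $C_{f(d)}=\overline{[H,G]}$, as required.

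The main obstacle is precisely the finite, uniform bound $f(d)$, and this is the deep heart of the Nikolov--Segal analysis, resting on the classification of finite simple groups. The route is to analyse a finite $d$-generated group $\bar G$ through its generalized Fitting structure, treating separately the commutators arising in the soluble radical and those arising in the layer, the product of the quasisimple components. For the layer the essential input is that commutators are very plentiful in finite simple and quasisimple groups: Lie-theoretic width estimates (ultimately of Ore/commutator-width type) show that every element of a finite simple group, and more generally controlled products within the layer, are expressible as a bounded number of commutators, with the generators controlling the conjugating elements. The soluble part is handled by separate bounded-generation estimates, and assembling these case analyses into a single function $f(d)$ depending only on $d$ is where essentially all the difficulty lies; I would treat this uniform bound as the cited input from \cite[Theorem 1.4]{NS2007} rather than reprove it.

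Finally, the ``in particular'' clause follows by a short induction. Taking $H=G$ gives that $\gamma_2(G)=[G,G]$ is closed. If $\gamma_n(G)$ is closed, then it is a closed normal subgroup, so applying the theorem with $H=\gamma_n(G)$ shows $\gamma_{n+1}(G)=[\gamma_n(G),G]$ is closed, completing the induction.
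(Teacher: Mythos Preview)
The paper does not prove this statement at all: Theorem~\ref{NikSeg2007} is quoted as a preliminary result from Nikolov--Segal \cite[Theorem~1.4]{NS2007} and is used only once, to drop the closure in Theorem~\ref{gamma_npowerful} when $H$ is finitely generated. There is therefore no ``paper's own proof'' to compare against.

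That said, your outline is an accurate high-level account of the Nikolov--Segal strategy: the reduction to a bounded-width statement via the compactness of the sets $C_f$, the passage to finite $d$-generated quotients, and the identification of the uniform bound $f(d)$ as the deep CFSG-dependent core. You are also right that the ``in particular'' clause follows by straightforward induction once the main assertion is in hand. The one point to be explicit about is that the generating set $\{[h,g]:h\in H,\,g\in G\}$ is already closed under inversion (since $[h,g]^{-1}=[h^{g^{-1}},g^{-1}]$ and $H$ is normal), so that $[H,G]=\bigcup_f C_f$ genuinely holds without adjoining inverses; otherwise the argument is sound as a sketch, with the substantive work correctly attributed to \cite{NS2007}.
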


 %\begin{theorem}[{\cite[Corollary 2]{NS2011}}]\label{NikSeg2011}
 %\textcolor{red}{If $G$ is a finitely generated profinite group, then $G^q$ is open in $G$ for every $q \in \mathbb{N}$.}
 %\end{theorem}

 \begin{theorem}[{\cite[Theorem 2.4]{AlSaAn2008}}]\label{PR:T:1} 
 Let $G$ be a pro-$p$ group and let $N$ and $M$ be closed normal subgroups of $G$, then \[[N^{p^k},M] \equiv [N,M]^{p^k} (\text{mod } [M,\prescript{}{p}{N}]^{p^{k-1}} [M,\prescript{}{p^2}{N}]^{p^{k-2}} \ldots [M,\prescript{}{p^k}{N}])\]
 \end{theorem}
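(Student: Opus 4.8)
The plan is to derive the congruence from a single application of the Hall--Petrescu collection formula, combined with Kummer's theorem on the $p$-adic valuations $v_{p}\!\binom{p^{k}}{i}=k-v_{p}(i)$ and the elementary commutator inclusion $\bigl[\,[M,\prescript{}{a}{N}],[M,\prescript{}{b}{N}]\,\bigr]\leqslant[M,\prescript{}{a+b}{N}]$. The latter, valid for all $a,b\geqslant 0$ (with the convention $[M,\prescript{}{0}{N}]:=M$), I would prove first, by the three subgroups lemma and induction on $b$, using that each $[M,\prescript{}{c}{N}]$ is normal in $G$ together with Lemmas~\ref{PR:R:1} and \ref{Commutator comparision}; an easy induction on $i$ then gives $\gamma_{i}([N,M])\leqslant[M,\prescript{}{i}{N}]$ as well. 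Throughout set $R_{k}:=[M,\prescript{}{p}{N}]^{p^{k-1}}[M,\prescript{}{p^{2}}{N}]^{p^{k-2}}\cdots[M,\prescript{}{p^{k}}{N}]$.

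For the inclusion $[N^{p^{k}},M]\leqslant[N,M]^{p^{k}}R_{k}$ it suffices, by normality, to treat one commutator $[n^{p^{k}},m]$ with $n\in N$, $m\in M$. Writing $z=[n,m]$, so that $m^{-1}nm=nz$, the Hall--Petrescu formula applied to the $p^{k}$-th power of $nz$ gives $[n^{p^{k}},m]=[n,m]^{p^{k}}\prod_{i=2}^{p^{k}}c_{i}^{\binom{p^{k}}{i}}$ with $c_{i}\in\gamma_{i}(\langle n,z\rangle)$. Since $z\in[M,N]$, an induction on weight using the commutator inclusion above shows every basic commutator of weight $i$ in $\{n,z\}$ lies in $[M,\prescript{}{i}{N}]$, so $c_{i}\in[M,\prescript{}{i}{N}]$. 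Writing $v=v_{p}(i)$, Kummer's theorem gives $c_{i}^{\binom{p^{k}}{i}}\in[M,\prescript{}{i}{N}]^{p^{k-v}}$, and since $p^{v}\mid i$ forces $[M,\prescript{}{i}{N}]\leqslant[M,\prescript{}{p^{v}}{N}]$, this factor lies in the $v$-th factor of $R_{k}$ when $1\leqslant v\leqslant k$ and in $[M,N]^{p^{k}}=[N,M]^{p^{k}}$ when $v=0$. As all the groups in sight are normal in $G$, the displayed identity yields $[n^{p^{k}},m]\in[N,M]^{p^{k}}R_{k}$, hence the inclusion.

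For the reverse inclusion I would first reduce to finite $p$-groups: $G$ is residually a finite $p$-group and the subgroups $[N^{p^{k}},M]$, $[N,M]^{p^{k}}$, $R_{k}$ are closed (Theorem~\ref{NikSeg2007} and the continuity statements in Lemma~\ref{PR:R:2}), so it is enough to prove the coset equality $[N^{p^{k}},M]R_{k}=[N,M]^{p^{k}}R_{k}$ in every finite quotient of $G$, where $N$ is nilpotent. Solving the Hall--Petrescu identity for $[n,m]^{p^{k}}$---and, via the product form of Hall--Petrescu together with $\gamma_{i}([N,M])\leqslant[M,\prescript{}{i}{N}]$, for an arbitrary $p^{k}$-th power in $[N,M]$---yields $[N,M]^{p^{k}}\leqslant[N^{p^{k}},M]\cdot[M,\prescript{}{2}{N}]^{p^{k}}\cdot R_{k}$, so it remains to absorb the shallow powers $[M,\prescript{}{j}{N}]^{p^{k}}$ for $2\leqslant j\leqslant p-1$ into $[N^{p^{k}},M]R_{k}$. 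Writing $[M,\prescript{}{j}{N}]=[N,[M,\prescript{}{j-1}{N}]]$ and feeding in the instances of the theorem already proved, together with the three subgroups and Hall--Witt identities, one rewrites $[M,\prescript{}{j}{N}]^{p^{k}}$ in terms of $[N^{p^{k}},M]$, factors of $R_{k}$, and $[\gamma_{2}(N)^{p^{k}},M]$; the last is handled by the same argument with $N$ replaced successively by $\gamma_{2}(N),\gamma_{2}(\gamma_{2}(N)),\dots$, a process that terminates because $N$ is solvable.

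\textbf{Main obstacle.} The forward inclusion drops out of one collection formula, but the reverse does not: the obvious rewriting of the shallow correction terms $[M,\prescript{}{j}{N}]^{p^{k}}$ with $j<p$ feeds them back into themselves, and breaking this circularity---by descending to finite quotients and running an induction on the derived (or nilpotency) length of $N$---is the technical heart of the argument. Getting the bookkeeping of these nested commutator identities to close is where the real work lies.
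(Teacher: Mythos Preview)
This statement is not proved in the paper at all: it appears in the preliminaries section as a result quoted verbatim from \cite[Theorem~2.4]{AlSaAn2008}, with no argument supplied. There is therefore no ``paper's own proof'' to compare your proposal against.

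That said, two remarks on your proposal. First, your Hall--Petrescu argument for the forward inclusion $[N^{p^{k}},M]\leqslant[N,M]^{p^{k}}R_{k}$ is correct and standard, and in fact this is the \emph{only} direction the present paper ever invokes (see the applications in Theorems~\ref{gamma_npowerful}, \ref{P_npowerful}, Lemma~\ref{Lemma Derived series 2}, and Lemma~\ref{MR:L:1}). Second, for the reverse inclusion your sketch is not complete: you correctly isolate the obstruction---absorbing the shallow correction terms $[M,\prescript{}{j}{N}]^{p^{k}}$ for $2\leqslant j\leqslant p-1$ back into $[N^{p^{k}},M]R_{k}$---but the proposed recursion through $\gamma_{2}(N),\gamma_{2}(\gamma_{2}(N)),\dots$ is only gestured at, and it is not clear from what you have written that the bookkeeping actually closes without producing new shallow terms at each stage. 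Since the paper does not need this direction, the gap is harmless here, but if you intend to supply a full proof of the congruence you should consult the original source.
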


  \begin{theorem}[{\cite[Theorem 2.5]{AlSaAn2008}}]\label{PR:T:2} 
 Let $G$ be a pro-$p$ group and let $N$ be a closed normal subgroup of $G$, then the following congruence hold for every $k,l \geq 0$ \[[N^{p^k},\prescript{}{l}{G}] \equiv [N,\prescript{}{l}{G}]^{p^k} (\text{mod } \prod_{r=1}^k [N^{p^{k-r}},\prescript{}{r(p-1)+l}{G}])\]
 \end{theorem}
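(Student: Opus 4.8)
The plan is to prove the congruence by a double induction: an outer induction on the exponent $k$ (with $l$ arbitrary) whose only non-trivial step is itself settled by an inner induction on $l$, with Theorem~\ref{PR:T:1} as the engine. Throughout I would take all commutator and power subgroups to be closed and use Lemma~\ref{PR:R:2} silently to pass between a subgroup and its closure, so that the formal identities of Lemma~\ref{PR:R:1} and the monotonicity of Lemma~\ref{Commutator comparision} remain valid topologically. Write $N_l:=\overline{[N,\prescript{}{l}{G}]}$ and $K_l:=\prod_{r=1}^{k}\overline{[N^{p^{k-r}},\prescript{}{r(p-1)+l}{G}]}$, so the target equality is $[N^{p^k},\prescript{}{l}{G}]\,K_l=N_l^{\,p^k}\,K_l$. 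The base case $k=0$ is trivial (both sides equal $[N,\prescript{}{l}{G}]$ and $K_l$ is empty), and for fixed $k\ge 1$ the inner base case $l=0$ is trivial (both sides equal $N^{p^k}$).

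For the inner step I would assume the congruence for $(k,l)$. Forming the commutator with $G$ in the equality $[N^{p^k},\prescript{}{l}{G}]\,K_l=N_l^{\,p^k}\,K_l$, distributing over products of normal subgroups by Lemma~\ref{PR:R:1}(i), and observing that $[K_l,G]\leqslant K_{l+1}$ (commuting each factor with $G$ merely raises its $G$-weight), one gets
\[
[N^{p^k},\prescript{}{l+1}{G}]\,K_{l+1}\;=\;[N_l^{\,p^k},G]\,K_{l+1}.
\]
Now apply Theorem~\ref{PR:T:1} with the closed normal subgroup $N_l$ in place of $N$ and $M=G$; since $[N_l,G]=N_{l+1}$ this yields $[N_l^{\,p^k},G]\,L=N_{l+1}^{\,p^k}\,L$, where $L:=\prod_{j=1}^{k}\overline{[G,\prescript{}{p^j}{N_l}]}^{\,p^{k-j}}$. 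Combining the two relations, the congruence for $(k,l+1)$ follows provided $L\leqslant K_{l+1}$.

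The hard part is establishing $L\leqslant K_{l+1}$, but I expect it to dissolve once one exploits that $N_l\leqslant N\leqslant G$. Writing $[G,\prescript{}{p^j}{N_l}]$ as $[[G,N_l],\prescript{}{p^j-1}{N_l}]$, using $[G,N_l]=[N_l,G]\leqslant N_{l+1}$, and then replacing each of the remaining $p^j-1$ copies of $N_l$ by $G$ (Lemma~\ref{Commutator comparision}), one obtains $\overline{[G,\prescript{}{p^j}{N_l}]}\leqslant\overline{[N,\prescript{}{l+p^j}{G}]}$, hence $\overline{[G,\prescript{}{p^j}{N_l}]}^{\,p^{k-j}}\leqslant\overline{[N,\prescript{}{l+p^j}{G}]}^{\,p^{k-j}}$. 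Since $j\ge 1$, the exponent $k-j$ is strictly smaller than $k$, so the outer induction hypothesis applies to $N$ with exponent $k-j$ and parameter $l+p^j$, bounding $\overline{[N,\prescript{}{l+p^j}{G}]}^{\,p^{k-j}}$ above by $\overline{[N^{p^{k-j}},\prescript{}{l+p^j}{G}]}\cdot\prod_{r=1}^{k-j}\overline{[N^{p^{k-j-r}},\prescript{}{r(p-1)+l+p^j}{G}]}$. Finally one matches each of these factors against a factor of $K_{l+1}=\prod_{r_0=1}^{k}\overline{[N^{p^{k-r_0}},\prescript{}{r_0(p-1)+l+1}{G}]}$: the leading factor sits inside the $r_0=j$ term and the $r$-th product factor inside the $r_0=r+j$ term, in each case because the number of copies of $G$ on the left dominates that on the right, which reduces to the elementary inequality $p^j\ge j(p-1)+1$ (valid for all $j\ge 1$ since $p^j-1=(p-1)(1+p+\dots+p^{j-1})\ge j(p-1)$), together with the fact that $[A,\prescript{}{m}{G}]\leqslant[A,\prescript{}{m'}{G}]$ whenever $m\ge m'$ and $A$ is normal in $G$. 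Hence $L\leqslant K_{l+1}$, closing the induction. Apart from this weight-bookkeeping, every step is a formal consequence of Lemmas~\ref{PR:R:1}, \ref{PR:R:2} and~\ref{Commutator comparision}.
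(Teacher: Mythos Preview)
The paper does not give its own proof of this statement: Theorem~\ref{PR:T:2} is quoted verbatim from \cite[Theorem~2.5]{AlSaAn2008} as a preliminary result and is used as a black box in Section~\ref{section: powerful lower p, lower central, derived}. There is therefore nothing in the paper to compare against.

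That said, your argument is sound. The double-induction scheme is the natural one: the outer hypothesis (the congruence for all exponents smaller than $k$ and all weights) is precisely what is needed to absorb the error terms coming out of Theorem~\ref{PR:T:1}, and the inner induction on $l$ is driven by commutating both sides with $G$. The only places worth tightening are purely cosmetic. First, $[K_l,G]$ is in fact \emph{equal} to $K_{l+1}$, not merely contained in it, since each factor $[N^{p^{k-r}},\prescript{}{r(p-1)+l}{G}]$ is normal and commutating with $G$ simply increments the weight by one; this slightly simplifies the passage from the $(k,l)$-congruence to $[N^{p^k},\prescript{}{l+1}{G}]K_{l+1}=[N_l^{\,p^k},G]K_{l+1}$. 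Second, the key containment $L\leqslant K_{l+1}$ is exactly as you say: $[G,\prescript{}{p^j}{N_l}]\leqslant[N,\prescript{}{l+p^j}{G}]$ by replacing $p^j-1$ copies of $N_l$ by $G$, then the outer hypothesis at exponent $k-j$ bounds $[N,\prescript{}{l+p^j}{G}]^{p^{k-j}}$ by a product each of whose factors matches a factor of $K_{l+1}$ via the inequality $p^j\geq j(p-1)+1$. This is essentially how one deduces \cite[Theorem~2.5]{AlSaAn2008} from \cite[Theorem~2.4]{AlSaAn2008}, so while the paper omits the proof, your reconstruction is the expected one.
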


\section{Powerfulness of the terms of the lower p-series, the lower central series and the derived series} \label{section: powerful lower p, lower central, derived}
The following proposition reduces the study of powerful embeddings in a pro-$p$ group to its finite quotients.
\begin{prop} {\cite[Proposition 3.2]{DDMS99}}\label{Dixon_Profinite_to_finite}
    Let $G$ be a group pro-$p$ and $N$ be an open subgroup of $G$. Then $N$ is powerfully embedded in $G$
if and only if $NK/K$ is powerfully embedded in $G/K$ for every open normal subgroup $K$ of $G$.
\end{prop}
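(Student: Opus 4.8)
The plan is to translate the condition into the finite quotients $G/K$, where $K$ runs over the open normal subgroups of $G$, via the standard description of closures in a profinite group: for any subset $S$ of a profinite group, $\overline{S}=\bigcap_{K}SK$, the intersection taken over all open normal subgroups $K$. I would apply this with $S=N^{p}$ when $p$ is odd and with $S=N^{4}$ when $p=2$. I will also use repeatedly that the quotient map $\pi_{K}\colon G\to G/K$ carries $N$ to $NK/K$ and $[N,G]$ onto $[NK/K,\,G/K]$, and that inside the finite group $G/K$ the closure operation is vacuous, so $\overline{(NK/K)^{p}}=(NK/K)^{p}=N^{p}K/K$.

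For the forward implication, assume $N$ is powerfully embedded in $G$, i.e.\ $[N,G]\le\overline{N^{p}}$ (I treat $p$ odd; the case $p=2$ is identical with $N^{4}$ in place of $N^{p}$). I would fix an open normal $K$, apply $\pi_{K}$ to this inclusion, and use continuity of $\pi_{K}$ to get $\pi_{K}(\overline{N^{p}})\subseteq\overline{\pi_{K}(N^{p})}=N^{p}K/K$. This yields at once $[NK/K,\,G/K]\le N^{p}K/K=\overline{(NK/K)^{p}}$, so $NK/K$ is powerfully embedded in $G/K$.

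For the converse, assume $NK/K$ is powerfully embedded in $G/K$ for every open normal $K$. Fixing such a $K$, the hypothesis reads $\pi_{K}([N,G])=[NK/K,\,G/K]\le\overline{(NK/K)^{p}}=N^{p}K/K$, and taking preimages under $\pi_{K}$ gives $[N,G]\subseteq N^{p}K$. As this holds for every open normal $K$, intersecting gives $[N,G]\subseteq\bigcap_{K}N^{p}K=\overline{N^{p}}$, which is precisely the assertion that $N$ is powerfully embedded in $G$.

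The step that needs the most care — though it is not deep — is the bookkeeping with closures. One must note that the closure in the hypothesis ``$\le\overline{(NK/K)^{p}}$'' is vacuous because $G/K$ is finite, whereas the closure $\overline{N^{p}}$ inside $G$ is genuinely present and is reconstructed exactly as $\bigcap_{K}N^{p}K$. The two supporting facts — that an open subgroup of a profinite group is closed (hence $N^{p}K$ is open and closed, and $\overline{N^{p}}=\bigcap_{K}N^{p}K$), and that a homomorphism carries commutator subgroups onto commutator subgroups of images — are standard, so once they are in place the equivalence is a short diagram chase in each direction, and I anticipate no further obstacle.
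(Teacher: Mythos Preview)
The paper does not give its own proof of this proposition; it is quoted verbatim from \cite[Proposition~3.2]{DDMS99}. Your argument is correct and is precisely the standard one: pass to the finite quotients $G/K$ using that $\pi_K$ carries $[N,G]$ onto $[NK/K,G/K]$ and $N^p$ onto $(NK/K)^p$, and reconstruct $\overline{N^p}$ as $\bigcap_K N^pK$ via the fact that the open normal subgroups form a base at the identity. One cosmetic quibble: $N^pK$ need not be a subgroup, so the parenthetical ``open and closed'' is not quite justified --- but this is not load-bearing, since the identity $\overline{N^p}=\bigcap_K N^pK$ does not require the sets $N^pK$ to be closed.
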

The above proposition also holds true for a closed subgroup $N$ of $G$.
\begin{theorem} \label{gamma_npowerful}
    Let $p$ be a prime and $H$ be a pro-$p$ group. If $H/Z_{n-1}(H)$ is powerful, then $\overline{\gamma_n(H)}$ is powerfully embedded in $H$.
\end{theorem}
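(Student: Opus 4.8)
The plan is to reduce the statement to finite $p$-groups and then run an induction on the group order. For the reduction, observe that by Proposition~\ref{Dixon_Profinite_to_finite} — which, as remarked after it, remains valid for closed subgroups — it is enough to prove that $\overline{\gamma_n(H)}\,K/K$ is powerfully embedded in $H/K$ for every open normal subgroup $K\trianglelefteq H$. Since $K$ is open, $\overline{\gamma_n(H)}\,K=\gamma_n(H)\,K$ is closed and $\overline{\gamma_n(H)}\,K/K=\gamma_n(H/K)$; moreover $(H/K)/Z_{n-1}(H/K)$ is powerful, being a quotient of the powerful group $H/Z_{n-1}(H)$ (the image of $Z_{n-1}(H)$ in $H/K$ lies in $Z_{n-1}(H/K)$). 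Thus it suffices to prove: if $G$ is a finite $p$-group with $G/Z_{n-1}(G)$ powerful, then $\gamma_n(G)$ is powerfully embedded in $G$.

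I would prove this by strong induction on $|G|$. The case $n=1$ is trivial, so assume $n\geqslant2$ and put $Z=Z(G)$, which is nontrivial, so $|G/Z|<|G|$. For each $m\geqslant n$ the quotient $G/Z_{m-1}(G)$ is powerful (it is a quotient of $G/Z_{n-1}(G)$), hence so is $(G/Z)/Z_{m-2}(G/Z)\cong G/Z_{m-1}(G)$; applying the induction hypothesis to $G/Z$ with parameter $m-1$ gives that $\gamma_{m-1}(G/Z)$ is powerfully embedded in $G/Z$. Pulling this back to $G$ and using that $Z$ is central, I obtain the estimate $(\star_m)$: $\gamma_m(G)\leqslant\gamma_{m-1}(G)^{p}\,Z$ for all $m\geqslant n$ (with $\gamma_{m-1}(G)^{4}$ in place of $\gamma_{m-1}(G)^{p}$ when $p=2$).

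Next I would convert $(\star_m)$ into a self-improving commutator bound. By $(\star_m)$, Lemma~\ref{Commutator comparision} and Lemma~\ref{PR:R:1}(i), $[\gamma_m(G),G]\leqslant[\gamma_{m-1}(G)^{p}Z,G]=[\gamma_{m-1}(G)^{p},G]$ since $[Z,G]=1$; plugging this into Theorem~\ref{PR:T:2} with $N=\gamma_{m-1}(G)$ and $k=l=1$ yields $(\ast)$: $[\gamma_m(G),G]\leqslant\gamma_m(G)^{p}\,\gamma_{m+p-1}(G)$ for all $m\geqslant n$. Now bootstrap: start from $(\ast)$ with $m=n$ and repeatedly rewrite the tail term $\gamma_j(G)=[\gamma_{j-1}(G),G]$ (where $j-1\geqslant n$) via $(\ast)$ together with the trivial inclusion $\gamma_{j-1}(G)^{p}\leqslant\gamma_n(G)^{p}$; this produces $[\gamma_n(G),G]\leqslant\gamma_n(G)^{p}\,\gamma_N(G)$ with $N$ increasing by $p-2\geqslant1$ at each step, which is where the oddness of $p$ enters. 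Choosing $N$ larger than the nilpotency class of $G$ makes the last factor trivial, so $[\gamma_n(G),G]\leqslant\gamma_n(G)^{p}$, i.e.\ $\gamma_n(G)$ is powerfully embedded in $G$; this completes the induction for odd $p$.

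I expect the main obstacle to be the prime $p=2$, where $(\ast)$ only reads $[\gamma_m(G),G]\leqslant\gamma_m(G)^{2}\gamma_{m+1}(G)$, the tail index does not grow, and the naive bootstrap stalls. To overcome this I would carry fourth powers throughout: combining $\gamma_m(G)\leqslant\gamma_{m-1}(G)^{4}Z$ with the $k=2$ case of Theorem~\ref{PR:T:2}, together with a further application of Theorem~\ref{PR:T:2} to the resulting term $[\gamma_{m-1}(G)^{2},\prescript{}{2}{G}]$, gives a bound of the form $[\gamma_m(G),G]\leqslant\gamma_m(G)^{4}\,\gamma_{m+1}(G)^{2}\,\gamma_{m+2}(G)$; the delicate point is to iterate this while controlling the square and fourth-power factors, exploiting that the target is $[\gamma_n(G),G]\leqslant\gamma_n(G)^{4}$ and not $\leqslant\gamma_n(G)^{2}$. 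A minor secondary point is the passage between $H$ and its finite quotients in the first step, which relies on Proposition~\ref{Dixon_Profinite_to_finite} in its form for closed subgroups.
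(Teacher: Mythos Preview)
Your reduction to finite $p$-groups and the odd-prime argument are correct, but the route differs from the paper's. The paper does not induct on $|G|$ nor pass to $G/Z(G)$; instead it uses the hypothesis directly in the form $\gamma_2(H)\leqslant H^{p}Z_{n-1}(H)$ together with the identity $\gamma_{n+k}(H)=[\gamma_2(H),\prescript{}{n+k-2}{H}]$, so that
\[
\gamma_{n+k}(H)\leqslant[H^{p}Z_{n-1}(H),\prescript{}{n+k-2}{H}]=[H^{p},\prescript{}{n+k-2}{H}]\leqslant\gamma_{n+k-1}(H)^{p}\gamma_{n+k+p-2}(H)
\]
via Theorem~\ref{PR:T:2}. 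This yields $\gamma_{n+k}(H)\leqslant\gamma_n(H)^{p}\gamma_{n+k+1}(H)$ for all $k\geqslant1$ in one stroke, and the iteration advances the tail index by~$1$ at each step. Your approach instead manufactures the sharper containment $\gamma_m(G)\leqslant\gamma_{m-1}(G)^{p}Z(G)$ from the inductive hypothesis on $G/Z(G)$, then applies Theorem~\ref{PR:T:2} with $N=\gamma_{m-1}(G)$ rather than $N=H$; your bootstrap advances by $p-2$ per step. Both are valid for odd $p$, and your $(\ast)$ and the paper's display~\eqref{central series step 3} are essentially the same inequality reached by different means. The paper's argument is slightly more economical (no induction on the order, and the single relation $\gamma_2\leqslant H^{p}Z_{n-1}$ does all the work), while your inductive passage through $G/Z(G)$ is a perfectly natural alternative that also gives the pointwise family $(\star_m)$ for free.

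On $p=2$: the paper asserts that the same scheme goes through \emph{mutatis mutandis}, replacing $H^{p}$ by $H^{4}$ in the key containment $\gamma_2(H)\leqslant H^{4}Z_{n-1}(H)$ and using the $k=2$ case of Theorem~\ref{PR:T:2}. Your diagnosis that the naive iteration stalls because $p-2=0$ is exactly right, and your proposed remedy---carrying the bound $[\gamma_m(G),G]\leqslant\gamma_m(G)^{4}\gamma_{m+1}(G)^{2}\gamma_{m+2}(G)$ and iterating with care on the square factor---is the correct shape of the extra work. You have not actually closed that loop (the step ``iterate this while controlling the square and fourth-power factors'' hides a genuine combinatorial check), so as written the $p=2$ case remains a sketch; the paper is equally terse here.
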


\begin{proof}
    Note that $(H/K)/(Z_{n-1}(H/K))$ is a quotient of the group $H/Z_{n-1}(H)$ and image of $\overline{\gamma_n(H)}$ in $H/K$ is $\gamma_n(H/K)$, for every open normal subgroup $K$ of $H$. Hence, by Proposition \ref{Dixon_Profinite_to_finite}, it suffices to prove the statement for a finite $p$-group $H$. We will prove the theorem for an odd prime. The proof for $p=2$ follows \textit{mutatis mutandis}. Since $H/Z_{n-1}(H)$ is powerful, we have that $\gamma_2(H) \leqslant H^p Z_{n-1}(H)$. Observe that $[\gamma_n(H),H]=[\gamma_2(H), \prescript{}{n-1}{H}]$. Therefore,
    \begin{align}
        [\gamma_n(H),H] &\leqslant [H^p Z_{n-1}(H), \prescript{}{n-1}{H}]\nonumber\\
        & \leqslant [H^p, \prescript{}{n-1}{H}][ Z_{n-1}(H), \prescript{}{n-1}{H}]&&\mbox{(Lemma \ref{PR:R:1}\textit{(i)})}\nonumber\\
        &=[H^p, \prescript{}{n-1}{H}]. &&\mbox{(Lemma \ref{PR:R:1}\textit{(ii)})}\label{Central series step 1}
    \end{align}Using Theorem \ref{PR:T:2}, we obtain
    \begin{align}
        [H^p, \prescript{}{n-1}{H}]&\leqslant[H, \prescript{}{n-1}{H}]^p[H, \prescript{}{p-1+n-1}{H}]\nonumber\\
        &=\gamma_n(H)^p\gamma_{n+p-1}(H).\label{central series step 2}
    \end{align} Using \eqref{Central series step 1} and \eqref{central series step 2}, and noting that $p$ is an odd prime, we obtain that {$[\gamma_n(H),H] \leqslant \gamma_n(H)^p\gamma_{n+2}(H)$}. We now show that $\gamma_{n+k}(H) \leqslant \gamma_n(H)^p\gamma_{n+k+1}(H)$ for $k \geq 1$. Towards that end,
    \begin{align}
		\gamma_{n+k}(H)&=[\gamma_2(H), \prescript{}{n+k-2}{H}] \nonumber\\
		&\leqslant [H^pZ_{n-1}(H), \prescript{}{n+k-2}{H}] \nonumber\\
            &\leqslant [H^p, \prescript{}{n+k-2}{H}][Z_{n-1}(H), \prescript{}{n+k-2}{H}]\nonumber\\
            &= [H^p, \prescript{}{n+k-2}{H}]\nonumber\\
            &\leqslant [H, \prescript{}{n+k-2}{H}]^p[H, \prescript{}{p-1+n+k-2}{H}] && \mbox{(Theorem } \ref{PR:T:2})\nonumber\\
            &= \gamma_{n+k-1}(H)^p\gamma_{n+k+p-2}(H)\nonumber\\
            &\leqslant \gamma_n(H)^p\gamma_{n+k+1}(H).\label{central series step 3}
    \end{align}
    Using \eqref{central series step 3}, we obtain that $[\gamma_n(H),H] \leqslant \gamma_n(H)^p$ and hence the proof.
\end{proof}

\begin{Remark}    
    If we define the notion of powerfully embedded group for any subgroup $N$ of a pro-$p$ group $H$ as $[H,H] \leqslant H^p$, then the closure in the statement of the Theorem \ref{Dixon_Profinite_to_finite} can be removed.
\end{Remark}
      \noindent If $G$ is a finitely generated pro-$p$ group, then $\gamma_n(G)$ is closed by Theorem~\ref{NikSeg2007}, and hence we have the following corollary.
\begin{corollary}
        Let $p$ be a prime and $H$ be a finitely generated pro-$p$ group. If $H/Z_{n-1}(H)$ is powerful, then $\gamma_n(H)$ is powerfully embedded in $H$.
\end{corollary}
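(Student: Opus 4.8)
The plan is to read off the corollary directly from Theorem~\ref{gamma_npowerful} together with the Nikolov--Segal closedness result, Theorem~\ref{NikSeg2007}. Theorem~\ref{gamma_npowerful} already supplies, under the hypothesis that $H/Z_{n-1}(H)$ is powerful, that the closed subgroup $\overline{\gamma_n(H)}$ is powerfully embedded in $H$, and it does so for an \emph{arbitrary} pro-$p$ group $H$. So the only extra ingredient needed for finitely generated $H$ is that the abstract commutator subgroup $\gamma_n(H)$ is already closed, which is precisely the ``in particular'' clause of Theorem~\ref{NikSeg2007} applied to the finitely generated profinite group $H$.

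Concretely, I would argue as follows: since $H$ is a finitely generated pro-$p$ group it is in particular a finitely generated profinite group, so by Theorem~\ref{NikSeg2007} every term $\gamma_n(H)$ of its lower central series is a closed subgroup; hence $\overline{\gamma_n(H)} = \gamma_n(H)$. Substituting this equality into the conclusion of Theorem~\ref{gamma_npowerful} yields that $\gamma_n(H)$ is powerfully embedded in $H$, which is exactly the assertion. There is essentially no obstacle here: all of the mathematical substance sits in Theorem~\ref{gamma_npowerful} and in the Nikolov--Segal theorem, and the corollary is just the bookkeeping remark that, for finitely generated pro-$p$ groups, the closure appearing in the statement of Theorem~\ref{gamma_npowerful} may be dropped. (The only point worth a moment's care is the standing convention that ``finitely generated'' means topologically finitely generated, so that Theorem~\ref{NikSeg2007} applies verbatim.)
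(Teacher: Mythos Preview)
Your proposal is correct and matches the paper's own argument exactly: the paper simply notes that by Theorem~\ref{NikSeg2007} the subgroup $\gamma_n(H)$ is closed in a finitely generated pro-$p$ group, so the closure in Theorem~\ref{gamma_npowerful} may be dropped.
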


\begin{theorem} \label{P_npowerful}
    Let $p$ be an odd prime and $H$ be a pro-$p$ group. If $H/Z_{n-1}(H)$ is powerful, then $P_n(H)$ is powerfully embedded in $H$.
\end{theorem}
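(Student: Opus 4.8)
The plan is to follow the template of the proof of Theorem~\ref{gamma_npowerful}. First I would reduce to a finite $p$-group: by Proposition~\ref{Dixon_Profinite_to_finite} (in the form valid for closed normal subgroups) it suffices to check that $P_n(H)K/K$ is powerfully embedded in $H/K$ for every open normal $K\trianglelefteq H$, and here the image of $P_n(H)$ in $H/K$ is $P_n(H/K)$, while $(H/K)/Z_{n-1}(H/K)$ is a quotient of the powerful group $H/Z_{n-1}(H)$ and hence powerful. So assume $H$ is a finite $p$-group with $p$ odd. I would then record the elementary facts needed later: from the recursion $P_{i+1}(H)=P_i(H)^{p}[P_i(H),H]$ one gets by induction that $\gamma_i(H)\le P_i(H)$ and $P_i(H)^{p}\le P_{i+1}(H)$, whence $\gamma_a(H)^{p^{j}}\le P_{a+j}(H)$; the standard description $P_n(H)=\prod_{a+j\ge n}\gamma_a(H)^{p^{j}}$ (provable by induction on $n$ from the same recursion, expanding $[\,\cdot\,,H]$ with Theorem~\ref{PR:T:2} and bounding the $p$-th power by a Hall--Petrescu collection); and, from Theorem~\ref{gamma_npowerful} together with Theorem~\ref{MannLubotzky_powerfully_embedded}\textit{(i)}, that $\gamma_m(H)$ is powerfully embedded in $H$ for every $m\ge n$.

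The core of the argument I would organise around two statements. The first is the observation that $\gamma_b(H)^{p^{i}}\le P_n(H)^{p}$ whenever $b+i\ge n+1$: if $i\ge 1$ this holds because $\gamma_b(H)^{p^{i-1}}\le P_{b+i-1}(H)\le P_n(H)$ (using $b+i-1\ge n$), and if $i=0$ then $b\ge n+1$, so $\gamma_b(H)\le\gamma_{n+1}(H)=[\gamma_n(H),H]\le\gamma_n(H)^{p}\le P_n(H)^{p}$, using the powerful embedding of $\gamma_n(H)$. The second, which is the heart of the proof, is that for $a,c\ge 1$ and $e\ge 0$ with $a+c+e\ge n+1$ one has $[\gamma_a(H)^{p^{e}},\prescript{}{c}{H}]\le P_n(H)^{p}$. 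I would prove this by induction on the exponent $e$. For $e=0$ it reads $\gamma_{a+c}(H)\le P_n(H)^{p}$, which is the first statement. For $e\ge 1$, Theorem~\ref{PR:T:2} (with $N=\gamma_a(H)$, $k=e$, $l=c$) bounds $[\gamma_a(H)^{p^{e}},\prescript{}{c}{H}]$ by the product of $\gamma_{a+c}(H)^{p^{e}}$ with the terms $[\gamma_a(H)^{p^{e-r}},\prescript{}{r(p-1)+c}{H}]$ for $1\le r\le e$; the first factor lies in $P_n(H)^{p}$ by the first statement (its ``weight'' $a+c+e\ge n+1$), and each remaining factor has weight $a+(r(p-1)+c)+(e-r)=a+c+e+r(p-2)\ge n+1$ with strictly smaller exponent $e-r$, so lies in $P_n(H)^{p}$ by the inductive hypothesis.

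Finally, writing $P_n(H)=\prod_{a+j\ge n}\gamma_a(H)^{p^{j}}$ and distributing the commutator with $H$ over the product (Lemma~\ref{PR:R:1}\textit{(i)}), one gets $[P_n(H),H]=\prod_{a+j\ge n}[\gamma_a(H)^{p^{j}},H]$, and each factor is of the form covered by the second statement with $c=1$ and weight $a+1+j\ge n+1$, hence lies in $P_n(H)^{p}$; so $[P_n(H),H]\le P_n(H)^{p}$, i.e.\ $P_n(H)$ is powerfully embedded in $H$. The step I expect to be the main obstacle is the bookkeeping in the inductive ``weight'' estimate: one must verify that repeatedly peeling off $p$-th powers through Theorem~\ref{PR:T:2} never drops the weight below $n+1$. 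This is exactly where $p$ being odd is essential — both because ``powerfully embedded'' involves $N^{p}$ rather than $N^{4}$, and because one needs $p$-th powers of powerfully embedded subgroups to again be powerfully embedded; for $p=2$ the method instead only yields powerfulness of $P_n(H)$, in line with Theorem~\ref{mainresultforeven:H/Z,H/D powerful gamma_n,P_n,Gamma_n powerful}.
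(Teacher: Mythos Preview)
Your proposal is correct, and the overall strategy matches the paper's: reduce to a finite $p$-group, decompose $P_n(H)=\prod_i\gamma_i(H)^{p^{n-i}}$, and bound each $[\gamma_i(H)^{p^{n-i}},H]$ via the commutator expansion formulas of Section~\ref{section: preliminary results}.

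The organisation differs in two respects worth noting. First, the paper uses Theorem~\ref{PR:T:1} to expand $[\gamma_i(H)^{p^{n-i}},H]$ once and bounds the resulting terms directly, arriving at $[P_n(H),H]\le P_n(H)^p\gamma_{n+1}(H)$ \emph{before} invoking the hypothesis; only then does it absorb $\gamma_{n+1}(H)$ by iterating the estimate $\gamma_{n+k}(H)\le\gamma_n(H)^p\gamma_{n+k+1}(H)$ extracted from the proof of Theorem~\ref{gamma_npowerful} (equation~\eqref{central series step 3}). You instead invoke Theorem~\ref{gamma_npowerful} as a black box at the outset (to get $\gamma_{n+1}(H)\le\gamma_n(H)^p\le P_n(H)^p$), and then run a clean downward induction on the exponent~$e$ using Theorem~\ref{PR:T:2}, so that every term is absorbed into $P_n(H)^p$ in a single pass. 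Second, your ``weight'' bookkeeping $a+c+e\ge n+1$ packages the estimates more uniformly than the paper's case split on $j<n-i$ versus $j=n-i$. Both routes hinge on the same inequality (namely that passing a $p$-th power through a commutator via Theorem~\ref{PR:T:1} or~\ref{PR:T:2} does not decrease the total weight when $p\ge 3$), and your remark that this is exactly where $p$ odd enters is on target.
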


\begin{proof}
As before, we may assume that $H$ is a finite $p$-group. Recall that $P_n(H)=\prod \limits _{i=1}^n \gamma_i(H)^{p^{n-i}}$, and therefore 
\begin{align}\label{Lazard theorem step -2}
    [P_n(H),H]=\prod \limits _{i=1}^n [\gamma_i(H)^{p^{n-i}},H].
\end{align} For $i < n$, using Theorem \ref{PR:T:1}, we observe that 
\begin{align}\label{Lazard theorem step -1}
    [\gamma_i(H)^{p^{n-i}},H]  \leqslant[\gamma_i(H),H]^{p^{n-i}} \prod \limits _{j=1}^{n-i}[H, \prescript{}{p^{j}}{\gamma_i(H)}]^{p^{n-i-j}}.
\end{align}

\begin{align}
    [H, \prescript{}{p^{j}}{\gamma_i(H)}]^{p^{n-i-j}} &\leqslant \gamma_{ip^j+1}(H)^{p^{n-i-j}}\nonumber \\ 
    &\leqslant \gamma_{i+j+1}(H)^{p^{n-(i+j+1)+1}} && (ip^j+1 \geq i+j+1 \text{ for } p \geq 2)\label{Lazard theorem step 0}.
\end{align}
%The inequality $ip^j+1 \geq i+j+1$ holds if and only if $ip^j \geq i+j$ holds.
%For $i=1$, $p^j \geq 1+j$. For all $j=1$, $p^{j+1} \geq p^j.p \geq (1+j)p \geq p+pj \geq p+j \geq j+2 \geq (j+1)+1$ since $p \geq 2$. Assume, it holds for $i$. $(i+1)p^j=ip^j+p^j \geq i+j+p^j \geq (i+1)+j$.
%The latter inequality holds since $p \geq 2$. 
For $j$ with $1 \leq j < n-i$, we have that $i+j+1 \leq n$. Then 
\begin{align}
    \gamma_{i+j+1}(H)^{p^{n-(i+j+1)+1}} &\leqslant \prod \limits _{k=1}^n \gamma_k(H)^{p^{n-k+1}}\nonumber\\
    &\leqslant\prod \limits _{k=1}^n (\gamma_k(H)^{p^{n-k}})^p\leqslant(P_n(H))^p.\label{Lazard theorem step 1}
\end{align}
For $j=n-i$, 
\begin{align}
    [H, \prescript{}{p^{n-i}}{\gamma_i(H)}]^{p^{n-i-(n-i)}} \leqslant \gamma_{i+(n-i)+1}(H) = \gamma_{n+1}(H)\label{lazard theorem step 2}
\end{align}
Using \eqref{Lazard theorem step 0}, \eqref{Lazard theorem step 1} and \eqref{lazard theorem step 2}, we obtain 
\begin{align}\label{Lazard theorem step 3}
    \prod \limits _{j=1}^{n-i}[H, \prescript{}{p^{j}}{\gamma_i(H)}]^{p^{n-i-j}}\leqslant (P_n(H))^p\gamma_{n+1}(H).
\end{align}\

Note that \begin{align}\label{Lazard theorem step 4}
    [\gamma_i(H),H]^{p^{n-i}} = \gamma_{i+1}(H)^{p^{n-(i+1)+1}}
    \leqslant(\gamma_{i+1}(H)^{p^{n-(i+1)}})^p\leqslant(P_n(H))^p.
\end{align}

Using \eqref{Lazard theorem step 3} and \eqref{Lazard theorem step 4} in \eqref{Lazard theorem step -1}, we obtain

\begin{align}\label{Lazard theorem step 5}
    [\gamma_i(H)^{p^{n-i}},H] \leqslant (P_n(H))^p\gamma_{n+1}(H),
\end{align} for all $i<n.$ Moreover, for $i=n$, we have \begin{align}\label{Lazard theorem step 6}
    [\gamma_i(H)^{p^{n-i}},H]=\gamma_{n+1}(H).
\end{align}
Using \eqref{Lazard theorem step 5} and \eqref{Lazard theorem step 6} in \eqref{Lazard theorem step -2}, we get ${[P_n(H),H] \leqslant (P_n(H))^p \gamma_{n+1}(H)}$. Since $H/Z_{n-1}(H)$ is powerful, using \eqref{central series step 3} (this does not require $H$ to be finitely generated), we obtain \begin{align}\label{Lazard theorem step 7}
    \gamma_{n+k} \leqslant \gamma_n(H)^p\gamma_{n+k+1}(H), \text{ for all $k\geq1$}.
\end{align}
Observing that $\gamma_n(H)^p\leqslant P_n(H)^p$ and using \eqref{Lazard theorem step 7} repeatedly, we obtain $[P_n(H),H]  \leqslant  P_n(H)^p$, and hence the proof.
\end{proof}

\begin{theorem} \label{P_npowerfulevenprime}
    Let $p=2$ and $H$ be a pro-$p$ group. If $H/Z_{n-1}(H)$ is powerful, then $P_n(H)$ is powerful.
\end{theorem}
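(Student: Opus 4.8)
The plan is to follow the template of Theorem~\ref{P_npowerful} as closely as possible, adjusting the numerology for the prime $p=2$. As before, by Proposition~\ref{Dixon_Profinite_to_finite} we reduce to the case where $H$ is a finite $2$-group, and the goal becomes showing $[P_n(H),H] \leqslant P_n(H)^4$. The starting point is again the decomposition $[P_n(H),H] = \prod_{i=1}^n [\gamma_i(H)^{2^{n-i}},H]$ and the application of Theorem~\ref{PR:T:1} to each factor with $i<n$, which yields
\[
[\gamma_i(H)^{2^{n-i}},H] \leqslant [\gamma_i(H),H]^{2^{n-i}} \prod_{j=1}^{n-i}[H,\prescript{}{2^{j}}{\gamma_i(H)}]^{2^{n-i-j}}.
\]
The estimates $[H,\prescript{}{2^j}{\gamma_i(H)}] \leqslant \gamma_{i2^j+1}(H) \leqslant \gamma_{i+j+1}(H)$ and $[\gamma_i(H),H]^{2^{n-i}} = \gamma_{i+1}(H)^{2^{n-(i+1)+1}} \leqslant P_n(H)^2$ go through verbatim, and for $i=n$ one has $[\gamma_i(H)^{2^{n-i}},H]=\gamma_{n+1}(H)$. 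So exactly as in the odd case one arrives at $[P_n(H),H] \leqslant P_n(H)^2\,\gamma_{n+1}(H)$.

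The key difference is the final absorption step. For odd $p$ one uses that $H/Z_{n-1}(H)$ powerful gives $\gamma_{n+k}(H) \leqslant \gamma_n(H)^p\gamma_{n+k+1}(H)$ via \eqref{central series step 3}, and then $\gamma_n(H)^p \leqslant P_n(H)^p$ collapses everything into $P_n(H)^p$. For $p=2$, the hypothesis that $H/Z_{n-1}(H)$ is powerful means $\gamma_2(H) \leqslant H^4 Z_{n-1}(H)$, so running the analogue of the computation in \eqref{central series step 3} with $H^4$ in place of $H^2$ and applying Theorem~\ref{PR:T:2} I expect to obtain something like $\gamma_{n+k}(H) \leqslant \gamma_n(H)^4\,\gamma_{n+k+1}(H)$ (the shift $p-1=1$ being harmless in the index bookkeeping), and hence $\gamma_{n+1}(H) \leqslant \gamma_n(H)^4\gamma_{n+2}(H)$ iterating down to $\gamma_{n+1}(H) \leqslant \gamma_n(H)^4 \leqslant P_n(H)^4$. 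Feeding this back gives $[P_n(H),H] \leqslant P_n(H)^2\gamma_{n+1}(H) \leqslant P_n(H)^2$, which already shows $P_n(H)$ is powerful (indeed it shows more, since $[P_n(H),H]\leqslant P_n(H)^2$), so in fact $P_n(H)$ is powerful.

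The main obstacle, and the reason the conclusion here is weaker than in the odd-prime case (``$P_n(H)$ is powerful'' rather than ``powerfully embedded''), is precisely the factor of $4$ versus $2$ in the definition of powerful for $p=2$. In the odd case the term $\gamma_n(H)^p$ produced along the way lies in $P_n(H)^p$ and the surviving commutator factor $\gamma_{n+1}(H)$ can be absorbed into $P_n(H)^p$; but at $p=2$ the intermediate estimates naturally land in $P_n(H)^2$, not $P_n(H)^4$, so one cannot conclude $[P_n(H),H] \leqslant P_n(H)^4$ with $H$ acting — only $[P_n(H),P_n(H)] \leqslant P_n(H)^4$ after a further squeeze, giving powerfulness of $P_n(H)$ as an abstract group. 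Concretely, from $[P_n(H),H] \leqslant P_n(H)^2$ one gets $[P_n(H),P_n(H)] \leqslant P_n(H)^2$ and then, since $P_n(H)^2$ is itself generated by squares with $[P_n(H)^2,P_n(H)]\leqslant [P_n(H),P_n(H)]^2\cdot(\text{higher commutators})\leqslant P_n(H)^4$, one bootstraps $[P_n(H),P_n(H)]\leqslant P_n(H)^4$. I would need to check this last bootstrap carefully using Lemma~\ref{PR:R:1} and Theorem~\ref{PR:T:1} applied inside $P_n(H)$, and that is where the real work lies; the rest is a transcription of the proof of Theorem~\ref{P_npowerful} with $2^{\bullet}$ exponents.
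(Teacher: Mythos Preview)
Your approach matches the paper's: the paper's own proof is merely the one-line remark that ``the proof proceeds along the same lines [as Theorem~\ref{P_npowerful}]; although additional terms appear and some extra technical work is required, the argument can be adapted.'' So your plan of transcribing Theorem~\ref{P_npowerful} with $p=2$ and then handling the surplus terms is exactly what the authors have in mind, and you have correctly located the point where the argument diverges from the odd case.

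That said, there is a genuine inconsistency in your write-up that you should fix. In the second paragraph you assert that $[P_n(H),H]\leqslant P_n(H)^2$ ``already shows $P_n(H)$ is powerful (indeed it shows more),'' but for $p=2$ powerfulness of $P_n(H)$ means $[P_n(H),P_n(H)]\leqslant P_n(H)^4$, and $[P_n(H),H]\leqslant P_n(H)^2$ does \emph{not} imply this. You then correctly retract this in the third paragraph, so the proposal contradicts itself. Keep only the third paragraph's diagnosis: the transcription of Theorem~\ref{P_npowerful} yields $[P_n(H),H]\leqslant P_n(H)^2\gamma_{n+1}(H)$, and Theorem~\ref{gamma_npowerful} (which the paper does prove for $p=2$) gives $\gamma_{n+1}(H)\leqslant\gamma_n(H)^4\leqslant P_n(H)^4$, hence $[P_n(H),H]\leqslant P_n(H)^2$; the remaining task is to push $[P_n(H),P_n(H)]$ down to $P_n(H)^4$. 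Your bootstrap sketch for this last step is too loose to be a proof --- the implication ``$[N,G]\leqslant N^2\Rightarrow[N,N]\leqslant N^4$'' is false in general for $2$-groups --- so you will need to exploit the specific structure $P_n(H)=\prod_i\gamma_i(H)^{2^{n-i}}$ together with $\gamma_{n+1}(H)\leqslant\gamma_n(H)^4$ and work out $[\gamma_i(H)^{2^{n-i}},\gamma_j(H)^{2^{n-j}}]$ directly via Theorem~\ref{PR:T:1}. This is precisely the ``extra technical work'' the paper alludes to but does not carry out.
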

\begin{proof}
    The proof proceeds along the same lines. Although additional terms appear and some extra technical work is required, the argument can be adapted to yield the above theorem.
\end{proof}

As a corollary, we obtain the following elegant result.
%\textcolor{red}{In \cite[III, 3.4.4.1]{La1965}, Lazard proved that every finitely generated powerful pro-$p$ group is $p$-adic analytic. In the next corollary, we generalize this result.}

%\begin{corollary}\label{padic analytic}
%    Let $p$ be an odd prime and $H$ be a finitely generated pro-$p$ group. If $H/Z_{n-1}(H)$ is a powerful pro-$p$ group for some $n\in \mathbb{N}$, then $H$ is a $p$-adic analytic group.
%\end{corollary}

\begin{corollary} \label{padic analytic from padic analytic}
    Let $p$ be a prime, and $H$ be a finitely generated pro-$p$ group. If $H/Z_n(H)$ is $p$-adic analytic for some $n\in \mathbb{N}$, then so is $H$.
\end{corollary}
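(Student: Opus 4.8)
The plan is to deduce Corollary~\ref{padic analytic from padic analytic} from Theorem~\ref{P_npowerful}, Theorem~\ref{P_npowerfulevenprime} and the characterization of $p$-adic analytic pro-$p$ groups recorded in Theorem~\ref{padic analytic characterization}. The strategy is to find, inside $H$, an open normal subgroup that is powerful; by Theorem~\ref{padic analytic characterization}(iii) this forces $H$ to be $p$-adic analytic. Since $H$ is finitely generated and $H/Z_n(H)$ is $p$-adic analytic, in particular $H/Z_n(H)$ is finitely generated, so it has an open powerful (indeed uniform) normal subgroup, say $U/Z_n(H)$ with $U$ open in $H$.

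First I would reduce to the case where $H/Z_n(H)$ itself is powerful. Replacing $H$ by the open subgroup $U$ above, note that $Z_n(H) \leqslant Z_n(U)$ is not automatic, so instead I argue as follows: $U$ is a finitely generated pro-$p$ group, $U/(U\cap Z_n(H))$ embeds as the open subgroup $U/Z_n(H)$ of $H/Z_n(H)$ hence is powerful, and $U \cap Z_n(H) \leqslant Z_n(U)$ since central elements of $H$ are central in $U$, so $U/Z_n(U)$ is a quotient of the powerful group $U/(U\cap Z_n(H))$ and therefore powerful. Thus, after replacing $H$ by $U$, I may assume $H/Z_n(H)$ is powerful (and $H$ finitely generated); it suffices to show such an $H$ is $p$-adic analytic, since $U$ being $p$-adic analytic and open in the original $H$ gives, by Theorem~\ref{padic analytic characterization}(vii) or (iii) applied to the original group, that the original $H$ is $p$-adic analytic.

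Next, with $H/Z_n(H)$ powerful, apply Theorem~\ref{P_npowerful} (for $p$ odd) or Theorem~\ref{P_npowerfulevenprime} (for $p=2$) with $n$ replaced by $n+1$: these give that $P_{n+1}(H)$ is powerfully embedded in $H$ (odd case), or at least powerful ($p=2$ case). In either case $P_{n+1}(H)$ is a powerful pro-$p$ group. Moreover $P_{n+1}(H)$ is open in $H$: since $H$ is finitely generated, $H/P_{n+1}(H)$ is a finite $p$-group, because the lower $p$-series of a finitely generated pro-$p$ group consists of open subgroups with finite index factors. Hence $H$ has an open powerful normal subgroup $P_{n+1}(H)$, i.e. $H$ is finitely generated and virtually powerful, so by Theorem~\ref{padic analytic characterization}(iii) $H$ is $p$-adic analytic. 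Unwinding the reduction of the previous paragraph then yields the original $H$ is $p$-adic analytic.

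The main obstacle is the bookkeeping in the reduction step: ensuring that passing to the open subgroup $U$ preserves the hypothesis $H/Z_n(H)$ powerful, and that $p$-adic analyticity of an open subgroup lifts back to the whole group. The latter is standard (an open subgroup of finite index in a f.g. pro-$p$ group $H$ being $p$-adic analytic forces $H$ to be $p$-adic analytic, e.g.\ via finite rank or via Theorem~\ref{padic analytic characterization}), and the former is handled by the observation $U\cap Z_n(H)\leqslant Z_n(U)$ together with the fact that quotients of powerful pro-$p$ groups are powerful. Everything else is a direct invocation of Theorems~\ref{P_npowerful}, \ref{P_npowerfulevenprime} and \ref{padic analytic characterization}, plus the elementary fact that $P_{n+1}(H)$ is open of finite index in a finitely generated pro-$p$ group.
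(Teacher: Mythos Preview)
Your proposal is correct and follows essentially the same approach as the paper: pass to an open subgroup $U$ (the paper calls it $N$) whose quotient by $Z_n(H)$ is powerful, observe that $Z_n(H)\leqslant Z_n(U)$ so $U/Z_n(U)$ is powerful, then apply Theorems~\ref{P_npowerful}/\ref{P_npowerfulevenprime} to conclude $P_{n+1}(U)$ is powerful and open, whence $H$ is $p$-adic analytic. Your write-up is in fact a bit more explicit than the paper's about the containment $Z_n(H)\leqslant Z_n(U)$ and about why analyticity lifts from the open subgroup back to $H$.
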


\begin{proof}
    By Theorem \ref{padic analytic characterization} (see also \cite[III, 3.4.3]{La1965}, \cite[Theorem 2.1]{LM872}), a finitely generated pro-$p$ group is analytic if and only if it has a powerful subgroup of finite index. Therefore, there exists a subgroup $N/Z_n(H)$ which is a powerful subgroup of finite index. We have that $N/Z_n(N)$ is a quotient of $N/Z_n(H)$ and hence $N/Z_n(N)$ is powerful. By Theorem \ref{P_npowerful} and Theorem \ref{P_npowerfulevenprime}, we have that $P_{n+1}(N)$ is powerful. Since $P_{n+1}(N)$ is open in $N$ and $N$ is open in $H$, we have that $H$ is also $p$-adic analytic.
\end{proof}

%\begin{proof}
%    By \cite[III, 3.4.3]{La1965} or  \cite[Theorem 2.1]{LM872}, a finitely generated pro-$p$ group is analytic if and only if it has a powerful subgroup of finite index.  By Theorem \ref{P_npowerful}, $P_n(H)$ is powerful. By \cite[Proposition 1.16\textit{(iii)}]{DDMS99}, $P_n(H)$ is open in $H$ and therefore, has finite index.
%\end{proof}

We define $\Gamma_1(H)=H$ and $\Gamma_{n+1}(H)=\overline{[\Gamma_n(H),\Gamma_n(H)]}$ for all $n\geq 1.$ While the derived series is traditionally defined without taking closures, we include the closure here to ensure that it remains a pro-$p$ group. This adjustment aligns with the definition of powerfully embedded subgroup, which is defined for closed normal subgroups.
\begin{lemma}\label{Lemma Derived series 1}
    Let $H$ be a profinite group, $p \geq 3$ and $k \geq 3$
    %$\textcolor{red}{p\geq 3}$ and $\textcolor{red}{k\geq 3}$ 
    be natural numbers. Then, \[\bigr[\Gamma_i,_{\;p}[[H,_{\;k-1}H],\Gamma_2,\ldots, \Gamma_{i-1}]\bigl]\leqslant \bigr[[H,_{\;p+k-2}H],\Gamma_2,\ldots,\Gamma_i\bigr]\] for all natural numbers $i\geq 3$.
\end{lemma}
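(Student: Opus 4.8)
The plan is to prove the stated inclusion by induction on $i$, with the three subgroups lemma (a standard consequence of the Hall--Witt identity) as essentially the only tool. Every subgroup occurring below --- each $\gamma_m(H)$, each $\Gamma_m(H)$, and all of their mutual commutators --- is normal, in fact characteristic, in $H$, so the three subgroups lemma applies throughout in the clean form $[[A,B],C]\leqslant[[B,C],A]\,[[C,A],B]$. By continuity of the commutator and power maps (Lemma~\ref{PR:R:2}\textit{(i)}) it suffices to verify the inclusion in every finite continuous quotient of $H$, so we may assume $H$ is a finite $p$-group. Write $L=[H,\prescript{}{k-1}{H}]=\gamma_k(H)$ and $W=[L,\Gamma_2(H),\ldots,\Gamma_{i-1}(H)]$, so the left-hand side is $[\Gamma_i(H),\prescript{}{p}{W}]$. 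Two elementary facts will be used constantly: $W\leqslant L$ and $W\leqslant\Gamma_{i-1}(H)$ (since $L$ and the $\Gamma_m(H)$ are normal), and the numerical estimates $\Gamma_m(H)\leqslant\gamma_{2^{m-1}}(H)$, $[\gamma_a(H),\gamma_b(H)]\leqslant\gamma_{a+b}(H)$, and $[\Gamma_a(H),\Gamma_b(H)]\leqslant\Gamma_{\min(a,b)+1}(H)$.

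The conceptual heart of the argument is this. Each of the $p$ copies of $W$ in $[\Gamma_i(H),\prescript{}{p}{W}]$ carries, as its innermost core, a copy of the lower central term $L=\gamma_k(H)$, wrapped in one string of commutators against $\Gamma_2(H),\ldots,\Gamma_{i-1}(H)$. The idea is to keep one copy of $W$ intact --- it will supply the brackets $\Gamma_2(H),\ldots,\Gamma_{i-1}(H)$ appearing on the right --- and to spend the remaining $p-1$ copies of $W$ together with the outer bracket $\Gamma_i(H)$ purely on boosting the lower central weight of the surviving core. Repeated use of the three subgroups lemma moves the $p-1$ extra cores $\gamma_k(H)$ next to the surviving core and fuses them, raising its weight from $k$ to at least $k+p-1$; the hypotheses $p,k,i\geq 3$ supply the slack in the underlying estimates (e.g.\ $pk\geq k+p-1$ and $(p-1)2^{\,i-2}\geq p-1$). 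Simultaneously the outer $\Gamma_i(H)$ is pushed inward and reappears as the last bracket, so that what survives lies in $[\gamma_{p+k-1}(H),\Gamma_2(H),\ldots,\Gamma_i(H)]=[[H,\prescript{}{p+k-2}{H}],\Gamma_2(H),\ldots,\Gamma_i(H)]$.

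In detail, for the base case $i=3$ (where $\Gamma_2(H)=\gamma_2(H)$ and $\Gamma_3(H)=[\gamma_2(H),\gamma_2(H)]$) I would first apply the three subgroups lemma to expand $\Gamma_3(H)=[\gamma_2(H),\gamma_2(H)]$ and push its two factors past the copies of $W$, and then, one copy of $W$ at a time, move the core $\gamma_k(H)$ of each remaining copy into place beside the first core; at each stage one discards an error term which, by the numerical estimates above, already lies inside $[\gamma_{p+k-1}(H),\Gamma_2(H),\Gamma_3(H)]$ --- because that error term either sits one step further down the derived series (when it arises from fusing two $\Gamma$-factors) or carries strictly larger lower central weight (when it arises from fusing two $\gamma$-factors). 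For the inductive step one peels the last bracket $\Gamma_{i-1}(H)$ off each copy of $W$ and expands $\Gamma_i(H)=[\Gamma_{i-1}(H),\Gamma_{i-1}(H)]$; the three subgroups lemma then reduces the leading block of the commutator to an instance of the statement for $i-1$ (with $\Gamma_i(H)$ in the outer role and $[H,\prescript{}{k-1}{H}]$ unchanged), while the peeled copies of $\Gamma_{i-1}(H)$ and the factors of $\Gamma_i(H)$ rebuild the two final brackets $\Gamma_{i-1}(H),\Gamma_i(H)$ of the target. Again one verifies that every error term is absorbed by the right-hand side.

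The one genuine difficulty is the bookkeeping. Each application of the three subgroups lemma splits a commutator into a ``good'' term, closer to the target shape $[\gamma_{p+k-1}(H),\Gamma_2(H),\ldots,\Gamma_i(H)]$, and an ``error'' term; across the nested induction and the many branchings this generates a sizeable tree of error terms, and one must check --- each time using $\Gamma_m(H)\leqslant\gamma_{2^{m-1}}(H)$, $[\gamma_a(H),\gamma_b(H)]\leqslant\gamma_{a+b}(H)$, $[\Gamma_a(H),\Gamma_b(H)]\leqslant\Gamma_{\min(a,b)+1}(H)$, and the arithmetic coming from $p,k,i\geq3$ --- that each leaf of the tree is contained in $[[H,\prescript{}{p+k-2}{H}],\Gamma_2(H),\ldots,\Gamma_i(H)]$. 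Arranging this accounting so that it closes --- in particular so that the error terms produced while fusing cores are bounded by the stated target rather than by a deeper instance of the statement --- is where the real work lies; the rest is routine commutator calculus.
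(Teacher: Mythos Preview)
Your plan is vastly more complicated than necessary, and because you never actually close the bookkeeping you admit is ``the real work,'' it is not a proof. The paper disposes of the lemma in three lines, without the three subgroups lemma and without any induction on $i$.

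The observation you are missing is that $W\leqslant\Gamma_i(H)$, not merely $W\leqslant\Gamma_{i-1}(H)$. Since $k\geq 3$ gives $\gamma_k(H)\leqslant\gamma_2(H)=\Gamma_2(H)$, one has $[\gamma_k(H),\Gamma_2(H)]\leqslant[\Gamma_2(H),\Gamma_2(H)]=\Gamma_3(H)$, and iterating, $W=[\gamma_k(H),\Gamma_2(H),\ldots,\Gamma_{i-1}(H)]\leqslant\Gamma_i(H)$. With this in hand, Lemma~\ref{PR:R:1}\textit{(iii)} and monotonicity (Lemma~\ref{Commutator comparision}) give
\[
[\Gamma_i,\prescript{}{p}{W}]
=[[W,\Gamma_i],\prescript{}{p-1}{W}]
\leqslant[W,\prescript{}{p}{\Gamma_i}]
=\bigl[H,\underbrace{H,\ldots,H}_{k-1},\Gamma_2,\ldots,\Gamma_{i-1},\underbrace{\Gamma_i,\ldots,\Gamma_i}_{p}\bigr].
\]
The target is $\bigl[H,\underbrace{H,\ldots,H}_{p+k-2},\Gamma_2,\ldots,\Gamma_{i-1},\Gamma_i\bigr]$. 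Both iterated commutators have length $p+k+i-2$, and a direct position-by-position check shows each entry on the left is contained in the corresponding entry on the right: up to position $p+k-1$ the right-hand entry is $H$; at position $p+k-1+m$ (for $1\leq m\leq i-1$) the right-hand entry is $\Gamma_{m+1}$, while the left-hand entry is either $\Gamma_{p+m}$ or $\Gamma_i$, and in either case the required containment holds because $p\geq 2$ and $m\leq i-1$. One more appeal to Lemma~\ref{Commutator comparision} finishes.

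So the elaborate programme of peeling brackets, fusing cores, and tracking a tree of error terms is entirely avoidable; the argument collapses to commutator reversal plus monotonicity once you record $W\leqslant\Gamma_i(H)$.
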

    \begin{proof} Using Lemma \ref{PR:R:1}\textit{(iii)}, we obtain
        \begin{align*}
            \bigr[\Gamma_i,_{\;p}&[[H,_{\;k-1}H],\Gamma_2,\dots ,\Gamma_{i-1}]\bigr]\\
            &=\bigl[[[H,_{\;k-1}H],\Gamma_2,\dots,\Gamma_{i-1}],\Gamma_i,_{\;p-1}[[H,_{\;k-1}H],\Gamma_2,\dots,\Gamma_{i-1}]\bigr]\\
            &\leqslant \bigl[[[H,_{\;k-1}H],\Gamma_2,\dots,\Gamma_{i-1}],_{\;p}\Gamma_i\bigr].
        \end{align*}Now the result follows from Lemma \ref{Commutator comparision}.
    \end{proof}
\begin{lemma}\label{Lemma Derived series 1A}
    Let $H$ be a profinite group, $p$ and $k\geq 2$ be natural numbers. Then, \[[\Gamma_k,_{\;p}\Gamma_k]\leqslant \overline{\bigl[[H,_{\;p}H],\Gamma_2,\dots,\Gamma_k]}.\]
\end{lemma}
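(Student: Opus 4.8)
The plan is to argue by induction on $k$. The case $k=1$ is immediate, since then the right‑hand side is $\overline{\gamma_{p+1}(H)}$, which contains $[\Gamma_1,{}_p\Gamma_1]=\gamma_{p+1}(H)$; the work is in deducing the statement for a given $k\geqslant 2$ from its validity for $k-1$.

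For the inductive step I would set $B=[\Gamma_{k-1},\Gamma_{k-1}]$, so that $\Gamma_k=\overline{B}$ by the definition of the $\Gamma_n$. Since the commutator map is continuous, iterating Lemma~\ref{PR:R:2}\textit{(i)} gives $[\Gamma_k,{}_p\Gamma_k]\leqslant\overline{[B,{}_pB]}$, so it suffices to bound $[B,{}_pB]$. Rewriting $[B,{}_pB]=[[\Gamma_{k-1},\Gamma_{k-1}],{}_pB]$ and using $B\leqslant\Gamma_{k-1}$, the iterated form of Lemma~\ref{Commutator comparision} lets me enlarge entries; the key point is to enlarge only the first $p-1$ of the $p$ trailing copies of $B$ to $\Gamma_{k-1}$, leaving the last copy of $B$ untouched, which gives
\[
[B,{}_pB]\ \leqslant\ \big[[\Gamma_{k-1},{}_p\Gamma_{k-1}],\,B\big].
\]

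I would then feed this into the induction hypothesis $[\Gamma_{k-1},{}_p\Gamma_{k-1}]\leqslant\overline{[[H,{}_pH],\Gamma_2,\dots,\Gamma_{k-1}]}$, pull the closure back outside with Lemma~\ref{PR:R:2}\textit{(i)} once more, and then use $B=[\Gamma_{k-1},\Gamma_{k-1}]\leqslant\overline{[\Gamma_{k-1},\Gamma_{k-1}]}=\Gamma_k$ together with Lemma~\ref{Commutator comparision} to replace the surviving $B$ by $\Gamma_k$. Chaining the inclusions and collapsing the nested closures yields $[\Gamma_k,{}_p\Gamma_k]\leqslant\overline{[[H,{}_pH],\Gamma_2,\dots,\Gamma_k]}$, completing the induction.

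I expect the main obstacle to be the rearrangement of $[B,{}_pB]$: one must see that exactly one copy of $[\Gamma_{k-1},\Gamma_{k-1}]$ has to be kept at the far end of the bracket --- it is that copy which becomes the final ``$\Gamma_k$'' --- whereas enlarging all entries symmetrically would replace $\gamma_{p+1}(H)$ by $\gamma_{p+2}(H)$ and destroy the inclusion. A convenient feature is that $\Gamma_j$, $\gamma_j(H)$ and $B$ are all normal in $H$, so the argument uses only monotonicity and continuity of commutators, with no Hall--Witt or three‑subgroups rearrangement needed; the remaining care is purely in the bookkeeping of which entries of the left‑normed commutators are enlarged and which are kept fixed.
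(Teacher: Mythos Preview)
Your inductive argument is correct. The paper takes a more direct, non-inductive route: it first unwinds the definition of $\Gamma_k$ all the way down in a single step, using the (easy) inclusion $\Gamma_k\leqslant\overline{[\Gamma_1,\Gamma_1,\Gamma_2,\dots,\Gamma_{k-1}]}$ together with Lemma~\ref{PR:R:2}\textit{(i)} to obtain
\[
[\Gamma_k,{}_p\Gamma_k]\ \leqslant\ \overline{[\Gamma_1,\Gamma_1,\Gamma_2,\dots,\Gamma_{k-1},{}_p\Gamma_k]},
\]
a left-normed commutator with $k+p$ entries; it then notes that the target $[[H,{}_pH],\Gamma_2,\dots,\Gamma_k]$ also has $k+p$ entries and that, slot for slot, each entry on the left is contained in the corresponding entry on the right, so Lemma~\ref{Commutator comparision} finishes. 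Your proof instead peels off one layer at a time, keeping exactly one copy of $B=[\Gamma_{k-1},\Gamma_{k-1}]$ at the far right to become the new $\Gamma_k$ and applying the induction hypothesis at level $k-1$ to the rest. Both arguments rest solely on monotonicity and continuity of commutators, so the difference is organizational: the paper's one-shot expansion is shorter once the auxiliary inclusion for $\Gamma_k$ is granted, while your version makes transparent exactly which copy furnishes the terminal $\Gamma_k$. (Your side remark that enlarging \emph{all} trailing copies would ``replace $\gamma_{p+1}(H)$ by $\gamma_{p+2}(H)$'' is not quite the right diagnosis---the actual issue is simply that $\Gamma_{k-1}\not\leqslant\Gamma_k$, so the last slot must stay at $B\leqslant\Gamma_k$---but this does not affect the proof itself.)
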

\begin{proof}
    Note that $[\Gamma_k,_{\;p}\Gamma_k] \leqslant\overline{[\Gamma_1,\Gamma_1,\Gamma_2,\dots,\Gamma_{k-1},_{\;p}\Gamma_k]}$ by Lemma \ref{PR:R:2}\textit{(i)}. Now the proof follows by Lemma \ref{Commutator comparision}.
\end{proof}

\begin{lemma}\label{Lemma Derived series 2}
Let $p$ be an odd prime, and $n\geq 3$, $k\geq 3$ be natural numbers. If $H$ is a finitely generated pro-$p$-group, then 
 \begin{align*}
     \bigl[[H,_{\;k-1}H]^p,\Gamma_2,\ldots,\Gamma_{n-1}\bigr]
        \leqslant \overline{(\Gamma_n)^p\bigl[[H,_{\;p+k-2}H],\Gamma_2,\ldots,\Gamma_{n-1}\bigr]}.
 \end{align*}
    
\end{lemma}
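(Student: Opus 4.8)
The strategy is to prove, by induction on $n\geq 3$, the slightly stronger inclusion
\[
\bigl[[H,_{\,k-1}H]^p,\Gamma_2,\ldots,\Gamma_{n-1}\bigr]\;\leqslant\;\overline{\bigl[[H,_{\,k-1}H],\Gamma_2,\ldots,\Gamma_{n-1}\bigr]^p\,\bigl[[H,_{\,p+k-2}H],\Gamma_2,\ldots,\Gamma_{n-1}\bigr]}\,,
\]
writing throughout $\gamma_k(H)=[H,_{\,k-1}H]$ and $\gamma_{p+k-1}(H)=[H,_{\,p+k-2}H]$. This implies the lemma, since $\gamma_k(H)\leqslant\gamma_2(H)=\Gamma_2$ (using $k\geq 2$) and $[\Gamma_j,\Gamma_j]\leqslant\Gamma_{j+1}$, so a routine induction on $j$ gives $[\gamma_k(H),\Gamma_2,\ldots,\Gamma_j]\leqslant\Gamma_{j+1}$, whence $[\gamma_k(H),\Gamma_2,\ldots,\Gamma_{n-1}]^p\leqslant(\Gamma_n)^p$. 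Retaining the first factor in this refined form rather than collapsing it to $(\Gamma_n)^p$ at the outset is the decisive point: it guarantees that the error term produced at the final step of the induction has precisely the shape required by Lemma~\ref{Lemma Derived series 1}. (A direct induction on the statement as given would instead force one to estimate $[(\Gamma_{n-1})^p,\Gamma_{n-1}]$, whose Theorem~\ref{PR:T:1}-error $[\Gamma_{n-1},_{\,p}\Gamma_{n-1}]$ is controlled, via Lemma~\ref{Lemma Derived series 1A}, only by $[\gamma_{p+1}(H),\Gamma_2,\ldots,\Gamma_{n-1}]$, which is strictly larger than the target error term once $k>2$.)

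For the base case $n=3$, Theorem~\ref{PR:T:1} with $N=\gamma_k(H)$ and $M=\Gamma_2$ gives $[\gamma_k(H)^p,\Gamma_2]\leqslant\overline{[\gamma_k(H),\Gamma_2]^p\,[\Gamma_2,_{\,p}\gamma_k(H)]}$, so it remains to bound $[\Gamma_2,_{\,p}\gamma_k(H)]$ by $[\gamma_{p+k-1}(H),\Gamma_2]$. Using Lemma~\ref{PR:R:1}\textit{(iii)} to rewrite the innermost term as $[\Gamma_2,\gamma_k(H)]=[\gamma_k(H),\Gamma_2]$, and then replacing the trailing copies of $\gamma_k(H)$ one at a time by the larger subgroup $\Gamma_2$ — legitimate by Lemma~\ref{Commutator comparision} since $\gamma_k(H)\leqslant\Gamma_2$ — we obtain $[\Gamma_2,_{\,p}\gamma_k(H)]=\bigl[[\gamma_k(H),\Gamma_2],_{\,p-1}\gamma_k(H)\bigr]\leqslant\bigl[[\gamma_k(H),_{\,p-1}\Gamma_2],\Gamma_2\bigr]$; finally $[\gamma_k(H),_{\,p-1}\Gamma_2]\leqslant[\gamma_k(H),_{\,p-1}H]=\gamma_{p+k-1}(H)$, and one more application of Lemma~\ref{Commutator comparision} closes the base case. (This computation is exactly the $i=2$ instance of Lemma~\ref{Lemma Derived series 1}, which that lemma does not cover; it is valid only because $\gamma_k(H)\leqslant\Gamma_2$.)

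For the inductive step, let $n\geq 4$, set $B=[\gamma_k(H),\Gamma_2,\ldots,\Gamma_{n-2}]$, and apply the refined inclusion for $n-1$ to $[\gamma_k(H)^p,\Gamma_2,\ldots,\Gamma_{n-2}]$. Taking the commutator with $\Gamma_{n-1}$, moving the closure outward by Lemma~\ref{PR:R:2}\textit{(i)} and distributing over the product by Lemma~\ref{PR:R:1}\textit{(i)}, we get
\[
[\gamma_k(H)^p,\Gamma_2,\ldots,\Gamma_{n-1}]\;\leqslant\;\overline{[B^p,\Gamma_{n-1}]\cdot[\gamma_{p+k-1}(H),\Gamma_2,\ldots,\Gamma_{n-1}]}\,.
\]
By Theorem~\ref{PR:T:1} with $N=B$ and $M=\Gamma_{n-1}$, $[B^p,\Gamma_{n-1}]\leqslant\overline{[B,\Gamma_{n-1}]^p\,[\Gamma_{n-1},_{\,p}B]}$; here $[B,\Gamma_{n-1}]=[\gamma_k(H),\Gamma_2,\ldots,\Gamma_{n-1}]$, and Lemma~\ref{Lemma Derived series 1} with $i=n-1\geq 3$ gives $[\Gamma_{n-1},_{\,p}B]=\bigl[\Gamma_{n-1},_{\,p}[\gamma_k(H),\Gamma_2,\ldots,\Gamma_{n-2}]\bigr]\leqslant[\gamma_{p+k-1}(H),\Gamma_2,\ldots,\Gamma_{n-1}]$. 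Substituting back and absorbing the repeated factor yields the refined inclusion for $n$, completing the induction.

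The one genuine obstacle is thus organisational rather than computational: the induction must be arranged so that the error term at each stage has the form $[\Gamma_i,_{\,p}[\gamma_k(H),\Gamma_2,\ldots,\Gamma_{i-1}]]$ with $i\geq 3$, where Lemma~\ref{Lemma Derived series 1} applies, the sole exception being the innermost term $[\Gamma_2,_{\,p}\gamma_k(H)]$ in the base case, handled by the elementary manipulation above. Everything else is routine commutator calculus combined with the continuity estimates of Lemma~\ref{PR:R:2}; finite generation of $H$ enters only to make the commutator subgroups in question closed, so that Theorem~\ref{PR:T:1} and Lemma~\ref{Lemma Derived series 1} are applicable.
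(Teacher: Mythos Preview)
Your proof is correct and follows essentially the same approach as the paper's: push the $p$-th power outward through the chain $\Gamma_2,\ldots,\Gamma_{n-1}$ one step at a time using Theorem~\ref{PR:T:1}, controlling the error term at each step via Lemma~\ref{Lemma Derived series 1} (with the innermost $\Gamma_2$-step handled directly), and then collapse $[\gamma_k(H),\Gamma_2,\ldots,\Gamma_{n-1}]^p$ to $(\Gamma_n)^p$ at the end. The only difference is organisational---you package the ``push'' as an induction on $n$ while the paper carries it out as an iteration for fixed $n$ (equations \eqref{Derived series Lemma step 1} and \eqref{Derived series Zapparin step})---and your bound $[\Gamma_2,_{\,p}\gamma_k(H)]\leqslant[\gamma_{p+k-1}(H),\Gamma_2]$ is slightly coarser than the paper's $[\gamma_{2+(p-1)k}(H),\Gamma_2]$, but both suffice. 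One small point: in the inductive step you should apply Theorem~\ref{PR:T:1} to $\overline{B}$ rather than $B$ and then move closures outward via Lemma~\ref{PR:R:2}, exactly as the paper does in deriving \eqref{Derived series Zapparin step}, since $B=[\gamma_k(H),\Gamma_2,\ldots,\Gamma_{n-2}]$ need not be closed.
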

\begin{proof}
    Using Theorem \ref{PR:T:1} and Lemma \ref{PR:R:1}\textit{(i)}, we obtain 
    \begin{align*}
        &\bigl[[H,_{\;k-1}H]^p,\Gamma_2,\Gamma_3,\dots,\Gamma_{n-1}\bigr]\\
        &\leqslant  \bigl[[[H,_{\;k-1}H],\Gamma_2]^p,\Gamma_3,\dots,\Gamma_{n-1}\bigr]\bigl[[\Gamma_2, _{\;p}[H,_{\;k-1}H]],\Gamma_3,\dots,\Gamma_{n-1}\bigr]\nonumber\\
        &\leqslant \bigl[[[H,_{\;k-1}H],\Gamma_2]^p,\Gamma_3,\dots,\Gamma_{n-1}\bigr] \bigl[[\Gamma_2, _{\;p-1}[H,_{\;k-1}H], [H,_{\;k-1}H]],\Gamma_3,\dots,\Gamma_{n-1}\bigr].    
    \end{align*}
     Observe that \[\bigl[[\Gamma_2, _{\;p-1}[H,_{\;k-1}H], [H,_{\;k-1}H]],\Gamma_3,\dots,\Gamma_{n-1}\bigr]\leqslant \bigl[[\gamma_{2+(p-1)k}(H),\Gamma_2],\Gamma_3,\dots,\Gamma_{n-1}\bigr].\]
     Since $p+k-2 \leq 2+(p-1)k$, 
     \begin{align*}
     [\gamma_{2+(p-1)k}(H),\Gamma_2,\Gamma_3,\dots,\Gamma_{n-1}]\leqslant \bigl[[H,_{\;p+k-2}H],\Gamma_2,\ldots,\Gamma_{n-1}\bigr].
     \end{align*}Thus,
     \begin{align}
        \bigl[[H,&_{\;k-1}H]^p,\Gamma_2,\Gamma_3,\dots,\Gamma_{n-1}\bigr]\nonumber\\
        &\leqslant  \bigl[[[H,_{\;k-1}H],\Gamma_2]^p,\Gamma_3,\dots,\Gamma_{n-1}\bigr]\bigl[[H,_{\;p+k-2}H],\Gamma_2,\ldots,\Gamma_{n-1}\bigr].\label{Derived series Lemma step 1}\end{align}
 
        Let $i$ be a natural number satisfying $2 \leq i \leq n-2$. Using Theorem \ref{PR:T:1}, Lemma \ref{PR:R:2} and Lemma \ref{Lemma Derived series 1} to   yields
        \begin{align} 
        \bigl[[&[H,_{\;k-1}H],\Gamma_2,\dots, \Gamma_i]^p,\Gamma_{i+1}\bigr]\nonumber\\
        &\leqslant\bigl[\overline{[[H,\prescript{}{k-1}{H}],\Gamma_2,\dots, \Gamma_i]}^p,\Gamma_{i+1}\bigr]\nonumber\\
            &\leqslant \bigl[\overline{[[H,_{\;k-1}H],\Gamma_2,\dots, \Gamma_i]},\Gamma_{i+1}\bigr]^p\bigr[\Gamma_{i+1},_{\;p}\overline{[[H,_{\;k-1}H],\Gamma_2,\dots, \Gamma_{i}]}\bigl]\nonumber\\
            & \leqslant \overline{\bigl[[[H,_{\;k-1}H],\Gamma_2,\dots, \Gamma_i],\Gamma_{i+1}\bigr]^p\bigr[\Gamma_{i+1},_{\;p}[[H,_{\;k-1}H],\Gamma_2,\dots, \Gamma_{i}]\bigl]} \nonumber\\
            &\leqslant \overline{\bigl[[H,_{\;k-1}H],\Gamma_2,\dots, \Gamma_{i+1}\bigr]^p\bigr[[H,_{\;p+k-2}H],\Gamma_2,\dots,\Gamma_{i+1}\bigr]}.\label{Derived series Zapparin step}
        \end{align} 
        Using \eqref{Derived series Lemma step 1}, \eqref{Derived series Zapparin step}, Lemma \ref{PR:R:2}\textit{(ii)} and Lemma \ref{PR:R:1}\textit{(i)}, we obtain
        \begin{align*}
        \bigl[[H,&_{\;k-1}H]^p,\Gamma_2,\Gamma_3,\ldots,\Gamma_{n-1}\bigr]\\
        &\leqslant  \overline{\bigl[[[H,_{\;k-1}H],\Gamma_2,\ldots, \Gamma_{n-2}],\Gamma_{n-1}\bigr]^p\bigl[[H,_{\;p+k-2}H],\Gamma_2,\ldots,\Gamma_{n-1}\bigr]}\\
        &\leqslant \overline{\bigl[[[H,H],\Gamma_2,\ldots, \Gamma_{n-2}],\Gamma_{n-1}\bigr]^p\bigl[[H,_{\;p+k-2}H],\Gamma_2,\ldots,\Gamma_{n-1}\bigr]}\\
        &\leqslant \overline{(\Gamma_n)^p\bigl[[H,_{\;p+k-2}H],\Gamma_2,\ldots,\Gamma_{n-1}\bigr]}
        \end{align*}and hence the proof.
\end{proof}

\begin{definition}\cite[Definition 3.6]{Donadze21}\label{D_n(G)}
    Let $H$ be a group. For each $n\geq 1$, we define a group $\mathcal{D}_n(H)$ by\[\mathcal{D}_n(H)=\{h\in H\mid[\ldots[[h,x_1],x_2],\ldots,x_n]=1 \text{ for each }x_i\in \Gamma_i(H)\}.\]
\end{definition}

Note that $\mathcal{D}_n(H)$ is a closed normal subgroup of $H$.

\begin{theorem} \label{Gamma_n is powerful}
    Let $p$ be an odd prime and $H$ be a pro-$p$ group. If $H/\mathcal{D}_{n-1}(H)$ is powerful for some natural number $n\geq 2$, then $\overline{\Gamma_n(H)}$ is powerfully embedded in $H$.
\end{theorem}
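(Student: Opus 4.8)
The plan follows the template of Theorems~\ref{gamma_npowerful} and~\ref{P_npowerful}. First, by Proposition~\ref{Dixon_Profinite_to_finite} it suffices to treat a finite $p$-group $H$: for an open normal $K\unlhd H$ the subgroup $\mathcal{D}_{n-1}(H/K)$ contains the image of $\mathcal{D}_{n-1}(H)$, so $(H/K)/\mathcal{D}_{n-1}(H/K)$ is a quotient of $H/\mathcal{D}_{n-1}(H)$ and hence powerful, while the image of $\overline{\Gamma_n(H)}$ in $H/K$ is $\Gamma_n(H/K)$. We may also assume $n\geq 3$, since the case $n=2$ is exactly Theorem~\ref{gamma_npowerful} (as $\mathcal{D}_1(H)=Z(H)=Z_1(H)$ and $\Gamma_2(H)=\gamma_2(H)$). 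So fix a finite $p$-group $H$ with $H/\mathcal{D}_{n-1}(H)$ powerful, equivalently $\gamma_2(H)\leqslant\overline{H^p}\,\mathcal{D}_{n-1}(H)$; the goal is $[\Gamma_n(H),H]\leqslant\overline{\Gamma_n(H)^p}$. Throughout, commutators are left-normed and $\Gamma_1(H)=H$.

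The first step is formal: using $\Gamma_m(H)=[\Gamma_{m-1}(H),\Gamma_{m-1}(H)]$, the three subgroups lemma and induction on $n$, one gets
\[[\Gamma_n(H),H]\leqslant\bigl[[\gamma_2(H),H],\Gamma_2(H),\ldots,\Gamma_{n-1}(H)\bigr]=\bigl[\gamma_3(H),\Gamma_2(H),\ldots,\Gamma_{n-1}(H)\bigr].\]
Thus it is enough to control the subgroups $[\gamma_k(H),\Gamma_2(H),\ldots,\Gamma_{n-1}(H)]$; note that $[\gamma_2(H),\Gamma_2(H),\ldots,\Gamma_{n-1}(H)]=\Gamma_n(H)$, so the content is precisely that passing from $\gamma_2$ to $\gamma_3$ already forces a $p$th power. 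The key estimate will be: for every $k\geq 3$,
\[\bigl[\gamma_k(H),\Gamma_2(H),\ldots,\Gamma_{n-1}(H)\bigr]\leqslant\overline{\Gamma_n(H)^p\,\bigl[\gamma_{k+1}(H),\Gamma_2(H),\ldots,\Gamma_{n-1}(H)\bigr]}.\]
Granting this, finiteness of $H$ gives $\gamma_N(H)=1$ for some $N$, and iterating the estimate downwards yields $[\gamma_k(H),\Gamma_2(H),\ldots,\Gamma_{n-1}(H)]\leqslant\overline{\Gamma_n(H)^p}$ for all $k\geq 3$; taking $k=3$ and combining with the first step finishes the proof.

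To prove the estimate, write $\gamma_k(H)=[\gamma_2(H),{}_{k-2}H]\leqslant[\overline{H^p}\,\mathcal{D}_{n-1}(H),{}_{k-2}H]$ and expand the right side by Lemma~\ref{PR:R:1}\textit{(i)} as $[\overline{H^p},{}_{k-2}H]\cdot[\mathcal{D}_{n-1}(H),{}_{k-2}H]$; then $[\gamma_k(H),\Gamma_2(H),\ldots,\Gamma_{n-1}(H)]$ lies in the product of $\bigl[[\overline{H^p},{}_{k-2}H],\Gamma_2(H),\ldots,\Gamma_{n-1}(H)\bigr]$ and $\bigl[\mathcal{D}_{n-1}(H),{}_{k-2}H,\Gamma_2(H),\ldots,\Gamma_{n-1}(H)\bigr]$. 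The second factor vanishes: $[\mathcal{D}_{n-1}(H),\Gamma_1(H),\Gamma_2(H),\ldots,\Gamma_{n-1}(H)]=1$ by the definition of $\mathcal{D}_{n-1}(H)$, and for $k\geq 4$ one reduces the extra copies of $H=\Gamma_1(H)$ to this case by pushing them past $\Gamma_2(H),\ldots,\Gamma_{n-1}(H)$ with the three subgroups lemma, each correction term being of the form $\bigl[\mathcal{D}_{n-1}(H),\Gamma_1(H),\ldots,[\Gamma_1(H),\Gamma_m(H)],\ldots\bigr]\leqslant[\mathcal{D}_{n-1}(H),\Gamma_1(H),\Gamma_2(H),\ldots,\Gamma_{n-1}(H)]=1$ since $[\Gamma_1(H),\Gamma_m(H)]\leqslant\Gamma_m(H)$. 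For the first factor, Theorem~\ref{PR:T:2} gives $[\overline{H^p},{}_{k-2}H]\leqslant\gamma_{k-1}(H)^p\,\gamma_{p+k-2}(H)$, so it is contained in $\bigl[\gamma_{k-1}(H)^p,\Gamma_2(H),\ldots,\Gamma_{n-1}(H)\bigr]\cdot\bigl[\gamma_{p+k-2}(H),\Gamma_2(H),\ldots,\Gamma_{n-1}(H)\bigr]$; by Lemma~\ref{Lemma Derived series 2} (and, for $k=3$, by the same argument with $\gamma_2$ in place of $\gamma_k$, using Theorem~\ref{PR:T:1}, Lemma~\ref{Lemma Derived series 1A} and induction) the first of these is at most $\overline{\Gamma_n(H)^p\,[\gamma_{p+k-2}(H),\Gamma_2(H),\ldots,\Gamma_{n-1}(H)]}$; and since $p\geq 3$ we have $p+k-2\geq k+1$, hence $\gamma_{p+k-2}(H)\leqslant\gamma_{k+1}(H)$. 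This gives the estimate.

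The hard part will be the vanishing of the $\mathcal{D}_{n-1}(H)$-contribution when two or more copies of $H$ precede the $\Gamma_i(H)$'s, i.e. the identity $[\mathcal{D}_{n-1}(H),{}_jH,\Gamma_2(H),\ldots,\Gamma_{n-1}(H)]=1$ for $j\geq 2$, which is not visible from the definition of $\mathcal{D}_{n-1}$ and needs the commutator-rearrangement argument sketched above; the only other delicate point is the endpoint $k=3$, where Lemma~\ref{Lemma Derived series 2} is used with base index $2$, i.e. one needs its (routine) extension to that index. Everything else is bookkeeping with Theorems~\ref{PR:T:1}, \ref{PR:T:2} and Lemmas~\ref{Lemma Derived series 1}--\ref{Lemma Derived series 2}, exactly as in the proof of Theorem~\ref{P_npowerful}.
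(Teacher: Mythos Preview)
Your proposal is correct and follows essentially the same route as the paper: reduce to finite $H$, get $[\Gamma_n,H]\leqslant[\gamma_3,\Gamma_2,\ldots,\Gamma_{n-1}]$ via the three subgroup lemma, then iterate an estimate of the shape $[\gamma_k,\Gamma_2,\ldots,\Gamma_{n-1}]\leqslant\Gamma_n^p\cdot[\gamma_{k'},\Gamma_2,\ldots,\Gamma_{n-1}]$ with $k'>k$, using $\gamma_2\leqslant H^p\mathcal{D}_{n-1}$, Theorems~\ref{PR:T:1}, \ref{PR:T:2} and Lemma~\ref{Lemma Derived series 2}. The paper organizes the base step a little differently (it first runs an induction on $i$ to reach $[\Gamma_n,H]\leqslant\Gamma_n^p\cdot[[H,_pH],\Gamma_2,\ldots,\Gamma_{n-1}]$ directly, which is exactly your ``$k=3$ endpoint'' argument), but the substance is the same.

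One simplification: what you flag as ``the hard part'', namely $[\mathcal{D}_{n-1}(H),{}_jH,\Gamma_2,\ldots,\Gamma_{n-1}]=1$ for $j\geq 2$, needs no commutator rearrangement. Since $\mathcal{D}_{n-1}(H)\unlhd H$, the subgroup $[\mathcal{D}_{n-1}(H),H]$ is normal, whence $[\mathcal{D}_{n-1}(H),{}_jH]\leqslant[\mathcal{D}_{n-1}(H),H]$ for every $j\geq 1$ by induction; now apply $[\cdot,\Gamma_2,\ldots,\Gamma_{n-1}]$ and use the defining property of $\mathcal{D}_{n-1}(H)$. This is exactly how the paper passes over this point without comment.
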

\begin{proof}
    As before, we can assume that $H$ is finite. The case $n=2$ follows from Theorem \ref{gamma_npowerful}. For ease of notation, we will denote $\Gamma_i(H)$ by $\Gamma_i$ throughout this proof. Now we first show that ${[\Gamma_n,H]\leqslant \bigl[[\Gamma_2,H], \Gamma_2,\ldots, \Gamma_{n-1}\bigr]}$ for any $n\geq 3.$ By the three subgroup lemma, the case $n=3$ follows easily. Assume the result is true for $n=k-1$, that is, $[\Gamma_{k-1},H]\leqslant \bigl[[\Gamma_2,H], \Gamma_2,\ldots, \Gamma_{k-2}\bigr]$. By the three subgroup lemma, we have 
    \begin{align*}
        [\Gamma_k, H]&= \bigl[[\Gamma_{k-1},\Gamma_{k-1}],H\bigr]\\
        &\leqslant \bigl[[\Gamma_{k-1}, H],\Gamma_{k-1}\bigr]\leqslant \bigl[[\Gamma_2,H], \Gamma_2,\ldots, \Gamma_{k-2},\Gamma_{k-1}\bigr],
    \end{align*}and hence by induction on $n$ the claim follows. Since $H/\mathcal{D}_{n-1}(H)$ is powerful, we have $\Gamma_2\leqslant H^p\mathcal{D}_{n-1}(H)$. Thus,
    \begin{align*}
        [\Gamma_n, H]&\leqslant \bigl[[\Gamma_2,H], \Gamma_2,\ldots, \Gamma_{n-1}\bigr]\\
        &\leqslant \bigl[[H^p\mathcal{D}_{n-1}(H),H], \Gamma_2,\ldots, \Gamma_{n-1}\bigr]\\
        &\leqslant \bigl[[H^p,H], \Gamma_2,\ldots, \Gamma_{n-1}\bigr]\bigl[[\mathcal{D}_{n-1}(H),H], \Gamma_2,\ldots, \Gamma_{n-1}\bigr].
    \end{align*}By definition of $\mathcal{D}_{n-1}(H)$, we have $\bigl[[\mathcal{D}_{n-1}(H),H], \Gamma_2,\ldots, \Gamma_{n-1}\bigr]=1$. Using Theorem \ref{PR:T:1}, we obtain 
    \begin{equation}
         [\Gamma_n, H]\leqslant\bigl[[H,H]^p,\Gamma_2,\ldots, \Gamma_{n-1}\bigr]\bigl[[H,_{\;p} H],\Gamma_2,\ldots, \Gamma_{n-1}\bigr].\label{Derived series claim 0}
    \end{equation}
  
      We now show by induction on $i$ that 
      \begin{align}
          [\Gamma_n, H]\leqslant \bigl[(\Gamma_i)^p,\Gamma_i,\ldots, \Gamma_{n-1}\bigr]\bigl[[H,_{\;p} H],\Gamma_2,\ldots, \Gamma_{n-1}\bigr] \label{Theorem derived powerful_claim 1}
      \end{align}
      for any $i$ with $2\leq i<n-1$. The case $i=2$ follows from \eqref{Derived series claim 0}. Assume the claim is true for $i=k$, and applying Theorem \ref{PR:T:1}, Lemma \ref{PR:R:1}\textit{(i)} and Lemma~\ref{Lemma Derived series 1A} yields
      \begin{align*}
         [&\Gamma_n, H]\\
         &\leqslant \bigl[[(\Gamma_k)^p,\Gamma_k],\Gamma_{k+1},\ldots, \Gamma_{n-1}\bigr]\bigl[[H,_{\;p} H],\Gamma_2,\ldots, \Gamma_{n-1}\bigr] \\
         &\leqslant  \bigl[[\Gamma_k,\Gamma_k]^p,\Gamma_{k+1},\ldots, \Gamma_{n-1}\bigr]\bigl[[\Gamma_k,_{\;p}\Gamma_k],\Gamma_{k+1},\ldots, \Gamma_{n-1}\bigr]\bigl[[H,_{\;p} H],\Gamma_2,\ldots, \Gamma_{n-1}\bigr]\\
         &\leqslant  \bigl[\big( [\Gamma_k,\Gamma_k]\big)^p,\Gamma_{k+1},\ldots, \Gamma_{n-1}\bigr] \bigl[[H,_{\;p} H],\Gamma_2,\ldots, \Gamma_{n-1}\bigr]\bigl[[H,_{\;p} H],\Gamma_2,\ldots, \Gamma_{n-1}\bigr]\\
         &\leqslant  \bigl[(\Gamma_{k+1})^p,\Gamma_{k+1},\ldots, \Gamma_{n-1}\bigr]\bigl[[H,_{\;p} H],\Gamma_2,\ldots, \Gamma_{n-1}\bigr].
      \end{align*}
        Hence \eqref{Theorem derived powerful_claim 1} holds. Similarly, we have that 
            \begin{align}
         [\Gamma_n, H]&\leqslant  \bigl[[(\Gamma_{n-2})^p,\Gamma_{n-2}],\Gamma_{n-1}\bigr]\bigl[[H,_{\;p} H],\Gamma_2,\ldots, \Gamma_{n-1}\bigr]  \nonumber\\
         &\leqslant  \bigl[[\Gamma_{n-2},\Gamma_{n-2}]^p,\Gamma_{n-1}\bigr]\bigl[[\Gamma_{n-2},_{\;p}\Gamma_{n-2}],\Gamma_{n-1}\bigr]\bigl[[H,_{\;p} H],\Gamma_2,\ldots, \Gamma_{n-1}\bigr]\nonumber\\
         &\leqslant  \bigl[(\Gamma_{n-1})^p,\Gamma_{n-1}\bigr]\bigl[[H,_{\;p} H],\Gamma_2,\ldots, \Gamma_{n-1}\bigr] \nonumber\\
         &\leqslant  \bigl[\Gamma_{n-1},\Gamma_{n-1}\bigr]^p\bigl[\Gamma_{n-1},_{\;p}\Gamma_{n-1}\bigr]\bigl[[H,_{\;p} H],\Gamma_2,\ldots, \Gamma_{n-1}\bigr]\nonumber\\
         &\leqslant  (\Gamma_n)^p\bigl[[H,_{\;p} H],\Gamma_2,\ldots, \Gamma_{n-1}\bigr] \label{Theorem derived powerful_claim 2}.
      \end{align}
Moreover for any $k\geq 3$, \begin{align*}
    \bigl[[H,_{\;k} H],\Gamma_2,\ldots, \Gamma_{n-1}\bigr]&= \bigl[[\Gamma_2,_{\;k-1} H],\Gamma_2,\ldots, \Gamma_{n-1}\bigr]\\
    &\leqslant\bigl[[H^p\mathcal{D}_{n-1},_{\;k-1}H],\Gamma_2,\ldots,\Gamma_{n-1}\bigr]\\
    &\leqslant\bigl[[H^p,_{\;k-1}H],\Gamma_2,\ldots,\Gamma_{n-1}\bigr].
    \end{align*}
    Applying Theorem \ref{PR:T:2} and Lemma \ref{PR:R:1}\textit{(i)} to $\bigl[[H^p,_{\;k-1}H],\Gamma_2,\ldots,\Gamma_{n-1}\bigr]$, we obtain
    \begin{align}
        \bigl[[H,&_{\;k} H],\Gamma_2,\ldots, \Gamma_{n-1}\bigr]\nonumber\\
        &\leqslant\bigl[[H,_{\;k-1}H]^p[H,_{\;p+k-2}H],\Gamma_2,\ldots,\Gamma_{n-1}\bigr]\nonumber\\
        &\leqslant\bigl[[H,_{\;k-1}H]^p,\Gamma_2,\ldots,\Gamma_{n-1}\bigr]\bigl[[H,_{\;p+k-2}H],\Gamma_2,\ldots,\Gamma_{n-1}\bigr].\label{Derived series Claim 3}
    \end{align}Using Lemma \ref{Lemma Derived series 2} in \eqref{Derived series Claim 3} yields
\begin{align}
    \bigl[[H,_{\;k}& H],\Gamma_2,\ldots, \Gamma_{n-1}\bigr]\nonumber\\
    &\leqslant   (\Gamma_n)^p\bigl[[H,_{\;p+k-2}H],\Gamma_2,\ldots,\Gamma_{n-1}\bigr]\bigl[[H,_{\;p+k-2}H],\Gamma_2,\ldots,\Gamma_{n-1}\bigr]\nonumber\\
    &\leqslant   (\Gamma_n)^p\bigl[[H,_{\;p+k-2}H],\Gamma_2,\ldots,\Gamma_{n-1}\bigr].\label{Derived series final claim}
\end{align}Repeatedly applying \eqref{Derived series final claim} in \eqref{Theorem derived powerful_claim 2} yields
\[ [\Gamma_n, H]\leqslant  (\Gamma_n)^p \bigl[[H,_{\;p+r(p-2)} H],\Gamma_2,\ldots, \Gamma_{n-1}\bigr],\] for every $r\geq 1$. Since $H$ is finite, we have $[\Gamma_n, H]\leqslant \Gamma_n^p$
%\bigcap\limits_{i=1}^{\infty}\overline{(\Gamma_n)^p\gamma_i(H)}$. Since $\overline{(\Gamma_n)^p}$ is a closed subgroup of a pro-$p$ group, it is compact. Moreover, by Theorem \ref{NikSeg2007}, we have $\gamma_i(H)$ is a closed subgroup. Using Lemma \ref{PR:R:2} \textcolor{red}{if this sentence becomes uncommented, consider that this proof might be in terms of finite groups}, we have 
%\begin{align*}
%    [\Gamma_n, H]&\leqslant \bigcap\limits_{i=1}^{\infty}\overline{(\Gamma_n)^p\gamma_i(H)}\leqslant \bigcap\limits_{i=1}^{\infty}\overline{\overline{(\Gamma_n)^p}\gamma_i(H)}\\
%    &\leqslant \bigcap\limits_{i=1}^{\infty} \overline{(\Gamma_n)^p}\gamma_i(H)\leqslant \bigcap\limits_{i=1}^{\infty}\overline{(\Gamma_n)^p}P_i(H).
%\end{align*}
% Since $H$ is finitely generated, by \cite[Proposition 1.16\textit{(iii)}]{DDMS99}, we have that the set ${\{P_i(H) \mid i \geq 1\}}$ is a base for the neighborhoods of $1$ in $H$. Thus 
%\begin{align*}
%    [\Gamma_n, H]  \leqslant \bigcap_{i=1}^{\infty} \overline{(\Gamma_n)^p} P_{i}(H)= \bigcap \limits_{N \unlhd_o H}\overline{(\Gamma_n)^p} N.
%\end{align*}
%Using \cite[Proposition 1.2\textit{(iii)}]{DDMS99}, we have $[\Gamma_n,H]  \leqslant  \overline{(\Gamma_n)^p}$
and hence the proof.
\end{proof}

\section{Crossed square of powerful $p$-groups \Romannum{1}.} \label{section: crossed square I}
Recall that a crossed module is a group homomorphism $\mu: M \to G$ together with an action of $G$ on $M$, which satisfies the following conditions
\begin{align}
    &\mu(^g m)=g\mu(m)g^{-1} , g\in G, m\in M\label{E:Crossed module 1}\\
        &^{\mu(m)}m'=mm'm^{-1}, m,m'\in M\label{E:Crossed module 2}.
\end{align}
Throughout the rest of the paper, we consider only actions via automorphisms.
\begin{lemma} \label{Ginvariant is normal}
    Let $\mu: M \to G$ be a crossed module. If $N$ is a $G$-invariant subgroup of $M$, then $N$ is a normal subgroup of $M$. Moreover, the restriction of $\mu$ to $N$ is a crossed module.
\end{lemma}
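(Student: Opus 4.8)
The plan is to derive normality directly from the second crossed module axiom \eqref{E:Crossed module 2}, and then to observe that the crossed module structure on $\mu|_N$ is obtained by literal restriction of the data on $\mu$.

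First I would prove $N \trianglelefteq M$. Fix $m \in M$ and $n \in N$. By \eqref{E:Crossed module 2} applied with $m' = n$ (using $n \in N \subseteq M$), we have $mnm^{-1} = {}^{\mu(m)}n$. Now $\mu(m) \in G$, and since $N$ is $G$-invariant, ${}^{\mu(m)}n \in N$. Hence $mnm^{-1} \in N$ for all $m \in M$, so $N$ is normal in $M$.

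Next I would check that $\mu|_N : N \to G$ is a crossed module. The homomorphism $\mu|_N$ is the restriction of a homomorphism, hence a homomorphism. The $G$-action on $M$ restricts to $N$ because $N$ is $G$-invariant, and since each $g \in G$ acts on $M$ by an automorphism that preserves $N$, its restriction is an automorphism of $N$; thus $G$ acts on $N$ via automorphisms. For the first axiom, $\mu(^g n) = g\,\mu(n)\,g^{-1}$ for $g \in G$ and $n \in N$ is just the instance of \eqref{E:Crossed module 1} for the element $n \in M$. For the second axiom, $^{\mu(n)}n' = n n' n^{-1}$ for $n, n' \in N$ is the instance of \eqref{E:Crossed module 2} for the elements $n, n' \in M$. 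Therefore $\mu|_N$ satisfies both conditions and is a crossed module.

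There is essentially no serious obstacle here: the argument is a direct verification. The only point worth stating carefully is that $G$-invariance is exactly what is needed both to make the action restrict to $N$ and to force normality via \eqref{E:Crossed module 2}; everything else is inherited verbatim from the ambient crossed module.
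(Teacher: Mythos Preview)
Your proof is correct and follows exactly the paper's approach: conjugation $mnm^{-1}$ is rewritten via \eqref{E:Crossed module 2} as ${}^{\mu(m)}n$, and $G$-invariance of $N$ gives normality. You additionally spell out the verification that $\mu|_N$ is a crossed module, which the paper's proof leaves implicit.
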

\begin{proof}
    For all $m \in M$, $n \in N$, we have that $mnm^{-1}=\prescript{\mu(m)}{}{n}$. As $N$ is $G$-invariant, we get that $\prescript{m}{}{n} \in N$.
\end{proof}
	
\begin{definition} \label{crosseqsquaredefinition}
    A crossed square is a commutative square of groups
\begin{center}
    \begin{tikzcd} \label{Gtensorcrossedsquare}
	L\arrow[r,"\beta"]\arrow[d,"\alpha"] & N\arrow[d,"\nu"] 
	\\ M \arrow[r, "\mu"]& G
			\end{tikzcd}
\end{center}
together with action of $G$ on $L,M,N$ (and hence actions of $M$ on $L$ and $N$ via $\mu$ and of $N$ on $L$ and $M$ via $\nu$) and a function $h:M\times N\to L. $ This structure shall satisfy the following axioms:
\begin{enumerate}
    \item [(i)] The maps $\alpha, \beta$ preserve the action of $G$; further, with the given action, the maps $\mu,\nu $ and $\kappa=\mu\alpha=\nu\beta$ are crossed modules
    \item [(ii)]$\alpha h(m,n)=m\prescript{n}{}{m^{-1}}, \alpha h(m,n)=\prescript{m}{}{n}n^{-1}$
    \item[(iii)]$h(\alpha l,n)= l\prescript{n}{}{l^{-1}}, h(m,\beta l)=\prescript{m}{}{l}l^{-1}$
    \item [(iv)]$h(mm',n)=\prescript{m}{}{h(m',n)}h(m,n), h(m,nn')=h(m,n)\prescript{n}{}{h(m,n')}$
    \item [(v)]$h(\prescript{g}{}{m},\prescript{g}{}{n})=\prescript{g}{}{h(m,n)}$
\end{enumerate}for all $l\in L,m,m'\in M,n,n'\in N$ and $g\in G$.
\end{definition}

\begin{prop}[Proposition 2.15, \cite{BL87}]\label{tensor is crossed square}Let $\mu:M\to G, \nu:N\to G$ be a crossed modules, so that $M,N$ act on both $M$ and $N$ via $G$. Then there is a crossed square \begin{equation}
    \begin{tikzcd} \label{Diagram tensor is crossed square}
	M\otimes N\arrow[r,"\beta"]\arrow[d,"\alpha"] & N\arrow[d,"\nu"] 
	\\ M \arrow[r, "\mu"]& G
			\end{tikzcd}
\end{equation}where $\alpha(m\otimes n)=m^nm^{-1}$ and $\beta(m\otimes n)=\prescript{m}{}{n}n^{-1},$ and $h(m,n)=m\otimes n.$    
\end{prop}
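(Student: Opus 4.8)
The plan is to recognize $M\otimes N$ as the nonabelian tensor product of $M$ and $N$ with respect to the mutual actions coming from the two crossed modules, and then to verify the five axioms of Definition~\ref{crosseqsquaredefinition} essentially by direct computation. Since $\mu:M\to G$ and $\nu:N\to G$ are crossed modules, $M$ acts on $N$ by $\prescript{m}{}{n}:=\prescript{\mu(m)}{}{n}$, $N$ acts on $M$ by $\prescript{n}{}{m}:=\prescript{\nu(n)}{}{m}$, and each of $M$ and $N$ acts on itself by conjugation. The first point to check is that these actions are \emph{compatible}, i.e.\ $\prescript{(\prescript{m}{}{n})}{}{m'}=\prescript{m}{}{\bigl(\prescript{n}{}{(\prescript{m^{-1}}{}{m'})}\bigr)}$ and the symmetric identity in $N$; both follow at once from the $G$-equivariance \eqref{E:Crossed module 1} of $\mu$ and $\nu$, for instance via $\nu(\prescript{\mu(m)}{}{n})=\mu(m)\nu(n)\mu(m)^{-1}$. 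One then takes $M\otimes N$ to be the group generated by symbols $m\otimes n$ subject to $mm'\otimes n=(\prescript{m}{}{m'}\otimes\prescript{m}{}{n})(m\otimes n)$ and $m\otimes nn'=(m\otimes n)(\prescript{n}{}{m}\otimes\prescript{n}{}{n'})$, and sets $h(m,n)=m\otimes n$.

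Next I would install the structure maps and handle the easy axioms. The rule $\prescript{g}{}{(m\otimes n)}:=\prescript{g}{}{m}\otimes\prescript{g}{}{n}$ is consistent with the two defining relations — this uses that $G$ acts by automorphisms and that $\mu,\nu$ are $G$-equivariant — so it defines a $G$-action on $M\otimes N$, and axiom (v) for $h$ is exactly this rule read on generators. That $\alpha$ and $\beta$ (as in the statement) are well-defined homomorphisms is checked by confirming that the elements $m\,\prescript{n}{}{m^{-1}}\in M$ and $\prescript{m}{}{n}\,n^{-1}\in N$ satisfy those same two relations, a short manipulation invoking only \eqref{E:Crossed module 1} and \eqref{E:Crossed module 2}. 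For axiom (i): $\alpha$ and $\beta$ are $G$-equivariant by construction of the $G$-action on $M\otimes N$; moreover $\mu\alpha=\nu\beta$ because both send $m\otimes n$ to the commutator $[\mu(m),\nu(n)]\in G$, so $\kappa:=\mu\alpha=\nu\beta$ is well defined, and a routine check shows $\kappa$ is a crossed module. Axiom (ii) is immediate, being the definition of $\alpha,\beta$ applied to $h(m,n)=m\otimes n$, and axiom (iv) is precisely the pair of defining relations of $M\otimes N$ restated in terms of $h$.

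The substantive step is axiom (iii): for every $l\in M\otimes N$ one must show $h(\alpha l,n)=l\,\prescript{n}{}{(l^{-1})}$ and $h(m,\beta l)=\prescript{m}{}{l}\,l^{-1}$, where $N$ and $M$ act on $l$ via $\nu$, $\mu$ and the $G$-action constructed above. By induction on word length it suffices to treat generators $l=m'\otimes n'$, and for this one needs a few auxiliary identities inside $M\otimes N$ — the key one being $(m\otimes n)^{-1}=\prescript{m}{}{(m^{-1}\otimes n)}=\prescript{n}{}{(m\otimes n^{-1})}$, which drops out of expanding $1=(mm^{-1})\otimes n$ and $1=m\otimes(nn^{-1})$ — after which $h(\alpha(m'\otimes n'),n)=\bigl(m'\,\prescript{n'}{}{m'^{-1}}\bigr)\otimes n$ is expanded by the first defining relation and matched against $(m'\otimes n')\bigl(\prescript{n}{}{m'}\otimes\prescript{n}{}{n'}\bigr)^{-1}$; the second identity is handled symmetrically. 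This bookkeeping with the tensor relations is where the difficulty lies, and it is exactly the content of \cite[\S 2]{BL87}, so in the write-up I would either reproduce this chain of identities or, since the statement is quoted there verbatim, simply appeal to \cite[Proposition~2.15]{BL87}.
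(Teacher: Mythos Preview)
The paper does not supply its own proof of this proposition: it is quoted verbatim from \cite[Proposition~2.15]{BL87} and used as a black box, so there is no argument in the paper to compare against. Your sketch is correct and is essentially the standard verification one finds in \cite{BL87}; in particular your identification of axiom~(iii) as the only step requiring real computation, and the reduction to generators via the identity $(m\otimes n)^{-1}=\prescript{m}{}{(m^{-1}\otimes n)}=\prescript{n}{}{(m\otimes n^{-1})}$, matches the original treatment. Since the present paper is content to cite the result, your final suggestion---to appeal directly to \cite[Proposition~2.15]{BL87}---is exactly what the authors do.
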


Setting $N=G$ and $\nu:N\to G $ as identity in \eqref{Diagram tensor is crossed square}, the crossed module $\mu:M\to G$ induces the following crossed square
\begin{equation}\label{tenssquare}
    \begin{tikzcd} 
	M \otimes G \arrow[r,"\beta"]\arrow[d,"\alpha"] & G\arrow[d,"id"] 
	\\ M \arrow[r, "\mu"]& G
			\end{tikzcd}
\end{equation} where $\alpha$ and $\beta$ are defined in Proposition \ref{tensor is crossed square}. For $n\geq 1$, consider the natural homomorphism $\tau_n:M^{p^n}\otimes G\to M\otimes G$. By abuse of notation, we will denote $\im{\tau_n}$ as $M^{p^n}\otimes G$, and so $M^{p^n}\otimes G$  can be regarded as a subgroup of $M\otimes G$ generated by the elements of the form $m^{p^n}\otimes g$. Moreover, $M^{p^n}\otimes G$ is a $G$-invariant subgroup, and hence it is a normal subgroup of $M\otimes G$ by Lemma~\ref{Ginvariant is normal}. We now recall a result from \cite[Proposition 2.3]{BL87}, which will be used in the proof of Lemma \ref{commutator_of_ tensor_of_powerful_groups}.
\begin{lemma}[Proposition 2.3, \cite{BL87}]\label{BL identity}
    Let $M,N$ be groups equipped with compatible actions on each other.
    \begin{enumerate}
        \item [(i)] There are homomorphisms $\alpha: M\otimes N\to M$, $\beta:M\otimes N\to N$ such that $\alpha(m\otimes n)= m^nm^{-1}$, $\beta(m\otimes n)=\;^mnn^{-1}$.
        \item [(ii)] The homomorphisms $\alpha,\beta$, with the given actions, are crossed modules.
        \item [(iii)] If $l\in M\otimes N, m\in M, n\in N$, then 
        \begin{align*}
            (\alpha l)\otimes n&= l^nl^{-1}\\
            m\otimes\beta l&= \;^mll^{-1}.
        \end{align*}
        \item[(iv)] If $l,l'\in M\otimes N$, then $[l,l']=\alpha l\otimes \beta l'.$
        
    \end{enumerate}
\end{lemma}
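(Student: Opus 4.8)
The plan is to observe that the four assertions are precisely the statements of \cite[Proposition 2.3]{BL87} specialized to the non-abelian tensor product $M \otimes N$ of two groups acting compatibly on one another, so the proof amounts to recalling the defining presentation of $M \otimes N$ and checking the relations are respected. Recall that $M \otimes N$ is generated by symbols $m \otimes n$ ($m \in M$, $n \in N$) subject to $(mm') \otimes n = (\prescript{m}{}{m'} \otimes \prescript{m}{}{n})(m \otimes n)$ and $m \otimes (nn') = (m \otimes n)(\prescript{n}{}{m} \otimes \prescript{n}{}{n'})$. For (i), I would define $\alpha$ and $\beta$ on generators by $\alpha(m \otimes n) = m^n m^{-1}$ and $\beta(m \otimes n) = \prescript{m}{}{n} n^{-1}$, and verify directly that each defining relation of $M \otimes N$ maps to a valid identity in $M$ (resp. $N$); this is the routine but essential well-definedness check, and it uses the compatibility axioms for the two actions.

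Next, for (ii), the actions of $M$ on $N$ and of $N$ on $M$ (and the conjugation actions on themselves) make $M \otimes N$ into a group on which $M$ and $N$ both act — via the rule $\prescript{m}{}{(x \otimes y)} = (\prescript{m}{}{x} \otimes \prescript{m}{}{y})$ and similarly for $N$ — and I would check the two crossed-module axioms \eqref{E:Crossed module 1} and \eqref{E:Crossed module 2} for $\alpha: M \otimes N \to M$: the Peiffer identity $\prescript{\alpha l}{}{l'} = l l' l^{-1}$ and the equivariance $\alpha(\prescript{m}{}{l}) = m\, (\alpha l)\, m^{-1}$. For (iii), I would use the generator-wise description: it suffices to check $(\alpha l) \otimes n = l^n l^{-1}$ when $l = m \otimes n'$ is a generator, where it reduces to $(m^{n'} m^{-1}) \otimes n$, and then expand using the defining relations and bilinearity; the general case follows since both sides are (anti)multiplicative in $l$. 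The identity $m \otimes \beta l = \prescript{m}{}{l}\, l^{-1}$ is the symmetric statement. Finally, (iv) follows by combining (iii) with itself: for $l, l' \in M \otimes N$ one computes $[l, l'] = l\, l'\, l^{-1}\, l'^{-1}$, rewrites $l\, l'\, l^{-1}$ using the action as $\prescript{\alpha l}{}{l'}$ (this is exactly the Peiffer identity from (ii)), and then $\prescript{\alpha l}{}{l'}\, l'^{-1} = (\alpha l) \otimes \beta l'$ by (iii).

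The main obstacle is purely bookkeeping: verifying that $\alpha$ and $\beta$ are well-defined homomorphisms, i.e. that the images of the defining relators of $M \otimes N$ vanish, which requires careful use of the compatibility conditions between the $M$- and $N$-actions. Since this lemma is quoted verbatim from \cite{BL87}, I would in practice simply cite it; the sketch above indicates how one would reconstruct the proof if needed.
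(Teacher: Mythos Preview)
Your proposal is correct and matches the paper's approach: the paper does not give a proof of this lemma at all, but simply quotes it as \cite[Proposition 2.3]{BL87}, exactly as you recognize in your final paragraph. Your sketch of how one would reconstruct the argument (well-definedness of $\alpha,\beta$ on the presentation, verification of the crossed-module axioms, reduction of (iii) to generators, and the derivation of (iv) from the Peiffer identity combined with (iii)) is accurate and goes beyond what the paper itself provides.
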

\begin{lemma}\label{commutator_of_ tensor_of_powerful_groups}
    Let $p$ be an odd prime, $M$ and $G$ be finite $p$-groups, and ${\mu:M\to G}$ be a crossed module. If $M$ is a powerful $p$-group and $\mu(M)$ is powerfully embedded in $G$, then $\gamma_2(M\otimes G)\leqslant M^p\otimes G.$
\end{lemma}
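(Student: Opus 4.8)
The plan is to use Lemma~\ref{BL identity}\textit{(iv)}, which identifies every commutator in $M \otimes G$ as $[l,l'] = \alpha l \otimes \beta l'$, so that $\gamma_2(M \otimes G)$ is generated by elements $\alpha l \otimes \beta l'$ with $l, l' \in M \otimes G$. Since $\im \alpha = [M,G] \cdot$ (more precisely $\alpha(m \otimes g) = m^{g}m^{-1}$, so $\im \alpha \leqslant [M,G]$, viewing $M$ as normal in the semidirect product or using the crossed module action) and $\im \beta \leqslant [M,G] \leqslant \mu(M)$ inside $G$, the generators of $\gamma_2(M \otimes G)$ lie in the subgroup generated by elements $a \otimes b$ with $a \in [M,G]$ and $b \in [M,G] \leqslant \mu(M)$. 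Now the hypotheses come in: $M$ powerful gives $[M,M] \leqslant M^{p}$, and $\mu(M)$ powerfully embedded in $G$ gives $[\mu(M), G] \leqslant \mu(M)^{p}$, hence $[M,G] \leqslant$ (preimage considerations) $M^{p} \cdot (\text{something in }\ker\mu)$; the key point I would isolate is that $\alpha l \in M^{p}$ for all $l$, because $\alpha l = m^{g}m^{-1} \cdots = [m,g^{-1}]^{-1}\cdots$ is a product of commutators $[m,g]$ which, by the crossed module relation and powerfulness of $M$ together with powerful embedding of $\mu(M)$, lands in $M^{p}$.

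So the first real step is: \emph{show $\im \alpha \leqslant M^{p}$.} Here $\alpha(m \otimes g) = m^{g}m^{-1}$, and I want to argue $m^{g}m^{-1} \in M^{p}$ for every $m \in M$, $g \in G$. If $g = \mu(m')$ for some $m' \in M$, then by the crossed module axiom \eqref{E:Crossed module 2}, $m^{g}m^{-1} = m' m m'^{-1} m^{-1} = [m',m]^{-1} \in [M,M] \leqslant M^{p}$ by powerfulness of $M$. For general $g \in G$, I would reduce to this case by using that $[M,G] \leqslant \mu(M)$-action behaves well: more carefully, one checks $m^g m^{-1}$ is $G$-conjugate-stable modulo $M^p$ and uses that $G/\mu(M)$ acts, together with the fact that $M^p$ is $G$-invariant (as $M$ is powerful, $M^p$ is characteristic, hence $G$-invariant under the action via automorphisms) — but the cleanest route is probably to invoke Theorem~\ref{PR:T:1} or a direct commutator-collection argument showing $[M, G] \leqslant M^{p}[M, G, \dots]$ and iterate using finiteness. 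Alternatively, and perhaps more slickly: $\alpha l \otimes g = l^g l^{-1} \in \gamma_2(M \otimes G) \cdot$(tensor relations)? No — I think the honest argument is the case analysis on whether $g \in \mu(M)$ plus a $G$-equivariance/finiteness sweep.

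The second step, once $\im\alpha \leqslant M^{p}$ is established, is essentially formal: $\gamma_2(M \otimes G)$ is generated by $\alpha l \otimes \beta l'$ with $\alpha l \in M^{p}$, and by definition (via the homomorphism $\tau_1 : M^{p} \otimes G \to M \otimes G$ and our convention identifying $\im \tau_1$ with $M^{p} \otimes G$) every element $a \otimes b$ with $a \in M^{p}$, $b \in G$ lies in $M^{p} \otimes G$; since $\beta l' \in G$, we conclude $\alpha l \otimes \beta l' \in M^{p} \otimes G$, and as $M^{p} \otimes G$ is a (normal) subgroup of $M \otimes G$ it contains the subgroup generated by these elements, i.e.\ $\gamma_2(M \otimes G) \leqslant M^{p} \otimes G$.

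\textbf{Main obstacle.} The crux is Step~1: proving $m^{g} m^{-1} \in M^{p}$ for \emph{all} $g \in G$, not merely $g \in \mu(M)$. The crossed module axiom only controls the action of $\im \mu$ directly; handling the full action of $G$ requires combining powerful embedding of $\mu(M)$ in $G$ with powerfulness of $M$, and I expect this to need either a careful application of Theorem~\ref{PR:T:1} (to push commutators with $G$ into $p$-th powers modulo deeper terms) followed by a descent using finiteness of $M$ and $G$, or a direct verification that the relevant commutator subgroup $[M,G]$ (computed inside the crossed module, equivalently inside $M \rtimes G$) is contained in $M^{p}$ under the stated hypotheses. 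Everything after that is bookkeeping with the tensor relations and the normality of $M^{p} \otimes G$.
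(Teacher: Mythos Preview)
Your Step~1 claim, that $m^{g}m^{-1}\in M^{p}$ for all $m\in M$ and $g\in G$, is false under the stated hypotheses, and this is not a technical obstacle that a commutator-collection argument can overcome. Take $p$ odd, $M=(\mathbb{Z}/p)^{2}$ (abelian, hence powerful), $G=\mathbb{Z}/p$ acting nontrivially on $M$ by an order-$p$ automorphism, and $\mu:M\to G$ the trivial map. Then $\mu(M)=1$ is powerfully embedded, yet $M^{p}=1$ while $m^{g}m^{-1}\neq 1$ for suitable $m,g$. The lemma still holds in this example because $\beta$ is identically trivial (as $M$ acts on $G$ via $\mu$), so $\gamma_2(M\otimes G)=1$; but your route through $\im\alpha\leqslant M^{p}$ is blocked. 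In general the hypotheses only give $\mu(m^{g}m^{-1})=[\mu(m),g]\in\mu(M)^{p}$, which places $m^{g}m^{-1}$ in $M^{p}\cdot\ker\mu$, not in $M^{p}$.

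The paper's proof does \emph{not} try to push $\alpha l$ into $M^{p}$. Instead it keeps the full expression $\alpha(m\otimes g)\otimes\beta(m_1\otimes g_1)$ and applies Lemma~\ref{BL identity}\textit{(iii)} in the form $a\otimes\beta l'=\prescript{a}{}{l'}\,l'^{-1}$ with $a=m^{g}m^{-1}$ and $l'=m_1\otimes g_1$. Expanding the action of $a\in M$ on $m_1\otimes g_1$ splits the problem into two pieces: the $M$-coordinate contributes $[m_1^{-1},a]\in[M,M]\leqslant M^{p}$ (powerfulness of $M$), and the $G$-coordinate contributes $[g_1^{-1},\mu(a)]$ with $\mu(a)=[\mu(m),g]\in\mu(M)^{p}$ (powerful embedding of $\mu(M)$). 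Writing $\mu(a)=\mu(z)^{p}$ via Theorem~\ref{MannLubotzky_powerfully_embedded}\textit{(iii)} and applying Lemma~\ref{BL identity}\textit{(iii)} once more turns $m_1\otimes[g_1^{-1},\mu(z^{p})]$ into $\prescript{m_1}{}{(z^{p}\otimes g_1^{-1})^{-1}}(z^{p}\otimes g_1^{-1})\in M^{p}\otimes G$. The point is that the argument exploits $\mu(\alpha l)\in\mu(M)^{p}$, which \emph{is} available, rather than $\alpha l\in M^{p}$, which is not.
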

\begin{proof}
    Let $m\otimes g, m_1\otimes g_1\in M\otimes G$, we will show that $[m\otimes g, m_1\otimes g_1]\in M^p\otimes G$. Applying Lemma \ref{BL identity}\textit{(iv)}) for the first equality, Lemma \ref{BL identity}\textit{(i)} for the second equality, and Lemma \ref{BL identity}\textit{(iii)} for the third equality, we obtain
   \begin{align*}
       [m\otimes g, m_1\otimes g_1]&
       =\alpha(m\otimes g)\otimes \beta(m_1\otimes g_1) \\
       &=m^gm^{-1}\otimes \beta(m_1\otimes g_1)\\
       &=\prescript{m^gm^{-1}}{}{(m_1\otimes g_1)}(m_1\otimes g_1)^{-1} \\
       &=(\prescript{m^gm^{-1}}{}{m_1}\otimes\; ^{m^gm^{-1}}g_1)(m_1\otimes g_1)^{-1}.
       \end{align*}
       Recall that by Definition \ref{crosseqsquaredefinition}, the action of the group $M$ on $G$ is given by $\prescript{m}{}{g}=\prescript{\mu(m)}{}{g}$.  Note that $\prescript{m}{}{m_1}=mm_1m^{-1}=m_1[m_1^{-1},m]$ for all $m,m_1 \in M$. Thus, 
       \begin{align*}
       [m\otimes g, m_1\otimes g_1]
       &=(m_1[m_1^{-1},{m^gm^{-1}}]\otimes g_1[g_1^{-1},\mu({m^gm^{-1}})])(m_1\otimes g_1)^{-1}\\
       &=\;^{m_1}([m_1^{-1},{m^gm^{-1}}]\otimes g_1[g_1^{-1},\mu({m^gm^{-1}})])\\&\qquad \qquad\quad(m_1\otimes g_1[g_1^{-1},\mu({m^gm^{-1}})])(m_1\otimes g_1)^{-1}.
   \end{align*}Since $M$ is powerful, $^{m_1}([m_1^{-1},{m^gm^{-1}}]\otimes g_1[g_1^{-1},\mu({m^gm^{-1}})])\in M^p\otimes G$. Thus,
   \begin{align*}
       [m\otimes g, m_1\otimes g_1]&\equiv (m_1\otimes g_1[g_1^{-1},\mu({m^gm^{-1}})])(m_1\otimes g_1)^{-1}\mod M^p\otimes G\\
       &\equiv (m_1\otimes g_1)^{g_1}(m_1\otimes [g_1^{-1},\mu({m^gm^{-1}})])(m_1\otimes g_1)^{-1}\mod M^p\otimes G
   \end{align*}Thus it is enough to show $m_1\otimes [g_1^{-1},\mu({m^gm^{-1}})]\in M^p\otimes G$. Since $\mu(M)$ is powerfully embedded in $G$, we have ${\mu({m^gm^{-1}})=[\mu(m),g]}\in \mu(M)^p$. Therefore, by Theorem \ref{MannLubotzky_powerfully_embedded}\textit{(iii)}, we have that ${\mu({m^gm^{-1}})=\mu(z)^p}$ for some $z\in M$. Thus,
   \begin{align*}
       m_1\otimes [g_1^{-1},\mu({m^gm^{-1}})]&= m_1\otimes[\mu(z)^p,g_1^{-1}]^{-1}\\
       &=m_1\otimes[\mu(z^p),g_1^{-1}]^{-1}\\
       &=m_1\otimes (^{\mu(z^p)}g^{-1}_1g_1)^{-1}\\
       &=m_1\otimes\beta(z^p\otimes g^{-1}_1)^{-1}\\
       &=\;^{m_1}(z^p\otimes g^{-1}_1)^{-1}(z^p\otimes g^{-1}_1).
   \end{align*}
   Therefore, $ m_1\otimes [g_1^{-1},\mu({m^gm^{-1}})]\in M^p\otimes G$, and hence the proof.
\end{proof}
\begin{lemma}[Exercise 2, p. 45, \cite{DDMS99}]\label{Shalev identity}Let $M,N\ \unlhd  G.$ If $M$ and $N$ are powerfully embedded in $G$, then $[N^{p^i}, M^{p^j}]=[N,M]^{p^{i+j}}$ for all $i$ and $j$.
    
\end{lemma}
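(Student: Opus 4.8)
The plan is to isolate the one-step identity $[N^p,M]=[N,M]^p$ — call it $(\ast)$ — for $M$ and $N$ powerfully embedded in a finite $p$-group $G$, to derive the general statement from it, and to prove $(\ast)$ itself by induction on $|G|$. Granting $(\ast)$ and, by the symmetry of commutators (Lemma~\ref{PR:R:1}\textit{(iii)}), its mirror image $[N,M^p]=[N,M]^p$, I would prove $[N^{p^i},M^{p^j}]=[N,M]^{p^{i+j}}$ by induction on $i+j$: for $i\geq 1$, write $[N^{p^i},M^{p^j}]=\bigl[(N^{p^{i-1}})^{p},M^{p^j}\bigr]$, note that $N^{p^{i-1}}$ and $M^{p^j}$ are powerfully embedded in $G$ by Theorem~\ref{MannLubotzky_powerfully_embedded}\textit{(i)}, apply $(\ast)$ to obtain $[N^{p^{i-1}},M^{p^j}]^{p}$, rewrite this as $\bigl([N,M]^{p^{i-1+j}}\bigr)^{p}$ using the inductive hypothesis, and conclude via $\bigl([N,M]^{p^{k}}\bigr)^{p}=[N,M]^{p^{k+1}}$; the last identity holds because $[N,M]$, being powerfully embedded, is powerful, so every element of $[N,M]^{p^{k}}$ is a $p^{k}$-th power in $[N,M]$ by Theorem~\ref{MannLubotzky_powerfully_embedded}\textit{(iii)} (this also settles $p=2$).

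For $(\ast)$, the inclusion $[N^p,M]\leqslant[N,M]^p$ follows directly from Theorem~\ref{PR:T:1} with $k=1$: it gives $[N^p,M]\equiv[N,M]^{p}\pmod{[M,\prescript{}{p}{N}]}$, and the correction term is absorbed because $[M,\prescript{}{p}{N}]=\bigl[[N,M],\prescript{}{p-1}{N}\bigr]\leqslant[[N,M],G]\leqslant[N,M]^{p}$, using that $[N,M]$ is powerfully embedded (for $p=2$, $[M,\prescript{}{2}{N}]=[[N,M],N]\leqslant[[N,M],G]\leqslant[N,M]^{4}\leqslant[N,M]^{2}$). The same congruence moreover yields the exact equality $[N,M]^{p}=[N^p,M]\cdot[M,\prescript{}{p}{N}]$, which is the lever for the converse.

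The reverse inclusion $[N,M]^{p}\leqslant[N^p,M]$ is where I expect the real difficulty: a direct commutator expansion is circular, since bounding $[[N,M],N]$ through powerful embedding only regenerates terms $[M,\prescript{}{p}{N}]$. The idea is to break the circle with the induction on $|G|$. If $[N^p,M]\neq 1$, pass to the strictly smaller group $\overline{G}=G/[N^p,M]$: there $[\overline{N}^{p},\overline{M}]=1$ while $[\overline{N},\overline{M}]^{p}=[N,M]^{p}/[N^p,M]$, so the inductive hypothesis forces $[N,M]^{p}\leqslant[N^p,M]$. If $[N^p,M]=1$ and, for contradiction, $[N,M]^{p}\neq 1$, choose a central subgroup $Z$ of order $p$ inside $[N,M]^{p}$ and pass to $G/Z$; there the inductive hypothesis gives $[N,M]^{p}=Z$, hence $[[N,M],N]\leqslant[[N,M],G]\leqslant[N,M]^{p}=Z\leqslant Z(G)$, so $[[N,M],\prescript{}{2}{N}]=1$ and therefore $[M,\prescript{}{p}{N}]=1$ (since $p-1\geq 2$; when $p=2$ one has $[N,M]^{4}=\bigl([N,M]^{2}\bigr)^{2}=Z^{2}=1$, so already $[[N,M],N]=1$ and again $[M,\prescript{}{2}{N}]=1$). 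Then the equality $[N,M]^{p}=[N^p,M]\cdot[M,\prescript{}{p}{N}]$ collapses to $Z=1$, a contradiction. Hence $[N,M]^{p}=1=[N^p,M]$, which proves $(\ast)$ and, with the reduction above, the lemma.
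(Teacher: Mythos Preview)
Your proof is correct. The paper does not supply its own argument for this lemma: it is stated with the citation to Exercise~2 on p.~45 of \cite{DDMS99} and used as a black box, so there is nothing to compare your route against.

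A couple of minor remarks for robustness. First, your use of the congruence from Theorem~\ref{PR:T:1} to extract the \emph{equality} $[N,M]^{p}=[N^{p},M]\cdot[M,\prescript{}{p}{N}]$ relies on the congruence being two-sided (i.e., $[N^{p},M]\cdot[M,\prescript{}{p}{N}]=[N,M]^{p}\cdot[M,\prescript{}{p}{N}]$); this is indeed how such congruences are meant, but it is worth saying explicitly, since the paper itself only ever invokes one direction. Second, in your reduction step you silently use $(N^{p^{i-1}})^{p}=N^{p^{i}}$; this holds because $N$, being powerfully embedded, is powerful, and for powerful $p$-groups the agreement of iterated $p$-th powers with $p^{i}$-th powers is standard (essentially Theorem~\ref{MannLubotzky_powerfully_embedded}\textit{(iii)}). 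With these two points made explicit, the induction on $|G|$ in Case~2 (trapping $[N,M]^{p}$ as a central subgroup of order $p$ and then killing the correction term $[M,\prescript{}{p}{N}]$) is a clean way to close the argument and handles $p=2$ as you note.
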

\begin{lemma}\label{commutator in recursive formula}
    Let $p$ be an odd prime, $n$ be a nonnegative integer. Let $M$ and $G$ be finite $p$-groups, and let $\mu:M\to G$ be a crossed module. Suppose that $M$ is powerful and that $\mu(M)$ is powerfully embedded in $G$. Then, for all $m \in M^{p^n}, m_1\in M, g\in G, l\in M\otimes G$, the following holds:\[[m\otimes [\mu(m_1),g], l ]\equiv 1\mod M^{p^{n+2}}\otimes G.\] 
\end{lemma}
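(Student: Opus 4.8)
The plan is to push the bracket $[m\otimes[\mu(m_1),g],\,l]$ out of the tensor product $M\otimes G$ into the powerful group $M$ via the crossed-module homomorphism $\alpha\colon M\otimes G\to M$ of \eqref{tenssquare}, and then to finish using the arithmetic of powerful $p$-groups. By Lemma~\ref{BL identity}\textit{(iv)}, $[m\otimes[\mu(m_1),g],\,l]=\alpha(m\otimes[\mu(m_1),g])\otimes\beta l$, so it suffices to prove that $\alpha(m\otimes[\mu(m_1),g])\in M^{p^{n+2}}$. Indeed, since $M$ is powerful every element of $M^{p^{n+2}}$ is a $p^{n+2}$-th power of an element of $M$ (Theorem~\ref{MannLubotzky_powerfully_embedded}\textit{(iii)}), so such an element tensored with $\beta l$ is one of the generators of the normal subgroup $M^{p^{n+2}}\otimes G$, which is exactly the assertion $[m\otimes[\mu(m_1),g],\,l]\equiv 1\bmod M^{p^{n+2}}\otimes G$.

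Next I would evaluate $\alpha(m\otimes[\mu(m_1),g])$. Since $\mu(M)$ is normal in $G$ and powerfully embedded in it, $[\mu(m_1),g]\in[\mu(M),G]\leqslant\mu(M)^p$; as $\mu(M)$ is powerful, Theorem~\ref{MannLubotzky_powerfully_embedded}\textit{(iii)} gives $[\mu(m_1),g]=\mu(m_2)^p=\mu(m_2^p)$ for some $m_2\in M$. Using the description of $\alpha$ from Proposition~\ref{tensor is crossed square} together with the Peiffer identity \eqref{E:Crossed module 2} (which turns the action of $\mu(m_2^p)$ on $M$ into conjugation by $m_2^p$), the element $\alpha(m\otimes[\mu(m_1),g])$ comes out as the commutator $[m^{-1},m_2^{-p}]$ (or its inverse, depending on the sign convention for $\alpha$), which in any case lies in $[M,M^p]$. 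Finally, since $m\in M^{p^n}$ and $M$ is powerful, write $m=a^{p^n}$ (Theorem~\ref{MannLubotzky_powerfully_embedded}\textit{(iii)}), so that $\alpha(m\otimes[\mu(m_1),g])\in[M^{p^n},M^p]$.

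It remains to run the powerful-group calculus. Lemma~\ref{Shalev identity}, applied with ambient group $M$ (which is powerfully embedded in itself precisely because it is powerful), gives $[M^{p^n},M^p]=\gamma_2(M)^{p^{n+1}}$. By Theorem~\ref{MannLubotzky_powerfully_embedded}\textit{(ii)} the subgroup $\gamma_2(M)$ is powerfully embedded in $M$, and since $\gamma_2(M)\leqslant M^p$ we get $\gamma_2(M)^{p^{n+1}}\subseteq(M^p)^{p^{n+1}}=M^{p^{n+2}}$. Hence $\alpha(m\otimes[\mu(m_1),g])\in M^{p^{n+2}}$, and by the reduction of the first paragraph the lemma follows. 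The only real issue is bookkeeping: one must check at each step that the subgroups $\mu(M)$, $\gamma_2(M)$ and the various $M^{p^i}$ are powerful or powerfully embedded so that Theorem~\ref{MannLubotzky_powerfully_embedded}\textit{(iii)} and Lemma~\ref{Shalev identity} genuinely apply, and that the exponents accumulate to exactly $p^{n+2}$; there is no conceptual obstacle beyond this.
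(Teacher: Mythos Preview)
Your argument is correct and follows essentially the same route as the paper's proof: both apply Lemma~\ref{BL identity}\textit{(iv)} to write the commutator as $\alpha(m\otimes[\mu(m_1),g])\otimes\beta(l)$, use the powerful embedding of $\mu(M)$ in $G$ to write $[\mu(m_1),g]=\mu(z)^p$, compute $\alpha(m\otimes\mu(z^p))=[m,z^p]$ via the Peiffer identity, and conclude with Lemma~\ref{Shalev identity} that $[M^{p^n},M^p]\leqslant[M,M]^{p^{n+1}}\leqslant M^{p^{n+2}}$. The only cosmetic difference is that you invoke Theorem~\ref{MannLubotzky_powerfully_embedded}\textit{(iii)} to write $m=a^{p^n}$, which is unnecessary since $m\in M^{p^n}$ is already given.
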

\begin{proof}Since $\mu(M)$ is powerfully embedded in $G$, we have $[\mu(m_1),g]=\mu(z)^p$ for some $z\in M$. Thus,\begin{align*}
        [m\otimes [\mu(m_1),g], l ]&=\alpha(m\otimes [\mu(m_1),g])\otimes \beta(l) & \mbox{(Lemma } \ref{BL identity}\textit{(iv)})\\
        &= m^ {[\mu(m_1),g]}m^{-1}\otimes \beta(l)\\
        &= m\;^{\mu(z^p)}m^{-1}\otimes \beta(l)\\
        &=mz^pm^{-1}z^{-p}\otimes \beta(l) & \mbox{(By } \eqref{E:Crossed module 2})\\
        &=[m,z^p]\otimes\beta(l).
    \end{align*}Applying Lemma \ref{Shalev identity}, we obtain $[M^{p^n},M^{p}] \leq [M,M]^{p^{n+1}}\leq M^{p^{n+2}}.$ Therefore, $[m,z^p]\otimes\beta(l)\in M^{p^{n+2}}\otimes G$, and hence the proof.
\end{proof}

\begin{prop}\label{Power_expansion_tensor_identity}
      Let $p$ be an odd prime, $n$ be a nonnegative integer. Let $M$ and $G$ be finite $p$-groups, and let $\mu:M\to G$ be a crossed module. If $M$ is powerful, and $\mu(M)$ is powerfully embedded in $G$, then for all ${m \in M^{p^n}}$, ${g \in G}$, and natural numbers $r$ and $t$, the following relation holds:
   \[m^t \otimes g\equiv(m\otimes g)^r(m^{t-r}\otimes g)(m^{t-r}\otimes [\mu(m),g])^r(m\otimes[\mu(m),g])^{\binom{r}{2}}\mod M^{p^{n+2}}\otimes G.  \]
 \end{prop}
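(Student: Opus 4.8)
The plan is to argue by induction on $r$, keeping $m$, $g$ and $t$ fixed throughout. For $r=0$ the asserted congruence is the tautology $m^{t}\otimes g=m^{t}\otimes g$, since $\binom{0}{2}=0$ and all the remaining factors are empty. For the inductive step $r\to r+1$ I would take the factor $m^{t-r}\otimes g$ supplied by the induction hypothesis and run one step of a Hall--Petrescu-style collection on it.

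Write $m^{t-r}=m^{t-r-1}\cdot m$ and apply the first defining relation $ab\otimes g=\bigl({}^{a}(b\otimes g)\bigr)(a\otimes g)$ of the nonabelian tensor product (Proposition~\ref{tensor is crossed square}, Lemma~\ref{BL identity}): since ${}^{m^{t-r-1}}m=m$, this gives $m^{t-r}\otimes g=\bigl(m\otimes{}^{\mu(m^{t-r-1})}g\bigr)(m^{t-r-1}\otimes g)$. A short manipulation --- using ${}^{a}g=g\,[g^{-1},a]$, the second defining relation $m\otimes hh'=(m\otimes h)\bigl({}^{h}m\otimes{}^{h}h'\bigr)$, and the identity ${}^{g}[g^{-1},a]=[a,g]$ --- rewrites the first factor as $(m\otimes g)\bigl({}^{g}m\otimes[\mu(m^{t-r-1}),g]\bigr)$. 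Thus the newly created copy of $m\otimes g$ amalgamates with the leading block $(m\otimes g)^{r}$ at the cost of a single defect $e:={}^{g}m\otimes[\mu(m^{t-r-1}),g]$. Since $M$ is powerful, $M^{p^{n}}$ coincides with the ($G$-invariant) set of $p^{n}$-th powers of $M$ (Theorem~\ref{MannLubotzky_powerfully_embedded}), so ${}^{g}m\in M^{p^{n}}$; hence $e$ is of the form $m'\otimes[\mu(m''),g]$ with $m'\in M^{p^{n}}$, and by Lemma~\ref{commutator in recursive formula} it is central modulo $M^{p^{n+2}}\otimes G$ and slides freely to the right past $m^{t-r-1}\otimes g$.

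It remains to match the accumulated error block $(m^{t-r}\otimes[\mu(m),g])^{r}(m\otimes[\mu(m),g])^{\binom{r}{2}}\,e$ against the required block $(m^{t-r-1}\otimes[\mu(m),g])^{r+1}(m\otimes[\mu(m),g])^{\binom{r+1}{2}}$ modulo $M^{p^{n+2}}\otimes G$. Every factor appearing is of the shape $m'\otimes[\mu(m''),g]$ with $m'\in M^{p^{n}}$, hence central modulo $M^{p^{n+2}}\otimes G$ by Lemma~\ref{commutator in recursive formula}, so this becomes purely multiplicative bookkeeping once one records the following reductions, each proved by the same mechanism. (a) Theorem~\ref{MannLubotzky_powerfully_embedded}(iii) writes every commutator in $[\mu(M),G]\le\mu(M)^{p}$ as a genuine $p$-th power $\mu(z^{p})$, so conjugating the first tensor-slot by such a commutator shifts an element of $M^{p^{n}}$ only by an element of $[M^{p^{n}},M^{p}]=[M,M]^{p^{n+1}}\subseteq M^{p^{n+2}}$ (Lemma~\ref{Shalev identity} and powerfulness of $M$), which is negligible modulo $M^{p^{n+2}}\otimes G$ by $G$-invariance of that subgroup. (b) Consequently $m^{s}\otimes[\mu(m),g]\equiv(m^{s-1}\otimes[\mu(m),g])(m\otimes[\mu(m),g])$ modulo $M^{p^{n+2}}\otimes G$, and $[\mu(m)^{k},g]\equiv[\mu(m),g]^{k}$ modulo a high power of $\mu(M)$ (commutator expansion, using $[\mu(M)^{p},\mu(M)]=[\mu(M),\mu(M)]^{p}$ from Lemma~\ref{Shalev identity}, or Theorems~\ref{PR:T:1} and~\ref{PR:T:2}). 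Feeding these into the error block collapses it, and the count $\binom{r+1}{2}=\binom{r}{2}+r$ emerges because collecting the $(r+1)$-st copy of $m\otimes g$ past the material already present manufactures exactly $r$ additional copies of $m\otimes[\mu(m),g]$.

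The step I expect to be the real obstacle is precisely this final reconciliation: one must check that every secondary correction --- a defect of a defect, produced when a commutator in $\mu(M)$ is conjugated or when $[\mu(m)^{k},g]$ is straightened to $[\mu(m),g]^{k}$ --- actually lands in $M^{p^{n+2}}\otimes G$, leaving nothing that is not a recorded power of $m\otimes[\mu(m),g]$. This is where the modulus $p^{n+2}$ (rather than $p^{n+1}$) is forced, being exactly the threshold at which Lemma~\ref{Shalev identity} turns $[M^{p^{n}},M^{p}]$ into an absorbable subgroup; carrying along the distinctions among $M^{p^{n}}$, $M^{p^{n+1}}$, $M^{p^{n+2}}$ and among $\mu(M)^{p}$, $\mu(M)^{p^{2}}$, together with the sign and side conventions fixed in Definition~\ref{crosseqsquaredefinition} and Lemma~\ref{BL identity}, is the bulk of the work.
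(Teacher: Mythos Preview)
Your overall plan—induction on $r$, with one Hall--Petrescu-style expansion per step and Lemma~\ref{commutator in recursive formula} supplying the centrality needed to collect terms—is exactly the paper's strategy. The divergence is in one seemingly innocuous choice: you factor $m^{t-r}=m^{t-r-1}\cdot m$, whereas the paper factors $m^{k}=m\cdot m^{k-1}$. This is precisely what manufactures the obstacle you anticipate.

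With the paper's ordering, the leading factor acts on $g$ through $\mu(m)$ alone (not $\mu(m^{k-1})$), and one obtains an exact identity $m^{k}\otimes g=(m^{k-1}\otimes[\mu(m),g])(m\otimes g)(m^{k-1}\otimes g)$, hence the recursive formula
\[
m^{k}\otimes g\equiv(m\otimes g)(m^{k-1}\otimes g)\,(m^{k-1}\otimes[\mu(m),g])\pmod{M^{p^{n+2}}\otimes G}
\]
whose error term has a \emph{fixed} second slot $[\mu(m),g]$, independent of $k$. The inductive step $s\to s+1$ then consists of applying this same formula twice—once to $m^{t-s}\otimes g$ and once to $m^{t-s}\otimes[\mu(m),g]$ (with $[\mu(m),g]$ in the role of $g$)—and collecting. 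After the count $\binom{s}{2}+s=\binom{s+1}{2}$, the only surviving debris is $(m^{t-s-1}\otimes[\mu(m),[\mu(m),g]])^{s}$, and the whole ``final reconciliation'' collapses to showing $m^{t-s-1}\otimes[\mu(m),[\mu(m),g]]\equiv 1$, a short direct computation using $[\mu(m),g]=\mu(z^{p})$, Lemma~\ref{BL identity}\textit{(iii)}, and $[M^{p},M^{p^{n}}]\leqslant M^{p^{n+2}}$ from Lemma~\ref{Shalev identity}.

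Your ordering instead produces $e={}^{g}m\otimes[\mu(m^{t-r-1}),g]$, with a conjugated first slot and a power inside the commutator in the second slot. To match the target you would effectively have to prove $e\equiv m^{t-r-1}\otimes[\mu(m),g]\pmod{M^{p^{n+2}}\otimes G}$; this congruence is true (indeed it is equivalent to the paper's recursive formula), but your proposed route—straightening $[\mu(m)^{k},g]$ to $[\mu(m),g]^{k}$ and separately disposing of ${}^{g}m$ versus $m$—is a detour, and your sketch of reductions~(a) and~(b) does not make clear that the secondary corrections from expanding $[\mu(m)^{k},g]$ all land in $M^{p^{n+2}}\otimes G$. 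In short: the plan is not wrong, but the obstacle you foresee is self-inflicted and vanishes once you reverse the factorisation to $m\cdot m^{k-1}$.
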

\begin{proof}
    Let $n\geq 0, k\in \mathbb Z,$ and $m\in M^{p^n}$. We will first show that
    \begin{equation}\label{E:recursive_formula}
        m^k\otimes g\equiv (m\otimes g)(m^{k-1}\otimes g)(m^{k-1}\otimes [\mu(m),g])\mod M^{p^{n+2}}\otimes G.
    \end{equation}
    Towards that end,
    \begin{align*}
        m^k\otimes g &=\prescript{m}{}{(m^{k-1}\otimes g)} (m\otimes g)\\
        &= (m^{k-1}\otimes\;^{\mu(m)}g)(m\otimes g)\\
        &= (m^{k-1}\otimes [\mu(m),g]g)(m\otimes g)\\
        &= (m^{k-1}\otimes [\mu(m),g])\;^{[\mu(m),g]}(m^{k-1}\otimes g) (m\otimes g)\\
        &= (m^{k-1}\otimes [\mu(m),g])\;^{\beta(m\otimes g)}(m^{k-1}\otimes g) (m\otimes g).
    \end{align*} Recall that $\beta:M\otimes G\to G$ is a crossed module. Using \eqref{E:Crossed module 2}, we have $^{\beta(m\otimes g)}(m^{k-1}\otimes g)= (m\otimes g)(m^{k-1}\otimes g)(m\otimes g)^{-1}$. Thus,
    \[m^k\otimes g= (m^{k-1}\otimes [\mu(m),g])(m\otimes g)(m^{k-1}\otimes g). \]
    Now \eqref{E:recursive_formula} follows from Lemma \ref{commutator in recursive formula} and the identity $ab=[a,b]ba$. Fixing $n,t$, the proof of the Proposition
    proceeds by induction on $r$. For $r=1$, the proof  follows from \eqref{E:recursive_formula}. Assume that the proposition is true for $r=s$, and then using \eqref{E:recursive_formula}, we have \begin{align*}
       m^t \otimes g&\equiv(m\otimes g)^s(m^{t-s}\otimes g)(m^{t-s}\otimes [\mu(m),g])^s(m\otimes[\mu(m),g])^{\binom{s}{2}} \text{ mod }  M^{p^{n+2}}\otimes G \\
       &\equiv (m\otimes g)^s(m\otimes g)(m^{t-s-1}\otimes g)(m^{t-s-1}\otimes [\mu(m),g])\\
       &\qquad \big((m\otimes [\mu(m),g])(m^{t-s-1}\otimes[\mu(m),g])(m^{t-s-1}\otimes [\mu(m),[\mu(m),g]])\big)^s\\
       & \qquad (m\otimes [\mu(m),g])^{\binom{s}{2}} \mod M^{p^{n+2}}\otimes G.
    \end{align*}Now using Lemma \ref{commutator in recursive formula}, and collecting terms together, we have 
   \begin{align*}    
   m^t \otimes g&\equiv(m\otimes g)^{s+1}(m^{t-(s+1)}\otimes g)(m^{t-(s+1)}\otimes [\mu(m),g])^{s+1}\\
   &\qquad (m^{t-s-1}\otimes [\mu(m),[\mu(m),g]])^s(m\otimes[\mu(m),g])^{\binom{s+1}{2}}\mod M^{p^{n+2}}\otimes G
   %\textcolor{purple}{(m^{t-s-1}\otimes [\mu(m),[\mu(m),g]])^s(m\otimes[\mu(m),g])^{\binom{s+1}{2}}\mod M^{p^{n+2}}\otimes G}.
   \end{align*}Hence it is enough to show that ${(m^{t-s-1}\otimes [\mu(m),[\mu(m),g]])\equiv 1\mod  M^{p^{n+2}}\otimes G}$. Since $\mu(M)$ is powerfully embedded in $G$, we have $[\mu(m),g]=\mu(z)^p$ for some $z\in M$. So,
 \begin{align*}
         m^{t-s-1}\otimes [\mu(m),&[\mu(m),g]]\\
         &= m^{t-s-1}\otimes [[\mu(m),g],\mu(m)]^{-1}\\
      &=m^{t-s-1}\otimes [\mu(z^p),\mu(m)]^{-1}\\
      &=m^{t-s-1}\otimes(^{\mu(z^p)}\mu(m)\mu(m)^{-1})^{-1}\\
      &=m^{t-s-1}\otimes\beta(z^p\otimes\mu(m))^{-1}\\
      &=\;^{m^{t-s-1}}(z^p\otimes\mu(m))^{-1}(z^p\otimes\mu(m))\\
      &=(z^p[(z^p)^{-1},{m^{t-s-1}}]\otimes\mu(m))^{-1}(z^p\otimes\mu(m))\\
      &=\big(^{z^p}([(z^p)^{-1},{m^{t-s-1}}]\otimes\mu(m))(z^p\otimes\mu(m))\big)^{-1}(z^p\otimes\mu(m)).
   \end{align*}
 Note that ${[(z^p)^{-1},m^{t-s-1}]\in [M^p,M^{p^n}]\leq[M,M]^{p^{n+1}}\leq M^{p^{n+2}}}$ by  Lemma \ref{Shalev identity}. Thus, $[(z^p)^{-1},{m^{t-s-1}}]\otimes\mu(m)\in M^{p^{n+2}}\otimes G$. Therefore, we obtain that ${m^{t-s-1}\otimes [\mu(m),[\mu(m),g]]\equiv 1\mod M^{p^{n+2}}\otimes G}$, and hence the proof.
    \end{proof} 
  
\begin{theorem}\label{T:Tensor of poweful crossed module}
    Let $p$ be an odd prime, $M$ and $G$ be finite $p$-groups, and $\mu:M\to G$ be a crossed module. If $M$ is powerful and $\mu(M)$ is powerfully embedded in $G$, then
    \begin{enumerate}
        \item[(i)] $M^p\otimes G\leqslant (M\otimes G)^p$.
        \item[(ii)] $M\otimes G$ is a powerful finite $p$-group.
    \end{enumerate}
    
\end{theorem}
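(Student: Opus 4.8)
The plan is to deduce both parts from the machinery already assembled, with Proposition~\ref{Power_expansion_tensor_identity} doing the heavy lifting. For part~(i), I want to show every generator $m^p \otimes g$ of $M^p \otimes G$ lies in $(M\otimes G)^p$. Apply Proposition~\ref{Power_expansion_tensor_identity} with $n=0$, taking $m \in M$ arbitrary, $t = p$ and $r = p$: this gives
\[
m^p \otimes g \equiv (m\otimes g)^p (m^{0}\otimes g)(m^{0}\otimes[\mu(m),g])^p (m\otimes[\mu(m),g])^{\binom{p}{2}} \mod M^{p^2}\otimes G,
\]
and since $m^0\otimes g = 1\otimes g = 1$ and $m^0\otimes[\mu(m),g]=1$, the middle factors vanish, leaving $m^p\otimes g \equiv (m\otimes g)^p (m\otimes[\mu(m),g])^{\binom{p}{2}} \bmod M^{p^2}\otimes G$. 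The term $(m\otimes[\mu(m),g])^{\binom p2}$ must be absorbed: since $\mu(M)$ is powerfully embedded, $[\mu(m),g]=\mu(z)^p$ for some $z\in M$ by Theorem~\ref{MannLubotzky_powerfully_embedded}\textit{(iii)}, so $m\otimes[\mu(m),g] = m\otimes\beta(z^p\otimes g) = \prescript{m}{}{(z^p\otimes g)}(z^p\otimes g)^{-1}$ by Lemma~\ref{BL identity}\textit{(iii)}, which lies in $M^p\otimes G$ after expanding $\prescript{m}{}{(z^p\otimes g)}$ as in the proof of Lemma~\ref{commutator_of_ tensor_of_powerful_groups} (the "extra" commutator $[(z^p)^{-1},m]\otimes g$ lies in $[M^p,M]\otimes G \leq M^{p^2}\otimes G$ by Lemma~\ref{Shalev identity}). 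So in fact $m^p\otimes g$ lies in $(M\otimes G)^p (M^p\otimes G)^{?}$ — here I will need to set up a clean downward induction on $n$ showing $M^{p^n}\otimes G \leqslant (M\otimes G)^p \,(M^{p^{n+1}}\otimes G)$ and iterate, using finiteness of $M$ (so $M^{p^N}=1$ for large $N$) to terminate. Collecting the generators and using that $(M\otimes G)^p$ is a subgroup (Theorem~\ref{MannLubotzky_powerfully_embedded}\textit{(i)} is not available here since $M\otimes G$ is not yet known powerful, so I must instead argue that the set of $p$th powers generates a normal subgroup and that the congruences stay inside it) yields $M^p\otimes G \leqslant (M\otimes G)^p$.

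For part~(ii), combine (i) with Lemma~\ref{commutator_of_ tensor_of_powerful_groups}: the latter gives $\gamma_2(M\otimes G) \leqslant M^p\otimes G$, and (i) gives $M^p\otimes G \leqslant (M\otimes G)^p$, so $\gamma_2(M\otimes G)\leqslant (M\otimes G)^p$, which is exactly the statement that $M\otimes G$ is powerful. Finiteness of $M\otimes G$ follows because it is generated by the finitely many elements $m\otimes g$ with $m,g$ ranging over the finite groups $M,G$, and it is a nilpotent group (being a quotient-controlled object sitting in the crossed square over finite $p$-groups — or more directly, as a finite-rank image one checks it is a finite $p$-group), so the assertion reduces to the powerfulness just established.

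The main obstacle I anticipate is the bookkeeping in part~(i): Proposition~\ref{Power_expansion_tensor_identity} only gives congruences modulo $M^{p^{n+2}}\otimes G$, not modulo $(M\otimes G)^p$, so one cannot conclude in a single step. The delicate point is to organize the argument so that at stage $n$ one has genuinely reduced $M^{p^n}\otimes G$ into $(M\otimes G)^p \cdot (M^{p^{n+1}}\otimes G)$ — this requires checking that \emph{each} correction term produced (the $(m^{t-r}\otimes[\mu(m),g])$ factors, the $\binom r2$ factors, and the mod-$M^{p^{n+2}}\otimes G$ slop) is itself manifestly a $p$th power or lies in the deeper subgroup $M^{p^{n+1}}\otimes G$, and then invoking finiteness of $M$ to run the descent only finitely often. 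Once that inductive scaffold is in place, everything else is a direct appeal to the lemmas already proved.
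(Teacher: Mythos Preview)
Your inductive scheme is exactly the paper's: show $M^{p^n}\otimes G\leqslant (M\otimes G)^p(M^{p^{n+1}}\otimes G)$ for every $n\geq 1$ by applying Proposition~\ref{Power_expansion_tensor_identity} with $t=r=p$ to the element $m^{p^{n-1}}\in M^{p^{n-1}}$, then chain these containments and use $M^{p^k}=1$ for large $k$. Part~\textit{(ii)} is also as in the paper: combine~\textit{(i)} with Lemma~\ref{commutator_of_ tensor_of_powerful_groups}. So the architecture is right.

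The gap is in your handling of the factor $(m^{p^{n-1}}\otimes[\mu(m^{p^{n-1}}),g])^{\binom{p}{2}}$. You try to absorb it by arguing that the base element lies in $M^p\otimes G$; but at level $n=1$ that only puts the factor back into $M^p\otimes G$, which is exactly the subgroup you are trying to bound, so the step is circular (this is the source of your ``$?$''). Moreover, the identity you invoke is incorrect: $\beta(z^p\otimes g)=[\mu(z^p),g]$, not $\mu(z)^p=[\mu(m),g]$, so $m\otimes[\mu(m),g]\neq m\otimes\beta(z^p\otimes g)$ in general. The fix is much simpler and is precisely where the hypothesis that $p$ is \emph{odd} enters: since $p$ is odd, $p\mid\binom{p}{2}$, so $(m^{p^{n-1}}\otimes[\mu(m^{p^{n-1}}),g])^{\binom{p}{2}}$ is already a $p$th power and hence lies in $(M\otimes G)^p$. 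With this observation the congruence from Proposition~\ref{Power_expansion_tensor_identity} immediately gives $M^{p^n}\otimes G\leqslant (M\otimes G)^p(M^{p^{n+1}}\otimes G)$, and no further absorption argument is needed. Finally, finiteness of $M\otimes G$ as a $p$-group is not a formal consequence of being finitely generated and nilpotent (think of $\mathbb{Z}$); the paper cites Ellis~\cite{E87} for this.
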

\begin{proof}
    Let $m\in M$ and $g\in G$. For any $n\geq 1$, setting $r=t=p$ in Proposition \ref{Power_expansion_tensor_identity} yields,
    \begin{equation*}
        m^{p^n}\otimes g=(m^{p^{n-1}})^p\otimes g\equiv (m^{p^{n-1}}\otimes g)^p(m^{p^{n-1}}\otimes[\mu(m^{p^{n-1}}),g])^{\binom{p}{2}}\mod M^{p^{n+1}}\otimes G.
    \end{equation*}
    Since the elements of the form $m^{p^n}\otimes g$ generate $M^{p^n}\otimes G$, using the above inequality, we have ${M^{p^n}\otimes G\leqslant (M\otimes G)^p(M^{p^{n+1}}\otimes G)}$. An inductive argument shows $M^p\otimes G\leqslant (M\otimes G)^p(M^{p^k}\otimes G)$ for any $k\geq 1$. Moreover, choosing $k=\exp(M)$ yields $M^{p^k}\otimes G=1$, and hence proof of \textit{(i)} follows.
    \par By the main result of \cite{E87}, $M\otimes G$ is a finite $p$-group. Using Lemma \ref{commutator_of_ tensor_of_powerful_groups}, we have $\gamma_2(M\otimes G)\leqslant M^p\otimes G.$ Now the proof of $\textit{(ii)}$ follows from $\textit{(i)}.$
\end{proof}
Now we inductively define the $n$-fold tensor product $G^{\otimes n}$. By setting ${M=G^{\otimes 1}=G}$, and choosing $\mu_1$ and $\nu_1$ as the identity on $G$ in \eqref{tenssquare}, we obtain the following crossed square, 
  \begin{equation}
\begin{tikzcd}
		G\otimes G \arrow[r,"\beta_{2}"] \arrow[d,"\alpha_2"] & G \arrow[d,"id"] \\
		G \arrow[r,"\mu_1"] & G
	\end{tikzcd}\label{tensorsquare1}
\end{equation}
where $\alpha_2$ and $\beta_2$ are defined as in \eqref{tenssquare}. Now, set $G^{\otimes 2}=G\otimes G$ and $\mu_2=\beta_2$. By Lemma \ref{BL identity} \textit{(ii)}, the homomorphism $\mu_2:G\otimes G\to G$ is a crossed module.  Assuming $G^{\otimes {n}}$ is defined and $\mu_{n}:G^{\otimes {n}}\to G$ is a crossed module, and taking $\nu_{n}:G\to G$ as identity in \eqref{tenssquare}, we have
      \begin{equation}
\begin{tikzcd}
		G^{\otimes{n}}\otimes G \arrow[r,"\beta_{n+1}"] \arrow[d,"\alpha_{n+1}"] & G \arrow[d,"id"] \\
		G^{\otimes{n}} \arrow[r,"\mu_{n}"] & G
	\end{tikzcd}\label{tensorsquare2}
\end{equation}
For $n\geq 3$, we define $n+1$-fold tensor product $G^{\otimes {n+1}}$= $G^{\otimes{n}}\otimes G$, $\mu_{n+1}=\beta_{n+1}$. It is easy to see that $\mu_{n+1}(x \otimes g)=[\mu_{n}(x),g]$, for all $x \in G^{\otimes{n}}, g \in G$, and $n\geq 1.$

\begin{corollary}\label{nfold tensor is powerful}
    Let $p$ be an odd prime. If $G$ is a finite powerful $p$-group, then for all natural number $n$, the following holds:
    \begin{enumerate}
        \item[(i)] $(G^{\otimes {n}})^p\otimes G\leqslant (G^{\otimes n+1})^p$,
        \item[(ii)] $G^{\otimes {n+1}}$ is finite powerful $p$-group.
    \end{enumerate} 
\end{corollary}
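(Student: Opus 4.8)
The plan is to argue by induction on $n$, carrying along a slightly stronger hypothesis than the statement itself: for every $n \geq 1$, (a) $G^{\otimes n}$ is a finite powerful $p$-group, and (b) $\mu_n(G^{\otimes n})$ is powerfully embedded in $G$. Both conclusions of the corollary will then be read off as immediate consequences of the inductive step. The reason for smuggling in clause (b) is that Theorem~\ref{T:Tensor of poweful crossed module} can only be applied to the crossed module $\mu_n : G^{\otimes n} \to G$ once we know its image is powerfully embedded in $G$; an induction on conclusions (i)--(ii) alone would not hand us this hypothesis, so it has to be threaded through.

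For the base case $n=1$ one has $G^{\otimes 1} = G$, which is powerful by hypothesis, and $\mu_1 = \mathrm{id}_G$, so $\mu_1(G^{\otimes 1}) = G$ is powerfully embedded in itself precisely because $G$ is powerful. For the inductive step, assume (a) and (b) at level $n$; then $M := G^{\otimes n}$ is powerful and $\mu_n(M)$ is powerfully embedded in $G$, so Theorem~\ref{T:Tensor of poweful crossed module} applies to $\mu_n$ and delivers $(G^{\otimes n})^p \otimes G \leqslant (G^{\otimes n} \otimes G)^p = (G^{\otimes n+1})^p$ --- this is conclusion (i) at index $n$ --- and also that $G^{\otimes n+1} = G^{\otimes n} \otimes G$ is a finite powerful $p$-group --- this is conclusion (ii) at index $n$, and it is clause (a) at level $n+1$. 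To recover clause (b) at level $n+1$, use the identity $\mu_{n+1}(x \otimes g) = [\mu_n(x), g]$ to identify $\mu_{n+1}(G^{\otimes n+1}) = [\mu_n(G^{\otimes n}), G]$; since $\mu_n(G^{\otimes n})$ is powerfully embedded in $G$ by (b), Theorem~\ref{MannLubotzky_powerfully_embedded}\textit{(i)} shows $[\mu_n(G^{\otimes n}), G]$ is powerfully embedded in $G$. This closes the induction and, as noted, simultaneously yields both parts of the corollary at every index.

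I do not expect a genuine obstacle here: the real content sits in Theorem~\ref{T:Tensor of poweful crossed module}, and the only thing requiring care is the bookkeeping for clause (b) --- correctly identifying $\im{\mu_{n+1}}$ with $[\im{\mu_n}, G]$ and then invoking Theorem~\ref{MannLubotzky_powerfully_embedded}\textit{(i)} to propagate powerful embeddedness up the tower, with the base of the tower handled by the observation that $\mu_1$ is the identity on the powerful group $G$.
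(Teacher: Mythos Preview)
Your proof is correct and follows essentially the same inductive strategy as the paper, applying Theorem~\ref{T:Tensor of poweful crossed module} to the crossed module $\mu_n:G^{\otimes n}\to G$ once one knows $G^{\otimes n}$ is powerful and $\mu_n(G^{\otimes n})$ is powerfully embedded in $G$. The only cosmetic difference is that the paper identifies $\mu_n(G^{\otimes n})=\gamma_n(G)$ outright and cites Theorem~\ref{MannLubotzky_powerfully_embedded}\textit{(ii)} directly, rather than carrying your clause~(b) through the induction via Theorem~\ref{MannLubotzky_powerfully_embedded}\textit{(i)}; the two bookkeeping choices are equivalent.
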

\begin{proof}
      The proof proceeds by induction on $n$. For $n=1$, let $\mu_1:G^{\otimes 1}\to G$ be the identity. Note that the hypotheses of Theorem \ref{T:Tensor of poweful crossed module} are satisfied, and hence $G^p\otimes G\leqslant (G\otimes G)^p$ and $G\otimes G$ is a finite powerful $p$-group. By the induction hypothesis, we have that $G^{\otimes k}$ is a finite powerful $p$-group for $n=k$. Let $\mu_k:G^{\otimes k}\to G$ be defined as above, and note that, $\mu_k(G^{\otimes k})=\gamma_k(G)$ is powerfully embedded in $G$. The proof now follows from Theorem \ref{T:Tensor of poweful crossed module}.
\end{proof}
Now we define the $n$-iterated tensor product $G_{\otimes n}$. Set $G_{\otimes 1}=G$, and inductively define $G_{\otimes n+1}=G_{\otimes n}\otimes G_{\otimes n}, $ for all $n\geq 1.$ Now applying Theorem \ref{T:Tensor of poweful crossed module} inductively to the crossed modules $id:G_{\otimes {n}}\to G_{\otimes {n}}$ we obtain:
\begin{corollary}\label{n-iterated tensor product is powerful}
    Let $p$ be an odd prime. If $G$ is a finite powerful $p$-group, then $G_{\otimes {n+1}}$ is a finite powerful $p$-group for all natural numbers $n$. 
\end{corollary}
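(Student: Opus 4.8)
The plan is to prove this by a straightforward induction on $n$, invoking Theorem~\ref{T:Tensor of poweful crossed module} at each step with an identity crossed module. The key elementary observation is that a finite powerful $p$-group $K$ (with $p$ odd) is powerfully embedded in itself: by definition $[K,K]\leqslant K^p$, which is exactly the condition $[\mathrm{id}(K),K]\leqslant K^p$. Consequently, for any finite powerful $p$-group $K$, the identity map $\mathrm{id}:K\to K$, with $K$ acting on itself by conjugation, is a crossed module satisfying the hypotheses of Theorem~\ref{T:Tensor of poweful crossed module} in the special case $M=G=K$.

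For the base case $n=1$ we have $G_{\otimes 2}=G\otimes G$. Since $G$ is finite powerful by hypothesis, the observation above shows that $\mathrm{id}:G\to G$ meets the hypotheses of Theorem~\ref{T:Tensor of poweful crossed module}, and hence $G\otimes G=G_{\otimes 2}$ is a finite powerful $p$-group. For the inductive step, assume $G_{\otimes n}$ is a finite powerful $p$-group. Applying the observation to $K=G_{\otimes n}$, the identity crossed module $\mathrm{id}:G_{\otimes n}\to G_{\otimes n}$ again satisfies the hypotheses of Theorem~\ref{T:Tensor of poweful crossed module}, so $G_{\otimes n}\otimes G_{\otimes n}$ is a finite powerful $p$-group; by definition this group is $G_{\otimes n+1}$, which completes the induction.

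There is essentially no obstacle here: the entire content is carried by Theorem~\ref{T:Tensor of poweful crossed module}. The only points requiring a moment's care are, first, recording that ``powerful'' coincides with ``powerfully embedded in itself,'' so that the theorem — phrased for a crossed module $\mu:M\to G$ with $\mu(M)$ powerfully embedded in $G$ — applies with $M=G$ and $\mu=\mathrm{id}$; and second, noting that the finiteness of each $G_{\otimes n+1}$ is part of the conclusion of Theorem~\ref{T:Tensor of poweful crossed module} (ultimately resting on \cite{E87}), so that the inductive hypothesis ``finite powerful'' is genuinely reproduced at each stage.
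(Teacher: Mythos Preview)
Your proof is correct and follows exactly the paper's own approach: the paper simply states that the result is obtained by applying Theorem~\ref{T:Tensor of poweful crossed module} inductively to the crossed modules $\mathrm{id}:G_{\otimes n}\to G_{\otimes n}$. Your write-up just makes explicit the routine verification that a finite powerful $p$-group is powerfully embedded in itself, so that the hypotheses of the theorem are met at each stage.
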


\begin{theorem} \label{gamma and derived is finite powerful}
     Let $p$ be an odd prime, suppose we have a group extension $1\to N\to H \to G\to 1$, where $G$ is a finite powerful $p$-group. The following statements hold:
    
    \begin{enumerate}
        \item [(i)] If $N\leqslant Z_n(H)$ for some positive integer $n$, then $\gamma_{n+1}(H)$ is a finite powerful $p$-group.
        \item[(ii)] If $N\leqslant \mathcal{D}_n(H)$ for some positive integer $n$, then $\Gamma_{n+1}(H)$ is a finite powerful $p$-group. 
    \end{enumerate}
\end{theorem}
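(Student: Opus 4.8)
The plan is to follow the route indicated in the Outline and deduce the theorem from Corollaries~\ref{nfold tensor is powerful} and \ref{n-iterated tensor product is powerful} (hence ultimately from Theorem~\ref{T:Tensor of poweful crossed module}). The idea is to realize $\gamma_{n+1}(H)$ and $\Gamma_{n+1}(H)$ as homomorphic images of powerful tensor constructions attached to $G$, and then to invoke the elementary fact that a quotient $P/K$ of a finite powerful $p$-group $P$ is again a finite powerful $p$-group: it is finite and a $p$-group as a quotient of such, and, $p$ being odd, $[P/K,P/K]=[P,P]K/K\leqslant P^pK/K=(P/K)^p$. Note at the outset that $H/N\cong G$; since $N\leqslant Z_n(H)$ in \textit{(i)} and $N\leqslant\mathcal D_n(H)$ in \textit{(ii)}, the quotients $H/Z_n(H)$ and $H/\mathcal D_n(H)$ are themselves quotients of the finite powerful $p$-group $G$, hence finite powerful $p$-groups.

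The substantive step is the construction of two epimorphisms. For \textit{(i)} I would produce a surjective homomorphism from the $(n+1)$-fold tensor product $G^{\otimes(n+1)}$ (built via the crossed squares \eqref{tensorsquare2}) onto $\gamma_{n+1}(H)$, sending the generator determined by $g_1,\dots,g_{n+1}\in G$ to the iterated commutator $[[\cdots[[\bar g_1,\bar g_2],\bar g_3],\cdots],\bar g_{n+1}]$ of arbitrary lifts $\bar g_i\in H$. This commutator map is well defined because, by Lemma~\ref{PR:R:1}\textit{(ii)}, a weight-$(n+1)$ iterated commutator in $H$ depends only on the images of its entries modulo $Z_n(H)$ (iterating $[G,Z_k(H)]\leqslant Z_{k-1}(H)$ gives $[Z_n(H),{}_nH]=1$), hence a fortiori only on their images modulo $N$; the remaining, and real, work is to check that this map is a crossed pairing compatible with the tower \eqref{tensorsquare2}, so that it factors through $G^{\otimes(n+1)}$. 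For \textit{(ii)} the analogue is a surjection from the iterated tensor product $G_{\otimes(n+1)}$ (with its recursion $G_{\otimes(k+1)}=G_{\otimes k}\otimes G_{\otimes k}$) onto $\Gamma_{n+1}(H)$; here $\mathcal D_n(H)$ plays the role of $Z_n(H)$, its very definition ensuring that the commutators built along the derived-series bracketing pattern, which generate $\Gamma_{n+1}(H)$, are unchanged under alterations of their entries modulo $\mathcal D_n(H)$. These epimorphisms are of Baer/Ellis type (cf. \cite{Baer52,Ellis2001} in the central case and \cite{Donadze21} for the $\mathcal D_n$ condition) and can in any case be read off from the morphisms of crossed squares developed in Section~\ref{section: crossed square morphisms} (cf. Proposition~\ref{nfold_qtensor_exact_sequence}).

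Granted the two epimorphisms, the proof concludes immediately: $G^{\otimes(n+1)}$ is a finite powerful $p$-group by Corollary~\ref{nfold tensor is powerful}\textit{(ii)} and $G_{\otimes(n+1)}$ is one by Corollary~\ref{n-iterated tensor product is powerful}, so their homomorphic images $\gamma_{n+1}(H)$ and $\Gamma_{n+1}(H)$ are finite powerful $p$-groups by the first paragraph, which is \textit{(i)} and \textit{(ii)} respectively. I expect the main obstacle to be exactly the verification that the iterated-commutator maps are crossed pairings meshing with the tower of crossed squares \eqref{tensorsquare2} (resp. the iterated tensor construction), together with --- for \textit{(ii)} --- the bookkeeping required to match the left-normed derived-series bracketing with the recursion for $G_{\otimes(n+1)}$ and to confirm that membership in $\mathcal D_n(H)$ annihilates every correction term; once these structural facts are in place the rest is formal.
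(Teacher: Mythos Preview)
Your proposal is correct and follows essentially the same route as the paper: realize $\gamma_{n+1}(H)$ (resp.\ $\Gamma_{n+1}(H)$) as a quotient of $G^{\otimes(n+1)}$ (resp.\ $G_{\otimes(n+1)}$), invoke Corollaries~\ref{nfold tensor is powerful} and~\ref{n-iterated tensor product is powerful}, and conclude since quotients of finite powerful $p$-groups are finite powerful $p$-groups. The only difference is that where you sketch the construction of the epimorphisms and flag the crossed-pairing verification as the main obstacle, the paper simply cites \cite[Lemma~3.1]{Donadze21} and \cite[Lemma~3.7]{Donadze21} for these surjections outright, so that step need not be redone.
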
\label{powerful extension}
\begin{proof}
    \textit{(i)} Using Corollary \ref{nfold tensor is powerful}, we have $G^{\otimes n+1}$ is a finite powerful $p$-group. By \cite[Lemma 3.1]{Donadze21}, we have a surjective homomorphism $G^{\otimes n+1}\to \gamma_{n+1}(H)$. Thus, $\gamma_{n+1}(H)$ is a finite powerful $p$-group.\\
    \par \textit{(ii)} Using Corollary \ref{n-iterated tensor product is powerful}, we have $G_{\otimes n+1}$ is a finite powerful $p$-group. By \cite[Lemma 3.7]{Donadze21}, we have a surjective a homomorphism $G_{\otimes n+1}\to \Gamma_{n+1}(H)$. Thus, $\Gamma_{n+1}(H)$ is a finite powerful $p$-group.
\end{proof}

\section{Crossed square of powerful $p$-groups \Romannum{2}.} \label{section: crossed square II}
The non-abelian tensor product modulo $q$ was defined as a generalization of the tensor product in \cite[Definition 1.1]{CR92}. Let $\mu:M \to G$ and $\nu: N \to G$ be two crossed modules and consider the pullback
	\begin{equation}\label{Pull back diagram}
	  \begin{tikzcd}
			M \times_G N \arrow[r,"\pi_2"] \arrow[d,"\pi_1"] & N \arrow[d,"\nu"] \\
			M \arrow[r,"\mu"] & G
		\end{tikzcd}
	\end{equation}
	where $K=M \times_G N=\{(m,n) \mid m \in M,n\in N,\mu(m)=\nu(n)\}$. In this diagram, each group acts on any other group via its image in the group $G$. 
	\begin{definition}\label{Defn q tensor}
		The tensor product modulo q, $M \tens N$, of the crossed modules $\mu$ and $\nu$ is the group generated by the symbols $m \otimes n$ and $\{k\}$, $m \in M$, $n \in N$, $k \in K$, with relations
		\begin{align}
			& m \otimes nn'=(m \otimes n)(\prescript{n}{}{m} \otimes \prescript{n}{}{n'})\label{E:q-tensor def 1} \\
			& mm' \otimes n = (\prescript{m}{}{m'} \otimes \prescript{m}{}{n})(m \otimes n)\\
			& \{k\}(m \otimes n)\{k\}^{-1}= \prescript{k^q}{}{m} \otimes \prescript{k^q}{}{n} \label{E:q-tensor def 3}\\
			&\{kk'\}=\{k\} \prod \limits_{i=1}^{q-1}(\pi_1k^{-1} \otimes ( \prescript{k^{1-q+i}}{}{\pi_2k'})^i)\{k'\}  \\
			& [\{k\},\{k'\}]=\pi_1 k^q \otimes \pi_2 {k'}^q \label{E:q-tensor def 5} \\
			& \{(m\prescript{n}{}{m^{-1}},\prescript{m}{}{n}n^{-1})\}=(m \otimes n)^q\label{E:q-tensor def 6} 
		\end{align}
   for all $m,m' \in M$, $n,n' \in N$, $k, k' \in K$.
	\end{definition}
We have two group homomorphisms $\alpha:M \tens N \to M$ and $\beta: M \tens N \to N$ defined by 
\begin{align}
	& \alpha(m \otimes n)=m\prescript{n}{}{m^{-1}}, & \alpha(\{k\})=\pi_1 k^q,\label{E:alpha}\\
	& \beta(m \otimes n)=\prescript{m}{}{n}n^{-1}, & \beta(\{k\})=\pi_2 k^q.\label{E:beta}
\end{align}
We also have actions of the groups $G$ on the group $M \tens N$ given by $\prescript{g}{}{(m \otimes n)}=(\prescript{g}{}{m} \otimes \prescript{g}{}{n})$ and $\prescript{g}{}{\{k\}}=\{\prescript{g}{}{k}\}$ for $m \in M$, $n \in N$, $k \in K$ and $g \in G$. These actions commute with relations \eqref{E:q-tensor def 1} to \eqref{E:q-tensor def 6}. 
\begin{prop} [Proposition 1.16, \cite{CR92}]\label{qtensor is crossed square}
     Let $\mu:M\to G$ and $\nu:N\to G$ be crossed modules, which induce the actions  of $M$ and $N$ on themselves and on each other via $G$. Then there is a crossed square 
\begin{equation}
    \begin{tikzcd} 
	M \tens N \arrow[r,"\beta"]\arrow[d,"\alpha"] & N\arrow[d,"\nu"] 
	\\ M \arrow[r, "\mu"]& G
    \end{tikzcd}\label{qtensorsquare}
\end{equation}
where $\alpha(m\otimes n)=m \prescript{n}{}{m^{-1}}$, $\alpha(\{k\})= \pi_1k^q$, $\beta(m\otimes n)= \prescript{m}{}{n}n^{-1}$, ${\beta(\{k\})=\pi_2k^q}$ and $h(m,n)=m\otimes n$.    
\end{prop}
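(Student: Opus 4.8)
The plan is to verify the five axioms of Definition~\ref{crosseqsquaredefinition} for the square~\eqref{qtensorsquare}, taking $L=M\tens N$, $\kappa=\mu\alpha=\nu\beta$ and $h(m,n)=m\otimes n$; the argument runs closely parallel to the ``$q=\infty$'' case recorded in Proposition~\ref{tensor is crossed square}. Three of the axioms are direct transcriptions of the construction, and I would dispose of these first: axiom~(ii), $\alpha h(m,n)=m\,\prescript{n}{}{m^{-1}}$ and $\beta h(m,n)=\prescript{m}{}{n}\,n^{-1}$, is nothing but the definition \eqref{E:alpha}--\eqref{E:beta} of $\alpha,\beta$ on generators $m\otimes n$; axiom~(iv), $h(mm',n)=\prescript{m}{}{h(m',n)}\,h(m,n)$ and $h(m,nn')=h(m,n)\,\prescript{n}{}{h(m,n')}$, is precisely relations \eqref{E:q-tensor def 2} and \eqref{E:q-tensor def 1}; and axiom~(v), $h(\prescript{g}{}{m},\prescript{g}{}{n})=\prescript{g}{}{h(m,n)}$, is the definition of the $G$-action on the generators $m\otimes n$. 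Of course one must first know that $\alpha,\beta$ of \eqref{E:alpha}--\eqref{E:beta} and the $G$-action are well defined, i.e. compatible with all of \eqref{E:q-tensor def 1}--\eqref{E:q-tensor def 6}; for $\alpha$ and $\beta$ every relation except \eqref{E:q-tensor def 4} is a routine check against the formulas and the Peiffer identities \eqref{E:Crossed module 2} for $\mu,\nu$, while \eqref{E:q-tensor def 4} forces a Hall--Petrescu-type expansion of $(\pi_1k\,\pi_1k')^q$ (resp. $(\pi_2k\,\pi_2k')^q$) whose correction terms are exactly the $\alpha$-images (resp. $\beta$-images) of the telescoping product $\prod_{i=1}^{q-1}(\pi_1 k^{-1}\otimes(\prescript{k^{1-q+i}}{}{\pi_2 k'})^i)$.

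For axiom~(i) I would note first that $\alpha,\beta$ are $G$-equivariant directly from the definition of the action on generators, and that $\mu,\nu$ are crossed modules by hypothesis, so the content is $\kappa=\mu\alpha=\nu\beta$. Commutativity of the square is a generator-wise check: $\mu\alpha(m\otimes n)=\mu(m)\,\mu(\prescript{n}{}{m})^{-1}=[\mu(m),\nu(n)]=\nu(\prescript{m}{}{n})\,\nu(n)^{-1}=\nu\beta(m\otimes n)$ using \eqref{E:Crossed module 1}, together with $\mu\pi_1k^q=\nu\pi_2k^q$ since $(\pi_1k,\pi_2k)\in M\times_G N$. Equivariance of $\kappa$ follows from that of $\alpha$ and \eqref{E:Crossed module 1} for $\mu$, leaving the Peiffer identity $\prescript{\kappa(l)}{}{l'}=l\,l'\,l^{-1}$, which I would check on the two types of generator and then extend multiplicatively. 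For $l=\{k\}$ one has $\kappa\{k\}=\mu\pi_1k^q$, which acts on $M\tens N$ exactly as $k^q$ does via $G$, so \eqref{E:q-tensor def 3} handles $l'=m'\otimes n'$ and \eqref{E:q-tensor def 3}--\eqref{E:q-tensor def 5} handle $l'=\{k'\}$; for $l=m\otimes n$ one has $\kappa(m\otimes n)=\mu(\alpha(m\otimes n))$, so $\prescript{\kappa(m\otimes n)}{}{l'}=\prescript{\alpha(m\otimes n)}{}{l'}$, and this is expanded via \eqref{E:q-tensor def 1}--\eqref{E:q-tensor def 3} (plus the crossed-module relations for $\mu,\nu$) to match $(m\otimes n)\,l'\,(m\otimes n)^{-1}$.

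The remaining axiom~(iii), $h(\alpha l,n)=l\,\prescript{n}{}{l^{-1}}$ and $h(m,\beta l)=\prescript{m}{}{l}\,l^{-1}$ for \emph{every} $l\in M\tens N$, is where I expect the real difficulty to lie, alongside the $(kk')^q$-expansion mentioned above. First one verifies it on generators: for $l=m'\otimes n'$ the first identity reads $(m'\,\prescript{n'}{}{m'^{-1}})\otimes n=(m'\otimes n')\,\prescript{n}{}{(m'\otimes n')^{-1}}$, to be expanded through \eqref{E:q-tensor def 1}--\eqref{E:q-tensor def 3} and \eqref{E:Crossed module 2}; for $l=\{k\}$ it reads $(\pi_1k^q)\otimes n=\{k\}\,\prescript{n}{}{\{k\}^{-1}}$, handled via \eqref{E:q-tensor def 3} and $\alpha\{k\}=\pi_1k^q$. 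One then passes to an arbitrary $l$ by induction on word length, using that both $l\mapsto h(\alpha l,n)$ and $l\mapsto l\,\prescript{n}{}{l^{-1}}$ are ``crossed-homomorphism-like'' in $l$ (the former because $\alpha$ is a homomorphism and $h$ obeys axiom~(iv), the latter by a direct commutator computation), so that agreement on a generating set propagates through products; the statement for $h(m,\beta l)$ is symmetric. Beyond these two points---the $q$-th-power expansion underlying \eqref{E:q-tensor def 4} and the generator-to-word propagation in axiom~(iii)---the whole verification is formal manipulation with the presentation of $M\tens N$, so I would organise the write-up to isolate those as separate lemmas and treat everything else as routine.
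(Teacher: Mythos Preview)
The paper does not give its own proof of this proposition: it is quoted verbatim from \cite[Proposition~1.16]{CR92} and used as a black box, so there is nothing in the paper to compare your argument against. Your outline is the standard axiom-by-axiom verification and is sound; the two places you flag as requiring genuine work (compatibility of $\alpha,\beta$ with relation~\eqref{E:q-tensor def 4} via a $q$-th-power expansion, and the propagation of axiom~(iii) from generators to arbitrary words) are indeed the nontrivial parts, and the rest is routine manipulation with the presentation, exactly as you say.
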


 Setting $q=p$, $N=G$, and $\nu:N\to G $ as identity in \eqref{qtensorsquare}, any crossed module $\mu : M \to G$ yields the following crossed square 
\begin{equation}
	\begin{tikzcd} \label{Gqtensorcrossedsquare}
		M \otimes^p G \arrow[r,"\beta"]\arrow[d,"\alpha"] & G\arrow[d,"id"] 
		\\ M \arrow[r, "\mu"]& G
	\end{tikzcd}
\end{equation}where $\alpha,\beta$ are defined as in \eqref{E:alpha}, \eqref{E:beta}, respectively. Composing the natural homomorphism $\tau_1 : M^p \otimes G \to M\otimes G$ with $\sigma: M \otimes G \to M \ptens G$ defined in \cite[Proposition 1.6]{CR92}, we obtain a homomorphism $\eta: M^p \otimes G \to M \otimes^p G$. 
\begin{equation}
    \begin{tikzcd}
        M^{p}\otimes G\arrow[r, "\tau_1"]\arrow[dr,swap, "\eta"]&M\otimes G\arrow[d,"\sigma"]\\
        &M\otimes^p G
    \end{tikzcd}
\end{equation}We have that $\eta (M^p \otimes G)$ is a $G$-invariant subgroup, and hence a normal subgroup of $M \ptens G$.

\begin{lemma} \label{commutatorMqtensorG}
	Let $p$ be an odd prime, $M$ and $G$ be finite $p$-groups, and $\mu:M\to G$ be a crossed module. If $M$ is powerful and $\mu(M)$ is powerfully embedded in $G$, then $\gamma_2(M \ptens G) \leqslant \eta(M^p \otimes G)$.
\end{lemma}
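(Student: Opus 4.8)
The plan is to mimic the structure of the proof of Lemma~\ref{commutator_of_ tensor_of_powerful_groups}, working this time inside $M \ptens G$ instead of $M \otimes G$. The key new feature is that $M \ptens G$ has two types of generators --- the elements $m \otimes g$ and the elements $\{k\}$ for $k \in K = M \times_G G$ --- so we must control commutators of all combinations. First I would observe that $\sigma : M \otimes G \to M \ptens G$ is a homomorphism of crossed squares compatible with $\alpha$ and $\beta$, so that the computation carried out in Lemma~\ref{commutator_of_ tensor_of_powerful_groups} transports directly: for generators of the form $m \otimes g$ and $m_1 \otimes g_1$, Lemma~\ref{BL identity}\textit{(iv)} (together with its analogue for the $q$-tensor, i.e.\ the identity $[l,l'] = \alpha l \otimes \beta l'$ that holds in any crossed square coming from Proposition~\ref{qtensor is crossed square}) reduces $[m\otimes g, m_1 \otimes g_1]$ to exactly the same chain of manipulations, landing in $\eta(M^p \otimes G)$ because $M$ is powerful and $\mu(M)$ is powerfully embedded in $G$ (using Theorem~\ref{MannLubotzky_powerfully_embedded}\textit{(iii)} to write $[\mu(m),g] = \mu(z)^p$ and then rewriting $m_1 \otimes [g_1^{-1},\mu(m^g m^{-1})]$ via $\beta$ of an element of $M^p \otimes G$).

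Next I would handle the commutators involving the symbols $\{k\}$. Using relation \eqref{E:q-tensor def 5}, $[\{k\},\{k'\}] = \pi_1 k^p \otimes \pi_2 {k'}^p$; since $\pi_1 k \in M$ we get $\pi_1 k^p \in M^p$, so this commutator already lies in the subgroup generated by elements $m^p \otimes g$, which maps into $\eta(M^p \otimes G)$. For mixed commutators $[\{k\}, m \otimes g]$, relation \eqref{E:q-tensor def 3} gives $\{k\}(m\otimes g)\{k\}^{-1} = \prescript{k^p}{}{m} \otimes \prescript{k^p}{}{g}$, hence $[\{k\}, m\otimes g] = (\prescript{k^p}{}{m} \otimes \prescript{k^p}{}{g})(m\otimes g)^{-1}$; because $\pi_1 k^p \in M^p$ and the action of $G$ on $M$ preserves $M^p$ (as $M \unlhd$ and $M^p$ is $G$-invariant by powerfulness of $\mu(M)$ --- more precisely $M^p \otimes G$ is $G$-invariant in $M \otimes G$), I would expand this difference, using relations \eqref{E:q-tensor def 1}, \eqref{E:q-tensor def 3} and the same powerfulness trick as above, and check each resulting factor sits in $\eta(M^p \otimes G)$. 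Since $M \ptens G$ is generated by the $m \otimes g$ and the $\{k\}$, and $\eta(M^p \otimes G)$ is normal, these three cases together show every generator of $\gamma_2(M \ptens G)$ lies in $\eta(M^p \otimes G)$, giving the claim.

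The main obstacle I anticipate is the bookkeeping in the mixed case $[\{k\}, m \otimes g]$: unlike the pure-tensor computation, here one must carefully unwind $\prescript{k^p}{}{m} \otimes \prescript{k^p}{}{g}$ using the defining relations of the $q$-tensor and repeatedly use that $\mu(M)$ powerfully embedded forces commutators $[\mu(m'),g']$ to be $p$-th powers $\mu(z)^p$ with $z \in M$, so that the relevant correction terms are of the form $z^p \otimes (\cdot)$ and hence in $M^p \otimes G$. A secondary subtlety is making the identification of $\eta(M^p \otimes G)$ clean: one should note at the outset that $\eta(M^p \otimes G)$ is precisely the (normal) subgroup of $M \ptens G$ generated by all $m^p \otimes g$ together with their images, so that membership can be checked generator-by-generator. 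Once these are in place the argument is a routine, if lengthy, verification parallel to Lemma~\ref{commutator_of_ tensor_of_powerful_groups}.
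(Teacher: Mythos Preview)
Your proposal is correct and follows essentially the same approach as the paper: split into the three commutator types, transport the pure-tensor case through $\sigma$ and Lemma~\ref{commutator_of_ tensor_of_powerful_groups}, dispatch $[\{k\},\{k'\}]$ via \eqref{E:q-tensor def 5}, and expand the mixed case using \eqref{E:q-tensor def 3}. One small sharpening for the mixed case: the decisive observation is not the powerful-embedding trick $[\mu(m'),g']=\mu(z)^p$, but simply that $k=(m_1,g_1)\in K$ forces $g_1=\mu(m_1)$, so $g_1^p=\mu(m_1^p)$ directly; then $m\otimes[g^{-1},g_1^p]=m\otimes\beta(m_1^p\otimes g^{-1})^{-1}$ lands in $\eta(M^p\otimes G)$ by the crossed-module identity, while the $M$-side factor $[m^{-1},m_1^p]\in\gamma_2(M)\leqslant M^p$ uses only that $M$ is powerful.
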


\begin{proof}
Since $\eta(M^p \otimes G)$ is a $G$-invariant subgroup, it is a normal subgroup of $M \ptens G$, and hence, it is enough to show that $[x,y] \in \eta(M^p \otimes G)$, where $x$ and $y$ are generators of $M \ptens G$. We begin the proof by showing that $[m \otimes g,m_1 \otimes g_1] \in \eta(M^p \otimes G)$ for all $m,m_1 \in M$ and $g,g_1 \in G$. We have that ${[m \otimes g,m_1 \otimes g_1]=\sigma([m \otimes g,m_1 \otimes g_1])}$. Using Lemma \ref{commutator_of_ tensor_of_powerful_groups}, we have that $\sigma([m \otimes g,m_1 \otimes g_1]) \in \sigma(\tau_1(M^p \otimes G))$. This gives us that ${\sigma([m \otimes g,m_1 \otimes g_1]) \in \eta(M^p \otimes G)}$. By \eqref{E:q-tensor def 5}, for all $\{k\},\{k'\} \in M \ptens G$, we have ${[\{k\},\{k'\}] = \pi_1(k)^p \otimes \pi_2(k')^p}$. Note that $[\{k\},\{k'\}] \in \eta(M^p \otimes G)$, as $\pi_1(k)^p \otimes \pi_2(k')^p=\eta\big(\pi_1(k)^p \otimes \pi_2(k')^p\big)$.
To complete the proof, we have to show that $[\{k\},m \otimes g] \in \eta(M^p \otimes G)$ where $k=(m_1,g_1) \in K$, $m,m_1 \in M$ and $g,g_1 \in G$. Recall that $K$ acts on $M$ and $G$ via \eqref{Pull back diagram}, i.e. $\prescript{k}{}{m}=\prescript{\mu(m_1)}{}{m}$ and $\prescript{k}{}{g}= \prescript{g_1}{}{g}$. By \eqref{E:q-tensor def 3}, \eqref{E:Crossed module 2} and the relation $aba^{-1}=b[b^{-1},a]$, we obtain that 
	\begin{align*}
		[\{k\},m \otimes g] =& ( \prescript{k^p}{}{m} \otimes \prescript{k^p}{}{g})(m \otimes g)^{-1} \\
  =& ( m_1^pm(m_1^p)^{-1} \otimes g_1^{p}g{g_1^p}^{-1})(m \otimes g)^{-1} \\
		=&( m[m^{-1},m_1^p] \otimes g[g^{-1},g_1^p])(m \otimes g)^{-1} \\
		=& \prescript{m}{}{( [m^{-1},m_1^p] \otimes g[g^{-1},g_1^p])}( m \otimes \ g[g^{-1},g_1^p])(m \otimes g)^{-1}.
	\end{align*}
	Since $M$ is powerful, we have that $[m^{-1},m_1^p] \in \gamma_2(M) \leqslant M^p$. Therefore ${\prescript{m}{}{( [m^{-1},m_1^p] \otimes \ g[g^{-1},g_1^p])} \in \eta(M^p \otimes G)}$ because
    $M^p$ is a characteristic subgroup of $M$.
    %Moreover $M^p$ is a characteristic subgroup of $M$, and hence we have that ${\prescript{m}{}{( [m^{-1},m_1^p] \otimes \ g[g^{-1},g_1^p])} \in \eta(M^p \otimes G)}$.
    Thus we have that
	\begin{align*}
		[\{k\},m \otimes g]&\equiv ( m \otimes \ g[g^{-1},g_1^p])(m \otimes g)^{-1} \mod \eta(M^p\otimes G) \\
		& \equiv (m \otimes g) \prescript{g}{}{( m \otimes \ [g^{-1},g_1^p])}(m \otimes g)^{-1} \mod \eta(M^p\otimes G).
	\end{align*}
	Since $M$ is $G$-invariant, it is enough to show that $m \otimes \ [g^{-1},g_1^p] \in \eta(M^p \otimes G)$. Note that $(m_1,g_1) \in K$ and hence $g_1=\mu(m_1)$ by the definition of $K$. Therefore
	\begin{align*}
		m \otimes \ [g^{-1},g_1^p]&=m \otimes \ [g^{-1},\mu(m_1^p)] \\
		&= m \otimes \ [\mu(m_1^p),g^{-1}]^{-1} \\
		&= m \otimes \ (\prescript{\mu(m_1^p)}{}{g^{-1}}g)^{-1} \\
		&= m \otimes \ (\prescript{m_1^p}{}{g^{-1}}g)^{-1} \\
		&= m \otimes \ \beta(m_1^p \otimes g^{-1})^{-1}  && \text{(Lemma } \ref{BL identity}) \\
		&=\prescript{m}{}{(m_1^p \otimes g^{-1})^{-1}}(m_1^p \otimes g^{-1}).
	\end{align*}
	Since $m_1^p \otimes g^{-1} \in \eta(M^p \otimes G)$, we have that $m \otimes \ [g^{-1},g_1^p] \in \eta(M^p \otimes G)$ and hence the proof.
\end{proof}

%As a corollary, we obtain that $M\otimes^p G$ is powerful.

\begin{theorem} \label{qtensorpowerful}
 Let $p$ be an odd prime, $M$ and $G$ be finite $p$-groups, and $\mu:M\to G$ be a crossed module. If $M$ is powerful and $\mu(M)$ is powerfully embedded in $G$, then $M \otimes^p G$ is a finite powerful $p$-group.
\end{theorem}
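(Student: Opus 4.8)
The plan is to follow the template of Theorem~\ref{T:Tensor of poweful crossed module}, transporting what we already know about the ordinary tensor product $M\otimes G$ across the homomorphisms $\sigma\colon M\otimes G\to M\ptens G$ and $\eta=\sigma\circ\tau_1$. Three things need to be checked: \textbf{(a)} $M\ptens G$ is a finite $p$-group; \textbf{(b)} $\eta(M^p\otimes G)\leqslant (M\ptens G)^p$; and \textbf{(c)} $\gamma_2(M\ptens G)\leqslant (M\ptens G)^p$. Given (a) and (b), assertion (c) is immediate from Lemma~\ref{commutatorMqtensorG}, and then (a) together with (c) says precisely that $M\ptens G$ is a finite powerful $p$-group.

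For (a), I would analyse the subgroup $T=\langle\, m\otimes g \mid m\in M,\ g\in G\,\rangle$ of $M\ptens G$. It is normal: conjugation by a generator $\{k\}$ sends $m\otimes g$ to $\prescript{k^p}{}{m}\otimes\prescript{k^p}{}{g}\in T$ by~\eqref{E:q-tensor def 3} (and similarly for $\{k\}^{-1}$, taking $\prescript{k^{-p}}{}{m}\otimes\prescript{k^{-p}}{}{g}$ in~\eqref{E:q-tensor def 3}), while conjugation by the elements $m'\otimes g'$ keeps us inside $T$ trivially. Moreover $T=\sigma(M\otimes G)$, hence $T$ is finite by \cite{E87}. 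The quotient $Q=(M\ptens G)/T$ is generated by the images of the $\{k\}$, $k\in K$, and~\eqref{E:q-tensor def 5} shows $Q$ is abelian; the defining multiplication rule for $\{kk'\}$, whose correction factors are of the form $m\otimes g$ and hence lie in $T$, then shows that $k\mapsto \overline{\{k\}}$ is a surjective homomorphism $K\to Q$, so $Q$ is a finite abelian $p$-group since $K\leqslant M\times G$ is a finite $p$-group. Therefore $M\ptens G$ is a finite $p$-group. (If \cite{CR92} contains a ready-made finiteness statement for $q$-tensor products of finite groups, it could be substituted for this paragraph.)

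Assertion (b) requires almost no work: by Theorem~\ref{T:Tensor of poweful crossed module}\textit{(i)} we have $\tau_1(M^p\otimes G)\leqslant (M\otimes G)^p$, so applying the group homomorphism $\sigma$ gives
\[\eta(M^p\otimes G)=\sigma\bigl(\tau_1(M^p\otimes G)\bigr)\leqslant \sigma\bigl((M\otimes G)^p\bigr)=\bigl(\sigma(M\otimes G)\bigr)^p\leqslant (M\ptens G)^p.\]
Combining this with Lemma~\ref{commutatorMqtensorG}, which gives $\gamma_2(M\ptens G)\leqslant \eta(M^p\otimes G)$, yields $\gamma_2(M\ptens G)\leqslant (M\ptens G)^p$, and together with (a) this proves the theorem.

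The two estimates (b) and (c) come essentially for free from Theorem~\ref{T:Tensor of poweful crossed module} and Lemma~\ref{commutatorMqtensorG}, so the only genuine obstacle is the finiteness in (a): one must keep the extra generators $\{k\}$ of the $q$-tensor product under control and confirm that, modulo the image of the ordinary tensor product, they contribute only a finite abelian $p$-group. That bookkeeping — in particular verifying that $T$ is normal and that the $\{kk'\}$-relation collapses to additivity modulo $T$ — is the step I would write out in full detail.
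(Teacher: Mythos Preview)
Your proof is correct and follows the same approach as the paper: combine Lemma~\ref{commutatorMqtensorG} with Theorem~\ref{T:Tensor of poweful crossed module}\textit{(i)} to obtain $\gamma_2(M\ptens G)\leqslant\eta(M^p\otimes G)\leqslant\sigma((M\otimes G)^p)\leqslant(M\ptens G)^p$. The only differences are cosmetic: the paper cites \cite{MR1088877} and \cite{E87} for the finiteness/$p$-group statement in your step~(a) rather than arguing directly, and in step~(b) it detours through $(M\otimes G)^p=\{x^p\mid x\in M\otimes G\}$ (using the powerfulness of $M\otimes G$ from Theorem~\ref{T:Tensor of poweful crossed module}\textit{(ii)}) instead of your cleaner general identity $\sigma((M\otimes G)^p)=(\sigma(M\otimes G))^p$.
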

\begin{proof}
 By \cite[Corollary 12]{MR1088877}, $M \ptens G$ is a finite group. Using the main results of \cite{E87} and \cite[Propositon 11]{MR1088877}, we obtain that $M \ptens G$ is a $p$-group.
	By Lemma \ref{commutatorMqtensorG}, we have that $\gamma_2(M \otimes^p G) \leqslant \eta(M^p \otimes G)$. Using Theorem~\ref{T:Tensor of poweful crossed module}\textit{(i)}, we have that $\tau_1(M^p \otimes G)\leqslant (M \otimes G)^p$ and therefore $ {\eta(M^p \otimes G) \leqslant \sigma((M \otimes G)^p)}$. By Theorem~\ref{T:Tensor of poweful crossed module}, $M \otimes G$ is powerful and hence $(M \otimes G)^p=\{x^p \mid x \in M \otimes G\}$. Since $\sigma(\{x^p \mid x \in M \otimes G)\} \leqslant (M \ptens G)^p$, we have that ${\sigma((M \otimes G)^p) \leqslant (M \ptens G)^p}$. Thus $M \ptens G$ is powerful. 
\end{proof}
Now we inductively define the $n$-fold $q$-tensor product $G^{\otimes^q_{n}}$. 
By setting $M=N=G^{\otimes^q_1}=G$, and choosing $\mu_1$ and $\nu_1$ as the identity map on $G$ in \eqref{qtensorsquare}, we obtain the following crossed square, 
\begin{equation}
\begin{tikzcd}
		G\tens G \arrow[r,"\beta_{2}"] \arrow[d,"\alpha_2"] & G \arrow[d,"id"] \\
		G \arrow[r,"\mu_1"] & G
	\end{tikzcd}\label{n fold qtensorsquare1}
\end{equation}
where $\alpha_2,\beta_2$ are defined as in \eqref{E:alpha}, \eqref{E:beta}, respectively. Set $G^{\otimes^q_2}=G\tens G$, $\mu_2=\beta_2$.
By Proposition \ref{qtensor is crossed square}, the homomorphism $\mu_2: G \tens G  \to G$ is a crossed module.
Assuming $G^{\otimes^q_{n-1}}$ is defined, and $\mu_{n-1}:G^{\otimes^q_{n-1}}\to G$ is a crossed module, and taking $\nu_{n-1}:G\to G$ as identity in \eqref{qtensorsquare}, we have
      \begin{equation}
\begin{tikzcd}
		G^{\otimes^q_{n-1}}\tens G \arrow[r,"\beta_{n}"] \arrow[d,"\alpha_n"] & G \arrow[d,"id"] \\
		G^{\otimes^q_{n-1}} \arrow[r,"\mu_{n-1}"] & G
	\end{tikzcd}\label{q n fold tensorsquare}
\end{equation}
For $n\geq 3$, we now define $n$-fold $q$-tensor product $G^{\otimes^q_{n}}= G^{\otimes^q_{n-1}}\tens G$, $\mu_n=\beta_n$. It is easy to see that for all $x \in G^{\otimes^q_{n-1}}$ and $g \in G$,
\begin{itemize}
    \item[\textit{(i)}] $\mu_n(x \otimes g)=[\mu_{n-1}(x),g]$
    \item[\textit{(ii)}] $\mu_n(\{(x,\mu_{n-1}(x)),(x,\mu_{n-1}(x))\})={\mu_{n-1}(x)}^q$.
\end{itemize}

\begin{corollary} \label{Gqtensorpowerful}
	Let $p$ be an odd prime. Suppose $G$ is a finite powerful $p$-group, then $G^{\otimes^p_{n+1}}$ is a finite powerful $p$-group for all $n\geq 1$.
\end{corollary}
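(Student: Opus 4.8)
The plan is to prove the corollary by induction on $n$, running in parallel the auxiliary statement that the image of the crossed module $\mu_{n+1}\colon G^{\otimes^p_{n+1}}\to G$ is powerfully embedded in $G$. This is the exact $p$-tensor analogue of the argument for Corollary~\ref{nfold tensor is powerful}: Theorem~\ref{qtensorpowerful} plays here the role that Theorem~\ref{T:Tensor of poweful crossed module} plays there, and the closure properties of powerfully embedded subgroups in Theorem~\ref{MannLubotzky_powerfully_embedded}\textit{(i)} are what allow the hypothesis to survive from one level of the $n$-fold $p$-tensor tower to the next. Precisely, for $n\geq 1$ I would establish simultaneously \textit{(a)} $G^{\otimes^p_{n+1}}$ is a finite powerful $p$-group, and \textit{(b)} $\mu_{n+1}\bigl(G^{\otimes^p_{n+1}}\bigr)$ is powerfully embedded in $G$.

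For the base case $n=1$ I take $M=G$ and $\mu_1=\mathrm{id}_G$ in \eqref{n fold qtensorsquare1}. Since $G$ is powerful, $\mathrm{id}_G(G)=G$ is powerfully embedded in $G$, so the hypotheses of Theorem~\ref{qtensorpowerful} hold and $G^{\otimes^p_2}$ is a finite powerful $p$-group, which is \textit{(a)}. For \textit{(b)} I use that $G^{\otimes^p_2}$ is generated by the symbols $g\otimes g'$ and $\{k\}$, together with $\mu_2(g\otimes g')=[g,g']$ and $\mu_2(\{k\})=\pi_2(k)^p$, where $\pi_2(k)$ ranges over $G$ by the description of the pullback in \eqref{Pull back diagram}; hence $\mu_2(G^{\otimes^p_2})=\langle [G,G],\,G^p\rangle=G^p$, the last equality because $G$ is powerful, and $G^p$ is powerfully embedded in $G$ by Theorem~\ref{MannLubotzky_powerfully_embedded}\textit{(i)}.

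For the inductive step, assume \textit{(a)} and \textit{(b)} for $n$, and write $A=\mu_n\bigl(G^{\otimes^p_n}\bigr)$, which is powerfully embedded (hence normal) in $G$. Applying Theorem~\ref{qtensorpowerful} with $M=G^{\otimes^p_n}$ (a finite powerful $p$-group by the inductive hypothesis) and crossed module $\mu_n$ shows that $G^{\otimes^p_{n+1}}=G^{\otimes^p_n}\ptens G$ is a finite powerful $p$-group, which is \textit{(a)} for $n+1$. For \textit{(b)} for $n+1$, I invoke the identities recorded just after \eqref{q n fold tensorsquare}, namely $\mu_{n+1}(x\otimes g)=[\mu_n(x),g]$ and $\mu_{n+1}(\{(x,\mu_n(x)),(x,\mu_n(x))\})=\mu_n(x)^p$, together with $\mu_{n+1}(\{k\})=\pi_2(k)^p$ from \eqref{E:beta}, to identify $\mu_{n+1}\bigl(G^{\otimes^p_{n+1}}\bigr)=\langle [A,G],\,A^p\rangle$. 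Since $A$ is powerfully embedded in $G$, Theorem~\ref{MannLubotzky_powerfully_embedded}\textit{(i)} gives that $[A,G]$ and $A^p$ are powerfully embedded in $G$, as is their product; this proves \textit{(b)} for $n+1$ and closes the induction.

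I do not anticipate a serious obstacle, since this is essentially a transcription of the proof of Corollary~\ref{nfold tensor is powerful} with the non-abelian $p$-tensor replacing the ordinary tensor; the only point needing care is step \textit{(b)}, where one must account for the contribution of the $\{k\}$-generators to the image of $\mu_{n+1}$ and confirm it stays inside a powerfully embedded subgroup. In fact, because $[A,G]\leqslant A^p$ for any powerfully embedded $A$, the image collapses to $\mu_{n+1}\bigl(G^{\otimes^p_{n+1}}\bigr)=A^p$, so inductively $\mu_{n+1}\bigl(G^{\otimes^p_{n+1}}\bigr)=G^{p^{n}}$; in either formulation Theorem~\ref{MannLubotzky_powerfully_embedded}\textit{(i)} finishes the argument.
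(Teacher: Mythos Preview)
Your proposal is correct and follows essentially the same inductive scheme as the paper: both apply Theorem~\ref{qtensorpowerful} at each level of the tower, after checking that the image of the crossed module $\mu_n$ is powerfully embedded in $G$. The only cosmetic difference is in how this last check is done: the paper simply identifies $\mu_k\bigl(G^{\otimes^p_k}\bigr)=P_k(G)$ and cites Theorem~\ref{MannLubotzky_powerfully_embedded}\textit{(ii)}, whereas you carry the auxiliary statement \textit{(b)} through the induction using the closure properties in Theorem~\ref{MannLubotzky_powerfully_embedded}\textit{(i)}; your concluding observation that the image collapses to $G^{p^n}$ is exactly $P_{n+1}(G)$ for powerful $G$, so the two arguments meet.
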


\begin{proof}
    The proof proceeds by induction on $n$. For $n=1$, let $\mu_1: G \to G$ be the identity map. Note that the hypotheses of Theorem \ref{qtensorpowerful} are satisfied, and hence $G \ptens G$ is a finite powerful $p$-group. Assume for $n=k$, $G^{\ptens_k}$ is a finite powerful $p$-group. Let $\mu_k$ be the crossed module defined as above. Note that $\mu_k(G^{\ptens_k})=P_k(G)$, where $P_k(G)$ is the $k$-th term of the lower $p$-central series of $G$. By Theorem \ref{MannLubotzky_powerfully_embedded}\textit{(ii)}, $P_k(G)$ is powerfully embedded in $G$. Now the proof follows by Theorem \ref{qtensorpowerful}.    
\end{proof}

Now we define $n$-iterated $q$-tensor product $G_{\otimes^q_{n}}$. Set $G_{\otimes^q _{1}}=G$, and inductively define 

\begin{equation}\label{iteratedqtensordefn}
    G_{\otimes^q_{n+1}}=G_{\otimes^q_{n}}\otimes^q G_{\otimes^q_{n}},
\end{equation} for all $n\geq 1.$ Applying Theorem \ref{qtensorpowerful} inductively to the crossed module $id:G_{\otimes^p_{n}}\to G_{\otimes^p_{n}},$ we have
\begin{corollary}\label{n-iterated qtensor product is powerful}
    Let $p$ be an odd prime. If $G$ is a finite powerful $p$-group, then $G_{\otimes^p_{n+1}}$ is finite powerful $p$-group for $n\geq 1$. 
\end{corollary}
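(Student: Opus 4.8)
The plan is to argue by induction, applying Theorem~\ref{qtensorpowerful} at each stage exactly as the remark preceding the corollary indicates. Concretely, I would show by induction on $m\geq 1$ that $G_{\otimes^p_m}$ is a finite powerful $p$-group; the corollary is then the case $m=n+1$ with $n\geq 1$. The base case $m=1$ is the hypothesis that $G$ itself is a finite powerful $p$-group.

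For the inductive step, assume $G_{\otimes^p_m}$ is a finite powerful $p$-group and take the identity crossed module $\mathrm{id}\colon G_{\otimes^p_m}\to G_{\otimes^p_m}$. The crossed module axioms \eqref{E:Crossed module 1} and \eqref{E:Crossed module 2} are satisfied automatically, since with $\mu=\mathrm{id}$ both conditions reduce to the statement that conjugation is conjugation. Setting $M=G=G_{\otimes^p_m}$ in Theorem~\ref{qtensorpowerful}, the three hypotheses of that theorem hold: $M$ and $G$ are finite $p$-groups by the inductive hypothesis; $M=G_{\otimes^p_m}$ is powerful by the inductive hypothesis; and $\mu(M)=G_{\otimes^p_m}$ is powerfully embedded in $G_{\otimes^p_m}$ because a powerful $p$-group is powerfully embedded in itself. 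Hence Theorem~\ref{qtensorpowerful} yields that $G_{\otimes^p_m}\otimes^p G_{\otimes^p_m}=G_{\otimes^p_{m+1}}$ is a finite powerful $p$-group, which closes the induction.

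The only point that needs a word of justification — and the nearest thing to an obstacle — is the claim that a finite powerful $p$-group is powerfully embedded in itself. This is immediate from the definitions: the condition that $N$ be powerfully embedded in $H$ is $[N,H]\leqslant \overline{N^p}$ (for odd $p$), and specializing $N=H$ gives $[H,H]\leqslant \overline{H^p}$, which is precisely the definition of $H$ being powerful; for a finite group the closure is vacuous. Everything else in the argument is a direct instantiation of the already-established Theorem~\ref{qtensorpowerful}, and finiteness of each $G_{\otimes^p_{m+1}}$ is carried along automatically, since that theorem outputs a finite $p$-group at every stage.
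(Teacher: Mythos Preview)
Your proof is correct and follows exactly the approach of the paper: apply Theorem~\ref{qtensorpowerful} inductively to the identity crossed module $\mathrm{id}\colon G_{\otimes^p_m}\to G_{\otimes^p_m}$. The additional justification you give that a powerful $p$-group is powerfully embedded in itself is a welcome clarification of a point the paper leaves implicit.
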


\section{Morphisms between crossed squares} \label{section: crossed square morphisms}
In the next proposition, we will define a group homomorphism between two crossed modules that arise from two crossed squares.

\begin{prop}\label{The commutative cube}
    Let $\mu:A \to H$ and $\gamma:C \to H$ be $H$-crossed modules, $\nu:B \to G$ and $\delta:D \to G$ be $G$-crossed modules. Assume $\phi: H \to G$, $f_1: A \to B$, and $f_2: C \to D$ are group homomorphisms satisfying the following compatibility conditions
    \begin{equation}\label{compatability conditions}
        f_1(\prescript{h}{}{a})=\prescript{\phi(h)}{}{f_1(a)} \text{ and } 
        f_2(\prescript{h}{}{c})=\prescript{\phi(h)}{}{f_2(c)},
    \end{equation}
    for all $h\in H$, $a\in A$, $c\in C$. If the diagrams
    \begin{center}
			\begin{tikzcd} 
			A \arrow[r,"f_1"]\arrow[d,"\mu"] & B \arrow[d,"\nu"]
			\\\ H \arrow[r, "\phi"]& G
			\end{tikzcd}
   \qquad and \qquad\begin{tikzcd}
			C \arrow[r,"f_2"]\arrow[d,"\gamma"] & D \arrow[d,"\delta"]
			\\\ H \arrow[r, "\phi"]& G
   \end{tikzcd}
    \end{center}
   are commutative, then the following statements hold: 
 \begin{enumerate}
     \item [(i)] There exists a group homomorphism $f_1 \tens f_2: A \tens C \to B \tens D$ defined by 
     \begin{align*}
         (f_1 \tens f_2)(a \otimes c)&=f_1(a) \otimes f_2(c) \\
         (f_1 \tens f_2) \big(\big\{(a_1,c_1) \big\}\big)&=\big\{\big(f_1(a_1),f_2(c_1)\big)\big\}
     \end{align*}
     for all $a\in A$, $c \in C$ and $\{(a_1,c_1)\} \in K$ (\textit{cf.} Definition \ref{Defn q tensor}).
     \item[(ii)] The group homomorphism $f_1 \tens f_2$ satisfies the compatibility condition, i.e.,
     \[(f_1 \tens f_2)(\prescript{h}{}{x}) =\prescript{\phi(h)}{}{(f_1 \tens f_2)(x)}\]
     for all $x \in A \tens B$ and $h \in H$.
     \item[(iii)] The following cube is commutative
\begin{center}
    \begin{tikzcd}[row sep=1.5em, column sep = 1.5em]
    A \tens C \arrow[rr,"f_1 \tens f_2"] \arrow[dr, swap,"\beta_1"] \arrow[dd, swap,"\alpha_1"] &&
    B \tens D \arrow[dd,"\alpha_2", pos=.3] \arrow[dr,"\beta_2"] \\
    & C \arrow[rr, "f_2", pos=.3, crossing over] &&
    D \arrow[dd,"\delta"] \\
    A \arrow[rr,"f_1",pos=.3] \arrow[dr, "\mu"] && B \arrow[dr,"\nu"] \\
    & H \arrow[rr,"\phi"] \arrow[from=uu,"\gamma", pos=.3, crossing over]&& G
    \end{tikzcd}
\end{center}
where $\alpha_1,\beta_1$ and $\alpha_2,\beta_2$ are obtained from the corresponding crossed squares (\textit{cf.} Proposition \ref{qtensor is crossed square}).
\item [(iv)]In particular, the following diagram is commutative:
\begin{center}
			\begin{tikzcd} 
			A \tens C \arrow[r,"f_1 \tens f_2"]\arrow[d,"\gamma \circ \beta_1"] & B \tens D \arrow[d,"\delta \circ \beta_2"]
			\\\ H \arrow[r, "\phi" ]& G
			\end{tikzcd}
    \end{center}
 \end{enumerate}

\end{prop}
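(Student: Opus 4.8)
The plan is to establish (i) first — the existence of $f_1\tens f_2$ — directly from the presentation of the $q$-tensor product given in Definition~\ref{Defn q tensor}; parts (ii)--(iv) will then follow with little extra work. Before checking relations I would record two preliminaries. First, since the two squares in the hypothesis commute, any $(a_1,c_1)$ in the pullback $K=A\times_H C$ (so $\mu(a_1)=\gamma(c_1)$) satisfies $\nu(f_1(a_1))=\phi(\mu(a_1))=\phi(\gamma(c_1))=\delta(f_2(c_1))$, hence $(f_1(a_1),f_2(c_1))\in K':=B\times_G D$; as $f_1,f_2$ are homomorphisms this yields a group homomorphism $\bar{f}\colon K\to K'$, $(a_1,c_1)\mapsto(f_1(a_1),f_2(c_1))$, with $\pi_i\circ\bar{f}=f_i\circ\pi_i$ and $\bar{f}(k^q)=\bar{f}(k)^q$. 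Second, in the $q$-tensor construction for $\mu\colon A\to H$ and $\gamma\colon C\to H$, the group $A$ acts on $C$ through $\mu$, $C$ acts on $A$ through $\gamma$, and $K$ acts on both through its image in $H$; combining \eqref{compatability conditions} with the commutativity of the squares, $f_1$ and $f_2$ intertwine these actions with the corresponding ones for $B\tens D$, e.g.
\[
f_2(\prescript{a}{}{c})=f_2(\prescript{\mu(a)}{}{c})=\prescript{\phi(\mu(a))}{}{f_2(c)}=\prescript{\nu(f_1(a))}{}{f_2(c)}=\prescript{f_1(a)}{}{f_2(c)},
\]
and likewise $f_1(\prescript{c}{}{a})=\prescript{f_2(c)}{}{f_1(a)}$ and, for any power $j$, $f_2(\prescript{k^{j}}{}{c})=\prescript{\bar{f}(k)^{j}}{}{f_2(c)}$.

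With these in place I would define a homomorphism from the free group on the symbols $a\otimes c$ and $\{k\}$ to $B\tens D$ by $a\otimes c\mapsto f_1(a)\otimes f_2(c)$ and $\{k\}\mapsto\{\bar{f}(k)\}$, and verify that each relator \eqref{E:q-tensor def 1}--\eqref{E:q-tensor def 6} of $A\tens C$ maps to the corresponding relator of $B\tens D$. The relators not involving $\{kk'\}$ collapse immediately: for instance \eqref{E:q-tensor def 6} becomes $\{(f_1(a)\prescript{f_2(c)}{}{f_1(a)^{-1}},\ \prescript{f_1(a)}{}{f_2(c)}f_2(c)^{-1})\}=(f_1(a)\otimes f_2(c))^q$ by the intertwining identities and $\bar{f}(k^q)=\bar{f}(k)^q$, while \eqref{E:q-tensor def 5} reduces to $\pi_1\bar{f}(k)^q\otimes\pi_2\bar{f}(k')^q$. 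Hence the map descends to $A\tens C$, giving the homomorphism $f_1\tens f_2$ with the asserted effect on generators. Part (ii) is then immediate: both $x\mapsto(f_1\tens f_2)(\prescript{h}{}{x})$ and $x\mapsto\prescript{\phi(h)}{}{(f_1\tens f_2)(x)}$ are homomorphisms $A\tens C\to B\tens D$ (using $\prescript{h}{}{(a\otimes c)}=\prescript{h}{}{a}\otimes\prescript{h}{}{c}$ and $\prescript{h}{}{\{k\}}=\{\prescript{h}{}{k}\}$, with $H$ acting componentwise on $K$), and they agree on generators by \eqref{compatability conditions}, so they coincide.

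For (iii) I would argue face by face: the bottom face $(A,B,H,G)$ and the face $(C,D,H,G)$ are the hypothesised commuting squares; the faces $(A\tens C,A,C,H)$ and $(B\tens D,B,D,G)$ commute because $\mu\alpha_1=\gamma\beta_1$ and $\nu\alpha_2=\delta\beta_2$ are part of the crossed-square structure from Proposition~\ref{qtensor is crossed square}; and the two faces containing $f_1\tens f_2$, namely $\alpha_2\circ(f_1\tens f_2)=f_1\circ\alpha_1$ and $\beta_2\circ(f_1\tens f_2)=f_2\circ\beta_1$, are homomorphism identities checked on generators using \eqref{E:alpha}, \eqref{E:beta}, the intertwining identities, and $\pi_i\circ\bar{f}=f_i\circ\pi_i$ (so, e.g., $\beta_2((f_1\tens f_2)(a\otimes c))=\prescript{f_1(a)}{}{f_2(c)}f_2(c)^{-1}=f_2(\beta_1(a\otimes c))$ and $\beta_2((f_1\tens f_2)(\{k\}))=\pi_2\bar{f}(k)^q=f_2(\pi_2 k^q)=f_2(\beta_1(\{k\}))$). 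Finally (iv) follows by pasting the top face with the face $(C,D,H,G)$: $\phi\circ\gamma\circ\beta_1=\delta\circ f_2\circ\beta_1=\delta\circ\beta_2\circ(f_1\tens f_2)$.

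I expect the main obstacle to be relation \eqref{E:q-tensor def 4} in Part (i) — the one expressing $\{kk'\}$ — since it combines the projections $\pi_1,\pi_2$, the $q$-th power maps, and the iterated twisted action $\prescript{k^{1-q+i}}{}{(\cdot)}$ all at once. Its verification becomes routine only after one has carefully established that $\bar{f}$ is a genuine group homomorphism (so $\bar{f}(kk')=\bar{f}(k)\bar{f}(k')$) and is compatible with $\pi_1,\pi_2$ and with powers; everything then matches the corresponding relator in $B\tens D$ term by term.
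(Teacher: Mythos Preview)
Your proposal is correct and follows essentially the same route as the paper: both arguments establish that $(f_1(a_1),f_2(c_1))\in B\times_G D$ via the commuting squares, verify that the defining relations \eqref{E:q-tensor def 1}--\eqref{E:q-tensor def 6} are preserved (the paper, like you, spells out relation \eqref{E:q-tensor def 6} in detail and leaves the rest as routine), and then check (ii)--(iii) on generators using the same intertwining identities you record, with (iv) read off from (iii). Your explicit packaging of the pullback map $\bar f\colon K\to K'$ and the intertwining identities is a nice organizational touch that the paper leaves implicit, but the substance is identical.
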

 \begin{proof}
 
        \textit{(i)} Let $\{(a_1,c_1)\} \in K=A \times_H C$, hence we have that ${\mu(a_1)=\gamma_(c_1)}$. Note that $\nu(f_1(a_1))=\phi(\mu(a_1))=\phi(\gamma(c_1))=\delta(f_2(c_1))$. Therefore, we obtain that ${(f_1 \tens f_2) (\{(a_1,c_1)\})=\{(f_1(a_1),f_2(c_1))\}}$ is an element of $B \tens D$. To see that $f_1 \tens f_2$ is a well-defined group homomorphism, we need to see that $f_1 \tens f_2$ preserves the relations \eqref{E:q-tensor def 1}-\eqref{E:q-tensor def 6}. We will show that the relation \eqref{E:q-tensor def 6} is preserved by $f_1 \tens f_2$, and the other relations can be shown similarly. Let $a \in A$ and $c \in C$. Recalling that $A$ acts on $C$ via $\mu$, $C$ acts on $A$ via $\gamma$, $B$ acts on $D$ via $\nu$ and $D$ acts on $B$ via $\delta$, we obtain 
        \begin{align*}
            (f_1 \tens f_2)(\{(a &\prescript{c}{}{a^{-1}},\prescript{a}{}{c}c^{-1})\}) \\
            &= \{(f_1(a)f_1(\prescript{c}{}{a^{-1}}),f_2(\prescript{a}{}{c})f_2(c^{-1}))\} \\
            &=\{(f_1(a)f_1(\prescript{\gamma(c)}{}{a^{-1}}),f_2(\prescript{\mu(a)}{}{c})f_2(c^{-1}))\} \\
            &=\{(f_1(a)\prescript{\phi(\gamma(c))}{}{f_1(a)^{-1}},\prescript{\phi(\mu(a))}{}{f_2(c)}f_2(c)^{-1})\} &\mbox{(By } \eqref{compatability conditions})\\
          &=\{(f_1(a)\prescript{\delta(f_2(c))}{}{f_1(a)^{-1}},\prescript{\nu(f_1(a))}{}{f_2(c)}f_2(c)^{-1})\} \\
            &=\{(f_1(a)\prescript{f_2(c)}{}{f_1(a)^{-1}},\prescript{f_1(a)}{}{f_2(c)}f_2(c)^{-1})\} \\
            &=(f_1(a) \otimes f_2(c))^q & \mbox{(By } \eqref{E:q-tensor def 6})\\
            & = (f_1 \ptens f_2)((a \otimes c)^q).
        \end{align*}
        \par\textit{(ii)} The proof follows easily using the compatibility conditions of $f_1$ and $f_2$ given in \eqref{compatability conditions}.
        \par\textit{(iii)} Using Proposition \ref{qtensor is crossed square}, it is enough to show that the diagram
        \begin{center}
			\begin{tikzcd} 
			A \tens C \arrow[r,"f_1 \tens f_2"]\arrow[d," \beta_1"] & B \tens D \arrow[d,"\beta_2"]
			\\
   \ C \arrow[r, "f_2" ]& D
			\end{tikzcd}
   \quad and \quad\begin{tikzcd} 
			A \tens C \arrow[r,"f_1 \tens f_2"]\arrow[d,"\alpha_1"] & B \tens D \arrow[d,"\alpha_2"]
			\\\ A \arrow[r, "f_1" ]& B
			\end{tikzcd}
    \end{center}
    are commutative. We will show the commutativity of the left square, and the commutativity of the right square follows similarly. Towards that end, we have that $f_2(\beta_1(a \otimes c))=f_2(\prescript{a}{}{c}c^{-1})=f_2(\prescript{a}{}{c})f_2(c)^{-1}$. Since $A$ acts on $C$ via $\mu$, we have that $f_2(\prescript{a}{}{c})=f_2(\prescript{\mu(a)}{}{c})$. By \eqref{compatability conditions}, we have that ${f_2(\prescript{\mu(a)}{}{c})=\prescript{\phi(\mu(a))}{}{f_2(c)}}$. Therefore, we obtain $f_2(\beta_1(a \otimes c))=\prescript{\phi(\mu(a))}{}{f_2(c)}f_2(c)^{-1}$. On the other hand we have $\beta_2 \circ (f_1 \tens f_2)(a \otimes c)=\prescript{f_1(a)}{}{f_2(c)}f_2(c)^{-1}$. Since $B$ acts on $D$ via $\nu$ and $\nu \circ f_1=\phi \circ \mu$, we have that 
    \begin{equation*}
        \prescript{f_1(a)}{}{f_2(c)}f_2(c)^{-1}=\prescript{\nu(f_1(a))}{}{f_2(c)}f_2(c)^{-1}=\prescript{\phi(\mu(a))}{}{f_2(c)}f_2(c)^{-1}.
    \end{equation*}
    Thus $f_2 \circ \beta_1 (a \otimes c)=\beta_2 \circ (f_1 \tens f_2)(a \otimes c)$. Moreover, we have that $f_2(\beta_1(\{(a_1,c_1)\}))=f_2(c_1^q)=f_2(c_1)^q$. Note that 
    \begin{equation*}
        \beta_2 (f_1 \tens f_2(\{(a_1,c_1)\}))=\beta_2(\{(f_1(a_1),f_2(c_1))\})=f_2(c_1)^q.
    \end{equation*}
    Thus $f_2 \circ \beta_1 = \beta_2 \circ (f_1 \tens f_2)$, and hence the proof.
    %We have to show that the back side of the cube is commutative. We note that $f_1(\alpha_1(a \otimes c))=f_1(a \prescript{c}{}{a^{-1}})=f_1(a)f_1(\prescript{c}{}{a^{-1}})$. From the crossed square, we note that $C$ acts on $A$ via $\gamma$ and therefore $\prescript{c}{}{a^{-1}}=\prescript{\gamma(c)}{}{a^{-1}}$. By \eqref{compatability conditions}, we have that $f_1(\alpha_1(a \otimes c))=f_1(a)\prescript{\phi(\gamma(c))}{}{f_1(a^{-1})}$. Considering the other map, we get $\alpha_2(f_1 \tens f_2(a \otimes c))=\alpha_2(f_1(a) \otimes f_2(c))=f_1(a)\prescript{f_2(c)}{}{f_1(a)^{-1}}$. Since $D$ acts in $B$ via $\delta$, we get that $\alpha_2(f_1 \tens f_2(a \otimes c))=f_1(a)\prescript{\delta(f_2(c))}{}{f_1(a)^{-1}}$. Since $\delta \circ f_2= \phi \circ \gamma$, we have that $f_1 \circ \alpha_1$ and $\alpha_2 \circ (f_1 \tens f_2)$ map $a \otimes c$ to the same element. Next we check $f_1 \circ \alpha_1(\{(a,c)\})=f_1(a^p)=f_1(a)^p$. On the other hand, we have that $\alpha_2(f_1 \tens f_2(\{(a,c)\}))=\alpha_2(\{(f_1(a),f_2(c))\})$. This gives $f_1(a)^p$, and this completes the proof.
    \par\textit{(iv)} This follows from \textit{(iii)}.
 \end{proof}

 \begin{lemma}\label{Lemma_Hinvariant}
    Let $\mu:M\to H$ and $\nu:N\to H$ be crossed modules, and let $R$ and $S$ be $H$-invariant subgroups of $M$ and $N$, respectively. If $i:R\to M$ and $j:S\to N$ be inclusion maps, then 
    \begin{enumerate}
        \item [(i)]  $(i\tens j)(R\tens S)$ is an $H$-invariant subgroup of $M\tens N$.
        \item [(ii)] $(i\tens j)(R\tens S)$ is a normal subgroup of $M\tens N$.
    \end{enumerate}
    
\end{lemma}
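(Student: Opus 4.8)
The strategy is to identify $i \tens j$ with the morphism of $q$-tensor products furnished by Proposition~\ref{The commutative cube}, and then to deduce both parts from that proposition together with Lemma~\ref{Ginvariant is normal}. Essentially all the genuine work (checking that the defining relations of the $q$-tensor product are preserved) has already been carried out in Proposition~\ref{The commutative cube}, so from the vantage point of this lemma the argument is formal; the only thing demanding attention is the correct matching of the various actions when specializing that proposition.

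First I would record that the statement even makes sense. Since $R$ and $S$ are $H$-invariant, Lemma~\ref{Ginvariant is normal} gives $R \unlhd M$, $S \unlhd N$, and shows that $\mu|_R : R \to H$ and $\nu|_S : S \to H$ are crossed modules. The actions of $R$ and $S$ on each other are the restrictions of the ambient $H$-actions (via $\mu|_R$ and $\nu|_S$), so they are compatible and $R \tens S$ is defined. Now apply Proposition~\ref{The commutative cube} with $G = H$, $\phi = \mathrm{id}_H$, $f_1 = i : R \hookrightarrow M$, $f_2 = j : S \hookrightarrow N$, the source-side $H$-crossed modules being $\mu|_R, \nu|_S$ and the target-side ones being $\mu, \nu$. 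The squares $\mu \circ i = \mathrm{id}_H \circ \mu|_R$ and $\nu \circ j = \mathrm{id}_H \circ \nu|_S$ commute trivially, and the compatibility conditions~\eqref{compatability conditions} hold because $i$ and $j$ are inclusions of $H$-invariant subgroups: $i(\prescript{h}{}{r}) = \prescript{h}{}{r} = \prescript{\mathrm{id}_H(h)}{}{i(r)}$, and likewise for $j$. Hence Proposition~\ref{The commutative cube}\textit{(i)} yields the homomorphism $i \tens j : R \tens S \to M \tens N$ appearing in the statement.

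For part \textit{(i)}, Proposition~\ref{The commutative cube}\textit{(ii)} specializes to $(i \tens j)(\prescript{h}{}{x}) = \prescript{h}{}{(i \tens j)(x)}$ for all $x \in R \tens S$ and $h \in H$. Writing $y = (i \tens j)(x)$ for a typical element of the image, we get $\prescript{h}{}{y} = (i \tens j)(\prescript{h}{}{x}) \in (i \tens j)(R \tens S)$; since the image is a subgroup, it is an $H$-invariant subgroup of $M \tens N$.

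For part \textit{(ii)}, recall from Proposition~\ref{qtensor is crossed square} and Definition~\ref{crosseqsquaredefinition}\textit{(i)} that $\kappa = \mu\alpha = \nu\beta : M \tens N \to H$ is a crossed module, so by the Peiffer relation~\eqref{E:Crossed module 2} conjugation in $M \tens N$ satisfies $x\,x'\,x^{-1} = \prescript{\kappa(x)}{}{x'}$ for all $x, x' \in M \tens N$. By part \textit{(i)}, $(i \tens j)(R \tens S)$ is $H$-invariant, hence it is a $\kappa$-invariant subgroup of $M \tens N$, and Lemma~\ref{Ginvariant is normal} applied to the crossed module $\kappa$ shows that it is normal in $M \tens N$. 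I expect no real obstacle beyond the bookkeeping of the actions in the specialization of Proposition~\ref{The commutative cube}.
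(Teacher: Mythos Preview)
Your proof is correct and essentially follows the paper's approach. The only cosmetic difference is in part~\textit{(i)}: the paper verifies $H$-invariance directly on the two kinds of generators $r\otimes s$ and $\{(r_1,s_1)\}$, while you package that same computation into an appeal to Proposition~\ref{The commutative cube}\textit{(ii)}; for part~\textit{(ii)} both arguments are identical, invoking Proposition~\ref{qtensor is crossed square} and Lemma~\ref{Ginvariant is normal}.
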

\begin{proof} 
\textit{(i)} Let $r\otimes s, \{(r_1,s_1)\}\in \im{i\tens j},$ and $h\in H$. We have $\prescript{h}{}{(r\otimes s)}=\prescript{h}{}{r}\otimes \prescript{h}{}{s}$. Since $R$, $S$ are $H$-invariant subgroups, $\prescript{h}{}{r}\in R$ and $\prescript{h}{}{s}\in S$. Thus $\prescript{h}{}{(r\otimes s)}\in \im{i\tens j}$. Similarly $\prescript{h}{}{\{(r_1,s_1)\}}\in \im{i\tens j}$, and hence the proof.

\par\textit{(ii)} The proof follows from Proposition \ref{qtensor is crossed square} and Lemma \ref{Ginvariant is normal}.
\end{proof}
 Let $N$ be a normal subgroup of a group $H$. Set $\hat{N}_1^1=N$ and define $\mu_1^1:\hat{N}_1^1=N\to H$ as the inclusion map. Similar to the $n$-fold $q$-tensor product constructed in Section 4, we can construct a crossed module  
 \[\mu_n^i:H \tens\ldots \tens H \tens N \tens H \tens\ldots \tens H \to H,\]
 where $N$ appears in the $i$-th position for $1\leq i\leq n$, while the remaining $n-1$ terms are $H$. We denote $H \tens\ldots \tens H \tens N \tens H \tens \ldots \tens H$ by $\hat{N}^{i}_n$.
 \begin{lemma}\label{{N}_i normality}

    Let $N$ be a normal subgroup of a group $G$ and $n$ be a positive integer. For $1\leq i\leq n,$ there is an $H$-module homomorphism ${f_i^n:\hat{N}_n^{i}\to H^{\otimes^q_{n}}}$ such that $\im{f_i^n}$ is an $H$-invariant subgroup and the following diagram commutes \begin{equation}
      	\begin{tikzcd} 	
			\hat{N}^i_{n} \arrow[r,"f_i^n"]\arrow[d,"\mu ^{i}_{n}"] & H^{\otimes^{q}_{n}} \arrow[d,"\mu_n"] 
			\\\ H \arrow[r,"id"]& H
			\end{tikzcd} \label{N_i commutativity Lemma}
  \end{equation}    
\end{lemma}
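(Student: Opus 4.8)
\emph{Proof plan.} The plan is to prove the statement by induction on $n$, constructing $f_i^n$ at each stage from Proposition~\ref{The commutative cube} applied with $G=H$ and $\phi=\mathrm{id}_H$; this single input simultaneously produces the homomorphism $f_i^n$, its $H$-equivariance, and the commutativity of \eqref{N_i commutativity Lemma}.

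For the base case $n=1$ one has $\hat{N}^1_1=N$, the map $\mu^1_1\colon N\to H$ is the inclusion, $H^{\otimes^q_1}=H$, and $\mu_1=\mathrm{id}_H$; take $f_1^1\colon N\hookrightarrow H$ to be the inclusion. Since $N\trianglelefteq H$, it is $H$-invariant under conjugation, the inclusion is $H$-equivariant, and \eqref{N_i commutativity Lemma} commutes trivially.

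For the inductive step fix $n\geq 2$ and assume the statement for $n-1$. Mirroring the recursion $H^{\otimes^q_n}=H^{\otimes^q_{n-1}}\tens H$, the group $\hat{N}^i_n$ is built from its predecessor according to whether its last tensor factor is $H$ or $N$. If $1\leq i\leq n-1$, then $\hat{N}^i_n=\hat{N}^i_{n-1}\tens H$; in Proposition~\ref{The commutative cube} take the $H$-crossed modules $\mu^i_{n-1}\colon \hat{N}^i_{n-1}\to H$ and $\mu_{n-1}\colon H^{\otimes^q_{n-1}}\to H$, take $\mathrm{id}_H\colon H\to H$ in the roles of $\gamma$ and $\delta$, and set $f_1=f^i_{n-1}$ (supplied by the inductive hypothesis) and $f_2=\mathrm{id}_H$. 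The compatibility conditions \eqref{compatability conditions} hold, the first being exactly the $H$-equivariance of $f^i_{n-1}$ and the second being immediate; the two commuting squares demanded in the hypothesis of the Proposition are the instance of \eqref{N_i commutativity Lemma} for $n-1$ and a trivial identity. Then Proposition~\ref{The commutative cube}\textit{(i)} yields $f_i^n:=f^i_{n-1}\tens \mathrm{id}_H\colon\hat{N}^i_n\to H^{\otimes^q_n}$, part \textit{(ii)} gives its $H$-equivariance, and part \textit{(iv)} gives the commutativity of \eqref{N_i commutativity Lemma}, once one notes that $\mu^i_n$ and $\mu_n$ are, by their very construction, the composites $\gamma\circ\beta_1$ and $\delta\circ\beta_2$ appearing in that part. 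Finally $\im{f_i^n}$ is $H$-invariant because $f_i^n$ is $H$-equivariant: $\prescript{h}{}{f_i^n(x)}=f_i^n(\prescript{h}{}{x})\in\im{f_i^n}$. The case $i=n$ is handled the same way, now with $\hat{N}^n_n=H^{\otimes^q_{n-1}}\tens N$, with $f_1=\mathrm{id}_{H^{\otimes^q_{n-1}}}$ and $f_2\colon N\hookrightarrow H$ the inclusion (so $\gamma\colon N\hookrightarrow H$ as well); the compatibility condition for $f_2$ is precisely the normality of $N$ in $H$, and one may alternatively obtain the $H$-invariance (and normality) of $\im{f_n^n}$ directly from Lemma~\ref{Lemma_Hinvariant} applied to the inclusions of $H^{\otimes^q_{n-1}}$ and $N$.

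The substantive work---that a $q$-tensor of compatible morphisms is a well-defined $H$-equivariant homomorphism making the associated cube commute---is already carried out in Proposition~\ref{The commutative cube}, so what remains is essentially bookkeeping: at each inductive step one must correctly match the data $(\mu,\gamma,\nu,\delta,\phi,f_1,f_2)$ of that Proposition with the recursively defined $\hat{N}^i_n$, $\mu^i_n$, $H^{\otimes^q_n}$ and $\mu_n$, and in particular verify the identifications $\mu^i_n=\gamma\circ\beta_1$ and $\mu_n=\delta\circ\beta_2$ so that part \textit{(iv)} literally reproduces diagram \eqref{N_i commutativity Lemma}. I expect this matching of structure maps across the two parallel constructions to be the main, albeit purely formal, obstacle.
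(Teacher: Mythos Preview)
Your proof is correct and follows essentially the same inductive scheme as the paper's: both start from the inclusion $N\hookrightarrow H$ at $n=1$ and then build $f_i^n$ step by step via Proposition~\ref{The commutative cube} with $\phi=\mathrm{id}_H$, splitting into the cases $i<n$ (tensor $f_i^{n-1}$ with $\mathrm{id}_H$) and $i=n$ (tensor $\mathrm{id}_{H^{\otimes^q_{n-1}}}$ with the inclusion $N\hookrightarrow H$), and both read off $H$-equivariance and the commutativity of~\eqref{N_i commutativity Lemma} from parts \textit{(ii)} and \textit{(iv)} of that Proposition.

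The one genuine difference is in how $H$-invariance of $\im{f_i^n}$ is obtained. You deduce it immediately from $H$-equivariance: $\prescript{h}{}{f_i^n(x)}=f_i^n(\prescript{h}{}{x})\in\im{f_i^n}$. The paper instead, for $i<n$, factors $f_i^n$ through the intermediate object $f_i^{n-1}(\hat{N}^i_{n-1})\tens H$, identifies $\im{f_i^n}$ with the image of $j\tens\mathrm{id}$ for $j$ the inclusion of the (inductively $H$-invariant) subgroup $f_i^{n-1}(\hat{N}^i_{n-1})$, and then invokes Lemma~\ref{Lemma_Hinvariant}. Your route is shorter and avoids this factorisation entirely; the paper's route has the minor advantage of exhibiting $\im{f_i^n}$ concretely as the image of a $q$-tensor of inclusions, which makes Lemma~\ref{Lemma_Hinvariant} do the work uniformly in both cases and yields normality of the image at the same time.
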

 \begin{proof}
     The proof proceeds by induction on $n$. For $n=1$, set $f_1^1:\hat{N}^1_{1} \to H^{\otimes^{q}_{1}}$ as the inclusion map. Clearly $f_1^1$ is an $H$-module morphism, $\im{f_1^1}$ is an $H$-invariant subgroup and the diagram \eqref{N_i commutativity Lemma} commutes. For $n>1$ and $i<n$, setting $A=\hat{N}^i_{n-1}$, $\mu=\mu_{n-1}^i$, $B=H^{\otimes^q_{n-1}}$, $\nu=\mu_{n-1}$, $C=D=G=H$, $\gamma=\delta=id$, $f_1=f_i^{n-1}$, $f_2=id$ in Proposition \ref{The commutative cube}, we have the following commutative cube:

 \begin{equation}\label{special cube}
   \begin{tikzcd}[row sep=1.5em, column sep = 1.5em]
  \hat{N}^i_{n-1}\tens H \arrow[rr,"f_i^{n-1}\tens id"] \arrow[dr,swap,"\mu_n^i"] \arrow[dd, swap] &&
  H^{\otimes^q_{n-1}}\tens H \arrow[dd, pos=.3] \arrow[dr,"\mu_n"] \\
   & H \arrow[rr, pos=.3,"id", crossing over] &&
   H \arrow[dd,"id"] \\
\hat{N}^i_{n-1} \arrow[rr,"f_i^{n-1}",pos=.3] \arrow[dr, swap,"\mu_{n-1}^i"] && H^{\otimes^q_{n-1}}\arrow[dr,"\mu_{n-1}"] \\
   & H \arrow[rr,"id"] \arrow[from=uu,"id", pos=.3, crossing over]&& H
    \end{tikzcd}
\end{equation}Define $f_i^n=f_i^{n-1}\tens id$, and note that by Proposition \ref{The commutative cube}\textit{(ii)}, $f_i^n$ is an $H$-module homomorphism. The commutativity of $\eqref{N_i commutativity Lemma}$ follows from \eqref{special cube} and Proposition \ref{The commutative cube}\textit{(iv)}. Using \eqref{N_i commutativity Lemma}, it can be shown that \[f_i^{n-1}\tens id:\hat{N}^i_{n-1}\tens H \to f_i^{n-1}(\hat{N}^i_{n-1})\tens H\] is surjective. Moreover, we have the following commutative diagram,

\[
\begin{tikzcd}
 & f_i^{n-1}(\hat{N}^i_{n-1})\tens H \arrow{dr}{j\tens id} \\
\hat{N}^i_{n-1}\tens H \arrow{ur}{f_i^{n-1}\tens id} \arrow{rr}{f_i^{n}} && H^{\otimes^q_{n-1}}\tens H
\end{tikzcd}
\]where $j:f_i^{n-1}(\hat{N}^i_{n-1})\to H^{\otimes^q_{n-1}}$ is the inclusion map. Therefore, we have $\im{j\otimes id}=\im{f_i^n}$. Hence, by Lemma \ref{Lemma_Hinvariant}\textit{(i)}, $\im{f_i^n}$ is an $H$-invariant subgroup. For $n>1$ and $i=n$, setting $A=B=H^{\otimes^q_{n-1}}$, $\mu=\nu=\mu_{n-1}$, $D=G=H$, $C=N$, $f_1=id$, $\phi=\delta=id$ and $f_2=\gamma=j$ in Proposition \ref{The commutative cube}, where $j$ is the inclusion map,  we obtain the following commutative diagram: 
%\nu superscript, i should be j in tensor map?
 \begin{equation}\label{special cube1}
   \begin{tikzcd}[row sep=1.5em, column sep = 1.5em]
   H^{\otimes^q_{n-1}}\tens N\arrow[rr,"id\;\tens \;j"] \arrow[dr,swap,"\mu_n^n"] \arrow[dd, swap] &&
  H^{\otimes^q_{n-1}}\tens H \arrow[dd, pos=.3] \arrow[dr,"\mu_n"] \\
   & N \arrow[rr, pos=.3,"j", crossing over] &&
   H \arrow[dd,"id"] \\
 H^{\otimes^q_{n-1}} \arrow[rr,"id",pos=.3] \arrow[dr, swap,"\mu_{n-1}"] && H^{\otimes^q_{n-1}}\arrow[dr,"\mu_{n-1}"] \\        
   & H \arrow[rr,"id"] \arrow[from=uu,"j", pos=.3, crossing over]&& H
    \end{tikzcd}
 \end{equation}Define $f_i^n=id\tens j$, and note that by Proposition \ref{The commutative cube}\textit{(ii)}, $f_i^n$ is an $H$-module homomorphism. The commutativity of $\eqref{N_i commutativity Lemma}$ follows from \eqref{special cube1} and Proposition \ref{The commutative cube}\textit{(iv)}. By Lemma \ref{Lemma_Hinvariant}\textit{(i)}, $\im{id\tens j}$ is an $H$-invariant subgroup and hence the proof.
     
 \end{proof}

For a normal subgroup $N$ of $H$, we will now inductively define a homomorphism ${\phi_n: H^{\tens_n} \longrightarrow(H/N)^{\tens_n}}$ satisfying the compatibility condition $\phi_n(\prescript{h}{}{x})=\prescript{\phi_1(h)}{}{\phi_n(x)}$. For $n=1$, $\phi_1$ is the natural projection map which satisfies the compatibility condition. Assume that $\phi_{n-1}: H^{\tens_{n-1}} \longrightarrow (H/N)^{\tens_{n-1}}$ is well defined and satisfies the compatibility condition. By \eqref{q n fold tensorsquare}, we have the following crossed modules $\mu_{n-1}^{H}: H^{\tens_{n-1}} \longrightarrow H$, $\mu_{n-1}^{H/N}: (H/N)^{\tens_{n-1}} \longrightarrow H/N$. Setting $A=H^{\tens_{n-1}}$, $\mu= \mu_{n-1}^H$, $B=(H/N)^{\tens_{n-1}}$, $\nu=\mu_{n-1}^{H/N}$, $C=H$, $\gamma=id_H$, $D=G=H/N$, $\delta=id_{H/N}$, $f_1=\phi_{n-1}$, $f_2=\phi=\phi_1$ in Proposition \ref{The commutative cube}, we obtain the map $\phi_{n-1} \tens \phi_1$ which satisfies the compatibility condition. We denote this map by $\phi_n$. Moreover, the maps $\beta_1$ and $\beta_2$ from Proposition \ref{The commutative cube} are the maps $\mu_n^H$ and $\mu_n^{H/N}$ defined as in \eqref{q n fold tensorsquare}.

\begin{equation} \label{special cube for exact sequence for Lazard}
    \begin{tikzcd}[row sep=1.5em, column sep = 1.5em]
    H^{\tens_{n-1}} \tens H \arrow[rr,"\phi_{n-1} \tens \phi_1"] \arrow[dr, swap,"\beta_1"] \arrow[dd, swap,"\alpha_1"] &&
    (H/N)^{\tens_{n-1}} \tens H/N \arrow[dd,"\alpha_2", pos=.3] \arrow[dr,"\beta_2"] \\
    & H \arrow[rr, "\phi_1", pos=.3, crossing over] &&
    H/N \arrow[dd,"id_{H/N}"] \\
    H^{\tens_{n-1}} \arrow[rr,"\phi_{n-1}",pos=.3] \arrow[dr, "\mu_{n-1}^H"] && (H/N)^{\tens_{n-1}} \arrow[dr,"\mu_{n-1}^{H/N}"] \\
    & H \arrow[rr,"\phi_1"] \arrow[from=uu,"id_H", pos=.3, crossing over]&& H/N
    \end{tikzcd}
\end{equation}

By Lemma \ref{{N}_i normality}, there is an $H$-module homomorphism $\hat{f_i^n}:\hat{N}_n^i \to H^{\tens_n}$. The next Proposition extends a result by Ellis \cite[Proposition 7]{Ellis1995}. Setting $N_i= Im (\hat{f_i^n})$, we have

 \begin{prop} \label{nfold_qtensor_exact_sequence}
     Let $N$ be a normal subgroup of $H$ and $n$ be a positive integer. Set $G:=H/N$. The following sequence is exact: 
        \begin{equation*}
		   1 \to \prod \limits_{i=1}^n N_i \to H^{\otimes^{q}_n} \to G^{\otimes^{q}_n} \to 1.
        \end{equation*}
 \end{prop}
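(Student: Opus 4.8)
The plan is to argue by induction on $n$. For $n = 1$ the asserted sequence is exactly the given extension $1 \to N \to H \to G \to 1$, since $\hat{f_1^1}$ is the inclusion $N \hookrightarrow H$ and $N_1 = N$. For the inductive step I would work with the morphism $\phi_n = \phi_{n-1} \tens \phi_1 : H^{\otimes^q_{n-1}} \tens H \to G^{\otimes^q_{n-1}} \tens G$ furnished by Proposition \ref{The commutative cube} and displayed in \eqref{special cube for exact sequence for Lazard}, together with its explicit values $\phi_n(x \otimes g) = \phi_{n-1}(x) \otimes \phi_1(g)$ and $\phi_n(\{(x,\mu_{n-1}^H(x))\}) = \{(\phi_{n-1}(x),\phi_1(\mu_{n-1}^H(x)))\}$. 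Three things have to be checked: (a) $\phi_n$ is surjective; (b) $\prod_{i=1}^n N_i \leqslant \ker\phi_n$; (c) $\ker\phi_n \leqslant \prod_{i=1}^n N_i$. Note first that each $N_i = \im{\hat{f_i^n}}$ is a normal subgroup of $H^{\otimes^q_n}$ by Lemma \ref{{N}_i normality} and Lemma \ref{Lemma_Hinvariant}, so $\prod_{i=1}^n N_i$ is indeed a (normal) subgroup.

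Claims (a) and (b) are routine. Surjectivity of $\phi_n$ holds because $\phi_{n-1}$ and $\phi_1$ are surjective, so any generator $\bar x \otimes \bar g$ or $\{(\bar x,\mu_{n-1}(\bar x))\}$ of $G^{\otimes^q_n}$ lifts to a generator of $H^{\otimes^q_n}$. For (b), recall from Lemma \ref{{N}_i normality} that $\hat{f_i^n} = \hat{f_i^{n-1}} \tens \mathrm{id}$ for $i < n$ and $\hat{f_n^n} = \mathrm{id} \tens j$ with $j : N \hookrightarrow H$. When $i < n$ the induction hypothesis gives $\im{\hat{f_i^{n-1}}} \leqslant \ker\phi_{n-1}$, so $\phi_n\big(\hat{f_i^n}(y \otimes h)\big) = \phi_{n-1}\big(\hat{f_i^{n-1}}(y)\big) \otimes \phi_1(h) = 1 \otimes \phi_1(h) = 1$, using $1 \otimes g = 1$; bracket generators are handled the same way via $\{(1,1)\} = 1$. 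When $i = n$ the generators of $N_n$ have the form $x \otimes \eta$ with $\eta \in N$, and $\phi_1(\eta) = 1$ forces $\phi_n(x \otimes \eta) = \phi_{n-1}(x) \otimes 1 = 1$. Hence $N_i \leqslant \ker\phi_n$ for each $i$, and so is their product.

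The heart of the matter is (c). I would establish the following right-exactness property of the tensor product modulo $q$: if $f : A \to B$ and $g : C \to D$ are surjective morphisms of crossed modules sitting in a cube as in Proposition \ref{The commutative cube}, then $f \tens g$ is surjective and $\ker(f \tens g)$ is the normal subgroup of $A \tens C$ generated by the images of $(\ker f) \tens C$ and $A \tens (\ker g)$ under the natural maps. Granting this and applying it with $f = \phi_{n-1}$, $g = \phi_1$, the kernel of $\phi_n$ is generated by the image of $(\ker\phi_{n-1}) \tens H$ and the image of $H^{\otimes^q_{n-1}} \tens N$ inside $H^{\otimes^q_n}$. By the induction hypothesis $\ker\phi_{n-1} = \prod_{i=1}^{n-1} \im{\hat{f_i^{n-1}}}$; since $\hat{f_i^n} = \hat{f_i^{n-1}} \tens \mathrm{id}$ for $i < n$, the image of $\im{\hat{f_i^{n-1}}} \tens H$ in $H^{\otimes^q_n}$ is precisely $N_i$, and since $\hat{f_n^n} = \mathrm{id} \tens j$, the image of $H^{\otimes^q_{n-1}} \tens N$ is precisely $N_n$. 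Therefore $\ker\phi_n = \prod_{i=1}^n N_i$, which closes the induction.

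The main obstacle is the right-exactness property invoked in (c). Proving it amounts to showing that the evident surjection $(A \tens C)\big/\big\langle \im{(\ker f) \tens C},\, \im{A \tens (\ker g)}\big\rangle \to B \tens D$ is injective, i.e.\ to constructing its inverse from the presentation in Definition \ref{Defn q tensor}. One chooses set-theoretic sections of $f$ and $g$, lifts each generator of $B \tens D$ — for a bracket generator $\{\bar k\}$ one must lift an element of the pullback, so the two coordinates of the lift have to be chosen compatibly, which is arranged using the commutativity of the front faces of the cube — and checks that the relations \eqref{E:q-tensor def 1}--\eqref{E:q-tensor def 6} hold modulo the displayed subgroup. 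The delicate points are that any two lifts of a generator differ by an element of $\ker f$ or $\ker g$, and that such discrepancies — after being fed through the twisted products in \eqref{E:q-tensor def 1} and through the $q$-th-power relations \eqref{E:q-tensor def 3}, \eqref{E:q-tensor def 5}, \eqref{E:q-tensor def 6} — stay inside $\big\langle \im{(\ker f) \tens C},\, \im{A \tens (\ker g)}\big\rangle$; here one uses that $\ker f$ and $\ker g$ are normal, hence invariant under the $G$-actions, and that all actions are by automorphisms. Alternatively one can bypass the general lemma and perform this lift-and-check construction directly for $\phi_n$, producing an explicit inverse $G^{\otimes^q_n} \to H^{\otimes^q_n}/\prod_{i=1}^n N_i$; this is the $n$-fold analogue of the argument behind \cite[Proposition 7]{Ellis1995}.
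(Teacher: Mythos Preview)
Your approach matches what the paper intends: its own proof is the single line ``Using Proposition \ref{The commutative cube} and some effort, the proof follows,'' and your inductive argument via the morphism $\phi_n$ of \eqref{special cube for exact sequence for Lazard}, together with right-exactness of $\tens$, is exactly the natural elaboration of that hint and of the cited result of Ellis. One small gap in your step (b) for $i=n$: you treat only the tensor generators $x\otimes\eta$ and omit the bracket generators $\{(x,\eta)\}$ with $\mu_{n-1}^H(x)=\eta\in N$, whose image $\{(\phi_{n-1}(x),1)\}$ is not trivially $1$ in $G^{\tens_n}$---but this is precisely part of the right-exactness verification you correctly flag as the main obstacle in (c), so your overall plan still covers it.
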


 \begin{proof}
     Using Proposition \ref{The commutative cube} and some effort, the proof follows.
 \end{proof}

\section{Finiteness of the terms of the lower $p$-series for a pro-$p$ group} \label{section: finiteness of lower p series}
 For an arbitrary group $G$, set $\lambda_n(G)=\gamma_1(G)^{p^{n-1}}\gamma_2(G)^{p^{n-2}} \ldots \gamma_n(G)$ for all $n\geq 1$. It is well known that $\lambda_n(G)$ is a characteristic subgroup of $G$ and $G=\lambda_1(G)$ and $\lambda_{n+1}(G)\leqslant\lambda_n(G)$. It can also be defined as $\lambda_1(G)= G$ and $\lambda_n(G)=\lambda_{n-1}(G)^p[\lambda_{n-1}(G),G] $ for $n> 1$
 %(cf. \cite[Lemma 3.2.7]{Kar1987})
 . This series is called the lower $p$-series or the Lazard series. By Definition 1.15 in \cite{DDMS99}, for a pro-$p$ group G, the lower $p$-series is defined as $P_1(G)=G$ and $P_{n}(G)=\overline{P_{n-1}(G)^p} \overline{[P_{n-1}(G),G]} $ for all $n>1$.

 %%%

The following easy lemma gives the relation between the two definitions.
\begin{lemma}\label{PR:L:1}
        Let $G$ be a pro-$p$ group, we have $P_n(G)= \overline{\lambda_n(G)}.$
\end{lemma}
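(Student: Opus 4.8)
The plan is to prove $P_n(G) = \overline{\lambda_n(G)}$ by induction on $n$. The base case $n=1$ is immediate since $P_1(G) = G = \overline{G} = \overline{\lambda_1(G)}$. For the inductive step, assume $P_{n-1}(G) = \overline{\lambda_{n-1}(G)}$. Recall $P_n(G) = \overline{P_{n-1}(G)^p}\;\overline{[P_{n-1}(G),G]}$ and $\lambda_n(G) = \lambda_{n-1}(G)^p[\lambda_{n-1}(G),G]$. First I would handle the two factors separately. For the $p$-th power factor: using the inductive hypothesis and Lemma \ref{PR:R:2}\textit{(i)}, we get $\overline{P_{n-1}(G)^p} = \overline{(\overline{\lambda_{n-1}(G)})^p} \subseteq \overline{\overline{\lambda_{n-1}(G)^p}} = \overline{\lambda_{n-1}(G)^p}$, and the reverse inclusion $\overline{\lambda_{n-1}(G)^p} \subseteq \overline{P_{n-1}(G)^p}$ follows since $\lambda_{n-1}(G) \subseteq \overline{\lambda_{n-1}(G)} = P_{n-1}(G)$. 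Similarly for the commutator factor, Lemma \ref{PR:R:2}\textit{(i)} gives $[\overline{\lambda_{n-1}(G)}, \overline{G}] \subseteq \overline{[\lambda_{n-1}(G),G]}$, hence $\overline{[P_{n-1}(G),G]} \subseteq \overline{[\lambda_{n-1}(G),G]}$, and the reverse inclusion is again immediate from $\lambda_{n-1}(G) \subseteq P_{n-1}(G)$.

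Next I would combine the two factors. We then have
\[
P_n(G) = \overline{P_{n-1}(G)^p}\;\overline{[P_{n-1}(G),G]} = \overline{\lambda_{n-1}(G)^p}\;\overline{[\lambda_{n-1}(G),G]}.
\]
The remaining task is to identify $\overline{\lambda_{n-1}(G)^p}\;\overline{[\lambda_{n-1}(G),G]}$ with $\overline{\lambda_{n-1}(G)^p[\lambda_{n-1}(G),G]} = \overline{\lambda_n(G)}$. The inclusion $\overline{\lambda_{n-1}(G)^p}\;\overline{[\lambda_{n-1}(G),G]} \supseteq \overline{\lambda_n(G)}$ is clear since the left side is a closed set containing $\lambda_{n-1}(G)^p[\lambda_{n-1}(G),G]$. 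For the other inclusion, one uses Lemma \ref{PR:R:2}\textit{(ii)}: since $G$ is a pro-$p$ group, it is compact, and $\lambda_{n-1}(G)$ is a closed subgroup (being the product of the closed subgroups $\gamma_i(G)^{p^{n-1-i}}$ in a compact group, or simply noting finitely generated closed subgroups suffice here — in fact $\lambda_{n-1}(G)$ need not be closed in general, so I would instead argue directly). More carefully, I would note that $\lambda_{n-1}(G)^p$ is contained in the compact set $\overline{\lambda_{n-1}(G)^p}$ and $[\lambda_{n-1}(G),G]$ is contained in $\overline{[\lambda_{n-1}(G),G]}$; applying Lemma \ref{PR:R:2}\textit{(ii)} with $M = \overline{\lambda_{n-1}(G)^p}$ (compact) and $N = \overline{[\lambda_{n-1}(G),G]}$ (closed) gives $\overline{\lambda_{n-1}(G)^p}\;\overline{[\lambda_{n-1}(G),G]} = \overline{\overline{\lambda_{n-1}(G)^p}\;\overline{[\lambda_{n-1}(G),G]}} \subseteq \overline{\lambda_{n-1}(G)^p[\lambda_{n-1}(G),G]} \cdot (\text{closure})$, which collapses to $\overline{\lambda_n(G)}$ after taking closures.

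The main obstacle is the bookkeeping around compactness in Lemma \ref{PR:R:2}\textit{(ii)}: to apply it cleanly one wants the first factor compact, which holds because closed subsets of the compact group $G$ are compact. Once this is set up, the argument is a routine chase of closures, using only continuity of the group operations (Lemma \ref{PR:R:2}\textit{(i)}) and the monotonicity $\lambda_{n-1}(G) \subseteq P_{n-1}(G)$ from the inductive hypothesis. I expect the whole proof to be short — essentially the three displayed manipulations above — which is consistent with the excerpt's description of it as "the following easy lemma."
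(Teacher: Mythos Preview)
Your approach is exactly the paper's: induction on $n$ together with Lemma~\ref{PR:R:2}. The only wrinkle is in your final paragraph, where you apply Lemma~\ref{PR:R:2}\textit{(ii)} with $M=\overline{\lambda_{n-1}(G)^p}$ and $N=\overline{[\lambda_{n-1}(G),G]}$; that only tells you the product is closed, not that it sits inside $\overline{\lambda_n(G)}$. Instead apply the first inequality of Lemma~\ref{PR:R:2}\textit{(ii)} directly with $M=\lambda_{n-1}(G)^p$ and $N=[\lambda_{n-1}(G),G]$ (no compactness needed) to get $\overline{\lambda_{n-1}(G)^p}\,\overline{[\lambda_{n-1}(G),G]}\leqslant \overline{\lambda_n(G)}$, which is precisely the missing inclusion.
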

\begin{proof}
        This follows by induction on $n$ and Lemma \ref{PR:R:2}.
\end{proof}

 \begin{lemma}\label{MR:L:1}
Let $H$ be a pro-$p$ group. If $N$ is a normal subgroup of $Z_n(H)$ with exponent $p$, then 
\begin{enumerate}
    \item[(i)] $[\lambda_i(H), N,\prescript{}{n-i}{H}]=1$ for $1\leq i\leq n$.
    \item[(ii)] $[N,\prescript{}{n}{H}]=1$.
\end{enumerate}
 \end{lemma}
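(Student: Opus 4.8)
The plan is to deduce both statements from the single inclusion $[\lambda_i(H), N] \leqslant Z_{n-i}(H)$ for $1 \leqslant i \leqslant n$. Granting this, commuting with $H$ a total of $n-i$ times and using $[Z_k(H), H] \leqslant Z_{k-1}(H)$ (Lemma \ref{PR:R:1}\textit{(ii)} with $j=1$) repeatedly yields $[\lambda_i(H), N, \prescript{}{n-i}{H}] \leqslant Z_0(H) = 1$, which is \textit{(i)}; part \textit{(ii)} is the special case $i=1$, and in any event is immediate from $N \leqslant Z_n(H)$, since then $[N, \prescript{}{n}{H}] \leqslant [Z_n(H), \prescript{}{n}{H}] = 1$. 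Since the $i$-th Lazard term commutes with passage to quotients and the image of $N$ in a finite quotient $H/K$ again has exponent $p$ and lies in $Z_n(H/K)$, the usual argument (intersecting over the open normal subgroups $K$) reduces everything to the case of a finite $p$-group $H$; throughout, $N$ is taken normal in $H$, which is what the commutator identities below require and is the intended reading of the hypothesis.

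Writing $\lambda_i(H) = \prod_{j=1}^{i} \gamma_j(H)^{p^{i-j}}$ and expanding the commutator by iterating Lemma \ref{PR:R:1}\textit{(i)}, the displayed inclusion reduces to the estimate
\[
[\gamma_j(H)^{p^{m}}, N] \leqslant Z_{n-j-m}(H) \qquad (j \geqslant 1,\ m \geqslant 0),
\]
with the convention $Z_s(H) = 1$ for $s \leqslant 0$ (take $m = i - j$ and multiply over $1 \leqslant j \leqslant i$). The case $m = 0$ is $[\gamma_j(H), N] \leqslant Z_{n-j}(H)$, which is Lemma \ref{PR:R:1}\textit{(ii)} applied to $N \leqslant Z_n(H)$.

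I would establish the estimate by induction on $m$. For the step, use $\gamma_j(H)^{p^{m}} \leqslant \bigl(\gamma_j(H)^{p^{m-1}}\bigr)^{p}$, set $R := \gamma_j(H)^{p^{m-1}}$, and invoke Theorem \ref{PR:T:1} with $k = 1$ to get $[R^{p}, N] \leqslant [R, N]^{p}\,[N, \prescript{}{p}{R}]$. Here $[N, \prescript{}{p}{R}]$ is harmless: $[R,N] \leqslant Z_{n-j-m+1}(H)$ by the inductive hypothesis, and one more commutator with $R \leqslant H$ drops the level, so $[N, \prescript{}{p}{R}] \leqslant [N, \prescript{}{2}{R}] \leqslant Z_{n-j-m}(H)$. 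The term $[R,N]^{p}$ is controlled by the sub-lemma in which the exponent-$p$ hypothesis is used: \emph{if $K \leqslant H$ and $[K,N] \leqslant Z_t(H)$ with $t \geqslant 1$, then $[K,N]^{p} \leqslant Z_{t-1}(H)$.} To prove it, pass to $\overline{H} = H/Z_{t-1}(H)$, where the image of $[K,N]$ is central; then for $x \in K$ and $y \in N$ the element $[\overline{x}, \overline{y}]$ is central, so $[\overline{x}, \overline{y}]^{p} = [\overline{x}, \overline{y}^{\,p}] = [\overline{x}, 1] = 1$ because every element of $N$ has order dividing $p$; as $[K,N]$ is, modulo $Z_{t-1}(H)$, abelian and generated by such elements, its $p$-th power lies in $Z_{t-1}(H)$. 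Applying the sub-lemma with $K = R$ and $t = n - j - m + 1$ gives $[R,N]^{p} \leqslant Z_{n-j-m}(H)$, and combining the two bounds closes the induction; the degenerate cases $t \leqslant 0$ are trivial by the convention and the inductive hypothesis.

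The main obstacle is finding this sub-lemma and the correct shift $n - j - m$: one has to see that taking a $p$-th power of a commutator $[K,N]$ with $N$ of exponent $p$ buys exactly one step down the upper central series — precisely enough to absorb the extra factor of $p$ produced by Theorem \ref{PR:T:1}. Everything else is bookkeeping with Lemma \ref{PR:R:1} and the reduction to finite quotients.
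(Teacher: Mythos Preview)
Your argument is correct, and the reading of the hypothesis as $N\trianglelefteq H$ with $N\leqslant Z_n(H)$ is indeed what the paper uses (and needs, for the same reason you point out). However, the route is genuinely different from the paper's. The paper does not induct on $m$: it applies Theorem~\ref{PR:T:1} once with the full exponent $k=i-j$ to obtain
\[
[\overline{\gamma_j(H)}^{p^{i-j}},\overline{N}] \;\leqslant\; \prod_{r=0}^{i-j}\bigl[\overline{N},\,{}_{p^{r}}\,\overline{\gamma_j(H)}\bigr]^{p^{\,i-j-r}},
\]
and then observes that every factor with $r<i-j$ lies in $\overline{N}$ (by normality of $N$ in $H$) and hence has trivial $p$-th power since $N$ has exponent $p$. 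Only the term $r=i-j$ survives, and commuting it $n-i$ more times with $H$ lands in $Z_{\,i-jp^{i-j}}(H)=1$ via the elementary estimate $jp^{i-j}\geqslant i$. So the exponent-$p$ hypothesis is used in a single blunt stroke (subgroups of $N$ have trivial $p$-power), with no need for your sub-lemma or an induction. Your approach trades this one-shot application for repeated $k=1$ steps and the auxiliary fact that $[K,N]^p$ drops one level in the upper central series; this is slightly longer but isolates a reusable principle and yields the cleaner intermediate bound $[\gamma_j(H)^{p^{m}},N]\leqslant Z_{n-j-m}(H)$, whereas the paper's estimate goes straight for the iterated commutator. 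Your reduction to finite quotients is a legitimate simplification that the paper does not make (it carries closures throughout).
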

\begin{proof}\textit{(i)} Since $\gamma_i(H)^{p^k}$ is normal subgroup for any non negative integer $k$, Lemma \ref{PR:R:1}\textit{(i)} yields \begin{align*}
    [\lambda_i(H), N,\prescript{}{n-i}{H}]&= [\gamma_1(H)^{p^{i-1}}\gamma_2(H)^{p^{i-1}}\ldots \gamma_i(H), N,\prescript{}{n-i}{H}]\\
    &= \big[[\gamma_1(H)^{p^{i-1}},N][\gamma_2(H)^{p^{i-1}},N]\ldots [\gamma_i(H), N], \prescript{}{n-i}{H}\big]\\
    &=[\gamma_1(H)^{p^{i-1}},N,\prescript{}{n-i}{H}][\gamma_2(H)^{p^{i-2}},N,\prescript{}{n-i}{H}]\ldots [\gamma_i(H), N,\prescript{}{n-i}{H}].
\end{align*}

Hence it suffices to show $[\overline{\gamma_j(H)}^{p^{i-j}},\overline{N},\  _{n-i}\ H]=1$ for all $1\leq j\leq i$. By Theorem \ref{PR:T:1}, we have $[\overline{\gamma_j(H)}^{p^{i-j}},\overline{N}]\leqslant \prod\limits_{r=0}^{r=i-j}[\overline{N},\ _{p^{r}}\ \overline{\gamma_j(H)}]^{p^{i-j-r}}$. Since $[\overline{N},\ _{p^{r}}\ \overline{\gamma_j(H)}]\leqslant \overline{N}$, and $\overline{N}$ has exponent $p$, we have $[\overline{N},\ _{p^{r}}\ \overline{\gamma_j(H)}]^{p^{i-j-r}}=1$,  for all $r< i-j$. Thus\begin{align*}
    [\overline{\gamma_j(H)}^{p^{i-j}},\overline{N},\ _{n-i}\ H]&\leqslant [\overline{N},\ _{p^{i-j}}\ \overline{\gamma_j(H)},\ _{n-i}\ H]\\
    &\leqslant [Z_n(H),\ _{p^{i-j}}\ \overline{\gamma_j(H)},\ _{n-i}\ H]\\
    &\leqslant \overline{[Z_n(H),\ _{p^{i-j}}\ \gamma_j(H),\ _{n-i}\ H]}\\
    &\leqslant Z_{n-jp^{i-j}-n+i}(H) &&\mbox{(Lemma \ref{PR:R:1} \textit{(ii)})}
\end{align*} It is easy to see $jp^{i-j}\geq i$ for all $1\leq j\leq i$. Therefore  \[[\overline{\gamma_j(H)}^{p^{i-j}},\overline{N},\ _{n-i}\ H]\leqslant Z_0(H)=1,\] as required.
\par\textit{(ii)} By Lemma \ref{PR:R:2}, we have ${[N,\ _n\ H]\leqslant [Z_n(H),\ _n\  H]=1}$.
 \end{proof}
 \begin{corollary}
Let $H$ be a pro-$p$ group. If $N$ is a normal subgroup of $Z_n(H)$ with exponent $p$, then $[P_i(H), N,\ _{n-i}\ H]=1$ for $1\leq i\leq n$.
 \end{corollary}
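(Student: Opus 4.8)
The plan is to deduce this corollary almost immediately from Lemma~\ref{MR:L:1}\textit{(i)} by passing to closures. First I would use Lemma~\ref{PR:L:1} to rewrite $P_i(H)=\overline{\lambda_i(H)}$ for each $i$ with $1\leq i\leq n$, so that the assertion becomes $[\,\overline{\lambda_i(H)},N,\prescript{}{n-i}{H}\,]=1$.

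The next step is to move the closure to the outside of the iterated commutator, using continuity of the commutator map. Since $N\leqslant\overline{N}$ and $H=\overline{H}$ (the group $H$ being pro-$p$, hence already complete), Lemma~\ref{PR:R:2}\textit{(i)} gives $[\overline{\lambda_i(H)},N]\leqslant[\overline{\lambda_i(H)},\overline{N}]\leqslant\overline{[\lambda_i(H),N]}$; iterating Lemma~\ref{PR:R:2}\textit{(i)} once for each further bracketing with $H$, and using that commutator subgroups are monotone in each argument, one obtains
\[
[P_i(H),N,\prescript{}{n-i}{H}]\leqslant\overline{[\lambda_i(H),N,\prescript{}{n-i}{H}]}.
\]
By Lemma~\ref{MR:L:1}\textit{(i)} the subgroup inside the closure is trivial, so the left-hand side is contained in $\overline{\{1\}}=1$, which is exactly the claim.

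I do not expect a genuine obstacle: the argument is a one-line reduction once Lemma~\ref{MR:L:1} is in hand. The only point meriting a little care is that Lemma~\ref{PR:R:2}\textit{(i)} is phrased for closed subgroups, so one must first replace $N$ by its closure $\overline{N}$ — which is harmless, since the final containment is in the trivial subgroup — and one uses that $H$ and $P_i(H)=\overline{\lambda_i(H)}$ are already closed.
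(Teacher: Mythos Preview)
Your proposal is correct and is exactly the approach the paper takes: its proof is the single sentence ``This follows from Lemma~\ref{PR:L:1}, Lemma~\ref{PR:R:2} and Lemma~\ref{MR:L:1},'' which you have simply unpacked. One minor point: Lemma~\ref{PR:R:2}\textit{(i)} is stated for arbitrary subgroups $M,N\leqslant G$, not only closed ones, so the caveat about replacing $N$ by $\overline{N}$ is unnecessary (though harmless).
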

 \begin{proof}
    This follows from lemma \ref{PR:L:1}, Lemma \ref{PR:R:2} and lemma \ref{MR:L:1}.
 \end{proof}
 
We need the next lemma crucially for the main theorem of this section.
 \begin{lemma}\label{MR:L:3}
		Let $H$ be a pro-$p$ group, and let $1 \to N \to H \to G \to 1$ be an extension of groups. Assume that $N$ is a closed normal subgroup of $H$, and the exponent of $N$ is $p$. If $N\leqslant Z_n(H)$ for a fixed positive integer $n$, then there exists a surjective homomorphism $\theta: G^{\otimes^{p}_{n+1}} \to \lambda_{n+1}(H)$.
 \end{lemma}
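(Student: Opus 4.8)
The plan is to realize $\lambda_{n+1}(H)$ as the image of the crossed module $\mu_{n+1}^{H}\colon H^{\otimes^p_{n+1}}\to H$ coming from the iterated crossed square \eqref{q n fold tensorsquare} (taken with $G$ replaced by $H$), and then to show that $\mu_{n+1}^{H}$ factors through the natural surjection $H^{\otimes^p_{n+1}}\twoheadrightarrow G^{\otimes^p_{n+1}}$ supplied by Proposition \ref{nfold_qtensor_exact_sequence}. First I would record, by an easy induction on $m$, that $\mu_m^{H}$ maps onto $\lambda_m(H)$: since $H^{\otimes^p_{m}}=H^{\otimes^p_{m-1}}\otimes^p H$ is generated by the elements $x\otimes h$ and $\{(x,\mu_{m-1}^{H}(x))\}$, with $\mu_{m}^{H}(x\otimes h)=[\mu_{m-1}^{H}(x),h]$ and $\mu_{m}^{H}(\{(x,\mu_{m-1}^{H}(x))\})=\mu_{m-1}^{H}(x)^{p}$, the image of $\mu_m^{H}$ equals $[\lambda_{m-1}(H),H]\,\lambda_{m-1}(H)^{p}=\lambda_m(H)$ by the recursive description of the lower $p$-series.

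By Proposition \ref{nfold_qtensor_exact_sequence}, applied to the given extension (with $G=H/N$), there is an exact sequence $1\to\prod_{i=1}^{n+1}N_i\to H^{\otimes^p_{n+1}}\to G^{\otimes^p_{n+1}}\to 1$, where $N_i=\im{\hat{f_i^{n+1}}}$ and $\hat{f_i^{n+1}}\colon\hat{N}^i_{n+1}\to H^{\otimes^p_{n+1}}$ is the $H$-module homomorphism of Lemma \ref{{N}_i normality}. Since $\mu_{n+1}^{H}$ is onto $\lambda_{n+1}(H)$ and $\prod_{i}N_i$ is generated by the $N_i$, the map $\mu_{n+1}^{H}$ descends to a (necessarily surjective) homomorphism $\theta\colon G^{\otimes^p_{n+1}}\to\lambda_{n+1}(H)$ precisely when $\mu_{n+1}^{H}(N_i)=1$ for every $i$. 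By commutativity of diagram \eqref{N_i commutativity Lemma} (with $n$ replaced by $n+1$) we have $\mu_{n+1}^{H}\circ\hat{f_i^{n+1}}=\mu_{n+1}^{i}$, so it is enough to prove $\mu_{n+1}^{i}(\hat{N}^i_{n+1})=1$ for all $1\le i\le n+1$; this is the heart of the matter.

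To do this I would unwind the inductive construction of $\hat{N}^i_m$ and track the subgroup $A^i_m:=\mu_m^{i}(\hat{N}^i_m)$ of $H$. Reading off the crossed squares of Lemma \ref{{N}_i normality} gives $A^1_1=N$, that $A^i_i=[\lambda_{i-1}(H),N]$ for $i\ge 2$ (the power generators $\{k\}$ contribute $v^{p}$ with $v\in N$, hence nothing, as $N$ has exponent $p$), and $A^i_m=[A^i_{m-1},H]\,\langle w^{p}\mid w\in A^i_{m-1}\rangle$ for $i<m$. The crucial observation is that $[\lambda_{i-1}(H),N]\subseteq N$ — indeed $[w,v]=(v^{w})^{-1}v\in N$ for $w\in H$, $v\in N$ — and, more generally, commutating a subgroup of $N$ with $H$ stays inside $N$; hence every $A^i_m$ lies in $N$, so all $p$-th power contributions vanish, and one gets $A^i_m=[\lambda_{i-1}(H),N,\prescript{}{m-i}{H}]$ for $2\le i\le m$ and $A^1_m=[N,\prescript{}{m-1}{H}]$. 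Setting $m=n+1$ and applying Lemma \ref{MR:L:1}\textit{(i)} with $j=i-1$ (admissible since $1\le i-1\le n$) gives $[\lambda_{i-1}(H),N,\prescript{}{n+1-i}{H}]=1$ for $2\le i\le n+1$, and Lemma \ref{MR:L:1}\textit{(ii)} gives $[N,\prescript{}{n}{H}]=1$ for $i=1$; note $N$ is normal in $Z_n(H)$, so Lemma \ref{MR:L:1} applies. Thus $A^i_{n+1}=1$ for all $i$, $\mu_{n+1}^{H}$ factors through $G^{\otimes^p_{n+1}}$, and the induced map $\theta$ is the required surjection.

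I expect the main obstacle to be the bookkeeping in the third paragraph: correctly identifying the image $A^i_m$ of each twisted crossed module $\mu_m^{i}$ along the inductive construction of Lemma \ref{{N}_i normality}, and in particular verifying that the $\{k\}$-type generators only ever produce $p$-th powers of elements that already lie in $N$, so that the hypothesis that $N$ has exponent $p$ annihilates them. The reduction through the exact sequence of Proposition \ref{nfold_qtensor_exact_sequence} and the surjectivity of $\mu_{n+1}^{H}$ are then formal.
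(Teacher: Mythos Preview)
Your proposal is correct and follows essentially the same route as the paper: use the exact sequence of Proposition~\ref{nfold_qtensor_exact_sequence}, identify $\mathrm{Im}(\mu_{n+1}^H)=\lambda_{n+1}(H)$, reduce via diagram~\eqref{N_i commutativity Lemma} to showing $\mu_{n+1}^i(\hat{N}^i_{n+1})=1$, compute these images as iterated commutators $[\lambda_{i-1}(H),N,\prescript{}{n+1-i}{H}]$ (respectively $[N,\prescript{}{n}{H}]$), and kill them with Lemma~\ref{MR:L:1}. Your inductive bookkeeping of $A^i_m$, together with the observation that each $A^i_m\subseteq N$ so that the $\{k\}$-contributions vanish by the exponent hypothesis, makes explicit exactly what the paper summarizes with ``It is easy to see''.
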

 \begin{proof}
		For $1\leq i\leq n+1$, set $N_i$ to be the image of the natural homomorphism $f_i^{n+1}:\hat{N}_{n+1}^{i}\to H^{\otimes^{p}_{n+1}}$ defined in Lemma \ref{{N}_i normality}. By Proposition \ref{nfold_qtensor_exact_sequence}, we have an exact sequence of groups 
        \[
        \begin{tikzcd}
		    &1 \arrow[r] &\prod \limits_{i=1}^{n+1} N_i \arrow[r] &H^{\otimes^{p}_{n+1}} \arrow[r] &G^{\otimes^{p}_{n+1}} \arrow[r] &1
		\end{tikzcd}
        \]
		 Applying Proposition \ref{The commutative cube}\textit{(iv)} to diagram \eqref{special cube for exact sequence for Lazard}, we get the following commutative diagram: 
		\begin{center}
			\begin{tikzcd} 
			
			H^{\otimes^{p}_{n+1}} \arrow[r]\arrow[d,"\mu ^{H}_{n+1}"] & G^{\otimes^{p}_{n+1}} \arrow[d,"\mu^{G}_{n+1}"]

			\\\ H \arrow[r]& G
			\end{tikzcd}
		\end{center}where $H^{\otimes^p_1}=H$, $\mu_1^H$ is identity as defined in \eqref{n fold qtensorsquare1}, and ${\mu_{n+1}^H: H^{\otimes_n^p}\otimes H\to H}$ is defined by
        \begin{align*}
            \mu_{n+1}^H(x \otimes h)=[\mu_{n}^H(x),h], && \mu_{n+1}^H\Big(\Big\{\big(x,\mu_{n}^H(x)\big)\Big\}\Big)={\mu_{n}^H(x)}^p
        \end{align*}
        for all $x \in H^{\otimes^p_{n}},h \in H$, as in \eqref{q n fold tensorsquare}. The map $\mu_{n+1}^G$ is defined similarly. Because $ \im{\mu_2^H}=\lambda_2(H)$ and $\im{\mu^H_{n+1}}= \im{\mu_n^H}^p[\im{\mu_n^H},H]$, we have that ${\im{\mu^H_{n+1}}=\lambda_{n+1}(H)}$. Hence to show the existence of a surjective homomorphism ${\theta: G^{\otimes^{p}_{n+1}} \to \lambda_{n+1}(H)}$, it is enough to show that  $\mu_{n+1}^H(N_i)=1$ for all $1\leq i\leq n+1$. By Lemma \ref{{N}_i normality}, we have the following commutative diagram  
  \begin{equation}
      	\begin{tikzcd} 	
			\hat{N}^i_{n+1} \arrow[r]\arrow[d,"\mu ^{i}_{n+1}"] & H^{\otimes^{p}_{n+1}} \arrow[d,"\mu^H_{n+1}"] 
			\\\ H \arrow[r,"id"]& H
			\end{tikzcd} \label{N_i commutativity}
  \end{equation}
  It is easy to see $\mu ^{i}_{n+1}(\hat{N}^1_{n+1})= [N,\ _n\ H]$, and $\mu ^{i}_{n+1}(\hat{N}^i_{n+1})= [\lambda_{i-1}(H), N,\ _{n-i}\ H]$ for all $i$ with $1< i\leq n+1 $. Using commutativity of \eqref{N_i commutativity} and Lemma \ref{MR:L:1}, we obtain $\mu_{n+1}^H(N_i)=\mu ^{i}_{n+1}(\hat{N}^i_{n+1})=1$, and hence the proof.

 % Observe that the map $\lambda^H_{2}: H \ptens H \to \lambda_2(H)$ is given by $\lambda^H_2( m \otimes n)=[m,n]$ and $\lambda ^H_2(\{m,m\})=m^p$. Clearly, $\text{Im}(\lambda ^H_2)=\lambda_2(H)$. Let us assume that the result holds for $n-1$. $\lambda^H_{n}: H \ptens_{n} H \to \lambda_n(H)$. We will prove the result for $n$ by giving the map $\lambda^H_{n+1}: H \ptens_{n+1} H \to \lambda_{n+1}(H)$ where $\lambda^H_{n+1}( m \otimes n)=[\lambda_n^H(m),n] \in [\lambda_n(H),H]$ and $\lambda^H_{n+1}(\{m,\lambda^H_n(m)\})=\lambda^H_n(m)^p \in \lambda_n^p(H)$. Clearly, $\text{Im}(\lambda ^H_{n+1})=\lambda_{n+1}(H)=\lambda_n^p(H)[\lambda_n(H),H]$. Using lemma \ref{MR:L:2}, one can easily see $\lambda_{n+1}^H(N_1)= [N,\ _n\ H]$ and for $1< i\leq n+1 $, $\lambda_{n+1}^H(N_i)= [\lambda_{i-1}(H), N,\ _{n-i}\ H]$. By lemma \ref{MR:L:1}, $\lambda_{n+1}^H(N_i)=1$ and thus the induced map $\nu: G^{\otimes^{p}_{n+1}} \to P_n(H)$ is the required map. 
	\end{proof} 

\begin{corollary}\label{MR:T:1}
Let $H$ be a  pro-$p$ group and $N$ be an open normal subgroup of $H$. If $N\leqslant Z_n(H)$ for some positive integer $n$ and exponent of $N$ is $p$, then $\lambda_{n+1}(H)$ and $P_{n+1}(H)$ are finite $p$-groups.
%The following statements hold:
%\begin{enumerate}
%    \item [(i)] $\lambda_{n+1}(H)$ and $P_{n+1}(H)$ are finite $p$-groups. 
   % \item [(ii)]  If $H/N$ is a powerful $p$-group, then both $\lambda_{n+1}(H)$ and $P_{n+1}(H)$ are finite powerful $p$-groups.
%\end{enumerate}
\end{corollary}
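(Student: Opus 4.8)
The plan is to deduce this immediately from Lemma~\ref{MR:L:3} together with the finiteness of iterated $p$-tensor products of finite $p$-groups. Since $N$ is open and normal in the pro-$p$ group $H$, the quotient $G := H/N$ is a \emph{finite} $p$-group. The hypotheses that $N$ is a closed normal subgroup contained in $Z_n(H)$ and of exponent $p$ are precisely those of Lemma~\ref{MR:L:3}, so we obtain a surjective homomorphism $\theta : G^{\otimes^{p}_{n+1}} \to \lambda_{n+1}(H)$.

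Next I would argue that $G^{\otimes^{p}_{n+1}}$ is a finite $p$-group. For finite $p$-groups, the $p$-tensor product is finite by \cite[Corollary~12]{MR1088877} and is a $p$-group by the main result of \cite{E87} together with \cite[Proposition~11]{MR1088877}; this is exactly the reasoning already used at the beginning of the proof of Theorem~\ref{qtensorpowerful}. Since $G^{\otimes^{p}_{n+1}}$ is obtained from $G$ by finitely many applications of the operation $(-)\otimes^{p} G$, via the crossed modules $\mu_k : G^{\otimes^{p}_{k}}\to G$ of Section~\ref{section: crossed square II}, a straightforward induction on $n$ shows that $G^{\otimes^{p}_{n+1}}$ is a finite $p$-group. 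Consequently $\lambda_{n+1}(H)$, being a homomorphic image of $G^{\otimes^{p}_{n+1}}$ under $\theta$, is a finite $p$-group.

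Finally, to pass to $P_{n+1}(H)$, I would invoke Lemma~\ref{PR:L:1}, which gives $P_{n+1}(H) = \overline{\lambda_{n+1}(H)}$. Since $\lambda_{n+1}(H)$ is finite and $H$ is Hausdorff, $\lambda_{n+1}(H)$ is already closed in $H$, whence $P_{n+1}(H) = \lambda_{n+1}(H)$ is a finite $p$-group. There is no real obstacle here: all the substantive work has been done in Lemma~\ref{MR:L:3} and in Section~\ref{section: crossed square II}. The only two points requiring care are (a) observing that $G = H/N$ is a finite $p$-group, so that the finiteness results for $q$-tensor products genuinely apply, and (b) noting that a finite subgroup of a Hausdorff topological group is automatically closed, so that the closure appearing in Lemma~\ref{PR:L:1} is vacuous in this situation.
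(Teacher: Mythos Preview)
Your proposal is correct and follows essentially the same approach as the paper's own proof: both deduce finiteness of $G^{\otimes^{p}_{n+1}}$ from \cite[Corollary~12]{MR1088877} and \cite{E87} by induction, apply Lemma~\ref{MR:L:3} to obtain the surjection onto $\lambda_{n+1}(H)$, and then use Lemma~\ref{PR:L:1} together with the Hausdorff property of $H$ to conclude that $P_{n+1}(H)=\lambda_{n+1}(H)$.
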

\begin{proof}
%\textit{(i)}
Since $N$ is an open normal subgroup of $H$, we have that  $G=H/N$ is a finite $p$-group. By \cite[Corollary 12]{MR1088877}, the main results of \cite{E87}, and using induction, we obtain that $G^{\otimes^{p}_{n+1}}$ is a finite $p$-group. Using Lemma \ref{MR:L:3}, we have a surjective homomorphism $G^{\otimes^{p}_{n+1}}\to \lambda_{n+1}(H)$, and thus $\lambda_{n+1}(H)$ is a finite $p$-group. Since $H$ is Hausdorff, we obtain that $\lambda_{n+1}(H)= \overline{\lambda_{n+1}(H)}$. Now the proof follows from Lemma \ref{PR:L:1}.

%\par \textit{(ii)} Since $G=H/N$ is finite $p$-group, we obtain that $G^{\otimes^{p}_{n+1}}$ is a finite powerful $p$-group by Corollary \ref{Gqtensorpowerful}. Now the proof follows \textit{mutatis mutandis} the proof of $\textit{(i)}$.

%In a pro-$p$ group, every open normal subgroup is a closed normal subgroup of finite index. Thus $G=H/N$ is finite, and hence by \cite[Corollary 12]{MR1088877} and induction, we obtain $G^{\otimes^{p}_{n+1}}$ is finite. Moreover by Lemma \ref{MR:L:3}, we have a surjective homomorphism $G^{\otimes^{p}_{n+1}}\to \lambda_{n+1}(H)$, and thus $\lambda_{n+1}(H)$ is finite. Since every pro-$p$ group is Hausdorff, we obtain $\lambda_{n+1}(H)= \overline{\lambda_{n+1}(H)}$. Now the proof follows from Lemma \ref{PR:L:1}.
\end{proof}

\section{Powerfulness and finiteness of the terms of the Frattini series for a pro-$p$ group} \label{frattini series}

    For a pro-$p$ group $H$, the Frattini series is defined as $\Phi_1(H):=G$ and  $\Phi_n(H)=\overline{\Phi_{n-1}(H)^p[\Phi_{n-1}(H),\Phi_{n-1}(H)]}$ for all $n>1$. In this section, we need another series related to the Frattini series. We define 
    \begin{align*}
        \Psi_1(H)=H \quad \text{and} \quad \Psi_n(H)=\Psi_{n-1}(H)^p[\Psi_{n-1}(H),\Psi_{n-1}(H)],
    \end{align*}
    for all $n>1$. By Lemma \ref{PR:R:2} and induction, $\overline{\Psi_n(H)}=\Phi_n(H)$ for all $n\geq 1$.

\begin{definition}\label{Frattini-normal subgroup}
    Let $H$ be a group. For each $n \geq 1$, we define a normal subgroup $\mathcal{F}_n(H)$ of $H$ by $$\mathcal{F}_n(H)=\{h \in H \mid [ \ldots [[h,x_1],x_2],\ldots,x_n]=1 \text{ for all }x_i \in \Psi_i(H) \}$$
\end{definition}

By setting $M=G=H_{\ptens_n}$ and $\mu=id$ in \eqref{Gqtensorcrossedsquare} we obtain the following crossed square for all $n \geq 1$
  \begin{equation} \label{n fold qtensorsquare_n}
\begin{tikzcd} 
		H_{\ptens_{n+1}}=H_{\ptens_n}\ptens H_{\ptens_n} \arrow[r,"\beta_{n+1}"] \arrow[d,"\alpha_{n+1}"] & H_{\ptens_n} \arrow[d,"id"] \\
		H_{\ptens_n} \arrow[r,"id"] & H_{\ptens_n}
  \end{tikzcd}
\end{equation}
By composing $\alpha_i$ for all $i$ with $2 \leq i \leq n+1$, we obtain the homomorphism ${\kappa_{n+1}^H : H_{\ptens_{n+1}} \to H}$ for all $n \geq 1$. It is easy to see that the image of $\kappa_{n+1}^H$ is $\Psi_{n+1}(H)$. Let $N$ be an $H$-invariant subgroup of $H$. We denote $\mathcal{N}_1^2$ to be the image of $N \ptens H$ in $H \ptens H$, and note that it is $H$-invariant. $\mathcal{N}_2^2$ is defined similarly. We inductively define $\mathcal{N}_i^{2^{n+1}}$ as the image of $\mathcal{N}_i^{2^n} \ptens H_{\ptens_n}$ in $H_{\ptens_{n+1}}$ if $i \leq 2^n$ and the image of $H_{\ptens_n} \ptens \mathcal{N}_{i-2^n}^{2^n} $ in $H_{\ptens_{n+1}}$ if $i > 2^n$.

  \begin{lemma}\label{Frattini exact sequence}
		Let $1 \to N \to H \to G \to 1$ be an extension of groups such that $N$ is a closed normal subgroup of $\overline{\mathcal{F}_n(H)}$ with exponent $p$ for a fixed positive integer $n$. Then there exists a surjective homomorphism ${\theta: G_{\otimes^{p}_{n+1}} \to \Psi_{n+1}(H)}$.
 \end{lemma}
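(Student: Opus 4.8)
The plan is to adapt the proof of Lemma~\ref{MR:L:3}, replacing the $n$-fold $q$-tensor product by the iterated $p$-tensor product and the crossed module $\mu_{n+1}^H$ by $\kappa_{n+1}^H$; write $G=H/N$ throughout, and recall from the paragraph defining $\kappa_{n+1}^H$ that $\operatorname{Im}\kappa_m^H=\Psi_m(H)$, that $\kappa_{n+1}^H=\kappa_n^H\circ\alpha_{n+1}$, and that each $\alpha_m$, hence each $\kappa_m^H$, is a group homomorphism.

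First I would produce, by the same inductive use of Proposition~\ref{The commutative cube} that underlies Proposition~\ref{nfold_qtensor_exact_sequence}, a surjection $\psi_{n+1}\colon H_{\ptens_{n+1}}\to G_{\ptens_{n+1}}$ — built from the quotient map $\psi_1\colon H\to G$ by tensoring $\psi_{n}$ with $\psi_{1}$ at the $(n+1)$-st stage (surjectivity because $f_1\ptens f_2$ is onto whenever $f_1,f_2$ are) — together with the identification $\ker\psi_{n+1}=\prod_i\mathcal{N}_i^{2^{n+1}}$, the subgroups $\mathcal{N}_i^{2^{n+1}}$ defined before the statement being $H$-invariant and hence normal by Lemma~\ref{Ginvariant is normal}. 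This yields the exact sequence $1\to\prod_i\mathcal{N}_i^{2^{n+1}}\to H_{\ptens_{n+1}}\xrightarrow{\psi_{n+1}}G_{\ptens_{n+1}}\to 1$.

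The heart of the matter is to show $\kappa_{n+1}^H\bigl(\mathcal{N}_i^{2^{n+1}}\bigr)=1$ for each $i$, and I would reach this through the auxiliary claim that, for every $m\geq 2$ and every leaf-$N$ subgroup $\mathcal{N}$ of $H_{\ptens_m}$ of this kind, $\kappa_m^H(\mathcal{N})\leqslant[N,\Psi_1(H),\Psi_2(H),\ldots,\Psi_{m-1}(H)]$. For $m=2$, $\mathcal{N}$ is the image of $N\ptens H$ or $H\ptens N$ in $H\ptens H$, and $\kappa_2^H=\alpha_2$ sends a tensor generator $x\otimes y$ to $x\,\prescript{y}{}{x^{-1}}\in[N,H]$ and a $\{k\}$-generator (whose $\pi_1$-component lies in $N$) to $\pi_1 k^p\in N^p=1$, so $\kappa_2^H(\mathcal{N})\leqslant[N,\Psi_1(H)]$. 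For the inductive step, the recursive description of a depth-$(m+1)$ subgroup as the image of $\mathcal{N}'\ptens H_{\ptens_m}$ or $H_{\ptens_m}\ptens\mathcal{N}'$, with $\mathcal{N}'$ a depth-$m$ subgroup, shows that $\alpha_{m+1}$ carries its tensor generators into $[\mathcal{N}',H_{\ptens_m}]$ and its $\{k\}$-generators into $(\mathcal{N}')^p$; applying $\kappa_m^H$ and using $\operatorname{Im}\kappa_m^H=\Psi_m(H)$ then gives
\[
\kappa_{m+1}^H(\mathcal{N})\leqslant\bigl[\kappa_m^H(\mathcal{N}'),\Psi_m(H)\bigr]\cdot\bigl(\kappa_m^H(\mathcal{N}')\bigr)^p.
\]
By the inductive hypothesis $\kappa_m^H(\mathcal{N}')\leqslant[N,\Psi_1(H),\ldots,\Psi_{m-1}(H)]\leqslant N$ — the last inclusion since $N$ is normal in $H$ — so the power term is trivial as $\exp N=p$, while the commutator term lies in $[N,\Psi_1(H),\ldots,\Psi_m(H)]$, completing the induction. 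Taking $m=n$ yields $\kappa_{n+1}^H(\mathcal{N}_i^{2^{n+1}})\leqslant[N,\Psi_1(H),\ldots,\Psi_n(H)]$. Since $N\leqslant\overline{\mathcal{F}_n(H)}$, every $a\in N$ satisfies $[\ldots[[a,x_1],x_2],\ldots,x_n]=1$ for all $x_i\in\Psi_i(H)$ — for $a$ in the closure this uses continuity of commutators (Lemma~\ref{PR:R:2}) — and a routine commutator-collecting argument exploiting that each $\Psi_i(H)$ is characteristic in $H$ (as in the proof of Lemma~\ref{MR:L:1}) upgrades this to $[N,\Psi_1(H),\ldots,\Psi_n(H)]=1$. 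Hence $\kappa_{n+1}^H(\mathcal{N}_i^{2^{n+1}})=1$.

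It follows that $\ker\psi_{n+1}=\prod_i\mathcal{N}_i^{2^{n+1}}\leqslant\ker\kappa_{n+1}^H$, so $\kappa_{n+1}^H$ factors as $\theta\circ\psi_{n+1}$ for a homomorphism $\theta\colon G_{\otimes^p_{n+1}}\to H$; since $\operatorname{Im}\kappa_{n+1}^H=\Psi_{n+1}(H)$ and $\psi_{n+1}$ is surjective, $\theta$ maps $G_{\otimes^p_{n+1}}$ onto $\Psi_{n+1}(H)$, as required. The main obstacle will be the bookkeeping in the third paragraph: verifying against the recursive binary-tree description of $\mathcal{N}_i^{2^{n+1}}$ that $\alpha_{m+1}$ sends every generator — including the $\{k\}$-symbols and their products governed by relation~\eqref{E:q-tensor def 4} — into the stated product of a commutator subgroup and a group of $p$-th powers, and that relation~\eqref{E:q-tensor def 6} keeps all the relevant $\pi_1$-components inside $N$.
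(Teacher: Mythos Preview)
Your proposal is correct and follows essentially the same route as the paper: build the exact sequence $1\to\prod_i\mathcal{N}_i^{2^{n+1}}\to H_{\ptens_{n+1}}\to G_{\ptens_{n+1}}\to 1$, show inductively via $\kappa_{n+1}^H=\kappa_n^H\circ\alpha_{n+1}$ that $\kappa_{n+1}^H(\mathcal{N}_i^{2^{n+1}})\leqslant[N,\Psi_1(H),\ldots,\Psi_n(H)]$, and conclude using $N\leqslant\mathcal{F}_n(H)$. Your treatment is in fact more careful than the paper's---you explicitly handle the $\{k\}$-generators and make visible where the hypothesis $\exp N=p$ enters (to kill the $(\kappa_m^H(\mathcal{N}'))^p$ term), whereas the paper compresses this into ``it can be easily shown''; note also that you prove only the inclusion $\leqslant$ where the paper asserts equality, but the inclusion is all that is needed.
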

 \begin{proof}
	Similar to Proposition \ref{nfold_qtensor_exact_sequence}, we have an exact sequence of groups
\[
     \begin{tikzcd}
         1 \arrow[r] & \prod \limits_{i=1}^{2^{n+1}} \mathcal{N}_i^{2^{n+1}} \arrow[r] & H_{\ptens_{n+1}} \arrow[r,"d"] & G_{\ptens_{n+1}} \arrow[r] & 1 
     \end{tikzcd}
\]
 
		Moreover by Proposition \ref{The commutative cube}, we have the following commutative diagram: 
\[			\begin{tikzcd} 
			H_{\ptens_{n+1}} \arrow[r]\arrow[d,"\kappa ^{H}_{n+1}"] & G_{\ptens_{n+1}} \arrow[d,"\kappa ^{G}_{n+1}"] 
			\\\ H \arrow[r]& G
			\end{tikzcd}
   \]
First recall that $\kappa_{n+1}^H(H_{\ptens_{n+1}})=\Psi_{n+1}(H)$. Hence to show the existence of a surjective homomorphism $\theta: G_{\otimes^{p}_{n+1}} \to \Psi_{n+1}(H)$, it suffices to show that $\kappa_{n+1}^H(\mathcal{N}_i^{2^{n+1}})=1$ for all $1\leq i\leq 2^{n+1}$. Using the fact that  ${\kappa_{n+1}=\kappa_n\circ \alpha_{n+1}}$, Lemma \ref{PR:R:1}\textit{(iii)} and induction on $n$, it can be easily shown that 
\begin{align*}
    \kappa_{n+1}^H(\mathcal{N}_i^{2^{n+1}})= [\ldots[[N,\Psi_1(H)],\Psi_2(H)],\ldots,\Psi_n(H)].
\end{align*} 
Since $N \leq \mathcal{F}_n(H)$, we have that   $\kappa_{n+1}^H(\mathcal{N}_i^{2^{n+1}})=1$, and hence the proof.
	\end{proof} 

 \begin{theorem} \label{Frattini is finite powerful}
Let $H$ be a  pro-$p$ group and $N$ be an open normal subgroup of $H$. Assume $N\leqslant \overline{\mathcal{F}_n(G)}$ for some positive integer $n$ and exponent of $N$ is $p$. The following statements hold:
\begin{enumerate}
    \item [(i)] $\Psi_{n+1}(H)$ and $\Phi_{n+1}(H)$ are finite $p$-groups. 
 \item [(ii)] Let $p$ be an odd prime. If  $H/N$ is powerful, then both $\Psi_{n+1}(H)$ and $\Phi_{n+1}(H)$ are finite powerful $p$-groups.
\end{enumerate}
\end{theorem}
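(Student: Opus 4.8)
The plan is to derive both parts directly from Lemma~\ref{Frattini exact sequence}, combined with the finiteness and powerfulness properties of iterated tensor products modulo $p$, exactly in the spirit of Corollary~\ref{MR:T:1} and Theorem~\ref{gamma and derived is finite powerful}. First I would record that, since $N$ is an open normal subgroup of $H$, the quotient $G := H/N$ is a finite $p$-group, and $N$ is a closed normal subgroup of $\overline{\mathcal{F}_n(H)}$ of exponent $p$; hence the hypotheses of Lemma~\ref{Frattini exact sequence} are satisfied, and we obtain a surjective homomorphism $\theta\colon G_{\otimes^p_{n+1}}\to \Psi_{n+1}(H)$. Everything then reduces to understanding $G_{\otimes^p_{n+1}}$ for the finite $p$-group $G$, together with the identity $\overline{\Psi_{n+1}(H)}=\Phi_{n+1}(H)$.

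For part~(i): by \cite[Corollary 12]{MR1088877}, the main result of \cite{E87} and \cite[Proposition 11]{MR1088877}, the tensor product modulo $p$ of two finite $p$-groups is again a finite $p$-group; an induction on $n$ using $G_{\otimes^p_{n+1}}=G_{\otimes^p_n}\otimes^p G_{\otimes^p_n}$ then shows that $G_{\otimes^p_{n+1}}$ is a finite $p$-group. Applying $\theta$, its image $\Psi_{n+1}(H)$ is a finite $p$-group. Since $H$ is Hausdorff, a finite subgroup is closed, so $\Phi_{n+1}(H)=\overline{\Psi_{n+1}(H)}=\Psi_{n+1}(H)$ is a finite $p$-group as well.

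For part~(ii): here $p$ is odd and $G=H/N$ is a finite powerful $p$-group, so Corollary~\ref{n-iterated qtensor product is powerful} gives that $G_{\otimes^p_{n+1}}$ is a finite powerful $p$-group. A quotient of a powerful finite $p$-group is powerful (for $p$ odd, $[\theta(x),\theta(y)]=\theta([x,y])\in\theta\bigl((G_{\otimes^p_{n+1}})^{p}\bigr)=\Psi_{n+1}(H)^{p}$), hence $\Psi_{n+1}(H)=\theta(G_{\otimes^p_{n+1}})$ is a finite powerful $p$-group; as in part~(i), $\Phi_{n+1}(H)=\overline{\Psi_{n+1}(H)}=\Psi_{n+1}(H)$, which is therefore also finite and powerful.

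I do not expect a genuine obstacle at this stage: the substantive work already lies in Lemma~\ref{Frattini exact sequence}, which rests on the morphisms-of-crossed-squares machinery of Section~\ref{section: crossed square morphisms}, and in Corollary~\ref{n-iterated qtensor product is powerful}. The only points needing care in the present argument are verifying that the hypotheses of Lemma~\ref{Frattini exact sequence} hold verbatim (openness and exponent-$p$ of $N$, and $N\leqslant\overline{\mathcal{F}_n(H)}$), and the passage from finiteness (resp. powerfulness) of $\Psi_{n+1}(H)$ to that of $\Phi_{n+1}(H)$ through the Hausdorff property and the identity $\overline{\Psi_{n+1}(H)}=\Phi_{n+1}(H)$.
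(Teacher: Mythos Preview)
Your proposal is correct and follows essentially the same approach as the paper: both invoke Lemma~\ref{Frattini exact sequence} to obtain the surjection $G_{\otimes^p_{n+1}}\twoheadrightarrow\Psi_{n+1}(H)$, use \cite[Corollary 12]{MR1088877} and \cite{E87} (with induction) for finiteness and Corollary~\ref{n-iterated qtensor product is powerful} for powerfulness of $G_{\otimes^p_{n+1}}$, and then pass to $\Phi_{n+1}(H)$ via $\overline{\Psi_{n+1}(H)}=\Phi_{n+1}(H)$. Your write-up is slightly more explicit (e.g., spelling out why quotients of powerful groups are powerful and why finite implies closed in a Hausdorff group), but there is no substantive difference.
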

\begin{proof}
\textit{(i)} Since $N$ is an open normal subgroup of $H$, we have that  $G=H/N$ is a finite $p$-group. By \cite[Corollary 12]{MR1088877}, the main results of \cite{E87}, and using induction, we obtain that $G_{\otimes^{p}_{n+1}}$ is a finite $p$-group. Moreover by Lemma \ref{Frattini exact sequence}, we have a surjective homomorphism $G_{\otimes^{p}_{n+1}}\to \Psi_{n+1}(H)$, and thus $\Psi_{n+1}(H)$ is a finite $p$-group. Thus ${\Phi_{n+1}(H)=\overline{\Psi_{n+1}(H)}=\Psi_{n+1}(H)}$ is a finite $p$-group.
\par\textit{(ii)} Since $G=H/N$ is finite powerful $p$-group, we obtain that $G_{\otimes^{p}_{n+1}}$ is a finite powerful $p$-group by Corollary \ref{n-iterated qtensor product is powerful}. Now the proof follows \textit{mutatis mutandis} the proof of $\textit{(i)}$.
\end{proof}

\section*{Acknowledgements} 
Sathasivam K acknowledges the Ministry of Education,  Government of India, for the doctoral fellowship under the Prime Minister's Research Fellows (PMRF) scheme PMRF-ID 0801996. V. Z. Thomas acknowledges research support from ANRF, Government of India grant CRG/2023/004417.

\bibliographystyle{amsplain}
\bibliography{Lazardsbib}

@article {Ellis1995,
    AUTHOR = {Ellis, Graham J.},
     TITLE = {Tensor products and {$q$}-crossed modules},
   JOURNAL = {J. London Math. Soc. (2)},
  FJOURNAL = {Journal of the London Mathematical Society. Second Series},
    VOLUME = {51},
      YEAR = {1995},
    NUMBER = {2},
     PAGES = {243--258},
      ISSN = {0024-6107,1469-7750},
   MRCLASS = {20J05 (20D99 20F28)},
  MRNUMBER = {1325569},
MRREVIEWER = {Timothy\ Porter},
       DOI = {10.1112/jlms/51.2.243},
       URL = {https://doi.org/10.1112/jlms/51.2.243},
}

@book {McK2000,
    AUTHOR = {McKay, Susan},
     TITLE = {Finite {$p$}-groups},
    SERIES = {Queen Mary Maths Notes},
    VOLUME = {18},
 PUBLISHER = {University of London, Queen Mary, School of Mathematical
              Sciences, London},
      YEAR = {2000},
     PAGES = {x+102},
      ISBN = {0-902480-17-0},
   MRCLASS = {20D15 (20F12 20F18)},
  MRNUMBER = {1802994},
MRREVIEWER = {Norberto\ Gavioli},
}

@article {AlSaAn2008,
    AUTHOR = {Fern\'{a}ndez-Alcober, Gustavo A. and Gonz\'{a}lez-S\'{a}nchez, Jon and
              Jaikin-Zapirain, Andrei},
     TITLE = {Omega subgroups of pro-{$p$} groups},
   JOURNAL = {Israel J. Math.},
  FJOURNAL = {Israel Journal of Mathematics},
    VOLUME = {166},
      YEAR = {2008},
     PAGES = {393--412},
      ISSN = {0021-2172},
   MRCLASS = {20E18 (20D15 20F14)},
  MRNUMBER = {2430441},
MRREVIEWER = {Dan Segal},
       DOI = {10.1007/s11856-008-1036-8},
       URL = {https://doi.org/10.1007/s11856-008-1036-8},
}

@article {MR1088877,
    AUTHOR = {Brown, Ronald},
     TITLE = {{$Q$}-perfect groups and universal {$Q$}-central extensions},
   JOURNAL = {Publ. Mat.},
  FJOURNAL = {Publicacions Matem\`atiques},
    VOLUME = {34},
      YEAR = {1990},
    NUMBER = {2},
     PAGES = {291--297},
      ISSN = {0214-1493},
   MRCLASS = {20E22},
  MRNUMBER = {1088877},
MRREVIEWER = {Graham J. Ellis},
       DOI = {10.5565/PUBLMAT\_34290\_08},
       URL = {https://doi.org/10.5565/PUBLMAT_34290_08},
}

@article {BL87,
    AUTHOR = {Brown, Ronald and Loday, Jean-Louis},
     TITLE = {Van {K}ampen theorems for diagrams of spaces},
      NOTE = {With an appendix by M. Zisman},
   JOURNAL = {Topology},
  FJOURNAL = {Topology. An International Journal of Mathematics},
    VOLUME = {26},
      YEAR = {1987},
    NUMBER = {3},
     PAGES = {311--335},
      ISSN = {0040-9383},
   MRCLASS = {55P15 (18G55 55Q05)},
  MRNUMBER = {899052},
MRREVIEWER = {Harold\ Hastings},
       DOI = {10.1016/0040-9383(87)90004-8},
       URL = {https://doi.org/10.1016/0040-9383(87)90004-8},
}

@article {CR92,
    AUTHOR = {Conduch\'e, Daniel and Rodr\'iguez-Fern\'andez, Celso},
     TITLE = {Nonabelian tensor and exterior products modulo {$q$} and
              universal {$q$}-central relative extension},
   JOURNAL = {J. Pure Appl. Algebra},
  FJOURNAL = {Journal of Pure and Applied Algebra},
    VOLUME = {78},
      YEAR = {1992},
    NUMBER = {2},
     PAGES = {139--160},
      ISSN = {0022-4049,1873-1376},
   MRCLASS = {20J05 (16S35)},
  MRNUMBER = {1161338},
       DOI = {10.1016/0022-4049(92)90092-T},
       URL = {https://doi.org/10.1016/0022-4049(92)90092-T},
}

@book {DDMS99,
    AUTHOR = {Dixon, J. D. and du Sautoy, M. P. F. and Mann, A. and Segal,
              D.},
     TITLE = {Analytic pro-{$p$} groups},
    SERIES = {Cambridge Studies in Advanced Mathematics},
    VOLUME = {61},
   EDITION = {Second},
 PUBLISHER = {Cambridge University Press, Cambridge},
      YEAR = {1999},
     PAGES = {xviii+368},
      ISBN = {0-521-65011-9},
   MRCLASS = {20E18 (20G30)},
  MRNUMBER = {1720368},
MRREVIEWER = {Alexander\ Lubotzky},
       DOI = {10.1017/CBO9780511470882},
       URL = {https://doi.org/10.1017/CBO9780511470882},
}

@article{Donadze21,
   
    AUTHOR = {Donadze, Guram and Garc\'ia-Mart\'inez, Xabier},
     TITLE = {Some generalisations of {S}chur's and {B}aer's theorem and
              their connection with homological algebra},
   JOURNAL = {Math. Nachr.},
  FJOURNAL = {Mathematische Nachrichten},
    VOLUME = {294},
      YEAR = {2021},
    NUMBER = {11},
     PAGES = {2129--2139},
      ISSN = {0025-584X,1522-2616},
   MRCLASS = {20F14 (18G10 18G50 20J05)},
  MRNUMBER = {4371288},
MRREVIEWER = {L.\ R.\ Vermani},
       DOI = {10.1002/mana.201900495},
       URL = {https://doi.org/10.1002/mana.201900495},
}

@article {E87,
    AUTHOR = {Ellis, Graham J.},
     TITLE = {The nonabelian tensor product of finite groups is finite},
   JOURNAL = {J. Algebra},
  FJOURNAL = {Journal of Algebra},
    VOLUME = {111},
      YEAR = {1987},
    NUMBER = {1},
     PAGES = {203--205},
      ISSN = {0021-8693},
   MRCLASS = {20F05 (20E22)},
  MRNUMBER = {913204},
MRREVIEWER = {Stephen\ J.\ Pride},
       DOI = {10.1016/0021-8693(87)90249-3},
       URL = {https://doi.org/10.1016/0021-8693(87)90249-3},
}

@article {LM87,
    AUTHOR = {Lubotzky, Alexander and Mann, Avinoam},
     TITLE = {Powerful {$p$}-groups. {I}. {F}inite groups},
   JOURNAL = {J. Algebra},
  FJOURNAL = {Journal of Algebra},
    VOLUME = {105},
      YEAR = {1987},
    NUMBER = {2},
     PAGES = {484--505},
      ISSN = {0021-8693},
   MRCLASS = {20D15 (11S25 20D40)},
  MRNUMBER = {873681},
MRREVIEWER = {J.\ Wiegold},
       DOI = {10.1016/0021-8693(87)90211-0},
       URL = {https://doi.org/10.1016/0021-8693(87)90211-0},
}

@article {LM872,
    AUTHOR = {Lubotzky, Alexander and Mann, Avinoam},
     TITLE = {Powerful {$p$}-groups. {II}. {$p$}-adic analytic groups},
   JOURNAL = {J. Algebra},
  FJOURNAL = {Journal of Algebra},
    VOLUME = {105},
      YEAR = {1987},
    NUMBER = {2},
     PAGES = {506--515},
      ISSN = {0021-8693},
   MRCLASS = {20E18 (20E34 22E20)},
  MRNUMBER = {873682},
MRREVIEWER = {J.\ Wiegold},
       DOI = {10.1016/0021-8693(87)90212-2},
       URL = {https://doi.org/10.1016/0021-8693(87)90212-2},
}

@article {Baer52,
    AUTHOR = {Baer, Reinhold},
     TITLE = {Endlichkeitskriterien f\"ur {K}ommutatorgruppen},
   JOURNAL = {Math. Ann.},
  FJOURNAL = {Mathematische Annalen},
    VOLUME = {124},
      YEAR = {1952},
     PAGES = {161--177},
      ISSN = {0025-5831,1432-1807},
   MRCLASS = {20.0X},
  MRNUMBER = {45720},
MRREVIEWER = {G.\ Higman},
       DOI = {10.1007/BF01343558},
       URL = {https://doi.org/10.1007/BF01343558},
}

@article {Donadze2020,
    AUTHOR = {Donadze, G. and Ladra, M. and P\'aez-Guill\'an, P.},
     TITLE = {Schur's theorem and its relation to the closure properties of
              the non-abelian tensor product},
   JOURNAL = {Proc. Roy. Soc. Edinburgh Sect. A},
  FJOURNAL = {Proceedings of the Royal Society of Edinburgh. Section A.
              Mathematics},
    VOLUME = {150},
      YEAR = {2020},
    NUMBER = {2},
     PAGES = {993--1002},
      ISSN = {0308-2105,1473-7124},
   MRCLASS = {20E22 (20F05 20J99)},
  MRNUMBER = {4080468},
MRREVIEWER = {Peyman\ Niroomand},
       DOI = {10.1017/prm.2018.150},
       URL = {https://doi.org/10.1017/prm.2018.150},
}

@article {Ellis2001,
    AUTHOR = {Ellis, Graham},
     TITLE = {On the relation between upper central quotients and lower
              central series of a group},
   JOURNAL = {Trans. Amer. Math. Soc.},
  FJOURNAL = {Transactions of the American Mathematical Society},
    VOLUME = {353},
      YEAR = {2001},
    NUMBER = {10},
     PAGES = {4219--4234},
      ISSN = {0002-9947,1088-6850},
   MRCLASS = {20F14 (20F12)},
  MRNUMBER = {1837229},
MRREVIEWER = {Mohammad\ Reza R. Moghaddam},
       DOI = {10.1090/S0002-9947-01-02812-4},
       URL = {https://doi.org/10.1090/S0002-9947-01-02812-4},
}

@article {NS2007,
    AUTHOR = {Nikolov, Nikolay and Segal, Dan},
     TITLE = {On finitely generated profinite groups. {I}. {S}trong
              completeness and uniform bounds},
   JOURNAL = {Ann. of Math. (2)},
  FJOURNAL = {Annals of Mathematics. Second Series},
    VOLUME = {165},
      YEAR = {2007},
    NUMBER = {1},
     PAGES = {171--238},
      ISSN = {0003-486X,1939-8980},
   MRCLASS = {20E18 (20E32 20F12)},
  MRNUMBER = {2276769},
MRREVIEWER = {Benjamin\ Klopsch},
       DOI = {10.4007/annals.2007.165.171},
       URL = {https://doi.org/10.4007/annals.2007.165.171},
}

@article {La1965,
    AUTHOR = {Lazard, Michel},
     TITLE = {Groupes analytiques {$p$}-adiques},
   JOURNAL = {Inst. Hautes \'Etudes Sci. Publ. Math.},
  FJOURNAL = {Institut des Hautes \'Etudes Scientifiques. Publications
              Math\'ematiques},
    NUMBER = {26},
      YEAR = {1965},
     PAGES = {389--603},
      ISSN = {0073-8301,1618-1913},
   MRCLASS = {14.50},
  MRNUMBER = {209286},
       URL = {http://www.numdam.org/item?id=PMIHES_1965__26__389_0},
}
\end{document}